\newtheorem{lemma}{Lemma}[section]
\newtheorem{proposition}[lemma]{Proposition}
\newtheorem{corollary}[lemma]{Corollary}
\newtheorem{theorem}[lemma]{Theorem}
\newtheorem{definition}[lemma]{Definition}
\newtheorem{remark}[lemma]{Remark}
\newtheorem*{Acknowledgement}{Acknowledgements}
\newtheorem{Theorem}{Theorem}
\newcommand\cf{cf\@. }
\newcommand\ie{i\@.e\@. }
\newcommand\pa{ \partial}
\newcommand\bbB{\mathbb B}
\newcommand\bbC{\mathbb C}
\newcommand\bbN{\mathbb N}
\newcommand\bbP{\mathbb P}
\newcommand\bbR{\mathbb R}
\newcommand\bbZ{\mathbb Z}
\renewcommand\Re{\operatorname{Re}}
\renewcommand\Im{\operatorname{Im}}
\newcommand\bU{\overline{\mathcal{U}}}
\newcommand\hF{\widehat{F}}
\newcommand\bD{\overline{D}}
\newcommand\CI{\mathcal{C}^{\infty}}
\newcommand\Diff{\operatorname{Diff}}
\newcommand\Ric{\operatorname{Ric}}
\newcommand\ord{\operatorname{ord}}
\newcommand\cC{\mathcal{C}}
\newcommand\cA{\mathcal{A}}
\newcommand\cE{\mathcal{E}}
\newcommand\cF{\mathcal{F}}
\newcommand\db{\overline{\pa}}
\newcommand\cV{\mathcal{V}}
\newcommand\cU{\mathcal{U}}
\newcommand\cI{\mathcal{I}}
\newcommand\End{\operatorname{End}}
\newcommand\pr{\operatorname{pr}}
\newcommand\phg{\operatorname{phg}}
\newcommand\Spec{\operatorname{Spec}}
\newcommand\Id{\operatorname{Id}}
\newcommand\bM{\overline{M}}
\newcommand\bN{\overline{N}}
\renewcommand\sc{\operatorname{sc}}
\newcommand\CY{\operatorname{CY}}
\newcommand\WH{\operatorname{WH}}
\newcommand\lb{\operatorname{lb}}
\newcommand\cH{\mathcal{H}}
\newcommand\cZ{\mathcal{Z}}
\newcommand\tM{\widetilde{M}}
\newcommand\tN{\widetilde{N}}
\newcommand\Cl{\operatorname{Cl}}
\newcommand\bbA{\mathbb{A}}
\newcommand\Ch{\operatorname{Ch}}
\newcommand\ind{\operatorname{ind}}
\newcommand\hA{\widehat{A}}
\newcommand\STr{\operatorname{STr}}
\newcommand\cN{\mathcal{N}}
\newcommand\cM{\mathcal{M}}
\newcommand\ran{\operatorname{ran}}
\newcommand\Str{\operatorname{Str}}
\newcommand\Tr{\operatorname{Tr}}
\newcommand\Q{\mathcal{Q}}
\newcommand\Td{\operatorname{Td}}
\newcommand\bZ{\overline{Z}}
\newcommand\bell{\overline{\ell}}
\newcommand\rel{\operatorname{rel}}
\newcommand\WP{\operatorname{WP}}
\newcommand\Vol{\operatorname{Vol}}
\newcommand\FP{\operatorname{FP}}
\newcommand\AC{\operatorname{ACyl}}
\newcommand\SU{\operatorname{SU}}
\newcommand\tr{\operatorname{tr}}
\newcommand\diver{\operatorname{div}}
\newcommand{\fib}{\mathrm{fib}}
\begin{document}
\title[asymptotically cylindrical Calabi-Yau moduli]
{The moduli space of asymptotically cylindrical \\ Calabi-Yau manifolds}

\author{Ronan J. Conlon}
\address{Département de Mathématiques, Universit\'e du Qu\'ebec \`a Montr\'eal}
\email{rconlon@cirget.ca}
\author{Rafe Mazzeo}
\address{Department of Mathematics, Stanford University}
\email{mazzeo@math.stanford.edu}

\author{Fr\'ed\'eric Rochon}
\address{Département de Mathématiques, Universit\'e du Qu\'ebec \`a Montr\'eal}
\email{rochon.frederic@uqam.ca}

\begin{abstract}
We prove that the deformation theory of compactifiable asymptotically cylindrical Calabi-Yau manifolds is unobstructed.
This relies on a detailed study of the Dolbeault-Hodge theory and its description in terms of the cohomology of
the compactification. We also show that these Calabi-Yau metrics admit a polyhomogeneous expansion at infinity,
a result that we extend to asymptotically conical Calabi-Yau metrics as well.  We then study the moduli space of
Calabi-Yau deformations that fix the complex structure at infinity. There is a Weil-Petersson metric on this space
which we show is Kähler.  By proving a local families $L^2$-index theorem, we exhibit its Kähler form as a multiple
of the curvature of a certain determinant line bundle.
\end{abstract}
\maketitle

\tableofcontents

\section*{Introduction}

A complete Riemannian manifold $(M,g)$ of dimension $2n$ is said to be Calabi-Yau if its holonomy group is contained in $\SU(n)$, in
which case $(M,g)$ is Ricci-flat and Kähler.  Conversely, if $(M,g)$ is Ricci-flat Kähler, then its reduced holonomy group is contained in
$\SU(n)$, hence $(M,g)$ is Calabi-Yau if $M$ is simply connected.  The principal source of examples of Calabi-Yau manifolds is the
famous Calabi conjecture proved by Yau \cite{Yau1978}:  a compact Kähler manifold with trivial canonical line bundle admits a
unique Calabi-Yau metric in each Kähler class. Subsequent work by Tian \cite{Tian1987} and Todorov \cite{Todorov1989} shows
that the moduli space of (polarized simply connected) compact Calabi-Yau manifolds has at most quotient singularities, and moreover,
its natural Weil-Petersson metric is Kähler. This moduli space is central in the study of mirror symmetry, and is thus of importance in
mathematical physics, algebraic geometry, differential geometry and number theory.

Fundamental results of Tian-Yau \cite{Tian-Yau1990, Tian-Yau1991} and Joyce \cite{Joyce} imply the existence of many non-compact,
complete, quasi-projective Calabi-Yau manifolds.  In the present paper, we study the moduli space of compactifiable asymptotically
cylindrical Calabi-Yau manifolds. The only previous generalization of the Tian-Todorov theorem (to any complete quasi-projective
setting) is for the same class of asymptotically cylindrical metrics, but only in complex dimension $2$, by
Hein \cite[Corollary~4.3]{Hein2012}. We mention also the formal deformation theory in the same setting, but in general
dimensions, in \cite[\S4.3.3]{KKP2008}. Recall that a complete Riemannian manifold $(M,g)$ is \textbf{asymptotically cylindrical} if there exist a compact set $K\subset M$, a closed Riemannian manifold $(N,h)$ and a diffeomorphism $\Phi: M\setminus K\to N\times (0,\infty)$ such that for some $\delta>0$, $| \nabla^k(\Phi_*g-g_{\infty})|= \mathcal{O}(e^{-\delta t})$ for all $k\in\bbN_0$, where $g_{\infty}= dt^2 +h$ is a product metric.  By the Cheeger-Gromoll splitting theorem, see also \cite{Salur2006}, a connected, complete manifold with nonnegative
Ricci curvature can have at most one end unless it splits as a global Riemannian product $\mathbb R \times N$, so we may as well
assume that $(M,g)$ has a single cylindrical end.  The recent improvements by Haskins-Hein-Nordström \cite{HHN2012} of the Tian-Yau
construction \cite{Tian-Yau1990} give many new examples of asymptotically cylindrical Calabi-Yau spaces. Indeed, let $\bM$ be a
compact Kähler orbifold of complex dimension $n\ge 2$.  Let $\bD\in |-K_{\bM}|$ be an effective orbifold divisor satisfying the
following two conditions:
\begin{itemize}
\item[(i)] The complement $M:= \bM\setminus \bD$ is a smooth manifold;
\item[(ii)] The orbifold normal bundle of $\bD$ is biholomorphic to $(\bbC\times D)/ \langle \iota\rangle$ as an orbifold line bundle, where $D$ is a connected complex manifold and $\iota$ is a complex automorphism of $D$ of order $m<\infty$ acting on the product via $\iota(w,x)= (e^{\frac{2\pi i}{m}}w,\iota(x))$.
\end{itemize}
Then if $\Omega$ is a meromorphic $n$-form on $\bM$ with a simple pole along $\bD$, the construction of \cite{Tian-Yau1990} and \cite{HHN2012} ensures that for every Kähler class $\mathfrak{t}$ on $\bM$, there exists an asymptotically cylindrical Calabi-Yau metric $g_{\CY}$ on $M$ with Kähler form $\omega_{\CY}$ such that $\omega_{\CY}\in \left. \mathfrak{t}\right|_{M}$ and $\omega_{\CY}^n= i^{n^2}\Omega\wedge \overline{\Omega}$.   We say that a Calabi-Yau manifold $(M,g_{\CY})$ obtained in this way is a \textbf{compactifiable asymptotically cylindrical Calabi-Yau manifold} with compactification $\bM$.

The existence result of Haskins-Hein-Nordström was used in \cite{CHNP2013} to obtain many new examples of asymptotically cylindrical Calabi-Yau $3$-folds.  Those play a distinguished role because they can be used as building blocks in Kovalev's twisted connected sum construction of compact manifold with holonomy $G_2$, see \cite{Kovalev2003,Kovalev-Lee,CHNP}.    
Haskins-Hein-Nordström also prove a uniqueness result, see Theorem~\ref{un.3} below for the formulation that will be used here.  More surprisingly, they establish a converse by recovering the compactification $\bM$ in many important cases; namely if $(M,g)$ is a simply-connected, irreducible asymptotically cylindrical Calabi-Yau manifold of complex dimension $n>2$, then $(M,g)$ arises from their construction.  

In the present paper, we shall study compactifiable asymptotically cylindrical Calabi-Yau manifolds and their moduli spaces.  After some
preliminaries on $b$-metrics
and the $b$-calculus of Melrose \cite{MelroseAPS}, we begin our investigation by determining the space of  $L^2$-harmonic forms of type $(p,q)$ on such a manifold.  As shown in \cite{MelroseAPS}, see also \cite{HHM2004}, the space of (de Rham) $L^2$-harmonic forms of an asymptotically cylindrical manifold is identified in terms of the (de Rham) cohomology of an associated manifold with boundary.  However,
to respect the $(p,q)$ decomposition, it turns out to be more natural here to relate this $(p,q)$ Hodge cohomology with the Dolbeault
cohomology of the compactification $\bM$. More precisely, if $E\to \bM$ is a holomorphic vector bundle over $\bM$,  then
Theorem~\ref{ht.5} below is the following assertion:
\begin{Theorem}
$
\hspace{0.5cm} L^2\cH^{p,q}(M;E) \cong  \Im \{ H^{q}(\bM;\Omega^p(\log \bD)\otimes E(-\bD))\to H^{q}(\bM; \Omega^{p}(\log\bD) \otimes E) \}.
$
\label{int.1}\end{Theorem}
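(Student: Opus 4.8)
The plan is to realize $L^2$-harmonic $(p,q)$-forms on $(M, g_{\CY})$ in terms of $\overline{\partial}$-cohomology with logarithmic growth conditions on the compactification $\bM$, and then to identify the relevant cohomology groups via a spectral-sequence/long-exact-sequence argument. First I would set up the $b$-geometry picture: the asymptotically cylindrical end $N \times (0,\infty)$ compactifies to a manifold $\bX$ with boundary, where $t = -\log x$ for a boundary defining function $x$, and the product metric $dt^2 + h$ becomes a $b$-metric $\frac{dx^2}{x^2} + h$. On the complex side, the cross-section $N$ fibers over a circle with fiber $D$, and $\bM$ is obtained by collapsing this circle at $x = 0$, so that $\bD \subset \bM$ plays the role of the divisor at infinity and $\Omega^p(\log \bD)$ encodes forms with at worst logarithmic poles along $\bD$. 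The Calabi-Yau form $\Omega$ having a simple pole along $\bD$ means $E(-\bD)$ twisting is exactly what measures the $L^2$ decay rate relative to the natural holomorphic frame.

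**The analytic identification.**

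The heart of the argument is a Hodge-theoretic statement on the $b$-manifold. By the $b$-calculus of Melrose (and its adaptation in \cite{HHM2004}), $L^2\cH^{p,q}(M;E)$ is computed by the Dolbeault complex with coefficients in the weighted $b$-Sobolev spaces at weight $0$, and the relevant fact is that $\db + \db^*$ acting on weight-$0$ sections is Fredholm precisely because $0$ is not an indicial root of the boundary operator — here one uses that $H^{q}(N; \cdot)$ has no contribution exactly at the critical weight, the generic situation being interrupted only by the circle direction in $N$. Thus a harmonic representative decays like $x^{0^+}$ along one "half" of the indicial behavior and is bounded (possibly non-decaying, with a log term) along the other. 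Translating bounded-with-log-growth holomorphic-type behavior near $\bD$ into the algebraic statement: a bounded form corresponds to a section of $\Omega^p(\log\bD)\otimes E$, while the decay coming from pairing against $\Omega$ (simple pole) forces the representative, viewed as a $\overline{\partial}$-closed form, to lie in the image of the sheaf map $\Omega^p(\log\bD)\otimes E(-\bD) \to \Omega^p(\log\bD)\otimes E$. This is where I expect the technical bulk: one must carefully match the $L^2$-condition with respect to the asymptotically cylindrical metric to the precise divisorial twist, keeping track of the order-$m$ cyclic quotient in condition (ii) so that the orbifold structure along $\bD$ is handled correctly (the $\frac{2\pi i}{m}$-rotation means the relevant boundary operator sees a $\bbZ/m$-equivariant decomposition, and only the invariant part survives in $L^2$).

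**From Hodge cohomology to the image of the restriction map.**

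Once the $L^2$-harmonic space is identified with a concrete subquotient of weighted Dolbeault cohomology on $\bM$, the final step is purely cohomological. Consider the short exact sequence of sheaves $0 \to \Omega^p(\log\bD)\otimes E(-\bD) \to \Omega^p(\log\bD)\otimes E \to (\Omega^p(\log\bD)\otimes E)|_{\bD} \to 0$ on $\bM$. The associated long exact sequence in cohomology realizes $\Im\{H^q(\bM;\Omega^p(\log\bD)\otimes E(-\bD)) \to H^q(\bM;\Omega^p(\log\bD)\otimes E)\}$ as the kernel of the connecting/restriction map to $H^q(\bD;\cdot)$. The analytic content above says a weighted-$L^2$-harmonic form is exactly a $\db$-closed form whose restriction to the divisor vanishes in cohomology (because genuine decay, not just boundedness, is what kills the boundary term), which matches this kernel. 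Surjectivity — that every class in the image is represented by an honest $L^2$-harmonic form — follows by solving the $\db$-Neumann-type problem on $M$ with the prescribed asymptotics, which is again guaranteed by the Fredholm theory at weight $0$ since the relevant obstruction group is precisely the cokernel computed by the same indicial analysis. The main obstacle, as noted, is the bookkeeping at the boundary: reconciling the metric $L^2$-norm, the holomorphic twisting by $\bD$, the log-pole sheaf $\Omega^p(\log\bD)$, and the cyclic orbifold group $\langle\iota\rangle$ simultaneously, and checking that no indicial root sits exactly at the critical weight so that the Hodge decomposition is clean.
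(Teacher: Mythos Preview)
Your proposal has a genuine gap at its analytic core. You assert that $\db + \db^*$ acting on weight-$0$ sections is Fredholm ``because $0$ is not an indicial root of the boundary operator.'' This is false in the situation at hand: the indicial family of $\Delta_{\db} = (\db+\db^*)^2$ at $\lambda = 0$ is $\tfrac{1}{2}\Delta_{\pa\tM}$, whose kernel is the space of harmonic forms on the cross-section $\pa\tM$, and this is never trivial. Consequently $\db+\db^*$ does \emph{not} have closed range on unweighted $L^2$, the $L^2$-Dolbeault cohomology at weight $0$ can be infinite-dimensional, and there is no clean Hodge decomposition at weight $0$ to invoke. Your surjectivity step, which appeals to ``the Fredholm theory at weight $0$,'' therefore has no foundation, and your closing remark about ``checking that no indicial root sits exactly at the critical weight'' is precisely the hypothesis that fails.

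The paper's route around this obstacle is to avoid weight $0$ entirely. One first proves, via a weighted $\db$-Poincar\'e lemma and a sheaf-theoretic Dolbeault argument, that for small $\epsilon > 0$,
\[
\WH^{p,q}(g_b,\epsilon,M;E) \cong H^q(\bM;\Omega^p(\log\bD)\otimes E(-\bD)), \qquad
\WH^{p,q}(g_b,-\epsilon,M;E) \cong H^q(\bM;\Omega^p(\log\bD)\otimes E).
\]
One then observes that $L^2$-harmonic forms are polyhomogeneous with strictly positive index set, hence already lie in $\rho^\epsilon L^2$, giving a map $\Phi\colon L^2\cH^{p,q} \to \Im\{\WH^{p,q}(\epsilon)\to\WH^{p,q}(-\epsilon)\}$. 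Injectivity of $\Phi$ is a direct integration by parts; surjectivity uses the Fredholm decomposition of $\Delta_{\db}$ on $\rho^{-\epsilon}L^2$ (where the operator \emph{is} Fredholm) together with an analysis of the leading term of the polyhomogeneous expansion at $\rho = 0$ to justify one further integration by parts. Your short-exact-sequence picture on $\bM$ is compatible with the final answer, but it does not replace the need to work at nonzero weights on the analytic side, and the weighted $\db$-Poincar\'e lemma---which is what actually matches the $\rho^{\pm\epsilon}L^2$ conditions to the divisorial twists $E$ versus $E(-\bD)$---is absent from your outline.
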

See \S~\ref{ht.0} for notation.

The proof uses a sheaf theoretic argument, along with some key facts about elliptic $b$-operators which lead to the characterization
of weighted Dolbeault $L^2$ cohomology
\begin{equation}
 \WH^{p,q}(g_b,\epsilon,M;E)\cong H^{q}(\bM ,\Omega^p(\log \bD) \otimes E(-\bD)), \quad  \WH^{p,q}(g_b,-\epsilon,M;E)\cong H^{q}(\bM;\Omega^{p}(\log\bD)\otimes E)
\label{int.2}\end{equation}
when $\epsilon>0$ is small enough, see Theorem~\ref{ht.3} for details.   Further analysis yields, in Theorem~\ref{bi.6}, a
$\pa\db$-lemma adapted to this setting, and the existence of canonical harmonic representatives for classes in
$\WH^{p,q}(g_b, -\epsilon, M; E)$, which is necessary for the deformation theory.  These Hodge theoretic results do not require the
full regularity of the metric assumed here,
and also do not require that $g_b$ be Calabi-Yau.   We continue to  assume, however, that $g_b$ is a polyhomogeneous exact $b$-metric,
i.e., $g_b$ admits a complete asymptotic expansion in the cylindrical end in powers of $\rho=e^{-t}$, see \S~\ref{b.0} for details, and
in fact, our next result shows that our Calabi-Yau spaces possess this sharp regularity:
\begin{Theorem}
Compactifiable asymptotically cylindrical Calabi-Yau metrics are polyhomogeneous exact $b$-metrics.
\label{int.3}\end{Theorem}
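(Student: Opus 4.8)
The plan is to run the Calabi-Yau equation through a bootstrap argument in the $b$-calculus, using that the relevant linearized operator is a $b$-elliptic (indeed Fredholm on weighted $b$-Sobolev/H\"older spaces) operator whose indicial roots are known explicitly, so that the only asymptotic terms that can appear are those dictated by the nonlinearity. Concretely, write the Calabi-Yau metric as $\omega_{\CY}=\omega_b+i\pa\db\varphi$, where $\omega_b$ is a fixed polyhomogeneous exact $b$-metric built from the compactification $\bM$ (e.g.\ the Tian-Yau background, whose polyhomogeneity is manifest from its construction via the adjunction-type formula for $\Omega$), and where the known exponential decay $|\nabla^k(\Phi_*g-g_\infty)|=\mathcal O(e^{-\delta t})$ translates, via the uniqueness statement (Theorem~\ref{un.3}) matching $\omega_{\CY}$ to the Tian-Yau metric, into the statement that $\varphi\in\rho^{\delta'}H^\infty_b$ for some $\delta'>0$; in particular $\varphi$ decays. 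The Monge-Amp\`ere equation $(\omega_b+i\pa\db\varphi)^n=e^{F}\omega_b^n$ with $F$ polyhomogeneous (again inherited from the background and from $\Omega\wedge\overline\Omega$) then reads, schematically, $\Delta_b\varphi = F + Q(\pa\db\varphi)$, where $\Delta_b$ is the (linearized, $\rho$-weighted) Laplacian of $\omega_b$ — a $b$-elliptic operator — and $Q$ collects the higher-order terms, each at least quadratic in $i\pa\db\varphi$.

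The key steps, in order, are: \textbf{(1)} Record the indicial roots of $\Delta_b$ at the boundary; these are $\{\pm\sqrt{\mu_j}\}$ built from the spectrum of the cross-section Laplacian on $N$, hence form a discrete set, and $0$ is an indicial root of multiplicity matching $b^0(N)=1$. \textbf{(2)} Since $\varphi\in\rho^{\delta'}H^\infty_b$ decays, $Q(\pa\db\varphi)\in\rho^{2\delta'}\mathcal A_{\phg}$ modulo lower order, provided $\pa\db\varphi$ is already known polyhomogeneous to that order; one bootstraps. \textbf{(3)} Apply the standard $b$-elliptic regularity / parametrix result (\cite{MelroseAPS}): if $\Delta_b u = v$ with $v$ polyhomogeneous with index set $E$ and $u$ decaying, then $u$ is polyhomogeneous with an index set determined by $E$ together with the indicial roots of $\Delta_b$ lying in the relevant half-plane (one picks up possible $\log$ terms exactly when an element of $E$ collides with an indicial root). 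Feeding $v=F+Q(\pa\db\varphi)$ — which is polyhomogeneous by the inductive hypothesis — yields that $\varphi$ is polyhomogeneous, and then so is $\omega_{\CY}=\omega_b+i\pa\db\varphi$, and hence $g_{\CY}$, as an exact $b$-metric (the ``exact'' part is automatic since $\omega_{\CY}$ and $\omega_b$ lie in the same cohomology class on the compactification, or alternatively follows from the closedness of $\omega_{\CY}$ together with its leading behaviour being the product form).

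The main obstacle is step~(2)–(3) run as a genuine induction: one must set up the bootstrap so that at each stage the index set of the right-hand side is strictly ``larger'' (more decay, or more terms) in a way that terminates in a well-defined polyhomogeneous index set, rather than an infinitely-generated one, and one must track carefully the interaction between the index set generated by the nonlinearity $Q$ (whose terms are sums of products of shifted indicial data) and the indicial roots of $\Delta_b$, since resonances there are what produce the logarithmic terms in the expansion. A secondary technical point is handling the kernel/cokernel of $\Delta_b$ on the decaying space: because $0$ is an indicial root, one works on a weighted space $\rho^{\delta'}H^\infty_b$ with $0<\delta'<$ (first positive indicial root), on which $\Delta_b$ is injective with closed range, and one must check $F+Q(\pa\db\varphi)$ pairs trivially against the (constant) cokernel — which it does, since both sides integrate the Monge-Amp\`ere relation and the total volumes match. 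Once these are in place the conclusion is immediate, and the same scheme — with the conical model $b$-calculus replaced by the appropriate scattering/$b$-type calculus for the cone — gives the asymptotically conical statement asserted in the abstract.
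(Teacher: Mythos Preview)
Your approach is essentially the paper's: write $\omega_{\CY}=\omega_b+i\pa\db u$ with $\omega_b$ a polyhomogeneous exact $b$-metric and $u\in\rho^{\epsilon}\CI_b$ solving the Monge--Amp\`ere equation with polyhomogeneous data, then bootstrap via linear $b$-elliptic regularity (Corollary~\ref{breg.9}), using that the nonlinear remainder is at least quadratic in $i\pa\db u$ and hence gains an extra $\rho^{\epsilon}$ at each step. Two of your stated obstacles are non-issues, however. The cokernel/integration check is unnecessary: you are not \emph{solving} the linearized equation but establishing regularity of a solution $u$ already supplied by \cite{HHN2012}, so Corollary~\ref{breg.9} applies directly with no orthogonality condition to verify. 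And the resulting index set need not be finitely generated in any sense --- index sets are typically infinite; the definition only requires that $\{(z,k)\in F:\Re z\le N\}$ be finite for each $N$, which is automatic from the iteration.
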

This is the content of Theorem~\ref{accy.1} and Corollary~\ref{accy.5} below.   The paper \cite{Santoro} already contains some results
in this direction, but what we prove here is more precise.

Similar regularity results for K\"ahler-Einstein metrics trace back to the work of Lee-Melrose \cite{Lee-Melrose}, where the
polyhomogeneity of the Cheng-Yau metric on a strictly pseudoconvex domain is established: we refer also to
\cite{Jeffres-Mazzeo-Rubinstein, Rochon-Zhang}, which prove polyhomogeneity of other types of Kähler-Einstein metrics.
All of these results are proved by using a linear regularity theorem (Corollary~\ref{breg.9} below in our case)
in an inductive bootstrapping argument for the complex Monge-Ampère equation.

Using that asymptotically conical metrics are conformal to asymptotically cylindrical metrics, we can deduce from the proof here
a similar polyhomogeneity result for asymptotically conical Calabi-Yau metrics, as constructed in \cite{Tian-Yau1991, CH2014};
this is carried out in Corollary~\ref{acocy.11}.

This sharp regularity of asymptotically cylindrical Calabi-Yau metrics becomes extremely useful when studying the deformation
theory of these metrics and for understanding the Weil-Petersson geometry of the corresponding moduli space.
Our approach to the deformation theory follows Kawamata \cite{Kawamata1978}, who studied deformations of
compactifiable complex manifolds.  Infinitesimal deformations of the complex structure are described by the cohomology group
$H^1(\bM; T_{\bM}(\log\bD))$, where $T_{\bM}(\log\bD)$ is the sheaf of holomorphic vector fields on $\bM$ tangent to $\bD$.
By our study of the Dolbeault-Hodge theory, these infinitesimal deformations admit canonical harmonic representatives. Using the Tian-Todorov theorem
as well as the $\pa\db$-lemma in Lemma~\ref{dd.1}, we recover the result of \cite{KKP2008} that the
deformation theory is formally unobstructed in this setting.   There are important simplifications in complex dimension $2$
which follow from the vanishing of the Frölicher-Nijenhuis bracket of constant differential forms on a flat cylinder, see \cite{Hein2012}.
To obtain actual deformations, we must choose the terms in the formal series of the deformation systematically. This is done
using a parametrix for the Laplacian in the sense of \cite{MelroseAPS} and \cite{MazzeoEdge}.  Invoking some estimates for
this parametrix, we can then safely apply the standard argument of Kodaira-Spencer \cite[\S~5.3]{Kodaira} to obtain the following result.
\begin{Theorem}
The deformation theory of compactifiable asymptotically cylindrical Calabi-Yau manifolds is unobstructed.
\label{int.4}\end{Theorem}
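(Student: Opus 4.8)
The plan is to run the Kuranishi/Kodaira--Spencer power series construction of the local deformation family, but carried out on the noncompact manifold $M$ using the weighted Dolbeault--Hodge theory of the preceding sections and the $b$-parametrix of \cite{MelroseAPS, MazzeoEdge} in place of the Green operator on a compact manifold. Following Kawamata \cite{Kawamata1978}, deformations of the complex structure of $M$ compatible with a compactification are encoded by Beltrami differentials $\phi\in\Omega^{0,1}(M;T_{\bM}(\log\bD))$ solving the Maurer--Cartan equation $\db\phi=\tfrac12[\phi,\phi]$, with infinitesimal deformations given by $H^1(\bM;T_{\bM}(\log\bD))$. By the weighted Hodge theory established above (Theorems~\ref{ht.3} and~\ref{bi.6}) this group is represented by canonical $\db$-harmonic, $\db^*$-closed forms lying in a fixed weighted space; let $G$ and $\cH$ denote the corresponding $b$-Green operator and harmonic projection.

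For the formal solution, I would fix a harmonic representative $\phi_1$ of a class in $H^1(\bM;T_{\bM}(\log\bD))$ and seek $\phi(t)=\sum_{k\ge 1}\phi_k t^k$ with this $\phi_1$ as first term and $\phi_k=\tfrac12\,\db^*G\big(\sum_{i+j=k}[\phi_i,\phi_j]\big)$ for $k\ge 2$. Writing $\Psi_k=\tfrac12\sum_{i+j=k}[\phi_i,\phi_j]$, solvability at order $k$ amounts to $\db\Psi_k=0$ and $\cH\Psi_k=0$; the first is automatic from the Jacobi identity once the lower orders satisfy the equation, and the second is exactly where the Tian--Todorov lemma enters, combined with the $\pa\db$-lemma of Lemma~\ref{dd.1}: the holomorphic volume form trivializing $K_{\bM}(\bD)$ rewrites $[\phi_i,\phi_j]$ as a $\pa$-exact form, and Lemma~\ref{dd.1} upgrades this to $\db$-exactness with vanishing harmonic part. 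This reproves the formal unobstructedness of \cite{KKP2008}.

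The analytic core is convergence. I would choose a weighted $b$-H\"older space $\rho^{\delta}\cC^{k,\alpha}_b(M)$ that is (i) a Banach algebra under the pointwise operations, so that $[\,\cdot\,,\,\cdot\,]$ is a bounded bilinear map, and (ii) a space on which $\db^*G$ acts boundedly with the usual one-derivative gain; the latter is precisely the mapping property of the $b$-parametrix, the point being that $\delta$ must be taken small and off the indicial spectrum of the relevant Laplacian so that applying $G$ loses no decay. Granting (i) and (ii), the classical majorant-series argument of Kodaira \cite[\S5.3]{Kodaira} --- comparing $\sum_k \|\phi_k\|\,t^k$ term by term with the explicit convergent power series built from the norms of $G$, $\db^*$ and the bracket --- yields convergence of $\phi(t)$ for $t$ in a ball in $H^1(\bM;T_{\bM}(\log\bD))$. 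Finally, the $b$-regularity statement Corollary~\ref{breg.9} applied to the limiting $\phi(t)$ shows that it is polyhomogeneous, so the resulting family consists of genuine compactifiable asymptotically cylindrical complex manifolds, each carrying an asymptotically cylindrical Calabi--Yau metric by the existence and uniqueness results recalled in Theorem~\ref{un.3}; since $t$ ranges over an open set in $H^1(\bM;T_{\bM}(\log\bD))$, the deformation theory is unobstructed.

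The main obstacle is the weight bookkeeping inside the convergence step. A bracket $[\phi_i,\phi_j]$ of forms decaying like $\rho^{\delta}$ decays like $\rho^{2\delta}$, but the $b$-Green operator can only restore decay up to the next indicial threshold, so one must verify that a \emph{single} weight $\delta$ --- small relative to the spectral gap $\epsilon$ of Theorem~\ref{ht.3} --- works throughout the recursion, i.e.\ that $\db^*G$ is bounded on $\rho^{\delta}\cC^{k,\alpha}_b(M)$ with a norm independent of the iteration step. Establishing this uniform estimate, together with the algebra property of the weighted space, are the two places where the noncompactness is felt; once they are in hand the Kodaira--Spencer majorant argument goes through as in the compact case.
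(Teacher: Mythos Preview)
Your overall strategy matches the paper's, and your formal step is essentially its Proposition~\ref{tt.13}. The gap is in the convergence step, where the specific operator and function space you propose do not close the estimate.

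First, the space. For the full (not relative) deformation theory, the canonical representative $\phi_1$ of a class in $H^1(\bM;T_{\bM}(\log\bD))\cong L^2_b\cH^{0,1}_-(M;{}^bT^{1,0}\tM)$ is, by Theorem~\ref{bi.6} and Lemma~\ref{bi.5}, only \emph{bounded} polyhomogeneous: its leading term at $\rho=0$ is a nonzero harmonic form on $\pa\tM$. So $\phi_1\notin\rho^{\delta}\cC^{k,\alpha}_b$ for any $\delta>0$, and your proposed Banach algebra does not contain the initial data. The paper works instead in the space $\cC^{k,\alpha}_{0,\delta}$ of \eqref{gf.2}, consisting of a $\rho$-independent boundary term plus a $\rho^{\delta}$-decaying remainder; this is a Banach algebra that does contain $\phi_1$.

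Second, the operator. The recursion operator is not $\db^*G$ but $\eta^{-1}\circ\pa\db^*G_{-\epsilon}\circ\eta$, where $\eta$ is the isomorphism \eqref{tt.5} induced by the meromorphic volume form; the paper's explicit formula is \eqref{tt.13d}. This is not a cosmetic rewriting. By Lemma~\ref{bi.5c}, applying $G_{-\epsilon}$ to a form with positive index set can produce $\log\rho$ and $\rho^0$ terms (because $0$ is indicial for $\Delta_{\db}$), and $\db^*$ of the result is only \emph{bounded} polyhomogeneous. It is the extra $\pa$ that kills this leading boundary term and restores positive decay; see the proof of Proposition~\ref{gf.1}. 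Thus $\pa\db^*G_{-\epsilon}$ is bounded on $\rho^{\delta}\cC^{k,\alpha}_b$ and on $\cC^{k,\alpha}_{0,\delta}$ (Propositions~\ref{gf.1} and~\ref{gf.3}), whereas $\db^*G_{-\epsilon}$ alone is not. You correctly flag the weight bookkeeping as the main obstacle, but the resolution is not to find a $\delta$ for which $\db^*G$ is bounded; it is to pass through $\eta$ so that the Tian--Todorov identity feeds a $\pa$-primitive into $\pa\db^*G_{-\epsilon}$ rather than a bracket into $\db^*G$. Once \eqref{tt.13d} and the estimate of Proposition~\ref{gf.3} are in hand, your majorant argument, the bootstrapping to polyhomogeneity via \eqref{gf.8}--\eqref{gf.9} and Corollary~\ref{breg.9}, and the final compactifiability check all proceed as you outline.
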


Combining this with the work of Kovalev \cite{Kovalev2006}, which in turn generalizes results of Koiso \cite{Koiso1983}, we
see that any Ricci-flat asymptotically cylindrical metric sufficiently close to a compactifiable asymptotically cylindrical
Calabi-Yau metric $g$ is in fact Kähler for some nearby deformation of the complex structure of $g$.

Similar results about the deformation theory in some different (though closely related) settings which involve asymptotically
cylindrical geometries may be found in \cite{Joyce-Salur} and \cite{Salur-Todd}.

We next consider relative deformations, i.e., those which fix the complex structure at infinity.  The infinitesimal analogue of
this type of deformation is $\Im \{ H^{1}(\bM; T_{\bM}(\log\bD)(-\bD))\to H^{1}(\bM; T_{\bM}(\log\bD)) \}$, the space of
$L^2$-harmonic forms $L^2\cH^{0,1}(M; T_{\bM}(\log\bD))$ by Theorem~\ref{int.1}.  Fixing a polarization $\bM$
and assuming that $H^{1}(\bM;\bbR)=0$, we show how to systematically choose a Calabi-Yau metric $g_m$ for each point $m$
in the relative moduli space $\cM_{\rel}$.

Now define a Weil-Peterson metric on the moduli space by
\[
g_{\WP}(u,v)= \int_{M_m} \langle u, v\rangle_{g_m} d\mu(g_m),  \quad u,v\in T_m\cM_{\rel}\cong L^2\cH^{0,1}(M_m,g_m, T^{1,0}M_m),
\]
where $M_m$ is the deformation corresponding to the point $m$.  Using a suitable notion of renormalized volume, we show in
Proposition~\ref{wp.8} that this metric is Kähler with Kähler form $\omega_{\WP}$, a multiple of the first Chern class of
the vertical tangent bundle.

Just as in the compact setting, we show that $\dim L^2 \mathcal H^{p,q}$ is constant in $\cM_{\rel}$, which means that
 it is possible to define a determinant line bundle associated  to the family of $\db$ operators on $\cM_{\rel}$ with a
corresponding Quillen metric and Quillen connection.  Twisting by a suitable choice of holomorphic vector bundle $E$,
see \eqref{wp.10}, our final result generalizes \cite[5.30]{BCOV1994}, see also \cite{Fang-Lu2005}.

\begin{Theorem}
The curvature of the determinant line bundle of the family of Dolbeault operators $\sqrt(\db+\db^*)$ associated to the holomorphic vector bundle $E$ is
\[
          \frac{i}{2\pi} (\nabla^Q)^2= \frac{\chi(M)}{12\pi} \omega_{\WP}.
\]
\label{int.5}\end{Theorem}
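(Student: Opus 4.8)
The plan is to obtain this from the \emph{local} families $L^2$-index theorem. Equipping the determinant line bundle of the family of Dolbeault operators $\db_E+\db_E^*$ over $\cM_{\rel}$ with its Quillen metric and Quillen connection $\nabla^Q$, one wants an identity of the form
\[
\frac{i}{2\pi}(\nabla^Q)^2 \;=\; \left[\,\int_{M/\cM_{\rel}}\Td\!\left(T^{1,0}(M/\cM_{\rel})\right)\Ch(E)\,\right]^{[2]},
\]
the bracket denoting the component of degree two along the base and the fiber integral being taken with respect to a Bismut superconnection built from the fibrewise Calabi--Yau metrics $g_m$. Before invoking this, I would check that the index bundle is an honest $\bbZ/2$-graded bundle over $\cM_{\rel}$ — this is the constancy of $\dim L^2\cH^{p,q}$ already established — and that the $b$-analytic torsion entering the Quillen metric is finite and smooth in $m$, which uses the polyhomogeneity of the metrics (Theorem~\ref{int.3}) together with the $b$-heat calculus.

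The proof of the index identity itself is where the non-compactness enters, and is, I expect, the main obstacle. One runs the Bismut--Gillet--Soul\'e transgression argument in the $b$-category; the novel point is that the boundary/eta-form correction which would appear in a naive families APS argument contributes nothing to the curvature. This is because $\cM_{\rel}$ parametrizes deformations fixing the complex structure at infinity: the restriction of the universal family to the cylindrical end, and hence every piece of geometric data feeding the boundary correction, is independent of the $\cM_{\rel}$-directions up to terms that do not affect its variation, so the correction has no component of positive degree along $\cM_{\rel}$ and drops out of the degree-two piece. Making this precise requires the mapping properties of the $b$-Laplacian and the parametrix construction used earlier in the paper.

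The remaining step is a purely formal characteristic-class computation, identical in shape to the compact Calabi--Yau case. For the virtual bundle $E$ fixed in \eqref{wp.10} (a weighted alternating sum of relative holomorphic forms), differentiating the Borel--Serre identity $\Td(V)\sum_p(-1)^p\Ch(\Lambda^pV^\vee)=c_{\mathrm{top}}(V)$ shows that the degree-$(2n+2)$ component of $\Td(T^{1,0}(M/\cM_{\rel}))\Ch(E)$ is $-\tfrac1{12}\,c_n(T^{1,0}(M/\cM_{\rel}))\wedge c_1(T^{1,0}(M/\cM_{\rel}))$. Integrating the top vertical Chern form over the fibres gives $\int_M c_n(T^{1,0}M,g_{\CY})=\chi(M)$ by the $b$-Chern--Gauss--Bonnet theorem (the boundary term vanishing for product ends), while the residual degree-two factor, which records the curvature of the vertical tangent bundle in the $\cM_{\rel}$-directions, is identified via Proposition~\ref{wp.8} — and the hypothesis $H^1(\bM;\bbR)=0$, which kills the auxiliary Hodge-bundle terms present in the compact statement of \cite{Fang-Lu2005} — with a multiple of $\omega_{\WP}$. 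Tracking the $2\pi$-normalization relating Chern--Weil forms to the Weil--Petersson K\"ahler form then gives $\frac{i}{2\pi}(\nabla^Q)^2=\frac{\chi(M)}{12\pi}\,\omega_{\WP}$, as claimed.
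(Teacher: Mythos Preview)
Your proposal is correct and follows essentially the same route as the paper: the Quillen curvature formula (Theorem~\ref{qc.8}) is obtained from the local families $L^2$-index theorem, the eta-form vanishes in positive degree because the indicial family is constant along $\cM_{\rel}$, the characteristic-class identity for the specific $E$ in \eqref{wp.10} reduces the degree-$(2n+2)$ piece to $-\tfrac{1}{12}c_1c_n$ (the paper quotes this from \cite{BCOV1994} rather than deriving it via Borel--Serre, but the content is identical), and then Proposition~\ref{wp.8} together with the $b$-Chern--Gauss--Bonnet theorem finish the computation. One small correction: the hypothesis $H^1(\bM;\bbR)=0$ is not used in the final curvature computation to kill auxiliary Hodge-bundle terms; in the paper it enters earlier, to guarantee that the K\"ahler class $\bell$ remains K\"ahler under relative deformations (Lemma~\ref{wp.2}) and that $K_{\bZ}(\bD)$ stays holomorphically trivial, so that the family of Calabi--Yau metrics and hence the K\"ahler fibration structure exist in the first place.
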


The key step in proving this is to obtain a local families $L^2$-index theorem.  This is not quite the setting of the families index
theorem of Melrose-Piazza \cite{MP1997} since ours is not a Fredholm family of operators. In particular, the heat kernel does
not decay exponentially fast for large time.   There are special features which help our calculations. One is that the collection
of $L^2$ kernels and cokernels form bundles over the moduli space.  The other is that the indicial family, that is, the model operator at infinity, is the same for all members of the family.

Using these and the scattering theory of \cite{MelroseAPS}, we show in Proposition~\ref{lfi.3} that the heat kernel decays rapidly in positive degree. We can then apply the argument of \cite{MP1997} to
obtain the local families $L^2$-index formula, see Theorem~\ref{lfi.16}.  Because of the constancy of the indicial family,
our formula contains no eta form in positive degree, and only the standard `Atiyah-Singer' integrand appears.
Note that our formula also applies to certain families of signature operators. For the Dolbeault operator,
proceeding as in \cite{BGSIII} and regularizing as in \cite{MelroseAPS} to define the analytic torsion, we
obtain the formula in Theorem~\ref{qc.8} for the curvature of the Quillen determinant line bundle.

The paper is organized as follows. The initial sections \S~\ref{b.0} and \S~\ref{greg.0} review the notion of $b$-metrics
and recall some important properties of elliptic $b$-operators.  In \S~\ref{ht.0}, we then study the Hodge theory of polyhomogeneous Kähler $b$-metrics admitting some suitable compactification by a compact Kähler manifold.  We then prove in \S~\ref{accy.0} that the asymptotically cylindrical Calabi-Yau metrics of \cite{HHN2012} are polyhomogeneous exact $b$-metrics.  We also show that the asymptotically
conical Calabi-Yau metrics of \cite{CH2013,CH2014} are polyhomogeneous as well. These results are used in \S~\ref{tt.0} to show
that the deformation theory of asymptotically cylindrical Calabi-Yau manifolds is unobstructed.  In \S~\ref{lfi.0} and \S~\ref{qc.0},
we obtain a local families $L^2$-index theorem and a curvature formula of the associated Quillen determinant line bundle for families of Dolbeault operators parametrized by the relative moduli space of asymptotically cylindrical Calabi-Yau manifolds.  Finally, in \S~\ref{wp.0},
we define the Weil-Peterson metric on the relative moduli space and explore some of its properties.

\begin{Acknowledgement}
R.M. was partially supported by the NSF under DMS-1105050; F.R. was partially supported by NSERC, FRQNT and a
Canada research chair.
\end{Acknowledgement}

\numberwithin{equation}{section}

\section{Asymptotically cylindrical metrics} \label{b.0}
In this section we define various classes of asymptotically cylindrical metrics, defined through different decay and regularity
assumptions presented in the language of $b$-geometry. This point of view is the one most coherent with the
analytic methods used later in this paper.  For those unacquainted with this language, we refer principally to
the book of Melrose \cite{MelroseAPS}.  We first introduce the $b$-vector fields, which give structure to later
definitions. This leads to the introduction of function spaces which are used later, in particular, the spaces of
polyhomogeneous functions as well as the various classes of asymptotically cylindrical metrics, also called $b$-metrics.
Differences between these spaces are due to the precise asymptotic regularity at infinity we impose on them.

Suppose that $\tM$ is a compact manifold with boundary with $\dim \tM = n$, and let $\rho\in\CI(\tM)$ be a boundary defining
function, i.e., $\rho>0$ in the interior $M= \tM\setminus \pa \tM$,  $\rho=0$ on $\pa \tM$ and $d\rho$ is nowhere
zero on $\pa\tM$.  We define the \textbf{Lie algebra of $b$-vector fields} on $\tM$ by
$$
     \cV_b(\tM)= \{ \xi \in \CI(\tM;T\tM) \; | \; \xi \; \mbox{is tangent to} \; \pa\tM \}.
$$
In local coordinates $(\rho,y)$ near $\pa \tM$, a $b$-vector field $\xi$ takes the form
$$
      \xi= a\rho \frac{\pa}{\pa \rho}+ \sum_{i=1}^{n-1} a_i \frac{\pa}{\pa y^i} \quad \mbox{with} \quad a, a_1,\ldots, a_{n-1}\in \CI(\tM).
$$
As an alternate characterization, $\xi\in \CI(\tM; T\tM)$ is in $\cV_b(\tM)$ if and only if $\xi\rho\in \rho\CI(\tM)$
for any boundary defining function $\rho$.

Associated to $\cV_b(\tM)$ is the \textbf{$b$-tangent bundle}, ${}^{b}T\tM\to \tM$. This is a natural smooth vector bundle
with fibre over $p\in \tM$ given by
$$
     {}^{b} T_p\tM= \cV_b(\tM)/ (I_p\cV_b(\tM)),  \quad  I_p= \{f\in \CI(\tM) \; | \; f(p)=0\}.
$$
There is a canonical morphism $\iota_b: {}^{b}T\tM\to T\tM$ of vector bundles such that
$$
       (\iota_b)_*\CI(\tM;{}^{b}T\tM)= \cV_b(\tM)\subset \CI(\tM;T\tM).
$$
Note that $\iota_b$ is only an isomorphism when restricted to $\tM\setminus \pa \tM$.  The vector bundle ${}^{b}T\tM$ is
a Lie algebroid with anchor map given by $(\iota_b)_*$.

\begin{definition}
A \textbf{$b$-metric} is a complete Riemannian metric $g$ on $M=\tM\setminus \pa\tM$ which can be written as
$$
       g=(\iota^{-1}_b)^* (\left. g_b\right|_{\tM\setminus \pa \tM})
$$
for some positive definite section $g_b\in \CI(\tM; \mathrm{Sym}^2({}^{b}T^*\tM))$. 
\label{b.1}\end{definition}
\begin{remark}
It is convenient and innocuous to regard $g_b$ as the $b$-metric.
\end{remark}

In local coordinates $(\rho,y)$ near $\pa\tM$, a $b$-metric is of the form
\begin{equation}
  g= a\frac{d\rho^2}{\rho^2} + \sum a_i dy^i\odot \frac{d\rho}{\rho} + \sum a_{ij} dy^i\odot dy^j, \quad a,a_j,a_{ij}\in \CI(\tM).
\label{b.2}\end{equation}
The $b$-metrics with all $a_i \equiv 0$ are particularly interesting.
\begin{definition}
A $b$-metric $g$ is a \textbf{product $b$-metric} if there exists a collar neighborhood
$$
    c: \pa\tM\times [0,\epsilon)_{\rho}\to \tM
$$
of the boundary such that $c^*g= \lambda^2\frac{d\rho^2}{\rho^2} + g_{\pa \tM}$, where $\lambda$ is a positive constant and $g_{\pa \tM}$ is a Riemannian metric on $\pa\tM$.  A $b$-metric $g$ is \textbf{exact} if
$g-g_p\in \rho\CI(\tM; {}^bT\tM\otimes {}^bT\tM)$ for some product $b$-metric $g_p$.
\label{b.3}\end{definition}
In terms of the variable $t=-\lambda\log \rho$, a product $b$-metric has the form
$$
                dt^2+ g_{\pa\tM}, \quad t\in (-\lambda\log\epsilon,\infty)
$$
in a collar neighborhood of $\pa\tM$, i.e., is isometric to a half-cylinder outside a compact set, while exact $b$-metrics are
those which converge exponentially to product metrics. An alternate characterization is that $g$ is an exact $b$-metric
if each of the coefficients $a_i$ of the cross-terms in \eqref{b.2} vanish at $\pa\tM$. One useful feature of exact $b$-metrics,
see \cite[Proposition~2.37]{MelroseAPS}, is that their Levi-Civita connection $\nabla$ extends naturally to the boundary to give a
connection for the $b$-tangent bundle ${}^bT\tM$.

We now describe some useful function spaces in this setting. Fix a volume density $\nu_g$ associated to any exact $b$-metric
$g$; this gives the Hilbert spaces $L^2_b(M)$ and $L^2_b(M;E)$ of square integrable functions and of square integrable
sections of a vector bundle $E\to \tM$ with Hermitian metric. Using a connection $\nabla^E$ for $E$ and the Levi-Civita
connection of $g$, we define the $b$-Sobolev spaces
$$
  H^k_b(M;E)= \{ f\in L^2(M;E) \; | \; \nabla^{\ell}f\in L^2_b(M; ({}^bT^*\tM)^{\ell}\otimes E) \; \forall \ell=0,\ldots, k\},
$$
where ${}^bT^*\tM$ denotes the dual of ${}^bT\tM$.  Since the elements of $\cV_b(\tM)$ are simply the vector fields
on $M$ which extend smoothly to the boundary and which have uniformly bounded length with
respect to any fixed $b$-metric, these $b$-Sobolev spaces can also be defined by requiring that $u \in H^k_b(M)$
if $u$ and $V_1 \ldots V_\ell u$ lie in $L^2_b$ for any collection of $b$-vector fields $V_i$ and for every $\ell \leq k$.
From this it is clear that the space $H^k_b(M;E)$ is independent of choices, even though the inner product is not.
We shall also use weighted versions of these Sobolev spaces, namely
$$
    \rho^{\ell}H^k_b(M;E)= \{ \rho^{\ell} \sigma \; | \; \sigma\in H^k_b(M;E)\}.
$$

We next define the space of $k$-times differentiable sections of $E$ with derivatives uniformly bounded on $M$ (with
respect to $g$ and the metric on $E$):
$$
      \cC^k_b(M;E)= \{ \sigma\in \cC^k(M;E) \; | \; \sup_{p\in M} |\nabla^{\ell} \sigma(p)|_{g,g_E}<\infty \; \; \forall \ell=0,1,\dots, k\}.
$$
As before, $\cC^k_b(M;E)$ (but not its norm) is independent of choices.  Set
$$
    \CI_b(M;E) =\bigcap_{k=0}^{\infty} \cC^k_b(M;E) \quad \mbox{and}  \quad H^{\infty}_b(M;E)= \bigcap_{k=0}^{\infty} H^k_b(M;E).
$$
Note that
$$
       \CI(\tM;E)\subsetneq \CI_b(M;E), \quad \mbox{and}\qquad    H^{\infty}_b(M;E) \subsetneq \CI_b(M;E).
$$
The first inclusion is proper since $u \in \CI_b$ only requires the boundedness of all $b$-derivatives of $u$, but not
that they extend continuously to the boundary. Thus, for example, $\cos(\log\rho)$ lies in $\CI_b(M)$, but not in $\CI(\tM)$.
The latter inclusion follows from the Sobolev embedding theorem, and is proper since elements of $\CI_b(M;E)$ which
are bounded but do not decay are not square integrable.  The space $\CI_b(M;E)$ is often called the space of conormal
sections of order $0$ and denoted $\mathcal A^0(M; E)$.

It is certainly too restrictive to require that the metric coefficients $a,a_i, a_{ij}$ in \eqref{b.2} are smooth up to the boundary.
One way of generalizing this, which appears in \cite{HHN2012}, is as follows.
\begin{definition}
An \textbf{asymptotically cylindrical metric} on $M=\tM\setminus \pa \tM$ ($\AC$-metric for short) is a complete Riemannian metric
$g$ on $M$ such that there exists a $\delta > 0$ and a product $b$-metric $g_p$ on $M$ for which
$$
         g-g_p = \rho^{\delta}\CI_b(M;{}^bT^*\tM\otimes {}^bT^*\tM).
$$
\label{b.4}\end{definition}
Unfortunately, this class of metrics is now too general for our purposes, so for reasons which will become clear later, we consider a
class of metrics intermediate between $\AC$ and exact $b$-metrics, which are characterized as having an asymptotic
expansion at infinity.  To make this precise, we first recall the definition of polyhomogeneous expansions of functions
and sections of bundles over $\tM$; this, in turn, relies on the notion of index sets, so this is our starting point.

\begin{definition}
An \textbf{index set $F$} is a discrete subset of $\bbC\times \bbN_0$ such that
\begin{align*}
& i) \ \, (z_j,k_j)\in F, \; |(z_j,k_j)|\to \infty \; \Longrightarrow \; \Re z_j\to \infty, \\
& ii) \ (z,k)\in F \; \Longrightarrow \; (z+p,k)\in F \; \ \forall p\in \bbN, \\
& iii) \ (z,k)\in F \; \Longrightarrow \; (z,p)\in F \; \ \forall p=0,\ldots,k.
\end{align*}
The index set $F$ is called \textbf{positive} if
$$
      (z,k)\in F \; \Longrightarrow \; \Im z=0, \Re z> 0,
$$
and is \textbf{nonnegative} if 
\begin{gather*}
 (z,k)\in F \; \Longrightarrow \; \Im z=0, \Re z\ge 0, \\
 (0,k)\in F \; \Longrightarrow \; k=0.
\end{gather*}

Finally, if $F$ and $G$ are two index sets, then their extended union $F \, \overline{\cup} \, G$ consists of the union of these
two sets along with the pairs $(z, k + \ell + 1)$ where $(z,k) \in F$ and $(z,\ell) \in G$.
\label{b.5}\end{definition}

If $F \subset \bbR\times \bbN_0$, we define $\inf F$ to be the smallest element of $F$ with respect to the lexicographic order relation on
$\bbR\times \bbN_0$, i.e.,
$$
(z_1,k_1)< (z_2, k_2) \quad \Longleftrightarrow \quad  z_1<z_2 \quad \mbox{or} \quad  z_1=z_2 \; \mbox{and} \; k_1>k_2.
$$

\begin{definition}
Given an index set $F$, define the space $\cA^F_{\phg}(\tM)$ of {\bf polyhomogeneous} functions with index set $F$
to consist of all functions $f$ which have an asymptotic expansion at $\pa\tM$ of the form
\begin{equation}
    f\sim \sum_{(z,k)\in F} a_{(z,k)} \rho^z(\log\rho)^k, \quad a_{z,k}\in \CI(\tM).
\label{b.6}\end{equation}
The symbol $\sim$ means here that for all $N\in \bbN$,
$$
       f- \underset{\Re z\le N}{\sum_{(z,k)\in F}} a_{(z,k)}\rho^z(\log \rho)^{k} \in \rho^N\CI_b(M).
$$

If $F$ is a nonnegative index set (or one such that every $(z,k) \in F \setminus \{(0,0)\}$ has $\Re z > 0$),
then $\cA^F_{\phg} \subset \CI_b$.  We call polyhomogeneous functions with these types of index sets
{\bf bounded polyhomogeneous}.   More generally, if $(s,k) = \inf F$, then $\cA^F_{\phg} \subset \rho^{-s-\epsilon} \CI_b$
for every $\epsilon > 0$.
\end{definition}

The coefficients $a_{(z,k)}$ in the expansion \eqref{b.6} depend on the choice of boundary defining function $\rho$, but because
of condition ii) in the definition of index sets, the space $\cA^F_{\phg}(\tM)$ itself is independent of this choice.
There are two familiar examples of these spaces: first, $\cA^\emptyset_{\phg}(\tM)$ is the same as $\dot{\cC}^{\infty}(\tM)$, the
space of smooth functions on $\tM$ vanishing with all derivatives on $\pa \tM$; next, $\cA^F_{\phg}(\tM)$ with
$F=\bbN_0\times \{0\}$ is the same as $\CI(\tM)$.  The reason for introducing these spaces with more general index
sets is that solutions of natural elliptic operators associated to even just product $b$-metrics are polyhomogeneous
with index sets determined by spectral data on $\pa\tM$, hence are only rarely smooth up to the boundary.

The space $\cA^F_{\phg}(\tM)$ is a $\CI(\tM)$-module, and thus, for any vector bundle $E\to \tM$, we can
define the \textbf{space of polyhomogeneous sections of $E$ with index set $F$} by
$$
      \cA^{F}_{\phg}(\tM;E)= \cA^F_{\phg}(\tM)\otimes_{\CI(\tM)} \CI(\tM;E).
$$
\begin{definition}
A \textbf{polyhomogeneous} $\AC$-metric on $M$ is an asymptotically cylindrical metric $g$ on $M$ of the form
$$
        g= (\iota^{-1}_b)^* (\left.g_b\right|_{\pa\tM}),
$$
where $g_b\in \cA^{F}_{\phg}(\tM; \mathrm{Sym}^2({}^bT^*\tM))$ with $F$ a nonnegative index set.
\label{b.7}\end{definition}

\section{Elliptic  $b$-operators} \label{greg.0}
We next review some aspects of the theory of elliptic $b$-operators with particular emphasis on their mapping properties
on spaces of polyhomogeneous and conormal sections.

The space of \textbf{$b$-differential operators} on $\tM$, $\Diff^*_b(\tM)$, is the universal enveloping algebra of $\cV_b(\tM)$
over $\CI(\tM)$.  In other words, an element of $P \in \Diff^*_b(\tM)$ is generated by $\CI(\tM)$ and locally finite sums of products of
$b$-vector fields.  In local coordinates $(\rho,y)$ near $\pa \tM$,
\begin{equation}
 P= \sum_{\alpha+|\beta|\le k} a_{\alpha\beta} \left(\rho\frac{\pa}{\pa \rho}\right)^{\alpha} \left( \frac{\pa}{\pa y} \right)^{\beta},
\quad a_{\alpha \beta}\in \CI(\tM),
\label{b.8}\end{equation}
where $k$ is the order of $P$. Since $\Diff^k_b(\tM)$ is a $\CI(\tM)$-module, we can immediately define the space of
$b$-differential operators acting on sections of a vector bundle $E\to \tM$ by
$$
      \Diff^{k}_b(\tM;E)= \Diff^k_{b}(\tM) \otimes_{\CI(\tM)} \CI(\tM; \mathrm{End}(E)).
$$
A connection $\nabla$ on $({}^bT^*\tM)^{\ell} \otimes E$ is obtained from a connection $\nabla$ on $E$ and
the Levi-Civita connection of an exact $b$-metric $g$. Any $P\in \Diff^k_b(\tM;E)$ then takes the form
\begin{equation}
P= \sum_{\ell=0}^k a_{\ell} \cdot \nabla^{\ell}, \quad a_{\ell}\in \CI(\tM; ({}^bT\tM)^{\ell}\otimes \End(E)),
\label{b.9}\end{equation}
where ``$\cdot$" denotes contraction between the copies of ${}^bT\tM$ and ${}^bT^*\tM$.   Important examples of $b$-differential
operators include the geometric operators associated to $b$-metrics, e.g. the Laplacian or Dirac-type operators. If $g$
is a polyhomogeneous $\AC$-metric, these geometric operators are elements of
$$
     \Diff^*_{b,F}(\tM;E)= \cA^{F}_{\phg}(\tM)\otimes_{\CI(\tM)} \Diff^*_b(\tM;E),
$$
the polyhomogeneous $b$-differential operators with index set $F$.

\begin{definition}
The \textbf{principal symbol} of $P\in \Diff^k_b(\tM;E)$ is the map $\sigma_k(P): {}^bT^*\tM\to \End(E)$ which is homogeneous
of degree $k$ on the fibres and is given by
$$
    \sigma(P)(\xi)= i^ka_k( \underset{k \; \mbox{times}}{\underbrace{\xi,\ldots,\xi}}) \in \End(E),  \quad \xi\in {}^bT^*\tM;
$$
here $a_k$ is the leading coefficient in \eqref{b.9}.  It is not hard to check that this definition is independent of the choice of
connection.  We say that $P$ is \textbf{elliptic} if $\sigma_k(P)(\xi)$ is an invertible element of $\End(E_p)$ for all
$p\in \tM$ and $\xi\in {}^bT^*_p\tM\setminus \{0\}$.
\label{b.10}\end{definition}

\begin{remark}
The principal symbol and ellipticity also make sense for polyhomogeneous $b$-differential operators with nonnegative index set.
\label{b.10b}\end{remark}

In contrast to the situation on closed manifolds, ellipticity alone does not ensure that a $b$-differential operator is Fredholm.
The extra information needed to produce a Fredholm theory is encoded in the indicial family. This is a family of operators on
sections of $E$ over $\pa \tM$ defined by
\begin{equation}
\bbC \ni \tau \mapsto  I(P,\tau)\sigma= \left.  \rho^{-i\tau} P \rho^{i\tau} \widetilde{\sigma} \right|_{\pa \tM},   \quad \sigma\in\CI(\pa \tM;E),
\label{b.11}\end{equation}
where $\widetilde{\sigma}\in \CI(\tM;E)$ is any smooth extension of $\sigma$ to $\tM$.  From the local coordinate description
\eqref{b.8}, we can write
\begin{equation}
  I(P,\tau)= \sum_{\alpha+|\beta|\le k} \left(\left.a_{\alpha,\beta}\right|_{\pa\tM}\right) (i\tau)^{\alpha} \left( \frac{\pa}{\pa y} \right)^{\beta}.
\label{b.11b}\end{equation}

For any elliptic operator $P \in \Diff^*_b(\tM;E)$, we define
$$
   \Spec_b(P)= \{ \tau\in \bbC \; | \; I(P,\tau) \; \mbox{is not invertible}\}.
$$
This set is of fundamental importance in the description of the mapping properties of $P$.  We recall two standard results,
see \cite[Theorem~5.60 and Proposition~5.61]{MelroseAPS}.

\begin{theorem}  If $P\in \Diff^k_{b}(\tM;E)$ is elliptic, then the map
$$
          P: \rho^{\alpha} H^{m+k}_{b}(M;E)\to \rho^{\alpha}H^m_b(M;E)
$$
between weighted $b$-Sobolev spaces is Fredholm if and only if $\alpha\notin -\Im\Spec_b(P)$.
\label{fred.1}\end{theorem}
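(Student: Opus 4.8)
The plan is to reduce to the unweighted case and then argue by a parametrix construction (for ``if'') and by an explicit singular (Weyl) sequence (for ``only if''), relying on Melrose's $b$-pseudodifferential calculus \cite{MelroseAPS}. Since $P\colon\rho^{\alpha}H^{m+k}_b(M;E)\to\rho^{\alpha}H^m_b(M;E)$ is conjugate, via multiplication by $\rho^{\alpha}$, to $\rho^{-\alpha}P\rho^{\alpha}\colon H^{m+k}_b(M;E)\to H^m_b(M;E)$, and since \eqref{b.11b} gives $I(\rho^{-\alpha}P\rho^{\alpha},\tau)=I(P,\tau-i\alpha)$, hence $\Spec_b(\rho^{-\alpha}P\rho^{\alpha})=\Spec_b(P)+i\alpha$, it suffices to prove: \emph{an elliptic $P\in\Diff^k_b(\tM;E)$ is Fredholm on $H^{m+k}_b(M;E)\to H^m_b(M;E)$ if and only if $0\notin\Im\Spec_b(P)$}, i.e.\ if and only if the elliptic operator $I(P,\tau)$ on the closed manifold $\pa\tM$ is invertible for every $\tau\in\bbR$.

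\emph{The ``if'' direction.} Ellipticity of $P$ furnishes a symbolic parametrix $Q_0\in\Psi^{-k}_b(\tM;E)$ with $PQ_0-\Id$ and $Q_0P-\Id$ in $\Psi^{-\infty}_b(\tM;E)$. This is not yet enough, because a $b$-smoothing operator, though bounded on $H^m_b(M;E)$, is \emph{not} compact there: it does not improve decay at $\pa\tM$. The remedy, and the place where the hypothesis enters, is to invert the normal operator $N(P)$. The family $\tau\mapsto I(P,\tau)$ is polynomial, hence entire, with values in the elliptic operators of order $k$ on $\pa\tM$; by parameter-dependent ellipticity it is invertible for $|\Re\tau|$ large, so the analytic Fredholm theorem shows $\Spec_b(P)$ is discrete and $I(P,\tau)$ has index $0$, and under the hypothesis $I(P,\tau)^{-1}$ is holomorphic on a neighbourhood of the real axis with $\|I(P,\tau)^{-1}\|$ polynomially bounded as $|\tau|\to\infty$. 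This is exactly the input needed to invert $N(P)$ inside the calculus; composing with $Q_0$ one obtains $Q\in\Psi^{-k}_b(\tM;E)$ with $PQ-\Id$ and $QP-\Id$ in $\rho\,\Psi^{-\infty}_b(\tM;E)$. Such a remainder maps $H^m_b(M;E)\to\rho\,H^{m+1}_b(M;E)$, and the inclusion $\rho\,H^{m+1}_b(M;E)\hookrightarrow H^m_b(M;E)$ is compact (a $b$-Rellich lemma: one gains a derivative on compact subsets and a power of $\rho$ near $\pa\tM$). Hence $Q$ is a two-sided inverse of $P$ modulo compacts, so $P$ is Fredholm.

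\emph{The ``only if'' direction.} Suppose $0\in\Im\Spec_b(P)$, so there is a real $\tau_0\in\Spec_b(P)$. Since $I(P,\tau_0)$ is elliptic of index $0$ but not invertible, $\ker I(P,\tau_0)\neq 0$; pick $0\neq\phi\in\ker I(P,\tau_0)$ and a smooth extension $\widetilde{\phi}\in\CI(\tM;E)$. Working on the cylindrical end with $\rho=e^{-t}$, set $u_n:=\psi_n(t)\,\rho^{i\tau_0}\,\widetilde{\phi}$, where $\psi_n\in\CI_c((n,2n))$ satisfies $\|\psi_n\|_{L^2(dt)}=1$ and $\|\psi_n^{(j)}\|_{L^2(dt)}=O(n^{-j})$. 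Because $|\rho^{i\tau_0}|=1$, one has $\|u_n\|_{H^{m+k}_b}\asymp 1$, while $u_n\rightharpoonup 0$ since $\mathrm{supp}\,u_n$ escapes to infinity. Moreover $Pu_n=[P,\psi_n]\rho^{i\tau_0}\widetilde{\phi}+\psi_n\,P(\rho^{i\tau_0}\widetilde{\phi})$; the commutator carries at least one $t$-derivative of $\psi_n$, so it is $O(n^{-1})$ in $H^m_b$, and since $P-N(P)\in\rho\,\Diff^k_b(\tM;E)$ we have $P(\rho^{i\tau_0}\widetilde{\phi})=\rho^{i\tau_0}\bigl(I(P,\tau_0)\phi+O(\rho)\bigr)=\rho^{i\tau_0}O(\rho)$ near $\pa\tM$, whence $\|\psi_nP(\rho^{i\tau_0}\widetilde{\phi})\|_{H^m_b}=O(e^{-n})$. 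Thus $\|Pu_n\|_{H^m_b}\to 0$ while $\|u_n\|_{H^{m+k}_b}\asymp1$ and $u_n\rightharpoonup 0$. If $P$ were Fredholm it would satisfy $\|u\|_{H^{m+k}_b}\le C\bigl(\|Pu\|_{H^m_b}+\|Ku\|\bigr)$ for some compact $K$; applied to $u_n$ this is a contradiction, since $\|Ku_n\|\to 0$ (compact operators send weakly null sequences to norm-null ones).

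\emph{Main difficulty.} The crux is the normal-operator step: upgrading the $b$-smoothing remainder to one with genuine decay $\rho$, which requires inverting $N(P)$ uniformly along the real axis and lifting that inverse into the $b$-calculus, together with the $b$-Rellich compactness that renders the improved remainder compact. This machinery is developed in \cite[Ch.~4--5]{MelroseAPS} and we would simply quote it; the symbolic parametrix, the singular sequence, and the index-zero and analytic-Fredholm properties of $I(P,\tau)$ are comparatively routine.
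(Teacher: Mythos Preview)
The paper does not actually prove this theorem: it is stated as a standard result and simply referenced to \cite[Theorem~5.60]{MelroseAPS}. Your proposal is a correct outline of precisely that standard proof---conjugation to reduce to weight zero, symbolic parametrix improved via inversion of the indicial family along the real axis to obtain a remainder in $\rho\,\Psi^{-\infty}_b$, $b$-Rellich compactness for the ``if'' direction, and a Weyl sequence built from an indicial kernel element for the ``only if'' direction. So your approach and the paper's (implicit) approach coincide, and there is nothing to compare.
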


\begin{proposition}
Let $P\in \Diff^k_{b}(\tM;E)$ be elliptic.  If $u \in \rho^{\alpha} L^2_b (M;E)$ and $Pu\in \cA^{G}_{\phg}(\tM;E)$, then
$$
        u\in \cA^{G\overline{\cup} \widehat{F}^+(\alpha)}_{\phg}(\tM;E),
$$
where 
$$
 \widehat{F}^+(\alpha)=\{ (z,k)\in \bbC\times \bbN_0 \; | \; \exists r\in\bbN_0, \Re z>\alpha+r, \quad -i(z-r)\in \Spec_b(\hat{P}), \; k+1\le \sum_{j=0}^{r} \ord (-i(z-j) ) \}.
$$
\label{breg.1}\end{proposition}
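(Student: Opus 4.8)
This proposition is a mild reformulation of \cite[Proposition~5.61]{MelroseAPS}, already quoted above, so the task is to reconstruct the standard $b$-calculus argument; my plan has two stages. First I would upgrade $u$ from a weighted $L^2_b$-section to a conormal one. Writing $(s,\cdot)=\inf G$, one has $\cA^G_{\phg}(\tM;E)\subset\rho^{s-\epsilon}H^\infty_b(M;E)$ for every $\epsilon>0$; since $P$ is elliptic it admits a parametrix $Q_0$ of order $-k$ in the small $b$-calculus with $Q_0P=\Id-R_0$ and $R_0$ residual, and both $Q_0$ and $R_0$ map every weighted $b$-Sobolev space into one of the same weight. Hence $u=Q_0(Pu)+R_0u\in\rho^{\beta}H^\infty_b(M;E)$ with $\beta=\min(\alpha,s)-\epsilon$, and Sobolev embedding gives $u\in\rho^\beta\CI_b(M;E)$; in particular $u$ is conormal. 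I would also record, for the second stage, that since $u\in\rho^\alpha L^2_b(M;E)$ and $u$ will turn out to be polyhomogeneous, every exponent $z$ occurring in its expansion has $\Re z>\alpha$, because $\rho^z(\log\rho)^k$ with $\Re z\le\alpha$ and $k\in\bbN_0$ is never in $\rho^\alpha L^2_b$; this is what licenses the index set $\widehat{F}^+(\alpha)$ rather than a weaker one.

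For the second stage I would extract the expansion by iterating against the normal operator $\hat{P}$ of $P$. Using a collar of $\pa\tM$, write $P=\hat{P}+\rho P'$ with $P'\in\Diff^k_b(\tM;E)$. For a weight $\gamma\notin-\Im\Spec_b(P)$ the dilation-invariant operator $\hat{P}$ is invertible between the weighted spaces of that weight on the model cylinder, with inverse realized as convolution in $t=-\log\rho$ against $\frac{1}{2\pi}\int_{\Im\tau=-\gamma}I(P,\tau)^{-1}e^{it\tau}\,d\tau$; shifting this contour past a point $\tau_0\in\Spec_b(P)$ at which $I(P,\tau)^{-1}$ has a pole of order $p$ produces a residue contributing terms $\rho^{i\tau_0}(\log\rho)^j$ with $j<p$. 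Starting from $\hat{P}u=Pu-\rho P'u=f-\rho P'u$ with $f\in\cA^G_{\phg}$, and using that $\rho P'$ sends a conormal section of order $\mu$ to one of order $\mu+1$, one solves inductively: at the $r$-th step one applies the inverse of $\hat{P}$ built from a contour just below $\Im\tau=-\alpha$, legitimate by the observation above, and one picks up exactly the terms whose exponents satisfy $-i(z-r)\in\Spec_b(\hat{P})$ and $\Re z>\alpha+r$, with logarithmic powers accumulating along the ladder $\{-i(z-j)\}_{j=0}^r$ of pole locations so as to satisfy $k+1\le\sum_{j=0}^r\ord(-i(z-j))$. These are precisely the pairs $(z,k)$ in $\widehat{F}^+(\alpha)$, while the images of $f$ supply the $G$-part. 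The remainder after $r$ steps lies in $\rho^{\alpha+r+1-\epsilon}\CI_b(M;E)$, so the partial expansions stabilize, and an asymptotic (Borel-type) summation assembles them into a genuine polyhomogeneous expansion of $u$ with index set $G\,\overline{\cup}\,\widehat{F}^+(\alpha)$, the extended union recording the coalescence of logarithmic factors coming from $G$ and from $\hat{P}$.

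The step I expect to be the main obstacle is the logarithmic bookkeeping in the second stage: controlling how the orders of the poles of $I(P,\tau)^{-1}$, the $r$-fold Taylor expansion of the coefficients of $P$ at $\pa\tM$, and the terms already present in $G$ combine, which is exactly the content of the extended union $\overline{\cup}$ and of the constraint $k+1\le\sum_{j=0}^r\ord(-i(z-j))$ defining $\widehat{F}^+(\alpha)$. An alternative that largely sidesteps this hand computation is to package the argument into the construction of a parametrix for $P$ in the full (large) $b$-calculus adapted to the weight $\alpha$: its Schwartz kernel is polyhomogeneous on the $b$-stretched product, with index family at the front face $\ff$ dictated by $\Spec_b(P)$, and the proposition then drops out of the composition and pushforward theorems for such kernels, as in \cite[Chapter~5]{MelroseAPS}.
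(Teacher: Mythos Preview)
The paper does not supply its own proof of this proposition: it is stated as one of ``two standard results'' recalled from \cite[Theorem~5.60 and Proposition~5.61]{MelroseAPS}, followed only by a remark explaining why the index set $G\,\overline{\cup}\,\widehat{F}^+(\alpha)$ arises from formal term-matching. So there is nothing in the paper to compare your argument against directly.

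That said, your two-stage outline is a faithful reconstruction of the standard $b$-calculus proof. The first stage (conormality via a small-calculus parametrix) and the second stage (iterated inversion of the indicial operator with contour shifting, accumulating logarithms along the ladder of indicial roots) are exactly how the result is established in \cite[Chapter~5]{MelroseAPS}, and your alternative of invoking the large $b$-calculus parametrix with its polyhomogeneous Schwartz kernel is also standard and is in fact closer to how Melrose packages the final statement. Your identification of the logarithmic bookkeeping as the delicate point is accurate, and the remark in the paper after the proposition says essentially the same thing: once polyhomogeneity is known, the precise index set is forced by formal matching of exponents on both sides of $Pu=f$.
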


\begin{remark}
The appearance of this somewhat complicated looking index set $\widehat{F}^+(\alpha)$ and the need for taking its extended union
with $G$ to obtain the correct index set for $u$ is explained by the fact that once we know that $u$ is polyhomogeneous, then a
purely formal calculation, matching terms on either side of $Pu = f$ with equal exponents, regulates which terms can appear
in the expansion for $u$. Hence one part of this result is simply the assertion that the solution $u$ must be polyhomogeneous if
$Pu$ is, while the second part asserts precisely which terms appear in its expansion.
\end{remark}

There is an important generalization of Proposition~\ref{breg.1} which follows easily from Theorem~\ref{fred.1}.

\begin{theorem}
Let $P\in \Diff^k_{b}(\tM;E)$ be elliptic. Suppose that $u\in\rho^{\alpha}\CI_b(M;E)$ and
\begin{equation}
       P u= f_1 +f_2, \quad \mbox{where} \quad f_1\in \rho^{\alpha+\beta}\CI_b(M;E) \quad \mbox{and}\quad f_2\in \cA^{G}_{\phg}(\tM;E)
\label{breg.5a}\end{equation}
for some $\beta > 0$ and some index set $G$. Then $u=u_1+u_2$, where
$$
       u_1\in \bigcap_{\delta > 0} \rho^{\alpha+\beta-\delta}\CI_b(M;E), \quad u_2\in \cA^{\hF^+(\alpha)\overline{\cup}G}(\tM;E).
$$
If no  $z \in \Spec_b(P)$ has $-\Im z = \alpha + \beta$, then $u_1 \in \rho^{\alpha + \beta}\CI_b(M;E)$.
\label{breg.5}\end{theorem}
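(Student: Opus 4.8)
The plan is to bootstrap the conclusion out of Proposition~\ref{breg.1} by iterating the basic Fredholm result, Theorem~\ref{fred.1}, to trade the a priori conormal regularity of $u$ for genuine decay, and then feed the resulting decay estimate into the polyhomogeneity statement. First I would dispose of the purely conormal part of the right-hand side. Write $Pu = f_1 + f_2$ as in \eqref{breg.5a}. Since $u \in \rho^\alpha\CI_b(M;E) \subset \rho^{\alpha-\delta}H^\infty_b(M;E)$ for every $\delta > 0$, and $Pu \in \rho^{\alpha}\CI_b(M;E)$ as well (because $f_2 \in \cA^G_\phg \subset \rho^{s}\CI_b$ for $s = \Re(\inf G)$, and both $f_1$ and this have weight at least $\alpha$ after possibly shrinking), I can consider the operator $P\colon \rho^{\alpha'}H^{m+k}_b \to \rho^{\alpha'}H^m_b$ on the range of weights $\alpha' \in (\alpha, \alpha+\beta)$. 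Choosing $\alpha'$ in this interval avoiding the finite set $-\Im\Spec_b(P) \cap [\alpha,\alpha+\beta]$, the operator is Fredholm there, and a standard argument (subtracting a finite-rank correction supported near the boundary, or invoking the semi-Fredholm estimate $\|v\|_{\rho^{\alpha'}H^{m+k}_b} \le C(\|Pv\|_{\rho^{\alpha'}H^m_b} + \|v\|_{\rho^{\alpha'-1}H^m_b})$) upgrades $u$ from weight $\alpha$ to weight $\alpha'$, and then inductively across each admissible weight, so that $u \in \bigcap_{\delta>0}\rho^{\alpha+\beta-\delta}H^\infty_b(M;E)$ modulo elements associated to the indicial roots crossed. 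By Sobolev embedding this gives $u = u_1' + (\text{boundary terms})$ with $u_1' \in \bigcap_{\delta>0}\rho^{\alpha+\beta-\delta}\CI_b(M;E)$.

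Next I would extract the polyhomogeneous piece. The boundary terms produced when crossing an indicial root $\tau$ with $-\Im\tau \in (\alpha,\alpha+\beta)$ are, by the structure of the $b$-parametrix (\cite[Ch.~5]{MelroseAPS}), polyhomogeneous with leading behaviour $\rho^{i\tau}(\log\rho)^j$ for $j$ controlled by the order of the pole of $I(P,\tau)^{-1}$; these are precisely the terms recorded in $\hF^+(\alpha)$. Meanwhile the genuinely polyhomogeneous forcing $f_2 \in \cA^G_\phg$ contributes, via Proposition~\ref{breg.1} applied to the equation $Pu_2 = f_2$, a solution in $\cA^{G\,\overline{\cup}\,\hF^+(\alpha)}_\phg(\tM;E)$. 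Collecting, $u = u_1 + u_2$ with $u_1 \in \bigcap_{\delta>0}\rho^{\alpha+\beta-\delta}\CI_b(M;E)$ and $u_2 \in \cA^{\hF^+(\alpha)\,\overline{\cup}\,G}_\phg(\tM;E)$, as claimed.

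Finally, the sharpened statement: if no $z \in \Spec_b(P)$ has $-\Im z = \alpha+\beta$, then the weight $\alpha+\beta$ itself is a Fredholm weight, so the bootstrapping in the first step does not stop just short of $\alpha+\beta$ but reaches it; one runs the semi-Fredholm estimate directly at weight $\alpha+\beta$, using that $Pu_1 = f_1 + (P u - Pu_2 - f_1)$ where $Pu_2 - f_2$ is already known to be better than $\rho^{\alpha+\beta}\CI_b$ by the matching in Proposition~\ref{breg.1}, and concludes $u_1 \in \rho^{\alpha+\beta}\CI_b(M;E)$ with no loss.

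The main obstacle I anticipate is bookkeeping rather than conceptual: one must verify carefully that the indicial roots encountered while raising the weight from $\alpha$ to $\alpha+\beta$ contribute exactly the index set $\hF^+(\alpha)$ and no more, i.e., that the parametrix construction does not generate spurious logarithmic terms beyond those allowed by the definition of $\hF^+(\alpha)$, and that the extended union $\overline{\cup}$ correctly accounts for the interaction between these boundary-layer terms and the expansion of $f_2$ when their exponents coincide. This is where one leans on the precise form of the $b$-parametrix and on condition iii) in Definition~\ref{b.5}; modulo that, the result is a routine (if delicate) iteration of the two quoted theorems.
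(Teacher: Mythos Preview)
Your approach has the right ingredients (Theorem~\ref{fred.1} and Proposition~\ref{breg.1}), but the way you try to separate the handling of $f_1$ and $f_2$ does not quite close. In your first paragraph you want to bootstrap $u$ through weights $\alpha' \in (\alpha,\alpha+\beta)$, but this requires $Pu \in \rho^{\alpha'}H^m_b$. You only have $Pu = f_1 + f_2$, and while $f_1 \in \rho^{\alpha+\beta}\CI_b$ is fine, $f_2 \in \cA^G_{\phg}$ will in general have terms with exponents in $(\alpha,\alpha+\beta)$, so $f_2 \notin \rho^{\alpha'}H^m_b$ once $\alpha'$ exceeds those exponents; the bootstrap stalls there. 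In your second paragraph you invoke Proposition~\ref{breg.1} ``applied to the equation $Pu_2 = f_2$'', but that proposition is a regularity statement for a \emph{given} solution, not an existence result, and you have not explained how to produce such a $u_2$ (the cokernel of $P$ is the obstruction you yourself flag but do not dispatch).

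The paper avoids both issues with a single clean step: rather than bootstrapping $u$ or trying to solve $Pu_2 = f_2$, it works directly at a single weight $\alpha+\beta-\delta$ (with $\delta>0$ small enough that no indicial root sits in $[\alpha+\beta-\delta,\alpha+\beta)$). There $P$ is Fredholm, so by density of $\dot{\cC}^\infty$ one can choose a finite-dimensional complement $V \subset \dot{\cC}^\infty(\tM;E)$ to the range, and then find $u_1 \in \rho^{\alpha+\beta-\delta}H^{m+k}_b$ and $f_3 \in V$ with $Pu_1 = f_1 - f_3$. Now set $u_2 := u - u_1$; this is automatically in $\rho^{\alpha-\delta}L^2_b$ and satisfies $Pu_2 = f_2 + f_3 \in \cA^G_{\phg}$ (since $f_3 \in \dot{\cC}^\infty$), so Proposition~\ref{breg.1} applies directly and gives $u_2 \in \cA^{\hF^+(\alpha)\overline{\cup}G}_{\phg}$. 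The point is that $u_1$ is manufactured from $f_1$ alone via Fredholm theory, with no reference to $u$; the difference $u_2$ then inherits exactly the right equation for Proposition~\ref{breg.1}. No weight-crossing, no indicial-root bookkeeping, no separate solve for $f_2$. The density trick absorbing the cokernel into $\dot{\cC}^\infty$ (hence into $\cA^G_{\phg}$) is the move your sketch is missing.
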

\begin{proof}
Choose  $\delta>0$ small enough so that $-\Im\Spec_b(P)\cap [\alpha+\beta-\delta,\alpha+\beta) =\emptyset$.
Then  $u\in\rho^{\alpha-\delta}H^{m+2}_b(M)$ and $f_1\in \rho^{\alpha+\beta-\delta}H^m_b(M;E)$ for all $m\in\bbN_0$ and the map
$$
    P: \rho^{\alpha+\beta-\delta} H^{m+k}_{b}(M;E)\to \rho^{\alpha+\beta-\delta}H^m_b(M;E)
$$
is Fredholm.  By the density of $\dot{\cC}^{\infty}(\tM;E)$ in $\rho^{\alpha+\beta-\delta}H^m_b(M;E)$, we can find a finite dimensional
subspace $V\subset \dot{\cC}^{\infty}(\tM;E)$ such that
$$
\rho^{\alpha+\beta-\delta} H^m_b(M;E)= P\left( \rho^{\alpha+\beta-\delta} H^{m+k}_b(M;E)\right) + V.
$$
We can therefore find $f_3\in V$ and $u_1\in \rho^{\alpha+\beta-\delta}H^{m+k}_b(M;E)$ such that
$$
    Pu_1= f_1-f_3.
$$
This is true for every $m$, so $u_1\in \rho^{\alpha+\beta-\delta}H^{\infty}_b(M;E)$.  On the other hand, if we set $u_2= u-u_1$,
then
$$
      Pu_2= f_2+f_3\in \cA^{G}_{\phg}(\tM;E),
$$
so by Proposition~\ref{breg.1}, $u_2\in \cA^{\hF^+(\alpha)\overline{\cup}G}(\tM;E)$ as claimed.  Finally, if $(\hF^+(\alpha)\overline{\cup}G)
\cap[\alpha+\beta-\delta,\alpha+\beta) =\emptyset$, then $u_2$ is independent of the choice of $\delta>0$ up to an error term
in $\rho^{\alpha+\beta}(\log\rho)^k\CI_b(M;E)$ for some fixed $k\in\bbN_0$. Hence $u_1\in \rho^{\alpha+\beta-\delta}H^{\infty}_b(M;E) \subset \rho^{\alpha+\beta - \delta} \CI_b(M;E)$
for all $\delta>0$.
\end{proof}

These results extend easily to allow $P$ to have polyhomogeneous coefficients. For Theorem~\ref{fred.1}, we refer to \cite{MazzeoEdge}.  
For Proposition~\ref{breg.1}, one can systematically and with little effort modify the parametrix construction of \cite{MelroseAPS},
see \cite{MazzeoEdge}.  Alternatively, one can extract this generalization directly from Theorem~\ref{breg.5} as follows.
\begin{corollary}
Let $F$ be a nonnegative index set and suppose that $P\in \Diff^k_{b,F}(\tM;E)$ is elliptic. If $u\in\rho^{\alpha}H^m_b(M;E)$ and
$Pu = f \in \cA^G_{\phg}(\tM;E)$, then $u$ is polyhomogeneous as well.
\label{breg.6}\end{corollary}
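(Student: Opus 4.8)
The plan is to reduce the polyhomogeneous-coefficient case to the smooth-coefficient case already settled in Theorem~\ref{breg.5}, by peeling off the non-smooth part of $P$ as a perturbative right-hand side and bootstrapping. Write $P = P_0 + P_1$, where $P_0 \in \Diff^k_b(\tM;E)$ is the ``leading'' part obtained by replacing each polyhomogeneous coefficient $a \in \cA^F_{\phg}(\tM)$ by its $\rho^0$-term $a|_{\pa\tM} \in \CI(\tM;E)$ (extended smoothly into the interior), and $P_1 \in \Diff^k_{b,F'}(\tM;E)$ collects the remainder, where $F' = F \setminus\{(0,0)\}$ is a \emph{positive} index set (using that $F$ is nonnegative, so $(0,k)\in F$ forces $k=0$). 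Thus $P_1$ gains at least a factor $\rho^{s_0}$ for some $s_0 > 0$: concretely $P_1 : \rho^{\gamma}\CI_b(M;E) \to \rho^{\gamma+s_0}\cA^{F'}_{\phg}(\tM;E) \subset \bigcap_{\delta>0}\rho^{\gamma+s_0-\delta}\CI_b(M;E)$ for any $\gamma$, and similarly $P_1$ maps $\rho^{\gamma}H^{m+k}_b$ into $\rho^{\gamma+s_0-\delta}H^m_b$ for every $\delta>0$. Note that $P_0$ is elliptic because it has the same principal symbol as $P$ (the non-smooth corrections are lower order relative to the leading coefficient \emph{only after} restricting to $\pa\tM$, but the symbol of $P_1$ vanishes identically since each of its coefficients vanishes at $\pa\tM$; more precisely $\sigma_k(P) = \sigma_k(P_0)$ by Remark~\ref{b.10b}), and $\Spec_b(P_0) = \Spec_b(P)$ since the indicial family only sees the boundary values of the coefficients, which are unchanged.

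Next I would run the bootstrap. Start from $u \in \rho^\alpha H^m_b(M;E)$; by the standard elliptic $b$-regularity for the \emph{smooth} operator $P_0$ we first gain derivatives, so $u \in \rho^{\alpha-\delta}H^\infty_b(M;E) \subset \rho^{\alpha-\delta}\CI_b(M;E)$ for all $\delta>0$. Now rewrite the equation as
\[
P_0 u = f - P_1 u \in \cA^G_{\phg}(\tM;E) + \rho^{\alpha+s_0-\delta}\CI_b(M;E),
\]
which is exactly of the form \eqref{breg.5a} for $P_0$ with $\beta = s_0 - \delta > 0$ (after shrinking $\delta$) and index set $G$. Applying Theorem~\ref{breg.5} gives $u = u_1 + u_2$ with $u_2 \in \cA^{\hF^+(\alpha)\,\overline{\cup}\,G}_{\phg}(\tM;E)$ already polyhomogeneous, and $u_1 \in \bigcap_{\delta'>0}\rho^{\alpha+s_0-\delta-\delta'}\CI_b(M;E)$. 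So modulo a polyhomogeneous term, $u$ has been improved from weight $\alpha$ (up to $\epsilon$) to weight $\alpha + s_0$ (up to $\epsilon$). Now feed $u_1$ back in: it satisfies $P_0 u_1 = f - P_1 u - P_0 u_2 + P_0 u_1$... more cleanly, since $u_2$ is already known to be polyhomogeneous it may be absorbed into the forcing term, and $P u_1 = f - P u_2$ lies in $\cA^{G'}_{\phg} + P_1(\text{better-weighted conormal})$, so Theorem~\ref{breg.5} applies again with the shifted weight. Iterating, after $j$ steps the conormal remainder lies in $\bigcap_{\delta>0}\rho^{\alpha + j s_0 - \delta}\CI_b(M;E)$ while all the pieces split off along the way are polyhomogeneous. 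Since the exponents $\alpha + j s_0 \to \infty$, the residual conormal term is $\dot{\cC}^\infty$-small to all orders; collecting all the polyhomogeneous pieces produced (their index sets are all contained in a single index set, namely the appropriate extended union of $G$ with iterated shifts of $\hF^+$ by the index set of $P_1$) and invoking the asymptotic-summation (Borel-type) lemma for polyhomogeneous expansions yields $u \in \cA^H_{\phg}(\tM;E)$ for a suitable index set $H$.

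The main technical point — and the place where one must be a little careful rather than merely routine — is the \emph{bookkeeping of index sets across the iteration}: one has to check that the union over $j$ of the index sets $\hF^+_{P_0}(\alpha + j s_0)\,\overline{\cup}\,G\,\overline{\cup}\,(\text{shifts coming from }P_1)$ is again a legitimate index set, i.e.\ still discrete and still satisfying the condition $\Re z_j \to \infty$ on divergent subsequences. This follows because each application only introduces exponents lying in $\Spec_b(P_0) + \bbN_0$ translated by the positive index set $F'$ of $P_1$ and by $G$, and these all have real parts bounded below and accumulating only at $+\infty$; uniformity in $j$ comes from the fact that $\Spec_b(P_0)$, $F'$, and $G$ are fixed. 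A secondary point is justifying that the conormal remainder genuinely improves by a \emph{fixed} amount $s_0 > 0$ at each stage rather than a shrinking amount — this is exactly why we extracted $s_0 = \min\{\Re z : (z,k) \in F'\} > 0$ at the outset rather than working with an infinitesimal gain. With these two observations in place, the iteration converges in the required sense and the proof is complete.
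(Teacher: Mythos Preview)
Your proposal is correct and follows essentially the same strategy as the paper: split $P = P_0 + (\text{remainder vanishing to positive order at }\pa\tM)$ with $P_0\in\Diff^k_b$, move the remainder to the right-hand side, and iterate Theorem~\ref{breg.5} to push the conormal error to arbitrarily high weight while accumulating polyhomogeneous pieces. One small point to tighten: your assertion that $\sigma_k(P_1)$ ``vanishes identically'' is only true at $\pa\tM$, so to guarantee that $P_0$ is elliptic throughout $\tM$ you should either patch $P_0$ to agree with $P$ outside a collar (which the paper implicitly does) or simply observe that interior smoothness of $u$ follows from ellipticity of $P$ itself and only the boundary behavior is at issue.
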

\begin{proof}
Take $\beta, \delta >0$ sufficiently small so that no element $(z,k) \in F$ has $ z \in (0,\beta + \delta)$, and then decompose
$$
      P= P_0+ \rho^{\beta}P_1,
$$
where $P_0\in \Diff^k_b(\tM;E)$ and $P_1\in \Diff^k_{b,F'}(\tM;E)$ with $F'=(F\setminus\{0\})-\beta > \delta$.  Then
\begin{equation}
      P_0 u= - \rho^{\beta}P_1 u + f,
\label{breg.7}\end{equation}
and since $\rho^{\beta}P_1u\in \rho^{\alpha+\beta+\delta'}\CI_b(M;E)$ for $0<\delta'<\delta$, Theorem~\ref{breg.5} implies that $u=u_1+u_2$
with $u_2 \in \cA^{\hF^+(\alpha)\overline{\cup}G}(\tM;E)$ and $u_1\in \rho^{\alpha+\beta}\CI_b(M;E)$.  Reinserting this into
\eqref{breg.7} gives
\begin{equation}
    P_0 u_1= - \rho^{\beta}P_1 u_1 + f_1 \quad \mbox{with} \quad \rho^{\beta}P_1 u_1\in \rho^{\alpha+2\beta+\delta}\CI_b(M;E) \;
\mbox{and } \; f_1 \; \mbox{polyhomogeneous}.
 \label{breg.8}\end{equation}
Applying Theorem~\ref{breg.5} again, we thus see that $u_1= v_1+v_2$
with $v_1\in \rho^{\alpha+2\beta}\CI_b(M;E)$ and $v_2$ polyhomogeneous, and hence $u = v_1 + v_1'$ with $v_1'=v_2+u_2$ polyhomogneous.  This argument can be iterated,
so for each $k\in \bbN$, we can write $u=v_k+v_k'$ with $v_k\in \rho^{\alpha+k\beta}\CI_b(M;E)$ and $v_k'$
polyhomogeneous. Since $k$ is arbitrary, we see that $u$ is polyhomogeneous as well.
\end{proof}

Replacing Proposition~\ref{breg.1}  by Corollary~\ref{breg.6} in the proof of Theorem~\ref{breg.5}, we obtain the following 
\begin{corollary}
Let $P\in \Diff^k_{b,F}(\tM;E)$ be elliptic with nonnegative index family $F$ and let $G$ be another index set.
Suppose that $u\in\rho^{\alpha}\CI_b(M;E)$ satisfies
$$
       P u= f_1 +f_2 \quad \mbox{with} \quad f_1\in \rho^{\alpha+\beta}\CI_b(M;E) \; \mbox{and}\; f_2 \; \mbox{polyhomogeneous}.
$$
Then $u=u_1+u_2$ with
$$
       u_1\in \bigcap_{\delta > 0} \rho^{\alpha+\beta-\delta}\CI_b(M;E)   \qquad \mbox{and}
\qquad u_2 \ \  \mbox{polyhomogeneous}.
$$
\label{breg.9}\end{corollary}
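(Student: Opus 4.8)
The plan is to prove Corollary~\ref{breg.9} by the same two-step reduction used to pass from Theorem~\ref{breg.5} (the smooth-coefficient version) to Corollary~\ref{breg.6}, only now keeping track of the $\CI_b$ error term explicitly. Concretely, I would mimic the proof of Corollary~\ref{breg.6} but feed it the hypothesis $Pu = f_1 + f_2$ with $f_1 \in \rho^{\alpha+\beta}\CI_b$ and $f_2$ polyhomogeneous, and at each stage of the iteration split off a polyhomogeneous piece and a piece with improved $\CI_b$ weight.

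First I would split the operator as $P = P_0 + \rho^{\beta'} P_1$ with $P_0 \in \Diff^k_b(\tM;E)$ and $P_1 \in \Diff^k_{b,F'}(\tM;E)$, $F' = (F\setminus\{0\}) - \beta' > \delta$, where $0 < \beta' < \beta$ and $\delta > 0$ are chosen small enough (using that $F$ is nonnegative) so that $F$ has no elements $(z,k)$ with $z \in (0,\beta'+\delta]$ and also so that no point of $-\Im\Spec_b(P_0)$ lies in the relevant half-open intervals $[\alpha + j\beta' - \delta, \alpha + j\beta')$ for the finitely many $j$ that will occur. (One may need to shrink $\beta'$ further to avoid the indicial roots; this is harmless.) Then $P_0 u = -\rho^{\beta'} P_1 u + f_1 + f_2$, and since $u \in \rho^\alpha \CI_b$ we have $\rho^{\beta'} P_1 u \in \rho^{\alpha + \beta' + \delta'}\CI_b$ for $0 < \delta' < \delta$. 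Grouping $-\rho^{\beta'}P_1 u + f_1$ as the ``$\rho^{\alpha+\beta'}\CI_b$'' term (after noting $\alpha + \beta' + \delta' > \alpha + \beta'$ only if $\beta' \le \beta$, which forces us to instead absorb $f_1$ into this term and treat $\beta'$ as the working gain; more cleanly, take $\beta' \le \beta$ so both $f_1$ and $\rho^{\beta'}P_1 u$ lie in $\rho^{\alpha+\beta'}\CI_b$) and $f_2$ as the polyhomogeneous term, Theorem~\ref{breg.5} gives $u = w_1 + w_2$ with $w_1 \in \bigcap_{\eta>0}\rho^{\alpha+\beta'-\eta}\CI_b$ — and in fact $w_1 \in \rho^{\alpha+\beta'}\CI_b$ by the choice of $\delta$ avoiding indicial roots — and $w_2 \in \cA^{\hF^+(\alpha)\,\overline{\cup}\,G}_{\phg}(\tM;E)$ polyhomogeneous.

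Next I would iterate: reinserting $u = w_1 + w_2$ into $P_0 u = -\rho^{\beta'}P_1 u + f_1 + f_2$ and using $P_0 w_2$ polyhomogeneous (by Remark~\ref{b.10b} applied to $P_1$, or directly since $w_2$ is polyhomogeneous and $P_0$ has smooth coefficients) and $\rho^{\beta'}P_1 w_2$ polyhomogeneous, one finds $P_0 w_1 = -\rho^{\beta'}P_1 w_1 + (\text{polyhomogeneous}) + f_1$ with $\rho^{\beta'}P_1 w_1 \in \rho^{\alpha+2\beta'}\CI_b$. Applying Theorem~\ref{breg.5} again (here $f_1$ is still present at weight $\alpha+\beta$, so actually the cleanest bookkeeping is to carry $f_1$ along unchanged throughout and only improve the $\CI_b$ remainder coming from $\rho^{\beta'}P_1 u$) yields $w_1 = w_1' + w_1''$ with $w_1'$ polyhomogeneous and $w_1'' \in \rho^{\alpha+\min(2\beta',\beta)}\CI_b$. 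After finitely many steps the $\rho^{k\beta'}$-gain overtakes $\rho^\beta$, so we reach $u = u_2 + u_1$ with $u_2$ polyhomogeneous and $u_1 \in \bigcap_{\delta>0}\rho^{\alpha+\beta-\delta}\CI_b$ (the $-\delta$ surviving only from the single application of Theorem~\ref{breg.5} that sees the genuine $\rho^{\alpha+\beta}\CI_b$ term $f_1$ at a weight coinciding with an indicial root).

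I expect the main obstacle to be purely organizational rather than conceptual: namely, managing the interaction between the fixed error $f_1$, which lives at weight $\alpha+\beta$ and cannot be improved, and the iteratively improving errors $\rho^{k\beta'}P_1(\cdot)$, while making sure the polyhomogeneous pieces generated at each stage are genuinely absorbed into $u_2$ and do not reintroduce bad $\CI_b$ behavior. One must also verify that the index sets $\hF^+(\alpha)\,\overline{\cup}\,G$ produced at successive stages stabilize — equivalently, that only finitely many applications of Theorem~\ref{breg.5} are needed — which follows because each step gains a full $\beta'$ in the $\CI_b$ weight and $\beta$ is fixed. The statement $u_1 \in \bigcap_{\delta>0}\rho^{\alpha+\beta-\delta}\CI_b$ (rather than $\rho^{\alpha+\beta}\CI_b$) is exactly what Theorem~\ref{breg.5} delivers without the hypothesis on indicial roots of $P$ at $\alpha+\beta$, so no further care is needed there.
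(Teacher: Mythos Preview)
Your approach is correct but takes a different route from the paper. The paper's proof is literally one line: it replays the proof of Theorem~\ref{breg.5} verbatim, using the Fredholm statement Theorem~\ref{fred.1} (which, as noted just before Corollary~\ref{breg.6}, extends to polyhomogeneous-coefficient operators by \cite{MazzeoEdge}) to solve $Pu_1 = f_1 - f_3$ directly at weight $\alpha+\beta-\delta$, and then invokes Corollary~\ref{breg.6} in place of Proposition~\ref{breg.1} to conclude that $u_2 = u - u_1$ is polyhomogeneous. You instead mimic the proof of Corollary~\ref{breg.6}: split $P = P_0 + \rho^{\beta'}P_1$ with $P_0$ smooth-coefficient, and iterate Theorem~\ref{breg.5} finitely many times until the $\CI_b$ remainder reaches weight $\alpha+\beta$. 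This is more laborious and the bookkeeping (as you yourself flag) is a bit tangled, but it has the virtue of being entirely self-contained, using only the smooth-coefficient results already proved in full and avoiding any appeal to the polyhomogeneous Fredholm theory from \cite{MazzeoEdge}. Either way the $\bigcap_{\delta>0}$ loss is exactly what Theorem~\ref{breg.5} gives without control on indicial roots at $\alpha+\beta$, so your final sentence is right and no further care is needed.
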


\section{Hodge theory for asymptotically cylindrical Kähler manifolds} \label{ht.0}
Let $\bM$ be a compact Kähler orbifold of complex dimension $n\ge 2$.  Let $\bD$ be an effective orbifold divisor satisfying the following two conditions:
\begin{itemize}
\item[(i)] The complement $M:= \bM\setminus \bD$ is a smooth manifold;
\item[(ii)] The orbifold normal bundle of $\bD$ is biholomorphic to $(\bbC\times D)/ \langle \iota\rangle$ as an orbifold line bundle, where $D$ is a connected smooth complex manifold and $\iota$ is a complex automorphism of $D$ of order $m<\infty$ acting on the product via $\iota(w,x)= (e^{\frac{2\pi i}{m}}w,\iota(x))$.
\end{itemize}

Let $\tM:=[\bM;\bD]$ be the manifold with boundary obtained by taking the real blow-up of $\bM$ around $\bD$, cf.\ \cite{MelroseAPS}.
Although $\bM$ may have an orbifold singularity along $\bD$, the blow-up $\tM$ is a smooth manifold with boundary $\pa\tM$
which is naturally identified with the total space of the orbifold unit normal bundle of $\bD$.  Thus $\pa \tM$ is foliated by circles
and the space of leaves is identified with the orbifold $\bD$.  In particular, if $\bD$ is smooth, this circle foliation is a circle bundle.
The manifold $\tM$ is an example of a $b$-complex manifold, as defined by Mendoza \cite{Mendoza}.

Suppose now that $g_b$ is a polyhomogeneous Kähler $\AC$-metric on $M= \bM\setminus \bD=
\tM\setminus \pa\tM$, and denote by $\omega_b$ its Kähler form. Fix a defining function $\rho\in \CI(\tM)$ and let $E\to \bM$
be a holomorphic vector bundle over $\bM$ equipped with a Hermitian metric $h$. We then consider on the
quasiprojective manifold $M= \bM\setminus \bD$, for any $a\in \bbR$, the weighted $L^2$-Dolbeault complex
\begin{equation}
\xymatrix{
  \cdots \ar[r]^-{\db} & \rho^{a}L^2_{\db}\Omega^{p,q}(M;E,g_b) \ar[r]^-{\db} &   \rho^{a}L^2_{\db}\Omega^{p,q+1}(M;E,g_b) \ar[r]^-{\db} & \cdots},
\label{ht.1}\end{equation}
where $L^2\Omega^{p,q}(M;E,g_b)$ is the space of forms of type $(p,q)$ on $M$ which are $L^2$ with respect to the metric $g_b$, and
\begin{equation}
  \rho^aL^2_{\db}\Omega^{p,q}(M;E,g_b)= \{ \mu\in \rho^aL^2\Omega^{p,q}(M;E,g_b) \quad | \quad \db\mu \in   \rho^aL^2\Omega^{p,q+1}(M;E,g_b)  \}.
\label{ht.2}\end{equation}
Denote the cohomology groups of the complex \eqref{ht.1} by
\begin{equation}
  \WH^{p,q}(g_b,a,M;E) =  \frac{  \{ \mu\in \rho^aL^2\Omega^{p,q}(M;E,g_b) \quad | \quad \db \mu=0  \}}{ \{ \db \zeta \in \rho^aL^2\Omega^{p,q}(M;E,g_b) \quad | \quad
        \zeta \in \rho^aL^2_{\db}\Omega^{p,q-1}(M;E,g_b) \}}.
\label{ht.2}
\end{equation}

These weighted $L^2$-cohomology groups are related to the sheaf cohomology of certain holomorphic vector bundles
on $\bM$.
\begin{definition}
The logarithmic tangent sheaf $T_{\bM}(\log \bD)$ is the subsheaf of the tangent sheaf $T_{\bM}$ of $\bM$ consisting of derivations of $\mathcal{O}_{\bM}$ sending the ideal sheaf $\mathcal{I}_{\bD}$ of $\bD$ in $\mathcal{O}_{\bM}$ to itself.  In other words, $T_{\bM}(\log \bD)$ is the sheaf of holomorphic vector fields tangent to $\bD$.  We also denote by $\Omega^1(\log \bD)$ the corresponding dual sheaf of logarithmic $1$-forms and by $\Omega^p (\log \bD)$ the sheaf of the $p^{\mathrm{th}}$ exterior power of $\Omega^1(\log \bD)$ with itself.
\end{definition}
\begin{theorem}
For $\epsilon>0$ sufficiently small, there are canonical identifications
$$
\WH^{p,q}(g_b,\epsilon,M;E)\cong H^{q}(\bM ,\Omega^p(\log \bD) \otimes E(-\bD)), \quad  \WH^{p,q}(g_b,-\epsilon,M;E)
\cong H^{q}(\bM;\Omega^{p}(\log\bD)\otimes E),
$$
where $E(-\bD)$ is the holomorphic vector bundle on $\bM$ associated to the sheaf of holomorphic sections of $E$ vanishing along $\bD$.
\label{ht.3}
\end{theorem}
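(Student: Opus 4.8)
The plan is to interpret the weighted $L^2$-Dolbeault complexes as fine resolutions of the sheaves on the right-hand side, and then to extract the cohomology identification from a de Rham–Weil argument. First I would observe that the key point is purely local near the boundary $\pa\tM$: away from $\bD$ the metric $g_b$ is an ordinary complete K\"ahler metric and the weight $\rho^{\pm\epsilon}$ is irrelevant, so all the content is in a collar neighborhood of $\pa\tM$, which by Definition~\ref{b.7} looks like a product cylinder $D'\times(0,\infty)_t$ (orbifold-quotiented by $\langle\iota\rangle$) up to polyhomogeneous error. On such a model one has an explicit decomposition of a $(p,q)$-form in the variables $dw/w$, $d\bar w/\bar w$ along the circle direction and forms pulled back from $D$; a form of type $(p,q)$ that is $\db$-closed and lies in $\rho^{\pm\epsilon}L^2$ must, by a Fourier-mode analysis in the circle variable, have its nonzero modes exponentially small and hence be cohomologically trivial, while the zero mode is constrained to involve $dw/w$ (for the $+\epsilon$ weight, forcing vanishing along $\bD$, i.e.\ the twist by $E(-\bD)$) or not (for the $-\epsilon$ weight). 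This is exactly the computation that underlies \eqref{int.2}.

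Concretely, the steps I would carry out are: (1) Define, for each $p$, the sheaf complex on $\bM$ whose sections over an open set $U$ are the $\db$-graded pieces $\rho^{\pm\epsilon}L^2_{\db}\Omega^{p,\bullet}(U\cap M;E,g_b)$, using cutoff functions to see these are fine sheaves (partitions of unity are $L^2$-bounded since $\rho^{\pm\epsilon}$ is locally comparable to a constant away from $\bD$ and the cutoffs can be taken $b$-bounded near $\bD$). (2) Prove an $L^2$-Dolbeault Poincar\'e lemma: on a small polydisc transverse to $\bD$, crossed with a disc in $\bD$, the weighted $L^2$ $\db$-complex is exact in positive degree and its degree-zero cohomology is the space of holomorphic sections of $\Omega^p(\log\bD)\otimes E(-\bD)$ resp.\ $\Omega^p(\log\bD)\otimes E$. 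Here the model computation is: solve $\db u=f$ on $\bbC^*\times(\text{polydisc})$ with controlled weight; away from $\bD$ this is the classical Dolbeault lemma, and across $\bD$ one uses the Fourier expansion in $\arg w$ together with the elliptic $b$-regularity of Proposition~\ref{breg.1}/Corollary~\ref{breg.6} applied to $\db+\db^*$ to control solutions and show the only obstruction is the holomorphic zero mode, whose allowed pole order along $\bD$ is dictated by the sign of the weight. (3) Identify the kernel sheaf in degree zero with $\Omega^p(\log\bD)\otimes E(-\bD)$ (for $+\epsilon$) and $\Omega^p(\log\bD)\otimes E$ (for $-\epsilon$): an $L^2$ holomorphic section of $\Omega^{p,0}(M;E)$ with respect to $g_b$ automatically has at worst a logarithmic pole along $\bD$ in the $\Omega^p$ factor — because $|dw/w|_{g_b}$ is bounded while $|dw|_{g_b}\to 0$ — and the $E$ part is holomorphic across $\bD$, with the extra weight $\rho^{\epsilon}$ forcing it to vanish there, giving the $E(-\bD)$ twist; conversely every such section is manifestly in the weighted $L^2$ space. (4) Conclude by the abstract de Rham theorem: a fine resolution of a sheaf computes its cohomology, so the hypercohomology of the complex equals $H^q(\bM;\Omega^p(\log\bD)\otimes E(-\bD))$ resp.\ $H^q(\bM;\Omega^p(\log\bD)\otimes E)$, and the total cohomology of the complex of global sections is the weighted $L^2$-cohomology $\WH^{p,q}$; naturality of all constructions gives the ``canonical'' in the statement.

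The main obstacle is step (2), the weighted $L^2$-Dolbeault Poincar\'e lemma across $\bD$. The classical holomorphic Dolbeault lemma gives exactness but not the $L^2$ weight control, and the polyhomogeneity of $g_b$ (rather than smoothness up to $\pa\tM$) means one cannot simply quote an exact product model; instead one writes $g_b = g_p + \rho^{\delta}(\text{polyhomogeneous})$ and treats the complex as a polyhomogeneous perturbation of the product-cylinder complex, applying the $b$-elliptic mapping and regularity theory of \S\ref{greg.0} — in particular Corollary~\ref{breg.6}, which is exactly engineered to propagate the needed conclusions through polyhomogeneous coefficients — to the Hodge–Dolbeault Laplacian $\db\db^*+\db^*\db$. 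The Fredholm threshold in Theorem~\ref{fred.1} is governed by $\Spec_b$ of this Laplacian, whose indicial roots near $0$ are $\pm$ (eigenvalue data of the circle and of a Hodge Laplacian on $D$); choosing $\epsilon$ smaller than the smallest positive such root is precisely the ``$\epsilon>0$ sufficiently small'' hypothesis, and ensures that the $+\epsilon$ and $-\epsilon$ weighted problems are both Fredholm and that the harmonic/holomorphic representatives have the pole structure asserted. Once this local model is nailed down, everything else is a formal sheaf-theoretic assembly.
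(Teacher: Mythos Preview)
Your overall architecture is the same as the paper's: both prove that the weighted $L^2$-Dolbeault complex is a fine resolution of the sheaf $\Omega^p(\log\bD)\otimes E(-\bD)$ (for $+\epsilon$) or $\Omega^p(\log\bD)\otimes E$ (for $-\epsilon$), and then invoke the abstract de Rham--Weil argument. Your steps (1), (3), (4) match the paper exactly, including the identification of the degree-zero kernel sheaf via the $L^2$ behavior of $dw/w$ versus $dw$.

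The one substantive difference is in the local $\db$-Poincar\'e lemma (your step (2), the paper's Lemma~\ref{ht.3b}). You propose a direct attack on the model cylinder via Fourier expansion in $\arg w$ plus $b$-elliptic regularity for $\db+\db^*$. The paper instead reduces to complex dimension one: it embeds the punctured disc into $\bbC\bbP^1\setminus\{0\}$, equips that with a global asymptotically cylindrical metric, uses the $b$-Fredholm theory of \cite{MelroseAPS} to see that $\db$ is globally surjective there (no nontrivial meromorphic $1$-forms with a single simple pole), restricts the resulting right inverse to the disc, and then iterates one variable at a time exactly as in the classical Griffiths--Harris proof. This buys two things. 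First, it sidesteps the issue that Theorem~\ref{fred.1} and the $b$-parametrix machinery are stated for \emph{global} operators on compact manifolds with boundary, not for operators on open local models --- your appeal to Corollary~\ref{breg.6} gives regularity but not solvability, and making the Fourier-mode ODE solution rigorous with polyhomogeneous (rather than product) coefficients would require reworking some of the machinery on a noncompact local chart. Second, it reduces the $\epsilon$-dependence to a single transparent condition in one complex variable. Your approach could be made to work, but the paper's compactification trick is both shorter and avoids the local-versus-global subtlety.
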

\begin{proof}
The idea is to adapt the sheaf theoretic proof of the Dolbeault theorem, see \cite[p.45]{Griffiths-Harris} for example, to our context.
Fix $a\ne 0$.  Denote by $\rho^{a}L^2\cH^{p,0}(E)$ the sheaf induced by the presheaf of  local holomorphic $p$-forms on $\bM$ with values in
$E$ which are $\rho^{a}L^2$ with respect to $g_b$ and the Hermitian metric $h$ of $E$.  Also, let $\rho^{a}L^2_{\db}\Omega^{p,q}(E)$
denote the sheaf defined by the presheaf which associates to $\bU\subset \bM$ the abelian group
 $$
    \{ \mu \in \rho^{a}L^2\Omega^{p,q}(\bU\cap M;E, g_b) \quad | \quad \db \mu \in \rho^{a}L^2\Omega^{p,q+1}(\bU\cap M;E,g_b) \}.
 $$
Finally, let $\rho^{a}L^2 \cZ^{p,q}(E)$ be the subsheaf of $\rho^{a}L^2_{\db}\Omega^{p,q}(E)$ which associates to $\bU$ the abelian group
$$
           \{  \mu\in \rho^{a}L^2\Omega^{p,q}(E)_{\bU} \quad | \quad \db \mu=0  \}.
$$

By Lemma~\ref{ht.3b} below (which is a version of the $\db$-Poincaré lemma in $\rho^{a}L^2$), we know that if $a$ is sufficiently
close to $0$, there are short exact sequences of sheaves,
 \begin{gather}
 \xymatrix{ 0 \ar[r] & \rho^{a}L^2\cH^{p,0}(E) \ar[r] & \rho^{a}L^2_{\db}\Omega^{p,0}(E) \ar[r]^-{\db} & \rho^{a}L^2\cZ^{p,1}(E)\ar[r] & 0,
 }  \quad q=0,  \\
  \xymatrix{ 0 \ar[r] & \rho^{a}L^2\cZ^{p,q}(E) \ar[r] & \rho^{a}L^2_{\db}\Omega^{p,q}(E) \ar[r]^-{\db} & \rho^{a}L^2\cZ^{p,q+1}(E)\ar[r] & 0,
 }  \quad q>0.
 \end{gather}
We know from \cite[Proposition~2, p.500]{HHM2004} (see also the beginning of the proof of \cite[Corollary~17]{GR2013}) that
the sheaf $\rho^{a}L^2_{\db}\Omega^{p,q}(E)$ is fine, so that $H^k(\bM; \rho^{a}L^2_{\db}\Omega^{p,q}(E))=\{0\}$ when $k>0$.
The corresponding long exact sequences in cohomology then give that
 \begin{equation}
 \begin{aligned}
    H^q(\bM; \rho^{a}L^2\cH^{p,0}(E)) &\cong H^{q-1}(\bM; \rho^{a}L^2\cZ^{p,1}(E)) \\
          & \cong H^{q-2}(\bM; \rho^{a}L^2\cZ^{p,2}(E)) \\
          &\cong \cdots \cong H^1(\bM; \rho^{a}L^2\cZ^{p,q-1}(E)) \\
          &\cong H^0(\bM; \rho^{a}L^2\cZ^{p,q}(E)) /  \db H^0(\bM; \rho^{a}L^2_{\db}\Omega^{p,q-1}(E)) \\
          & \cong \WH^{p,q}(g_b,a,M;E).
\end{aligned}\label{ht.3c}\end{equation}
If $a>0$ is sufficiently close to zero, then $\rho^aL^2\cH^{p,0}(E)$ is identified with the sheaf $\Omega^p(\log \bD)\otimes (E(-\bD))$
of holomorphic $p$-forms on $\bM$ with values in $E(-\bD)$, while $\rho^{-a}L^2\cH^{p,0}(E)$ is identified with the sheaf \linebreak
 $\Omega^p(\log \bD)\otimes E$ of holomorphic $p$-forms with values in $E$.  Thus, by \eqref{ht.3c}, when $\epsilon>0$ is
sufficiently small,
\begin{equation}
 \begin{gathered}
 \WH^{p,q}(g_b,\epsilon,M;E)\cong H^q(\bM; \Omega^{p}(\log\bD)\otimes (E(-\bD))), \\
 \WH^{p,q}(g_b,-\epsilon,M;E)\cong H^q(\bM; \Omega^{p}(\log\bD)\otimes E).  \end{gathered}
\label{ht.3d}\end{equation}
 \end{proof}

 \begin{lemma}
 The morphism of sheaves $\db: \rho^{a}L^2_{\db}\Omega^{p,q}(E) \to \rho^{a}L^2\cZ^{p,q+1}(E)$ is surjective for $a\ne 0$ sufficiently small.
 \label{ht.3b}\end{lemma}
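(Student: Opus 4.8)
The plan is to reduce the statement to a model computation near $\bD$, following the usual proof of the Dolbeault--Grothendieck lemma via an explicit solution operator, but now carrying along the $\rho^{a}L^2$ weight. Surjectivity of a morphism of sheaves is local, so it suffices to show that every point of $\bM$ admits arbitrarily small neighbourhoods $\bU$ on which each $\db$-closed $\mu\in\rho^{a}L^2\Omega^{p,q+1}(\bU\cap M;E,g_b)$ can be written as $\db\nu$ with $\nu\in\rho^{a}L^2\Omega^{p,q}(\bU\cap M;E,g_b)$; since then $\db\nu=\mu\in\rho^{a}L^2$ automatically, such a $\nu$ lies in $\rho^{a}L^2_{\db}\Omega^{p,q}$. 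At a point of $M$ the weight $\rho$ is bounded above and below and this is the classical $\db$-Poincar\'e lemma. So fix $p\in\bD$; passing to a finite cover of a neighbourhood of $p$ and working equivariantly if $\bD$ is an orbifold point, we may choose a holomorphic chart in which $\bD=\{w=0\}$, so that $\bU\cap M\cong V\times\{0<|w|<\varepsilon\}$ is a product of a ball $V$ with a punctured disc, with $\rho=|w|$, and the Dolbeault complex of $\bU\cap M$ is the associated double complex.

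First I would replace $g_b$ by a product $b$-metric. Being a polyhomogeneous $\AC$-metric, $g_b$ is uniformly equivalent on the end to a product $b$-metric $g_p$, so on a possibly smaller $\bU$ the spaces $\rho^{a}L^2\Omega^{p,q}(\bU\cap M;E,g_b)$ and $\rho^{a}L^2\Omega^{p,q}(\bU\cap M;E,g_p)$ coincide with equivalent norms; since neither $\db$ nor the condition $\db\mu=0$ involves the metric, it is enough to prove the claim for $g_p$, which we may further take to restrict to the product of the cylindrical metric $|dw|^2/|w|^2=dt^2+d\theta^2$ on the disc ($t=-\log|w|$, $\theta=\arg w$) with a metric on $V$. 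Under this, the weight $\rho^{a}$ is $e^{-at}$.

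Next I would solve $\db\nu=\mu$ via the usual homotopy in each factor of the double complex, provided each step is available with $\rho^{a}L^2$ control. On $V$ this is the classical Dolbeault lemma, whose solution operator is bounded on $L^2$ and acts only in the $V$-directions, hence preserves $\rho^{a}L^2$. On the punctured disc one must solve $\pa_{\bar w}u=g$ with $u$ in $\rho^{a}L^2$ given $g\,d\bar w$ in $\rho^{a}L^2$; expanding in Fourier modes $e^{ik\theta}$ turns this into a family of first-order ODEs $(\pa_t-k)u_k=f_k$ on a half-line, and conjugating by the weight reduces inverting $\pa_t-k$ on $\rho^{a}L^2$ to inverting $\pa_t-a-k$ on $L^2(\bbR_t)$, which is possible, with operator norm $\lesssim 1/\operatorname{dist}(a,\bbZ)$, exactly when $a\notin\bbZ$; twisting by $E$ only shifts this forbidden set by integers, so the step works for all sufficiently small $a\ne0$. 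Writing $\mu=\beta+d\bar w\wedge\alpha$ with $\alpha,\beta$ free of $d\bar w$, the relation $\db\mu=0$ gives $\db_V\beta=0$ and $\db_V\alpha=\pa_{\bar w}\beta$; solving $\pa_{\bar w}\widetilde\eta=\alpha$ by a choice of solution operator acting only in $t$ (hence commuting with $\db_V$), the form $\beta':=\db_V\widetilde\eta-\beta$ is then $\pa_{\bar w}$-closed and $\db_V$-closed, and solving $\db_V\eta'=\beta'$ on $V$ and setting $\nu:=\widetilde\eta-\eta'$ gives $\db\nu=\mu$ with $\nu\in\rho^{a}L^2$. (Alternatively, one may feed $\mu$ into the model resolvent of the $b$-Laplacian of $g_p$ on the half-cylinder, which preserves $\rho^{a}L^2$ whenever $-ia$ misses the integer-shifted indicial set, and take $\nu=\db^*$ of the output, which solves $\db\nu=\mu$ because $\db\mu=0$.)

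The step I expect to be delicate is the weighted bookkeeping: pinning down the indicial set of the model operator — governed by the spectrum of the cross-sectional Laplacian on the circle bundle $\pa\tM\to\bD$ together with the logarithmic structure of $E$ along $\bD$ — and checking it stays away from $0$, so that small $a\ne0$ is admissible; and verifying that the primitive $\nu$ constructed above, together with $\db\nu=\mu$, genuinely lies in $\rho^{a}L^2$, which is exactly where the compatibility $\db_V\alpha=\pa_{\bar w}\beta$ coming from $\db\mu=0$ is used, to control $\db_V$ of the disc-primitive. Finally, it is the sign of $a$ that distinguishes the two identifications of Theorem~\ref{ht.3}: a positive weight forces holomorphic data to vanish along $\bD$, i.e.\ to be sections of $E(-\bD)$, whereas a negative weight imposes no such condition.
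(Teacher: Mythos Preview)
Your argument is correct and complete in outline; the localization, the reduction to a product $b$-metric, the Fourier--ODE inversion on the half-cylinder, and the double-complex splitting all go through, and your orbifold remark (work equivariantly on a finite cover and average) is exactly what is needed.

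The paper takes a somewhat different route for the key one-variable step. Rather than Fourier-analysing $\bar w\,\pa_{\bar w}$ on the half-cylinder directly, it compactifies the punctured disc to $\bbC\bbP^1\setminus\{0\}$ equipped with an asymptotically cylindrical metric, and then quotes the Fredholm theory of elliptic $b$-operators (\cite[\S 6.3]{MelroseAPS}) together with the elementary fact that $\bbC\bbP^1$ carries no nontrivial meromorphic $1$-form with at most one simple pole, to conclude that $\db$ is surjective on $x^aL^2$ for small $a\neq 0$. The extension to $n$ variables is then carried out by the Griffiths--Harris one-variable-at-a-time procedure, which is the same as your double-complex argument in slightly different packaging. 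Your approach is more elementary and self-contained (no compactification, no appeal to the global $b$-parametrix), and makes the admissible range of weights visible as $a\notin\bbZ$; the paper's approach has the virtue of fitting into machinery already set up in \S\ref{greg.0}, and of making the surjectivity and the identification of the kernel (constants versus zero, according to the sign of $a$) fall out of the same Fredholm statement. Either way one obtains a bounded right inverse, which is what the induction needs.
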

\begin{proof}
Suppose first that $\bD$ is smooth.  It then suffices to show that the map
\begin{equation}
   \db: \rho^{a}L^2_{\db}\Omega^{p,q}(E)_{\bU} \to \rho^{a}L^2\cZ^{p,q+1}(E)_{\bU}
\label{pdl.1}\end{equation}
is surjective for any open set $\bU\subset \bM$ biholomorphic to a polycylinder $\Delta\subset \bbC^n $ over which $E$ is trivial
when lifted to $\Delta$.  We can restrict to sets of this type which are either disjoint from $\bD$, or else for which there is a
biholomorphism $\varphi: \bU\to \Delta \subset \bbC^n$ mapping $\bD\cap \bU$ onto $\Delta\cap (\bbC^{n-1}\times \{0\})$.

The assertion is not hard in complex dimension $n=1$.  Indeed, in that case regard $\Delta$ as a disk centered at $0$ in
$\bbC\cup \{\infty\}= \bbC\bbP^1$.  If $\bU\cap \bD=\emptyset$, we use the surjectivity of
 $$
     \db: H^1(\bbC\bbP^1)\to L^2\Omega^{0,1}(\bbC\bbP^1)
 $$
to see that \eqref{pdl.1} is surjective.  If $\bU\cap \bD\ne \emptyset$, then assume that the biholomorphism
$\varphi: \bU\to \Delta\subset \bbC$ maps $\bU\cap \bD$ to $0\in \Delta$.  Now put a complete asymptotically cylindrical
Kähler metric $k_b$ on $\bbC\bbP^1\setminus \{0\}$ and let $x\in C^{\infty}(\bbC\bbP^1)$ be the boundary defining function
which equals the Euclidean distance to the origin near $0\in \Delta\subset \bbC$.
Since there are no nontrivial meromorphic $1$-forms on $\bbC\bbP^1$ with at most one simple pole, then by
\cite[\S 6.3]{MelroseAPS}, we know that
$$
        \db: x^a H^1(\bbC\bbP^1\setminus \{0\}, k_b) \to x^a L^2\Omega^{0,1}(\bbC\bbP^1\setminus \{0\} ,k_b)
$$
is Fredholm and surjective whenever $a$ is nonzero but sufficiently small. Its kernel is trivial when $a>0$, while if $a < 0$, it is just the constants. Restricting to $\Delta$, we see that the map \eqref{pdl.1} is again surjective.  This proves
the result in complex dimension $1$.

From this discussion, we see that when $n=1$, there is a right inverse
\begin{equation}
       (\db)^{-1}: x^{a}L^2\cZ^{p,q+1}(E)_{\bU}\to x^{a}L^2_{\db}\Omega^{p,q}(E)_{\bU}
\label{pdl.3}\end{equation}
to \eqref{pdl.1}.  To prove the result in complex dimensions greater than $1$, we then proceed just as in
\cite[p.25-26]{Griffiths-Harris},  although we use \eqref{pdl.3} instead of the one-variable $\db$-Poincaré lemma,
cf.\ \cite[p.5]{Griffiths-Harris}.

Finally, if $\bM$ has orbifold singularities, then we can proceed as before away from $\bD$.  Near $\bD$ however, we
must also consider open sets of the form $\bU= V/\langle \mu \rangle$ with $\mu$ a finite order automorphism of $V$
such that the restriction of $E$ to $\bU$ lifts on $V$ to a trivial holomorphic vector bundle.  The preceding discussion
proves the surjectivity of \eqref{pdl.1} on sufficiently small open sets $V$, and we can then average with respect to the action
of $\mu$ to obtain the desired surjectivity on $\bU$.
\end{proof}

Our main interest here is in the case where the weight $a = 0$, but the cohomology then is often infinite dimensional
since $\db+ \db^*$ does not have closed range when acting between appropriate Sobolev spaces. Here $\db^*$ is the
formal adjoint of $\db$ with respect $g_b$ and a choice of Hermitian metric on $h$.  However, we can still consider the
space of $L^2$ harmonic forms of type $(p,q)$ with values in $E$, namely
\begin{equation}
  L^2\cH^{p,q}(M;E) = \{ \mu\in L^2\Omega^{p,q}(M;E, g_b,h) \quad | \quad   \db\mu=0, \quad \db^{*}\mu=0\}.
\label{ht.4}\end{equation}
Since $\db+ \db^*$ is an elliptic $b$-operator (acting on sections of $\Lambda^{p,*} \otimes E = \oplus_q \Lambda^{p,q} \otimes E$),
this space is finite dimensional and every element $\eta \in L^2\cH^{p,q}(M;E)$ is polyhomogeneous, namely
$$
             \eta \sim  \sum_{(z,k)\in \cI}  \rho^z (\log \rho)^k \eta_{z,k} \quad \mbox{near $\rho=0$, where }
\quad \eta_{z,k}\in \CI(\widetilde{M}, \Lambda^{p,q}({}^bT^* \widetilde{M})\otimes E).
$$
The index set $\cI$ here is determined solely by the indicial family of $\db+\db^*$, i.e., it is independent of $\eta$.

For convenience below, let us denote by $\cI_a$ the index set corresponding to elements $\eta \in \rho^a L^2 \Omega^{p,*}$
which lie in the common nullspace of $\db$ and $\db^*$. Note that the condition $\rho^z \in L^2$ implies
$z > 0$, so $\cI_0$ is a positive index set.  Henceforth we shall always choose $\epsilon$ so that
\begin{equation}
0 < \epsilon < \inf \cI_0.
\label{defe}
\end{equation}

By virtue of this remark, any $\eta \in L^2 \cH^{p,q}(M;E)$ is in $\rho^{\epsilon}L^2\Omega^{p,q}(M;E,g_b)$ and satisfies $\db \eta = 0$, thus represents an element in $\WH^{p,q}(g_b, \epsilon, M;E)$. Hence, composing with the natural map
between weighted cohomologies, we obtain a map
\begin{equation}
\Phi: L^2\cH^{p,q}(M;E) \to  \Im\{ \WH^{p,q}(g_b,\epsilon,M;E)\to  \WH^{p,q}(g_b,-\epsilon,M;E)\}.
\end{equation}
We wish to show that $\Phi$ is an isomorphism, and to this end we collect some results.

First note that
\begin{equation}
      \db+ \db^* : \rho^{-\epsilon} H^1_b\Omega^{p,*}(M;E) \to \rho^{-\epsilon}L^2\Omega^{p,*}(M;E,g_b)
\label{bi.1}\end{equation}
is Fredholm when $\epsilon$ satisfies \eqref{defe}, where $H^k_b\Omega^{p,q}(M;E)$ is the $b$-Sobolev space of order $k$ of forms of type $(p,q)$ taking values in $E$. The symmetry of $\db+\db^*$ with respect to the $L^2_b$ pairing
and the fact that the spaces
$\rho^{\pm\epsilon}L^2\Omega^*(M;E,g_b)$ are dual to each other means that the cokernel of \eqref{bi.1} can be identified
with the kernel of $\db+\db^*$ on $\rho^{\epsilon} H^1_b\Omega^*(M;E)$, which as we have just shown is the same as
$L^2\cH^*(M;E)$.  Hence there is a direct sum decomposition
\begin{equation}
\rho^{-\epsilon}L^2\Omega^*(M;E,g_b)= \Im\{  \db+ \db^* : \rho^{-\epsilon} H^1_b\Omega^*(M;E,g_b) \to
\rho^{-\epsilon}L^2\Omega^*(M;E,g_b)  \} \oplus L^2\cH^*(M;E).
\label{bi.2}
\end{equation}
For similar reasons, the $\db$-Laplacian $\Delta_{\db}= (\db+\db^*)^2$ induces the decomposition
\begin{equation}
\rho^{-\epsilon}L^2\Omega^*(M;E,g_b)=\Im\{  \Delta_{\db} : \rho^{-\epsilon} H^2_b\Omega^*(M;E,g_b) \to \rho^{-\epsilon}L^2\Omega^*(M;E,g_b)  \}
\oplus L^2\cH^*(M;E).
\label{bi.2a}
\end{equation}

\begin{proposition}
There is a finite dimensional subspace $A\subset \dot{\Omega}^*(M;E)$ orthogonal to $L^2\cH^*(M;E)$
such that
\begin{equation}
\rho^{\epsilon}L^2\Omega^*(M;E,g_b)= \Im\{  \Delta_{\db} : \rho^{\epsilon} H^2_b\Omega^*(M;E) \to \rho^{\epsilon}L^2\Omega^*(M;E,g_b)  \}
\oplus A \oplus L^2\cH^*(M;E).
\label{bi.2b}\end{equation}
\label{bi.4}\end{proposition}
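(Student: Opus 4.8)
The plan is to produce the decomposition \eqref{bi.2b} by comparing the two weighted spaces $\rho^{\epsilon}L^2$ and $\rho^{-\epsilon}L^2$ and using the Fredholm theory already in hand. First I would note that the operator
\[
\Delta_{\db}: \rho^{\epsilon}H^2_b\Omega^*(M;E)\to \rho^{\epsilon}L^2\Omega^*(M;E,g_b)
\]
is Fredholm, because $\epsilon$ satisfies \eqref{defe} and hence, by the definition of $\cI_0$ and Theorem~\ref{fred.1}, neither $\epsilon$ nor $-\epsilon$ lies in $-\Im\Spec_b(\Delta_{\db})$. By the self-adjointness of $\Delta_{\db}$ with respect to the $L^2_b$ pairing and the duality between $\rho^{\epsilon}L^2$ and $\rho^{-\epsilon}L^2$, the cokernel of this map is isomorphic to the nullspace of $\Delta_{\db}$ on $\rho^{-\epsilon}H^2_b\Omega^*(M;E)$. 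This nullspace contains $L^2\cH^*(M;E)$, but it is in general strictly larger: by elliptic $b$-regularity (Corollary~\ref{breg.6}) every element is polyhomogeneous, and the extra solutions are exactly those with leading exponent $z$ satisfying $-\epsilon < -\Im z < 0$, i.e.\ the elements of $\rho^{-\epsilon}L^2$ that are harmonic but not in $\rho^{\epsilon}L^2$.

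Next I would choose a complement. Let $\cH_{-\epsilon}$ denote the (finite-dimensional) nullspace of $\Delta_{\db}$ on $\rho^{-\epsilon}L^2\Omega^*(M;E)$. Since $L^2\cH^*(M;E)\subset \cH_{-\epsilon}$, pick a linear complement $B$ so that $\cH_{-\epsilon}= L^2\cH^*(M;E)\oplus B$. Elements of $B$ are smooth in the interior and polyhomogeneous with leading behaviour $\rho^{z}$, $0<-\Im z<\epsilon$; in particular they are \emph{not} in $\rho^{\epsilon}L^2$, so they are genuinely transverse to the range of $\Delta_{\db}$ on $\rho^{\epsilon}H^2_b$. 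One then has the orthogonal-type decomposition
\[
\rho^{\epsilon}L^2\Omega^*(M;E,g_b)= \Im\{\Delta_{\db}:\rho^{\epsilon}H^2_b\to \rho^{\epsilon}L^2\}\ \oplus\ \pi(B)\ \oplus\ L^2\cH^*(M;E),
\]
where $\pi$ denotes orthogonal projection in $\rho^{\epsilon}L^2$ off the closed range and off $L^2\cH^*$ (this uses that the range is closed, which is the Fredholm statement, and that $L^2\cH^*$ is orthogonal to the range, which follows from self-adjointness exactly as for \eqref{bi.2a}). To finish, I must replace the abstract complement $\pi(B)$ by one sitting inside $\dot{\Omega}^*(M;E)$ and orthogonal to $L^2\cH^*(M;E)$: since $\dot{\cC}^\infty(\tM;\Lambda^{p,*}\otimes E)$ is dense in $\rho^{\epsilon}L^2$, I can approximate a basis of $\pi(B)$ by compactly-supported-to-all-orders forms closely enough that the approximants still span a complement, and then apply Gram--Schmidt against $L^2\cH^*(M;E)$ (which lies in $\rho^{\epsilon}L^2$, so the inner products make sense) to land in a finite-dimensional $A\subset \dot{\Omega}^*(M;E)$ with $A\perp L^2\cH^*(M;E)$ and $A$ still transverse to the range. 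This is the same density/perturbation device used in the proof of Theorem~\ref{breg.5}.

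The main obstacle is not any one step in isolation but making the transversality survive the replacement of $\pi(B)$ by $A\subset\dot\Omega^*$: one has to be sure that a small perturbation of a complement of a closed-range operator is still a complement (true because the set of such complements is open), and that the dimension of $A$ equals $\dim B = \dim\cH_{-\epsilon}-\dim L^2\cH^*$, which is forced by the Fredholm index bookkeeping — the jump in $\dim\ker\Delta_{\db}$ between weights $-\epsilon$ and $\epsilon$ equals the jump in $\mathrm{coker}$, by the relative index theorem of \cite{MelroseAPS}, and this is precisely the codimension of the range of $\Delta_{\db}$ on $\rho^{\epsilon}H^2_b$ modulo $L^2\cH^*$. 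Keeping careful track of which harmonic forms lie in which weighted space, and verifying that the three summands in \eqref{bi.2b} are genuinely independent (no harmonic form in $\rho^{\epsilon}L^2$ other than $0$ lies in the range, by self-adjointness), is where the argument needs care.
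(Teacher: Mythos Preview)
Your overall strategy is the same as the paper's---Fredholm theory on $\rho^{\epsilon}L^2$, identify the cokernel, then use density of $\dot\Omega^*$ to land the complement there---but the final step has a real gap.

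The problem is your Gram--Schmidt move. You first approximate a basis of the complement by elements of $\dot\Omega^*(M;E)$, and \emph{then} subtract off their $L^2$-harmonic components to make them orthogonal to $L^2\cH^*(M;E)$. But the elements of $L^2\cH^*(M;E)$ are polyhomogeneous with index set $\cI_0$; they are \emph{not} in $\dot\Omega^*(M;E)$. So subtracting a nonzero harmonic piece from a form in $\dot\Omega^*$ produces something in $\cA^{\cI_0}_{\phg}\Omega^*(M;E)$, not in $\dot\Omega^*(M;E)$. Your resulting space $A$ is orthogonal to $L^2\cH^*$ but fails the containment $A\subset\dot\Omega^*(M;E)$ that the Proposition demands.

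The paper runs into exactly this obstruction and fixes it with an additional step you are missing. After subtracting harmonic components it has a polyhomogeneous basis $a_1,\dots,a_p\in\cA^{\cI_0}_{\phg}$ orthogonal to $L^2\cH^*$. It then invokes \cite[Lemma~5.44]{MelroseAPS} to find polyhomogeneous forms $b_i$ with $a_i-\Delta_{\db}b_i\in\dot\Omega^*(M;E)$; since $\Delta_{\db}b_i$ is automatically orthogonal to $L^2\cH^*$, the span of the $a_i-\Delta_{\db}b_i$ lies in $\dot\Omega^*$, is still orthogonal to $L^2\cH^*$, and still complements the range (you have only shifted by elements of the range). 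This ``solve-away-the-expansion'' trick is the key device you need to add. (A smaller point: your intermediate object $\pi(B)$ is not well defined as written, since $B\subset\cH_{-\epsilon}$ does not lie in $\rho^{\epsilon}L^2$; but this is easily repaired by simply choosing an abstract complement in $\rho^{\epsilon}L^2$ from the start, as the paper does.)
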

\begin{proof}
Using the density of $\dot{\Omega}^*(M;E)$ in $L^2\Omega^*(M;E,g_b)$, we first find a finite dimensional subspace
$A'\subset \dot{\Omega}^*(M;E)$ such that
$$
\rho^{\epsilon}L^2\Omega^*(M;E,g_b)= \Im\{  \Delta_{\db} : \rho^{\epsilon} H^2_b\Omega^*(M;E) \to \rho^{\epsilon}L^2\Omega^*(M;E,g_b)  \}
\oplus A' \oplus L^2\cH^*(M;E).
$$
This $A'$ need not be orthogonal to $L^2\cH^*(M;E)$, but by subtracting the $L^2$-harmonic component of each element of $A'$,
we obtain a finite dimensional space $A''\subset \cA^{\cI_0}_{\phg}\Omega^*(M;E)$ orthogonal to $L^2\cH^*(M;E)$ such that
$$
\rho^{\epsilon}L^2\Omega^*(M;E,g_b)= \Im\{  \Delta_{\db} : \rho^{\epsilon} H^2_b\Omega^*(M;E) \to \rho^{\epsilon}L^2\Omega^*(M;E,g_b)  \} \oplus
A''\oplus L^2\cH^*(M;E).
$$
Choose a basis $a_1, \ldots, a_p$ for $A''$.  Then by \cite[Lemma~5.44]{MelroseAPS}, we can find $b_1, \ldots,  b_p\in
\cA^G_{\phg}\Omega_b^*(\tM;E)$, where $G$ is an index set containing $\cI_0$ and with $\inf G=\inf \cI_0$, such that
$$
          a_i- \Delta_{\db}b_i \in \dot{\Omega}^*(M;E) \quad \mbox{for} \; i=\{1,\ldots,p\}.
$$
Clearly, the $\Delta_{\db}b_i$ are all orthogonal to $L^2 \cH^*(M,E)$. We thus let $A$ be the span of
the forms $a_i-\Delta_{\db}b_i$, $i=1,\ldots,p$.
\end{proof}
This has the following useful consequence.
\begin{corollary}
Let $\zeta\in \rho^{-\epsilon}H^1_b\Omega^{p,q-1}(M;E) \oplus \rho^{-\epsilon}H^1_b\Omega^{p,q+1}(M;E)$
and suppose that
\begin{equation}
 (\db+\db^*)\zeta\in \rho^{\epsilon}L^2_b\Omega^{p,q}(M;E).
\label{bi.4b}\end{equation}
Then  $\zeta= \zeta_1+\zeta_2$, where $\zeta_1$ is polyhomogeneous and $\zeta_2\in \rho^{\epsilon}H^1_b\Omega^*(M;E)$, and
the only nonzero components of $\zeta_1$ and $\zeta_2$ are in degrees $(p,q-1)$ and $(p,q+1)$.
Furthermore, $\zeta_1 = \mu_0+ \frac{d\rho}{\rho}\wedge \nu_0 + \mathcal O(\rho^\epsilon)$, where $\mu_0$ and $\nu_0$ are
harmonic, so $\db\zeta, \db^*\zeta \in \rho^{\epsilon}L^2_b\Omega^{p,q}(M;E)$.
\label{bi.4a}
\end{corollary}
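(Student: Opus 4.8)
The plan is to peel off the non-smooth part of the forcing term $f := (\db+\db^*)\zeta$ by means of the Fredholm decomposition of Proposition~\ref{bi.4}, and only then to invoke $b$-elliptic regularity: one cannot bootstrap $\zeta$ directly, since $f$ lies only in $\rho^{\epsilon}L^2_b$, which is far from conormal. Write $D=\db+\db^*$. Splitting $\zeta=\zeta_-+\zeta_+$ into its components of degree $(p,q-1)$ and $(p,q+1)$, the vanishing of the $(p,q-2)$- and $(p,q+2)$-components of $f\in\rho^{\epsilon}L^2_b\Omega^{p,q}(M;E)$ forces $\db^*\zeta_-=0$ and $\db\zeta_+=0$, so that $\db\zeta=\db\zeta_-$ and $\db^*\zeta=\db^*\zeta_+$ are both of pure degree $(p,q)$ and $f=\db\zeta+\db^*\zeta$. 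Next I would observe that $f$ is $L^2_b$-orthogonal to $L^2\cH^{p,q}(M;E)$: for harmonic $h$ one has $Dh=0$, and since $\zeta$ grows at most like $\rho^{-\epsilon}$ while each element of $L^2\cH^*(M;E)$ decays faster than $\rho^{\epsilon}$ (as $\inf\cI_0>\epsilon$ by \eqref{defe}), the integration by parts $\langle D\zeta,h\rangle_{L^2_b}=\langle\zeta,Dh\rangle_{L^2_b}$ carries no boundary contribution, whence $\langle f,h\rangle_{L^2_b}=0$.

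Applying Proposition~\ref{bi.4} to $f$ --- and noting that $\Delta_{\db}$, the orthogonal projection onto $L^2\cH^*$, and the auxiliary space $A$ of \eqref{bi.2b} (whose construction can be carried out one bidegree at a time) all respect the $(p,q)$-grading --- I obtain $f=\Delta_{\db}\psi+a$ with $\psi\in\rho^{\epsilon}H^2_b\Omega^{p,q}(M;E)$ and $a\in A\cap\dot{\Omega}^{p,q}(M;E)$, the $L^2\cH^{p,q}$-component being absent precisely by the orthogonality just checked. I then set $\zeta_2=D\psi\in\rho^{\epsilon}H^1_b\Omega^{*}(M;E)$ and $\zeta_1=\zeta-\zeta_2\in\rho^{-\epsilon}H^1_b$, so that $D\zeta_1=f-\Delta_{\db}\psi=a\in\dot{\Omega}^{*}(M;E)=\cA^{\emptyset}_{\phg}(\tM;\Lambda^{p,*}({}^bT^*\tM)\otimes E)$. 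Since $g_b$ is a polyhomogeneous $\AC$-metric with nonnegative index set $F$, $D$ is an elliptic element of $\Diff^1_{b,F}(\tM;\Lambda^{p,*}({}^bT^*\tM)\otimes E)$, so Corollary~\ref{breg.6} shows that $\zeta_1$ is polyhomogeneous. Both $\zeta_1$ and $\zeta_2=D\psi$ have nonzero components only in degrees $(p,q-1)$ and $(p,q+1)$, since $\psi$ has pure degree $(p,q)$ and $\zeta$ has this form by hypothesis; this gives the first two assertions.

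For the refined statement I would first shrink $\epsilon$, if necessary (consistently with \eqref{defe}), so that $0$ is the only element of $\Spec_b(D)$ giving an exponent of modulus $\le\epsilon$. Then, since $\zeta_1\in\rho^{-\epsilon}L^2_b$ and $D\zeta_1=a$ vanishes to infinite order at $\pa\tM$, the polyhomogeneous expansion of $\zeta_1$ reads $\zeta_1=L+\mathcal O(\rho^{\epsilon'})$ with $\epsilon'>\epsilon$ and $L:=\zeta_1|_{\rho=0}$, and matching $\rho^0$-coefficients in $D\zeta_1=a$ yields $I(D,0)L=0$. The model operator $I(D,0)$ is that of the product $b$-metric to which $g_b$ is asymptotic, and its kernel on $\Lambda^{p,*}({}^bT^*\tM)|_{\pa\tM}$ is spanned exactly by the forms $\mu_0+\frac{d\rho}{\rho}\wedge\nu_0$ with $\mu_0,\nu_0$ pulled back from $\pa\tM$ and harmonic there; this is the claimed expansion $\zeta_1=\mu_0+\frac{d\rho}{\rho}\wedge\nu_0+\mathcal O(\rho^{\epsilon})$. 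From $I(\Delta_{\db},0)=I(D,0)^2=I(\db,0)^*I(\db,0)+I(\db,0)I(\db,0)^*$ together with $I(D,0)L=0$ one gets $I(\db,0)L=I(\db^*,0)L=0$, hence $\db L$ and $\db^*L$ decay to positive order. Combining this with the remainder estimate, $\db\zeta_1$ and $\db^*\zeta_1$ decay strictly faster than $\rho^{\epsilon}$ and so lie in $\rho^{\epsilon}L^2_b\Omega^{p,q}(M;E)$; adding $\db\zeta_2,\db^*\zeta_2\in\rho^{\epsilon}L^2_b$ gives $\db\zeta,\db^*\zeta\in\rho^{\epsilon}L^2_b\Omega^{p,q}(M;E)$.

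The hard part is the very first point: the regularity of $\zeta$ cannot be improved directly, so one is forced through Proposition~\ref{bi.4}; and that detour pays off only after one has verified that $f$ has no harmonic component, which is exactly what makes the leftover term $a$ smooth enough to feed into Corollary~\ref{breg.6}. The remaining steps --- the bidegree bookkeeping, the identification of $\ker I(D,0)$, and the decay of $\db$, $\db^*$ applied to the leading term --- are routine once this is in place.
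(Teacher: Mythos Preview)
Your proof is correct and follows essentially the same route as the paper's: decompose $(\db+\db^*)\zeta$ via Proposition~\ref{bi.4}, kill the $L^2\cH^{p,q}$-component by integration by parts, set $\zeta_2=D\psi$ and $\zeta_1=\zeta-\zeta_2$ so that $D\zeta_1\in\dot\Omega^*$, and then invoke Corollary~\ref{breg.6} together with the identification of $\ker I(D,0)$.  The paper organises the bookkeeping slightly differently --- it introduces both $\eta_1\in\rho^{-\epsilon}H^2_b$ and $\eta_2\in\rho^\epsilon H^2_b$ via \eqref{bi.2a} and \eqref{bi.2b}, sets $\zeta_i=D\eta_i$, and must then absorb the leftover $\gamma=\zeta-\zeta_1-\zeta_2\in\ker_{-\epsilon}D$ into $\zeta_1$ --- whereas your definition $\zeta_1=\zeta-\zeta_2$ sidesteps that adjustment.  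For the final decay statement, the paper argues via $d\zeta_1\in\rho^\epsilon L^2_b$ and then separates bidegrees; your argument, that $I(D,0)L=0$ forces $I(\db,0)L=I(\db^*,0)L=0$ by the nonnegativity of $I(\Delta_{\db},0)=I(\db,0)^*I(\db,0)+I(\db,0)I(\db,0)^*$, reaches the same conclusion and has the mild advantage of not needing $d$ or $\pa$ on $E$-valued forms.
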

\begin{proof}
By \eqref{bi.2a}, \eqref{bi.2b} and \eqref{bi.4b}, we can write $(\db+\db^*)\zeta=\Delta_{\db}(\eta_1+\eta_2) + \eta_3$
where $\eta_1\in \rho^{-\epsilon}H^2_b\Omega^{p,q}(M;E)$ satisfies $\Delta_{\db}\eta_1 \in A\subset \dot{\Omega}^*(M;E)$,
$\eta_2\in \rho^{\epsilon}H^2_b\Omega^{p,q}(M;E)$ and $\eta_3 \in L^2 \cH^{p,q}(M;E)$. Set
$\zeta_1=(\db+\db^*)\eta_1\in \rho^{-\epsilon}H^1_b\Omega^*(M;E)$ and $\zeta_2= (\db+\db^*)\eta_2\in \rho^{\epsilon}
H^1_b\Omega^{*}(M;E)$; these only have nonzero components in degrees $(p,q-1)$ and $(p,q+1)$, and we have that
$$
(\db+\db^*)\zeta= (\db+\db^*) (\zeta_1 + \zeta_2) + \eta_3.
$$
Integrating by parts in $|| \eta_3||^2 = \langle \eta_3, (\db + \db^*)(\zeta - \zeta_1 - \zeta_2)\rangle$, where the
pairing is in $L^2_b$, shows that $\eta_3 = 0$.  We then see from this that $\zeta - \zeta_1 - \zeta_2 = \gamma$
is an element of the nullspace of $\db+\db^*$ on $\rho^{-\epsilon}H^1_b\Omega^*(M;E)$, so by replacing $\zeta_1$ by
$\zeta_1 - \gamma$, we may as well assume that $\zeta=\zeta_1+ \zeta_2$.

By Corollary~\ref{breg.6}, $\zeta_1$ lies in $\cA^{\cI_{-\epsilon}}_{\phg}$, so the leading term in its expansion is $\rho^0$
with coefficient lying in the kernel of the indicial family of $\db+ \db^*$, and hence also in the kernel of
the indicial family of $\Delta_{\db}= \frac12 \Delta= (\db+\db^*)^2$ at $\tau = 0$, i.e., it is harmonic and has the form
\begin{equation}
            \mu_0 + \frac{d\rho}{\rho}\wedge \nu_0
\label{bi.4c}\end{equation}
with $\mu_0, \nu_0$ harmonic on $\pa \tM$.  Thus $d\zeta_1 \in \rho^{\epsilon}L^2_b\Omega^{*}(M;E)$, and since $\zeta$ only
has nonzero components of types $(p,q-1)$ and  $(p,q+1)$, we see that  $\pa \zeta_1, \db \zeta_1\in
\rho^{\epsilon}L^2_b\Omega^{*}(M;E)$ individually. Since $(\db+\db^*)\zeta_1 \in \rho^{\epsilon}L^2\Omega^{*}(M;E)$,
we also obtain that $\db^*\zeta_1 \in \rho^{\epsilon}L^2\Omega^{*}(M;E)$.  Altogether, we have shown the final claim, that $\db\zeta,
\db^*\zeta\in \rho^{\epsilon}L^2_b\Omega^{*}(M;E)$.
 \end{proof}

The following is a simple adaptation of a result of \cite{MelroseAPS}, see also \cite{HHM2004}.
\begin{theorem}
For $\epsilon>0$ sufficiently small, there is a natural identification
\begin{equation}
\begin{aligned}
L^2\cH^{p,q}(M;E) &\cong  \Im\{ \WH^{p,q}(g_b,\epsilon,M,E)\to  \WH^{p,q}(g_b,-\epsilon,M,E)    \} \\
   &\cong \Im \{ H^{q}(\bM;\Omega^p(\log \bD)\otimes E(-\bD))\to H^{q}(\bM; \Omega^{p}(\log\bD) \otimes E) \}.
\end{aligned}
\end{equation}
\label{ht.5}\end{theorem}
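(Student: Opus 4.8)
The plan is to derive the second identification formally from Theorem~\ref{ht.3} and then to prove that the natural map $\Phi$ constructed above is an isomorphism. For the second identification, I would note that the map $\WH^{p,q}(g_b,\epsilon,M;E)\to\WH^{p,q}(g_b,-\epsilon,M;E)$ is induced by the inclusion $\rho^{\epsilon}L^2_{\db}\Omega^{p,\bullet}(E)\hookrightarrow\rho^{-\epsilon}L^2_{\db}\Omega^{p,\bullet}(E)$ of the fine resolutions used to prove Theorem~\ref{ht.3}; on the associated kernel sheaves this inclusion is precisely $\Omega^p(\log\bD)\otimes E(-\bD)\hookrightarrow\Omega^p(\log\bD)\otimes E$, so under the identifications of Theorem~\ref{ht.3} it becomes the map on $H^q(\bM;-)$ induced by that sheaf inclusion, whose image is the group appearing in the statement. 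It then remains to prove that $\Phi$ is injective and surjective.

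For injectivity, suppose $\eta\in L^2\cH^{p,q}(M;E)$ satisfies $\Phi(\eta)=0$, so $\eta=\db\zeta$ for some $\zeta\in\rho^{-\epsilon}L^2_{\db}\Omega^{p,q-1}(M;E)$. Since the decomposition \eqref{bi.2a} respects the $(p,q)$-bigrading, I would write $\zeta=\Delta_{\db}\gamma+h'$ with $\gamma\in\rho^{-\epsilon}H^2_b\Omega^{p,q-1}(M;E)$ and $h'\in L^2\cH^{p,q-1}(M;E)$; because $\db h'=0$ and $\db$ commutes with $\Delta_{\db}$, this gives $\eta=\Delta_{\db}(\db\gamma)$. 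As $\eta$ is polyhomogeneous and $\mathcal O(\rho^{\inf\cI_0})$ together with all its $b$-derivatives, $b$-elliptic regularity for $\Delta_{\db}$ at the fixed weight $-\epsilon$ promotes $\db\gamma$ to $\rho^{-\epsilon}H^2_b\Omega^{p,q}(M;E)$, so $\eta$ lies in $\Im\{\Delta_{\db}:\rho^{-\epsilon}H^2_b\Omega^{p,\bullet}(M;E)\to\rho^{-\epsilon}L^2\Omega^{p,\bullet}(M;E)\}$. By \eqref{bi.2a} this image meets $L^2\cH^{p,\bullet}(M;E)$ only in $0$, hence $\eta=0$.

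For surjectivity, I would begin with a $\db$-closed $\mu\in\rho^{\epsilon}L^2\Omega^{p,q}(M;E)$ representing a class in $\WH^{p,q}(g_b,\epsilon,M;E)$ and decompose $\mu=\Delta_{\db}\gamma+h$ via \eqref{bi.2a}, with $\gamma\in\rho^{-\epsilon}H^2_b\Omega^{p,q}(M;E)$ and $h\in L^2\cH^{p,q}(M;E)$; since \eqref{bi.2a} is $L^2_b$-orthogonal, $\mu-h=\Delta_{\db}\gamma$ is orthogonal to $L^2\cH^{p,q}(M;E)$. Setting $a:=\db^*\gamma$ and $b:=\db\gamma$ one has $\mu-h=\db a+\db^* b$ with $a\in\rho^{-\epsilon}H^1_b\Omega^{p,q-1}(M;E)$, $b\in\rho^{-\epsilon}H^1_b\Omega^{p,q+1}(M;E)$, and since $\db b=0$ and $\db^* a=0$ the form $\zeta:=a+b$ obeys $(\db+\db^*)\zeta=\mu-h\in\rho^{\epsilon}L^2_b\Omega^{p,q}(M;E)$. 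Applying Corollary~\ref{bi.4a} to $\zeta$ then yields in particular $\db^*\zeta=\db^* b\in\rho^{\epsilon}L^2_b\subset L^2$; as $\db(\db^* b)=\db(\mu-h)=0$ and $\db^*(\db^* b)=0$, this forces $\db^* b\in L^2\cH^{p,q}(M;E)$. But $\db^* b=(\mu-h)-\db a$ is orthogonal to $L^2\cH^{p,q}(M;E)$ because $\mu-h$ is, and because for any $h'\in L^2\cH^{p,q}(M;E)$ one has $\langle\db a,h'\rangle_{L^2_b}=\langle a,\db^* h'\rangle_{L^2_b}=0$, there being no boundary contribution since $\db^* h'=0$ while $h'=\mathcal O(\rho^{\inf\cI_0})$ decays faster than the at worst $\rho^{-\epsilon}$ growth of $a$ (and $\inf\cI_0>\epsilon$). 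Hence $\db^* b=0$, so $\mu-h=\db a$ with $a\in\rho^{-\epsilon}L^2_{\db}\Omega^{p,q-1}(M;E)$, and therefore the image of $[\mu]$ in $\WH^{p,q}(g_b,-\epsilon,M;E)$ equals $\Phi(h)$. This shows $\Phi$ is onto, and together with injectivity that it is an isomorphism.

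I expect the main obstacle to be the surjectivity step, and within it the vanishing of $\db^* b$. A priori $\db^* b$ is only an \emph{extended} $L^2$ harmonic form --- closed and coclosed, but living in $\rho^{-\epsilon}L^2$ with a possibly nonzero leading term at the boundary --- and such a form need not be zero merely because it is $L^2_b$-orthogonal to $L^2\cH^{p,q}(M;E)$. What rescues the argument is Corollary~\ref{bi.4a}, whose sharp $b$-asymptotics force $\db^* b$ to decay at rate $\rho^{\epsilon}$, hence to be genuinely in $L^2$, after which the orthogonality above makes it vanish. The remaining ingredients --- that \eqref{bi.2a} is an $L^2_b$-orthogonal, bigrading-preserving decomposition, that $b$-elliptic regularity for $\Delta_{\db}$ does not shift the weight, and that all the $L^2_b$ pairings and integrations by parts in the argument are legitimate because elements of $L^2\cH^{*}(M;E)$ decay like $\rho^z$ with $z\ge\inf\cI_0>\epsilon$ --- are standard consequences of the machinery already assembled in \S\ref{ht.0} and in \cite{MelroseAPS}.
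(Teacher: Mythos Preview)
Your proof is correct and follows essentially the same approach as the paper: both rely on the decomposition \eqref{bi.2a} together with Corollary~\ref{bi.4a} to control the asymptotics of $\db^*\zeta$, and both conclude surjectivity by showing that the $\db^*$-piece in the decomposition of the representative vanishes. Your surjectivity argument (showing $\db^* b$ is an honest $L^2$-harmonic form orthogonal to $L^2\cH^{p,q}$) is a slight reorganization of the paper's (which computes $\|\db^*\zeta\|^2=0$ directly via the identity $\langle\db\zeta,\db^*\zeta\rangle=\int_M d(\zeta\wedge *\db^*\zeta)=0$), but the content is the same.

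One remark: your injectivity argument, while correct, is more elaborate than necessary. The paper dispatches it in one line: since $\eta\in L^2\cH^{p,q}(M;E)$ is polyhomogeneous with $\inf\cI_0>\epsilon$ and $\zeta\in\rho^{-\epsilon}L^2$, the integration by parts $\|\eta\|^2_{L^2}=\langle\eta,\db\zeta\rangle_{L^2_b}=\langle\db^*\eta,\zeta\rangle_{L^2_b}=0$ is justified directly, without ever invoking \eqref{bi.2a} or elliptic regularity.
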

\begin{proof}
We follow closely the proof of \cite[Theorem~2B]{HHM2004}.

Let us first show that the map $\Phi$ in \eqref{ht.4} is injective. Thus, assume that there is an $\eta\in L^2\cH^{p,q}(M;E)$
such that $\Phi(\eta)=0$.  This means that $\eta= \db \zeta$ for some $\zeta\in \rho^{-\epsilon}L^2\Omega_{\db}^{p,q-1}(M;E,g_b)$.
Since $\epsilon<\inf \cI$, we can integrate by parts to deduce that 
$$
         \|\eta \|^2_{L^2} = \int_M \langle \eta, \db \zeta\rangle d g_b = \int_M \langle \db^* \eta, \zeta\rangle d g_b=0,
$$
i.e., $\eta=0$.

To prove the surjectivity of $\Phi$, fix $[\eta]\in  \Im\{ \WH^{p,q}(g_b,\epsilon,M;E)\to  \WH^{p,q}(g_b,-\epsilon,M;E)\}$.
We must show that $[\eta]$ is in the image of $\Phi$. If $\eta\in \rho^{\epsilon}L^2_{\db}\Omega^{p,q}(M;E,g_b)$ is a representative
of this class, then by \eqref{bi.2a}, there are $\nu\in \rho^{-\epsilon}H^2_b\Omega^{p,q}(M;E)$ and $\gamma\in L^2\cH^{p,q}(M;E)$ such that
\begin{equation}
     \eta = (\db +\db^*)^2\nu + \gamma= (\db+\db^*)\zeta + \gamma  \quad\mbox{with} \quad  \zeta:=
(\db+\db^*)\nu\in \rho^{-\epsilon}H^1_b\Omega^{*}(M;E).
\label{bi.3}\end{equation}
The assertion is then equivalent to showing that $\db^* \zeta=0$.  By Corollary~\ref{bi.4a}, $\db^*\zeta\in \rho^{\epsilon}L^2_b\Omega^{p,q}(M;E)$, so the integration by parts
$$
\langle \db \zeta, \db^*\zeta  \rangle_{L^2_b}= \int_M (\db\zeta) \wedge * \db^* \zeta = \int_M \db( \zeta\wedge *\db^* \zeta)=  \int_M d(\zeta\wedge *\db^*\zeta)=0
$$
is justified and shows that $\db\zeta$ is orthogonal to $\db^*\zeta$. We have used here that $\zeta\wedge*\db^*\zeta$ is a
form of type $(n,n-1)$, so that $\pa(\zeta\wedge *\db^*\zeta)=0$.  Similarly, $\langle \db^* \zeta, \eta \rangle_{L^2_b}=
\langle \db^* \zeta, \gamma\rangle_{L^2_b}=0$, so we conclude from \eqref{bi.3} that
$$
  \| \db^* \zeta \|^2_{L^2}=0  \quad \Longrightarrow \quad \db^*\zeta \equiv 0.
$$
We have thus proved that $\eta= \db \zeta + \gamma$, hence $[\eta]$ indeed lies in the image of the map $\Phi$.  The second identification
of the theorem follows by applying Theorem~\ref{ht.3}.
\end{proof}

\begin{corollary}
If the metric $g_b$ is Ricci-flat, then $L^2\cH^{p,0}(M)\cong L^2\cH^{0,p}(M)\cong \{0\}$, and so
$$
     \Im \{ H^{0}(\bM;\Omega^p(\log \bD)\otimes \mathcal{O}(-\bD))\to H^{0}(\bM; \Omega^{p}(\log\bD)) \}\cong   \Im \{ H^{p}(\bM;\mathcal{O}(-\bD))\to H^{p}(\bM; \mathcal{O}) \} =\{0\}.
$$

\label{ht.5a}\end{corollary}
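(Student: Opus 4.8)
The plan is to deduce the result from Theorem~\ref{ht.5} together with a Bochner vanishing argument, reducing everything to the vanishing of the space of $L^2$ holomorphic $p$-forms on $(M,g_b)$.

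First I would unwind the two images. Applying Theorem~\ref{ht.5} with $E=\mathcal{O}_{\bM}$ and using $\Omega^0(\log\bD)=\mathcal{O}_{\bM}$ and $\mathcal{O}_{\bM}\otimes\mathcal{O}(-\bD)=\mathcal{O}(-\bD)$, the case $q=0$ gives
\[
L^2\cH^{p,0}(M)\cong\Im\{H^0(\bM;\Omega^p(\log\bD)\otimes\mathcal{O}(-\bD))\to H^0(\bM;\Omega^p(\log\bD))\},
\]
while applying the theorem with $(p,q)$ replaced by $(0,p)$ gives
\[
L^2\cH^{0,p}(M)\cong\Im\{H^p(\bM;\mathcal{O}(-\bD))\to H^p(\bM;\mathcal{O}_{\bM})\};
\]
these are exactly the two images in the statement. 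By the standard Hodge-theoretic conjugation symmetry on the Kähler manifold $(M,g_b)$ (complex conjugation sends $(0,p)$-forms to $(p,0)$-forms, intertwines $\db,\db^*$ with $\pa,\pa^*$, and the Kähler identity $\Delta_{\db}=\Delta_{\pa}$ then identifies the relevant nullspaces), one has $L^2\cH^{0,p}(M)\cong\overline{L^2\cH^{p,0}(M)}$. Hence it suffices to prove $L^2\cH^{p,0}(M)=\{0\}$; the asserted isomorphism between the two images is then the evident one between trivial groups.

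Next I would identify $L^2\cH^{p,0}(M)$. Since $\db^*$ vanishes identically on forms of type $(p,0)$, an element of $L^2\cH^{p,0}(M)$ is nothing but an $L^2$ holomorphic $p$-form $\alpha$ on $(M,g_b)$; by the polyhomogeneity of $L^2$-harmonic forms (ellipticity of $\db+\db^*$ and Corollary~\ref{breg.6}), such $\alpha$ is polyhomogeneous at $\pa\tM$ with exponents in the positive index set $\cI_0$, so $\alpha$ and its covariant derivatives decay like $\rho^{\epsilon}$, i.e., exponentially in the cylindrical variable $t=-\log\rho$. Now the Bochner step: because on the Kähler manifold $(M,g_b)$ the Chern connection of the holomorphic bundle $\Lambda^p\Omega^1_M$ coincides with the connection induced by the Levi-Civita connection, holomorphicity of $\alpha$ gives $\nabla^{0,1}\alpha=0$, hence $\nabla\alpha=\nabla^{1,0}\alpha$. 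Feeding this into the Bochner--Weitzenböck formula for $|\alpha|^2$ expresses $\Delta_{g_b}|\alpha|^2$ through $|\nabla^{1,0}\alpha|^2$ and a curvature term which drops out for holomorphic forms in the Ricci-flat Kähler case, so that $|\alpha|^2$ is subharmonic. Integrating against $1$ — legitimate here since $\alpha$ and $\nabla\alpha$ decay exponentially on the cylindrical end, so the error terms of a standard exhaustion cut-off tend to zero — forces $\nabla^{1,0}\alpha=0$. Thus $\alpha$ is parallel, $|\alpha|_{g_b}$ is constant, and since $g_b$ is asymptotically cylindrical, $\Vol(M,g_b)=\infty$; a constant-norm $L^2$ form must vanish, so $\alpha=0$. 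This gives $L^2\cH^{p,0}(M)=\{0\}$, hence $L^2\cH^{0,p}(M)=\{0\}$, and the corollary follows.

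The only non-formal point is the Bochner step. The main obstacle is verifying that the Weitzenböck curvature term genuinely cancels for holomorphic $p$-forms once the metric is Ricci-flat Kähler — this is transparent for $p=0$ and $p=n$, where it reduces to the scalar curvature via the canonical bundle, but for general $p$ it rests on the structure theory of Ricci-flat Kähler manifolds — and, secondarily, in justifying the integration by parts on the non-compact $M$; the latter is routine given the exponential decay of $L^2$-harmonic forms in the cylindrical end together with the bounded geometry of $g_b$.
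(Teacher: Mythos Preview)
Your proof is correct and follows essentially the same Bochner argument as the paper: the paper invokes directly the Weitzenb\"ock identity $\Delta_d\xi=\nabla^*\nabla\xi$ on $(p,0)$-forms in the Ricci-flat K\"ahler case (citing Joyce), integrates by parts to get $\|\nabla\xi\|^2=0$, and concludes that $\xi$ is parallel and hence zero since $\xi\in L^2$ but the volume is infinite. Your hedging about the curvature term for general $p$ is unnecessary: on a K\"ahler manifold the relevant Weitzenb\"ock endomorphism on sections of $\Lambda^{p,0}$ is the derivation extension of the Ricci endomorphism of $T^{1,0}$ (this follows from the K\"ahler identity $(\nabla^{1,0})^*\nabla^{1,0}-(\nabla^{0,1})^*\nabla^{0,1}=i\Lambda F$ applied to $E=\Lambda^{p,0}$), so it vanishes identically once $\mathrm{Ric}=0$, with no appeal to holonomy or structure theory needed.
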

\begin{proof}
This is a standard argument, \cf \cite[Proposition~6.2.4]{Joyce}. The Weitzenbock formula on $(p,0)$-forms specializes,
since $g_b$ is Ricci-flat, to
$$
      \Delta_{d} \xi = \nabla^*\nabla \xi.
$$
Thus if $\xi\in L^2\cH^{p,0}(M)$, then the left hand side vanishes, and integrating by parts yields that $\xi$ is parallel.
But $\xi \in L^2$, so $\xi\equiv 0$. This proves that $L^2\cH^{p,0}(M)= \{0\}$, and by Hodge duality, that $L^2\cH^{0,p}(M)= \{0\}$.
\end{proof}

Parallel to \cite[Proposition~6.18]{MelroseAPS}, we now give an analytical characterization of the weighted cohomology
$\WH^{p,q}(g_b,-\epsilon,M,E)$ using the $\db$-Laplacian $\Delta_{\db}= \db\db^* + \db^*\db$ instead of $\db+\db^*$
and in terms of the space of extended $L^2$ harmonic forms
$$
     \ker^{p,q}_{-} \Delta_{\db}= \bigcap_{\epsilon>0} \{ \eta\in \rho^{-\epsilon}H^2_b\Omega^{p,q}(M;E,g_b)\quad | \quad \Delta_{\db} \eta=0 \}.
$$
As in \cite{MelroseAPS}, consider the space 
\begin{equation}
\begin{aligned}
    F(\db+\db^*,0)&=  \{ \mu_0 + \frac{d\rho}{\rho}\wedge \nu_0 \quad | \quad \mu_0+ \frac{d\rho}{\rho} \wedge\nu_0 \in \ker I(\Delta_{\db},0)\} \\
        &=\{ \mu_0 + \frac{d\rho}{\rho}\wedge \nu_0 \quad | \quad \mu_0, \nu_0 \; \mbox{are harmonic on } \;\pa \tM\}.
\end{aligned}
\label{fn.1}\end{equation}
Here, $I(\Delta_{\db},\lambda)=  \frac{1}2 \Delta_{\pa\tM}+c\lambda^2$ is the indicial family of $\Delta_{\db}$, with $\Delta_{\pa\tM}$
the Laplacian induced by the `restriction' of the metric $g_b$ on $\pa\tM$, and $c =g_b(\rho\pa_\rho, \rho \pa_\rho)$ at $\rho=0$.
Now define a pairing on $F(\db+\db^*,0)$ by
\begin{equation}
      B(u,v)=   \langle(\db+\db^*)\widetilde{u}, \widetilde{v}\rangle_{L^2_b} - \langle \widetilde{u}, (\db+\db^*)\widetilde{b}\rangle_{L^2_b},
\label{fn.2}\end{equation}
where $\widetilde{u}, \widetilde{v}\in \CI(\tM; \Lambda^*({}^bT^*M))$ are smooth extensions of $u$ and $v$ to $\tM$.
This is independent of the choice of extensions.  As shown in \cite[Proposition~6.2]{MelroseAPS}, this pairing is non degenerate, 
and clearly, if $\widetilde{v}$ is a smooth extension of an element $v\in F(\db+\db^*,0)$ of type $(p,q)$, then
\begin{equation}
      \db \widetilde{v}\in \rho^{\epsilon}H^{\infty}_b\Omega^{p,q+1}(M;E), \quad \db^*\widetilde{v}\in  \rho^{\epsilon}H^{\infty}_b\Omega^{p,q-1}(M;E)
\label{fn.3}\end{equation}
for $\epsilon\in (0, \inf\cI)$.
\begin{lemma}
Suppose that $u \in \rho^{-\epsilon}H^{2}_{b}\Omega^{p,q}(M;E)$ and $\Delta_{\db }u$ is polyhomogeneous, lying
in some $\cA^{F}_{\phg} \Omega^{p,*}(M;E)$ where $\inf F > \epsilon$. Then $u$ is polyhomogeneous and  $\db u,\db^*\!u$ are bounded polyhomogeneous. If $u$ is bounded polyhomogeneous, then $\db u, \db^*\!u \in \cA^{G}_{\phg}\Omega^{p,*}(M;E)$ for
some index set $G$ with $\inf G>\epsilon$.
\label{bi.5c}\end{lemma}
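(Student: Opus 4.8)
The plan is to combine the polyhomogeneity machinery of \S\ref{greg.0} with an analysis of the leading term of the expansion of $u$, closely paralleling Corollary~\ref{bi.4a}. Since $g_b$ is a polyhomogeneous $\AC$-K\"ahler metric, $\Delta_{\db}$ is an elliptic operator in $\Diff^2_{b,F_0}(\tM;\Lambda^{p,*}({}^bT^*\tM)\otimes E)$ for some nonnegative index set $F_0$. As $u\in\rho^{-\epsilon}H^2_b\Omega^{p,q}(M;E)\subset\rho^{-\epsilon}L^2_b$ and $\Delta_{\db}u=f\in\cA^F_{\phg}\Omega^{p,*}(M;E)$, Corollary~\ref{breg.6} applies and shows that $u$ is polyhomogeneous; moreover, tracking index sets through the proof of Theorem~\ref{breg.5} (as used in the proof of Corollary~\ref{breg.6}) places the index set of $u$ inside $\hF^+(-\epsilon)\,\overline{\cup}\,F$, where $\hF^+(-\epsilon)$ is built from $\Spec_b(\Delta_{\db})$.

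The next, and main, step is to examine this index set near $\Re z=0$. Recall from \S\ref{ht.0} that $I(\Delta_{\db},\lambda)=\tfrac12\Delta_{\pa\tM}+c\lambda^2$, so $\Spec_b(\Delta_{\db})\subset i\bbR$ and $\lambda=0$ is an indicial root of order $2$; taking $\epsilon$ sufficiently small (as we may, \cf\eqref{defe}) we also arrange that $\Spec_b(\Delta_{\db})$ has no nonzero element of modulus $\le\epsilon$. Together with $\inf F>\epsilon$ and $u\in\rho^{-\epsilon}L^2$, this forces an expansion
\[
u=u_0+(\log\rho)\,u_0'+\widetilde{u},\qquad \widetilde{u}\in\cA^{F_1}_{\phg}\Omega^{p,*}(M;E) \text{ with } \inf F_1>\epsilon,
\]
where $u_0,u_0'$ are $\rho$-independent $b$-forms of type $(p,q)$. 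Matching the coefficients of $\rho^0$ and of $\rho^0\log\rho$ on the two sides of $\Delta_{\db}u=f$, and using $\inf F>\epsilon$, forces $\Delta_{\pa\tM}u_0=\Delta_{\pa\tM}u_0'=0$; hence $u_0,u_0'\in F(\db+\db^*,0)$, each of the form $\mu+\tfrac{d\rho}{\rho}\wedge\nu$ with $\mu,\nu$ harmonic on $\pa\tM$, exactly as in \eqref{fn.1}. This identification of the $\rho^0$- and $\rho^0\log\rho$-coefficients of $u$ with elements of $F(\db+\db^*,0)$ is the heart of the matter, being the analogue for $\Delta_{\db}$ of the leading-term analysis carried out for $\db+\db^*$ in Corollary~\ref{bi.4a}; everything else is bookkeeping with index sets.

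It remains to apply $\db$ and $\db^*$. Choose smooth extensions $\widetilde{u}_0,\widetilde{u}_0'\in\CI(\tM;\Lambda^*({}^bT^*\tM)\otimes E)$ of $u_0,u_0'$ and write $u=\widetilde{u}_0+(\log\rho)\widetilde{u}_0'+\widetilde{u}'$ with $\widetilde{u}'$ polyhomogeneous and the infimum of its index set $>\epsilon$. Since $u_0,u_0'\in F(\db+\db^*,0)$, \eqref{fn.3} gives $\db\widetilde{u}_0,\db\widetilde{u}_0',\db^*\widetilde{u}_0,\db^*\widetilde{u}_0'\in\rho^{\epsilon}H^{\infty}_b$ for all small $\epsilon>0$; as these are also polyhomogeneous (the coefficients of $\db$ are smooth, those of $\db^*$ are polyhomogeneous with nonnegative index set), their index sets have infimum $>\epsilon$, so in particular they are bounded polyhomogeneous. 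Now $\db(\log\rho)$ is the $(0,1)$-part of the smooth $b$-one-form $\tfrac{d\rho}{\rho}$, hence is itself a smooth $b$-form near $\pa\tM$, so by the Leibniz rule $\db\big((\log\rho)\widetilde{u}_0'\big)=(\log\rho)\,\db\widetilde{u}_0'+\big(\db\log\rho\big)\wedge\widetilde{u}_0'$ is bounded polyhomogeneous (the first summand decays since $\db\widetilde{u}_0'$ does), and $\db\widetilde{u}'$ is polyhomogeneous with index set of positive infimum, hence bounded polyhomogeneous; adding, $\db u$ is bounded polyhomogeneous, and the same computation (now with $\db^*$) handles $\db^*u$. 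Finally, if $u$ is itself bounded polyhomogeneous then no $(0,k)$ with $k\ge1$ lies in its index set, so $u_0'=0$ and $u=\widetilde{u}_0+\widetilde{u}'$ with $\widetilde{u}_0\in F(\db+\db^*,0)$ and the infimum of the index set of $\widetilde{u}'$ exceeding $\epsilon$ (all positive exponents of $u$ being indicial roots $>\epsilon$ or coming from $\overline{\cup}F$ with $\inf F>\epsilon$); then $\db u=\db\widetilde{u}_0+\db\widetilde{u}'$ lies in some $\cA^G_{\phg}\Omega^{p,*}(M;E)$ with $\inf G>\epsilon$, the first summand by \eqref{fn.3} and the second since the infimum of the index set of $\widetilde{u}'$ exceeds $\epsilon$, and likewise $\db^*u$.
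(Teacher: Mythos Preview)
Your proof is correct and follows essentially the same approach as the paper: both invoke Corollary~\ref{breg.6} for polyhomogeneity, identify the leading $\rho^0$ and $\rho^0\log\rho$ coefficients as elements of $\ker I(\db+\db^*,0)=F(\db+\db^*,0)$ via the quadratic indicial family $I(\Delta_{\db},\lambda)=\tfrac12\Delta_{\pa\tM}+c\lambda^2$, and then use the $(p,q)$-type decomposition to conclude that $\db$ and $\db^*$ individually annihilate these leading terms. The paper phrases this last step slightly more tersely (writing $I(\db,0)u_{0,1}=I(\db^*,0)u_{0,1}=0$ directly and concluding), while you route through \eqref{fn.3} and an explicit Leibniz-rule computation for the $\log\rho$ factor, but the content is the same.
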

\begin{proof}
Corollary~\ref{breg.6} already shows that $u$ is polyhomogeneous.  Since the indicial family
$I(\Delta_{\db}, \lambda)=\frac12 \Delta_{\pa\tM}+
c\lambda^2$ is quadratic in $\lambda$, its inverse has a pole of order at most $2$ at $\lambda=0$. This means that
the leading terms in the expansion of $u$ are
$$
         u_{0,1} \log \rho + u_0,  \quad u_{0,1}, u_0\in \ker I(\Delta_{\db},0).
$$
But $I(\Delta_{\db},0)=I(\db+\db^*,0)^2$ and $I(\db+\db^*,0)$ is self-adjoint, so that $u_{0,1}, u_0\in \ker I(\db+\db^*,0)$.
Furthermore, since $\db u$ is of type $(p,q+1)$ and $\db^*u$ is of type $(p,q-1)$, we get $I(\db,0)u_{0,1}= I(\db^*,0)u_{0,1}=0$ and
$I(\db,0)u_{0}= I(\db^*,0)u_{0}=0$.  In particular,  $\db u$ and $\db^* u$ are bounded, and if $u$ is also bounded, i.e.,
$u_{0,1}=0$, then in fact $\db u, \db^* u\in \rho^{\epsilon}H^{\infty}_b\Omega^{*}(M;E)$.
\end{proof}

\begin{lemma}
If $w\in \db^* \ker^{p,q}_{-} \Delta_{\db}$, then $w$ is bounded polyhomogeneous with $\db w=0$.  More generally, any
bounded $u\in \ker^{p,q}_{-} \Delta_{\db}$ satisfies $\db u=\db^* u=0$.
\label{bi.5}\end{lemma}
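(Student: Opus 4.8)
The plan is to prove the more general statement first---namely that a bounded $u\in\ker^{p,q}_{-}\Delta_{\db}$ satisfies $\db u=\db^*u=0$---and then deduce the first assertion from it. The only genuine issue is to legitimize an integration by parts on the noncompact manifold $M$, and the input that makes this possible is Lemma~\ref{bi.5c}, which upgrades boundedness of $u$ into honest decay of $\db u$ and $\db^*u$. So first I would record: given a bounded $u\in\ker^{p,q}_{-}\Delta_{\db}$, since $\Delta_{\db}u=0$ is trivially polyhomogeneous with an index set whose infimum exceeds $\epsilon$, Lemma~\ref{bi.5c} shows $u$ is polyhomogeneous and, as $u$ is bounded, $\db u,\db^*u\in\cA^G_{\phg}\Omega^{p,*}(M;E)$ for some index set $G$ with $\inf G>\epsilon>0$; in particular $\db u,\db^*u\in\rho^{\epsilon'}\CI_b$ for some $\epsilon'>0$, so they lie in $L^2_b$, and $\db\db^*u=-\db^*\db u$ is polyhomogeneous with positive leading exponent, hence lies in $L^1_b$.

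Next comes the regularized integration by parts. Choose cutoffs $\chi_\delta\in\CI(\tM)$, $0\le\chi_\delta\le1$, with $\chi_\delta\equiv1$ on $\{\rho\ge\delta\}$, $\chi_\delta\equiv0$ near $\pa\tM$, and with $\rho\pa_\rho\chi_\delta$ uniformly bounded and supported in an annular region $\{c_1\delta\le\rho\le c_2\delta\}$ of $b$-volume bounded independently of $\delta$. Since $\chi_\delta u$ is compactly supported in $M$, pairing with $\Delta_{\db}u=0$ gives
\[
0=\langle\db\db^*u,\chi_\delta u\rangle_{L^2_b}+\langle\db^*\db u,\chi_\delta u\rangle_{L^2_b},
\]
and integrating by parts in each term (legitimate, as there is no boundary on the support of $\chi_\delta$) yields
\[
0=\langle\db^*u,\chi_\delta\db^*u\rangle+\langle\db^*u,[\db^*,\chi_\delta]u\rangle+\langle\db u,\chi_\delta\db u\rangle+\langle\db u,(\db\chi_\delta)\wedge u\rangle.
\]
Letting $\delta\to0$: the first and third terms converge to $\|\db^*u\|^2_{L^2_b}$ and $\|\db u\|^2_{L^2_b}$ by monotone convergence; the two commutator terms are bounded by $C\,\|\db^*u\|_{L^\infty(\{\rho\sim\delta\})}\|u\|_{L^\infty}$, resp. $C\,\|\db u\|_{L^\infty(\{\rho\sim\delta\})}\|u\|_{L^\infty}$, times the fixed $b$-volume of $\{\rho\sim\delta\}$, hence tend to $0$ by Step 1; and the left-hand side stays $0$ (its two pieces converge to $\langle\db\db^*u,u\rangle$ and $\langle\db^*\db u,u\rangle$, each absolutely convergent because $\db\db^*u,\db^*\db u\in L^1_b$ and $u$ is bounded, and they sum to $\langle\Delta_{\db}u,u\rangle=0$). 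Therefore $\|\db u\|^2_{L^2_b}+\|\db^*u\|^2_{L^2_b}=0$, so $\db u=\db^*u=0$.

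For the first statement, let $w=\db^*\eta$ with $\eta\in\ker^{p,q}_{-}\Delta_{\db}$. Applying Lemma~\ref{bi.5c} to $\eta$ (with $\Delta_{\db}\eta=0$ polyhomogeneous of arbitrarily large infimum) shows that $\db^*\eta=w$ is bounded polyhomogeneous. Since $\db^*\db^*=0$ we get $\db^*w=0$, and since $\db^*\Delta_{\db}=\Delta_{\db}\db^*$ we get $\Delta_{\db}w=\db^*\Delta_{\db}\eta=0$; being bounded and polyhomogeneous, $w$ lies in $\rho^{-\epsilon}H^2_b\Omega^{p,q-1}(M;E)$ for every $\epsilon>0$, so $w\in\ker^{p,q-1}_{-}\Delta_{\db}$. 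The case already proved then gives $\db w=\db^*w=0$, in particular $\db w=0$, which together with the boundedness and polyhomogeneity of $w$ is the assertion.

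The main obstacle is the integration by parts of the middle step: on the $b$-manifold $M$ one has $L^2_b\not\subset L^1_b$, so the naive computation $0=\langle\Delta_{\db}u,u\rangle=\|\db u\|^2+\|\db^*u\|^2$ is not a priori meaningful for extended ($\rho^{-\epsilon}L^2$) solutions, and it is precisely the boundary term at $\rho=0$---the obstruction measured by the pairing $B$ on $F(\db+\db^*,0)$ in \eqref{fn.2}---that must be shown to vanish. What saves the argument is the combination of Lemma~\ref{bi.5c} (which converts boundedness of $u$ into decay of $\db u,\db^*u$ and $L^1_b$-membership of $\db\db^*u$, so the relevant pairings converge absolutely and dominated convergence applies) with the elementary fact that the gradient of a $b$-cutoff, although only $O(1)$ in size, is supported on annular regions of uniformly bounded $b$-volume, so the error terms genuinely disappear in the limit.
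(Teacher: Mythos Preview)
Your proof is correct. Both you and the paper hinge on Lemma~\ref{bi.5c} to obtain decay of $\db u$ and $\db^*u$ from boundedness of $u$, and both then justify an integration by parts on the cylindrical end. The organizational difference is this: the paper observes that once $\db u\in\rho^{\epsilon}H^\infty_b$ and $\Delta_{\db}(\db u)=0$, the form $\db u$ is genuinely $L^2$-harmonic, so $\db^*\db u=0$ \emph{separately} (not merely $\db\db^*u+\db^*\db u=0$); the integration by parts $\|\db u\|^2=\langle u,\db^*\db u\rangle=0$ is then immediate since one side is identically zero, and similarly for $\db^*u$. You instead run a single cutoff argument against $\chi_\delta u$, handling both norms at once without invoking the $L^2$-harmonic structure. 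Your route is more elementary and self-contained (no appeal to properties of $L^2\cH^{*}$), at the cost of spelling out the cutoff details; the paper's is a bit slicker once one notices the $L^2$-harmonicity of $\db u$. Your deduction of the first assertion from the second, by showing $w=\db^*\eta$ is bounded polyhomogeneous and lies in $\ker^{p,q-1}_{-}\Delta_{\db}$, matches the paper's logic exactly.
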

\begin{proof}
By Lemma~\ref{bi.5c}, $w$ is bounded polyhomogeneous. Thus we must show that if $u\in \ker^{p,q}_{-}\Delta_{\db}$ is bounded,
then $\db u=\db^* u =0$.  But in this case, $\db u\in \rho^{\epsilon}H^{\infty}_b\Omega^{p,q+1}(M;E,g_b)$, so
$\db u$, which a priori only lies in $\ker^{p,q}_- \Delta_{\db}$, is actually an $L^2$-harmonic form. Thus, $\db^* \db u=0$,
and integrating by parts yields
$$
  \| \db u \|^2_{L^2} = \langle \db u, \db u \rangle_{L^2_b}= \langle u, \db^*\db u\rangle_{L^2_b}= 0 \quad \Longrightarrow \quad \db u=0.
$$
Similarly, $\db^* u$ is an $L^2$-harmonic form, and
$$
  \| \db^* u \|^2_{L^2} = \langle \db^* u, \db^* u \rangle_{L^2_b}= \langle u, \db \, \db^* u\rangle_{L^2_b}= 0 \quad \Longrightarrow \quad \db^* u=0.
$$
\end{proof}

Using these lemmas with $E$ a trivial holomorphic line bundle, we can prove a $\pa\db$-lemma which will be important later on.
\begin{lemma}[$\pa\db$-lemma]
Let $\alpha$ be a bounded polyhomogeneous $(p,q)$-form which is $\db$-closed, with $\alpha= \pa \beta$ for some
bounded polyhomogeneous $(p-1,q)$-form. Then there exists a polyhomogeneous $(p-1,q-1)$-form  \nolinebreak $\mu$ such that
$\alpha= \pa\db \mu$ with  $\pa \mu$ bounded polyhomogeneous.  Furthermore, if
$\beta \in \cA^F_{\phg}$ for some positive index set $F$, then $\mu$ is bounded polyhomogeneous and
$\pa\mu\in \rho^{\epsilon}H^{\infty}_b\Omega^{p,q-1}(M)$.
\label{dd.1}\end{lemma}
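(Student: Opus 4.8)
The plan is to adapt the Hodge-theoretic proof of the classical $\pa\db$-lemma to the weighted $b$-setting, using the decompositions \eqref{bi.2a}--\eqref{bi.2b} and the regularity results above. First, $\alpha=\pa\beta$ is $\pa$-closed, and being also $\db$-closed it is $d$-closed, which is the hypothesis of the usual statement. Since $g_b$ is K\"ahler, $\Delta_{\db}=\tfrac12\Delta_d$, so every $\zeta\in L^2\cH^{p,q}(M)$ is $L^2$ $d$-harmonic and hence satisfies $\pa\zeta=\db\zeta=\pa^*\zeta=\db^*\zeta=0$; in particular $\alpha=\pa\beta\perp L^2\cH^{p,q}(M)$. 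As $\beta$ is bounded polyhomogeneous it lies in $\rho^{-\epsilon}L^2$, so \eqref{bi.2a} gives $\beta=\Delta_{\db}\eta+h$, where $\eta:=G\beta\in\rho^{-\epsilon}H^2_b\Omega^{p-1,q}(M)$ ($G$ the Green operator) and $h=\Pi\beta\in L^2\cH^{p-1,q}(M)$, with $\Pi$ the projection onto $L^2\cH^*(M)$. Here $\Delta_{\db}\eta=\beta-h$ is polyhomogeneous (elements of $L^2\cH^*(M)$ being polyhomogeneous), and $\Delta_{\db}$ is an elliptic $b$-operator with polyhomogeneous coefficients of nonnegative index set (Definition~\ref{b.7}), so Corollary~\ref{breg.6} shows $\eta$ is polyhomogeneous.

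The candidate primitive is $\mu:=\db^*\eta\in\rho^{-\epsilon}H^1_b\Omega^{p-1,q-1}(M)$, which is then polyhomogeneous. Since $\pa h=0$ and $g_b$ is K\"ahler (so that $\pa\db^*=-\db^*\pa$ and $\pa\db=-\db\pa$),
\[
  \alpha=\pa\beta=\pa\bigl(\db\db^*\eta+\db^*\db\eta\bigr)=\pa\db\mu+\db^*\db(\pa\eta),
\]
and from $\db\alpha=0$ and $\db\db=0$ one computes $\db\db^*\bigl(\db(\pa\eta)\bigr)=0$. Morally, the integration by parts $\langle\db\db^*(\db\pa\eta),\db\pa\eta\rangle_{L^2_b}=\|\db^*\db\pa\eta\|^2_{L^2_b}$ then forces $\db^*\db\pa\eta=0$, hence $\alpha=\pa\db\mu$. (Conceptually: $\alpha$ is $d$-closed, $\pa$-exact, and --- being orthogonal to $L^2\cH^{p,q}(M)$ --- also $\db$-exact, which is the classical setting of the $\pa\db$-lemma.)

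The delicate point --- the one I expect to be the main obstacle --- is to legitimate the integration by parts above and to prove that $\pa\mu=\pa\db^*\eta$ is \emph{bounded}; both depend on controlling the leading asymptotics of $\eta$ and its derivatives. The indicial family $I(\Delta_{\db},\lambda)=\tfrac12\Delta_{\pa\tM}+c\lambda^2$ has a double pole at $\lambda=0$, so a priori $\db^*\eta$ and $\db\pa\eta$ could carry non-bounded $\log\rho$ terms. The essential use of the hypothesis $\alpha=\pa\beta$ is this: on the model cylinder the $\rho^0$-coefficient of $\alpha$ is $\pa$ of the $\rho^0$-coefficient of $\beta$, hence orthogonal to the $\Delta_{\pa\tM}$-harmonic forms (which are $\pa^*$-closed there), so the worst resonant $\rho^0(\log\rho)^2$ term is annihilated. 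Given this, the arguments of Lemma~\ref{bi.5c}, Lemma~\ref{bi.5} and Corollary~\ref{bi.4a} apply: the ``$\rho\,\pa_\rho$'' part of $\db,\db^*$ lowers the power of $\log$ by one and the remaining $\rho^0$ harmonic leading coefficients are killed by $I(\db,0)$ and $I(\db^*,0)$, so $\db\pa\eta$ and $\db^*\db\pa\eta$ actually lie in $\rho^{\epsilon}H^{\infty}_b$ (validating the pairing) and $\pa\mu$ is bounded polyhomogeneous.

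Finally, if $\beta\in\cA^F_{\phg}\Omega^{p-1,q}(M)$ with $F$ positive, then $\alpha=\pa\beta\in\cA^{F'}_{\phg}$ with $F'$ positive, so $\alpha,\beta\in\rho^{\epsilon}L^2$ once $\epsilon<\inf F'$ and $\Delta_{\db}\eta=\beta-h$ has no $\rho^0$ term; hence $\eta$ has at most a $\rho^0\log\rho$ leading term and $\mu=\db^*\eta$ is already bounded polyhomogeneous. For the last assertion I would rerun the construction with the weight-$(+\epsilon)$ decomposition \eqref{bi.2b} in place of \eqref{bi.2a}: the finite-dimensional correction $A\subset\dot{\Omega}^*(M)$ contributes only a smooth, rapidly decaying, $\pa\db$-exact-modulo-$\dot{\Omega}^*(M)$ term, and now $\Delta_{\db}\eta\in\cA^{F'}_{\phg}$ has $\inf F'>\epsilon$, so Lemma~\ref{bi.5c} applies directly to give $\db^*\eta\in\cA^G_{\phg}$ with $\inf G>\epsilon$, i.e.\ $\pa\mu\in\rho^{\epsilon}H^{\infty}_b\Omega^{p,q-1}(M)$.
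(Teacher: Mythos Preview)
Your overall strategy coincides with the paper's: decompose $\beta=\Delta_{\db}\eta+h$ via \eqref{bi.2a}, set $\mu=\db^*\eta$, and reduce to $\db^*\db\pa\eta=0$. The gap is in your handling of the leading asymptotics of $\eta$.

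You argue that the $(\log\rho)^2$ term in $\eta$ is annihilated because the $\rho^0$-coefficient of $\alpha=\pa\beta$ is orthogonal to the harmonic forms on $\pa\tM$. But the expansion of $\eta$ is determined by $\Delta_{\db}\eta=\beta-h$, not by $\alpha$: the $(\log\rho)^2$ coefficient of $\eta$ is (up to a constant) the $\Delta_{\pa\tM}$-harmonic projection of $\beta_0$, the $\rho^0$-coefficient of $\beta$ (note $h\in L^2\cH$ decays, so contributes nothing at order $\rho^0$). There is no hypothesis forcing $\beta_0$ to be orthogonal to harmonic forms, so in general this term is present. Consequently your claimed route to the integration by parts, and in particular the assertion that $\db\pa\eta\in\rho^{\epsilon}H^{\infty}_b$, is not justified.

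The paper does not try to eliminate the $(\log\rho)^2$ term. Instead it observes that the leading coefficients $\eta_{0,2},\eta_{0,1}\in\ker I(\Delta_{\db},0)$ lie, by the type argument of Lemma~\ref{bi.5c} applied both to $\eta$ and to $\overline{\eta}$, in the common kernel of $I(\db,0)$, $I(\db^*,0)$, $I(\pa,0)$ and $I(\pa^*,0)$. A direct check on the model cylinder then shows that $\db\pa\bigl(\eta_{0,j}(\log\rho)^j\bigr)$ is \emph{bounded} for $j=1,2$: each application of $\db$ or $\pa$ lowers the $\log$ power by one via $\db\log\rho$ or $\pa\log\rho$, while the tangential contributions vanish. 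Hence $\db\pa\eta$ is a bounded polyhomogeneous element of $\ker^{p,q+1}_{-}\Delta_{\db}$, and Lemma~\ref{bi.5} (rather than a bare integration by parts) gives $\db^*(\db\pa\eta)=0$. The same mechanism shows $\pa\mu=\pa\db^*\eta$ is bounded.

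For the ``furthermore'' clause, rerunning the construction with \eqref{bi.2b} is unnecessary. When $\beta$ has positive index set so does $\beta-h$, and Lemma~\ref{bi.5c} applied to $\eta$ gives $\mu=\db^*\eta$ bounded polyhomogeneous; since $\Delta_{\db}(\db^*\eta)=\db^*\beta$ also has positive index set, a second application of Lemma~\ref{bi.5c} (now to $\overline{\db^*\eta}$) yields $\pa\mu\in\rho^{\epsilon}H^{\infty}_b\Omega^{p,q-1}(M)$ directly.
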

\begin{proof}
From \eqref{bi.2a}, there exist $\psi\in \rho^{-\epsilon}H^{2}_b\Omega^{p-1,q}(M)$ and $\gamma\in L^2\cH^{p-1,q}(M)$ such that
$$
     \beta= \Delta_{\db} \psi+ \gamma.
$$
But $\beta$ and $\gamma$ are both polyhomogeneous, so $\psi$ is polyhomogeneous as well, with top order terms
$$
    \psi_{0,2} (\log \rho)^2 + \psi_{0,1}\log \rho + \psi_{0,0}, \qquad  \psi_{0,2}, \psi_{0,1} \in
\mathrm{ker}\, I(\Delta_{\db}, 0).
$$
As in the proof of Lemma~\ref{bi.5c}, this means that $\psi_{0,2}$ and $\psi_{0,1}$ are also in the kernel of $I(\db,0)$ and $I(\db^*,0)$.
The same considerations for $\overline{\psi}$ imply that $\psi_{0,2}$ and $\psi_{0,1}$ are in the kernel of $I(\pa,0)$ and $I(\pa^*,0)$,
and thus $\db\pa \psi$ and $\pa\db^*\psi$ are bounded polyhomogeneous.  Furthermore,
$$
\Delta_{\db}\, \db\pa \psi = \db\pa \Delta_{\db}\psi= \db (\pa (\beta-\gamma))= \db \alpha=0.
$$
Thus, by Lemma~\ref{bi.5}, we have that $\db^*\db\pa\psi=0$, so that
$$
       \alpha= \pa \beta= \pa \db\, \db^*\psi.
$$
We can now set $\mu= \db^*\psi$, which gives the result.

Finally, if $\beta$ is polyhomogeneous and vanishes to positive order, then by Lemma~\ref{bi.5c}, $\db^*\psi$ is bounded
polyhomogeneous.  But $\Delta_{\db}(\db^* \psi)=\db^*\beta$, so applying Lemma~\ref{bi.5c} to the complex conjugate
of $\db^*\psi$ gives that $\pa \mu = \pa \db^* \psi$ vanishes to positive order.
\end{proof}

\begin{theorem}
There is a natural isomorphism
$$
        \WH^{p,q}(g_b,-\epsilon,M,E) \cong
L^2_b\cH^{p,q}_{-}(M;E) := L^2\cH^{p,q}(M;E) \oplus \db^*  \ker^{p,q+1}_{-} \Delta_{\db}.
$$
\label{bi.6}\end{theorem}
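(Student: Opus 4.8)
The plan is to construct an explicit map
\[
\Psi\colon L^2\cH^{p,q}(M;E)\oplus\db^*\ker^{p,q+1}_-\Delta_{\db}\longrightarrow\WH^{p,q}(g_b,-\epsilon,M,E),\qquad \Psi(\gamma,w)=[\gamma+w],
\]
and to prove that it is an isomorphism, arguing in close parallel with \cite[Proposition~6.18]{MelroseAPS}. The map is well defined: a class in $L^2\cH^{p,q}(M;E)$ is $\db$-closed and lies in $\rho^{\epsilon}L^2\Omega^{p,q}(M;E,g_b)\subset\rho^{-\epsilon}L^2\Omega^{p,q}(M;E,g_b)$, while by Lemma~\ref{bi.5} every $w\in\db^*\ker^{p,q+1}_-\Delta_{\db}$ is bounded polyhomogeneous with $\db w=0$, hence lies in $\CI_b\Omega^{p,q}(M;E)\subset\rho^{-\epsilon}L^2\Omega^{p,q}(M;E,g_b)$ and is again a cocycle. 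One first checks that the sum on the left is genuinely direct: if $\gamma=\db^*\xi$ with $\gamma\in L^2\cH^{p,q}(M;E)$ and $\xi\in\ker^{p,q+1}_-\Delta_{\db}$, then integrating by parts gives $\|\gamma\|_{L^2}^2=\langle\db^*\xi,\gamma\rangle_{L^2_b}=\langle\xi,\db\gamma\rangle_{L^2_b}=0$, the boundary term vanishing because $\gamma$, being polyhomogeneous with positive index set $\cI_0$, decays exponentially while $\xi$ grows at worst like a power of $\log\rho$; hence $\gamma=0$ and then $\db^*\xi=0$.

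For surjectivity, take a representative $\eta\in\rho^{-\epsilon}L^2_{\db}\Omega^{p,q}(M;E,g_b)$ of a prescribed class and use the Hodge decomposition \eqref{bi.2a} to write $\eta=\Delta_{\db}\nu+\gamma$ with $\nu\in\rho^{-\epsilon}H^2_b\Omega^{p,q}(M;E)$ and $\gamma\in L^2\cH^{p,q}(M;E)$, so that $\eta=\db\db^*\nu+\db^*\db\nu+\gamma$. From $\db\eta=0=\db\gamma$ we get $\Delta_{\db}(\db\nu)=\db\Delta_{\db}\nu=\db(\eta-\gamma)=0$; by Corollary~\ref{breg.6} the form $\db\nu$ is polyhomogeneous, and since its expansion can only begin at $\rho^0$ (the sole element of $\Spec_b(\Delta_{\db})$ yielding an exponent in $(-\epsilon,\epsilon)$ is $\tau=0$), it lies in $\bigcap_{\epsilon'>0}\rho^{-\epsilon'}H^2_b\Omega^{p,q+1}(M;E)$, i.e.\ $\db\nu\in\ker^{p,q+1}_-\Delta_{\db}$ and therefore $\db^*\db\nu\in\db^*\ker^{p,q+1}_-\Delta_{\db}$. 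By Lemma~\ref{bi.5c}, $\db^*(\db\nu)$ is bounded polyhomogeneous, hence lies in $\rho^{-\epsilon}L^2$; together with $\gamma\in\rho^{\epsilon}L^2$ this shows $\db\db^*\nu=\eta-\db^*\db\nu-\gamma\in\rho^{-\epsilon}L^2$, so $\db^*\nu\in\rho^{-\epsilon}L^2_{\db}\Omega^{p,q-1}(M;E,g_b)$ and $\db\db^*\nu$ is a coboundary of the weighted complex. Consequently $[\eta]=[\gamma+\db^*\db\nu]=\Psi(\gamma,\db^*\db\nu)$.

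For injectivity, suppose $\Psi(\gamma,w)=0$, so that $w=\db^*\xi$ for some $\xi\in\ker^{p,q+1}_-\Delta_{\db}$ and $\gamma+w=\db\zeta$ with $\zeta\in\rho^{-\epsilon}L^2_{\db}\Omega^{p,q-1}(M;E,g_b)$. For any $\gamma'\in L^2\cH^{p,q}(M;E)$, integration by parts yields $\langle w,\gamma'\rangle_{L^2_b}=\langle\xi,\db\gamma'\rangle_{L^2_b}=0$ and $\langle\db\zeta,\gamma'\rangle_{L^2_b}=\langle\zeta,\db^*\gamma'\rangle_{L^2_b}=0$ — legitimate because $\gamma'$ and $*\overline{\gamma'}$ decay exponentially, $\xi$ grows at worst like a power of $\log\rho$, $\zeta$ and $\db\zeta$ lie in $\rho^{-\epsilon}L^2$, and $\zeta\wedge*\overline{\gamma'}$ has type $(n,n-1)$ so that $\pa(\zeta\wedge*\overline{\gamma'})=0$ and only an exponentially small end-contribution survives — and hence $\langle\gamma,\gamma'\rangle_{L^2_b}=0$ for all such $\gamma'$, so $\gamma=0$. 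It remains to deduce $w=0$ from $w=\db\zeta$ with $w\in\db^*\ker^{p,q+1}_-\Delta_{\db}$. Applying \eqref{bi.2a} to $\zeta$ and writing $\zeta=\db\db^*\sigma+\db^*\db\sigma+\gamma_0$, one may replace the primitive by $\zeta':=\db^*\db\sigma$, which satisfies $\db\zeta'=w$, $\db^*\zeta'=0$ and $\Delta_{\db}\zeta'=\db^*\db\zeta'=\db^*w=0$, hence (by elliptic regularity and the same root count as above) $\zeta'\in\ker^{p,q-1}_-\Delta_{\db}$. Since $w$ is itself bounded polyhomogeneous with $\db w=\db^*w=0$, the vanishing of $w$ has become a statement purely about the boundary (indicial) data of $\zeta'$ and of $w$, and it follows from the non-degeneracy of the pairing $B$ on $F(\db+\db^*,0)$ from \eqref{fn.2} (see \cite[Proposition~6.2]{MelroseAPS}), combined with Lemmas~\ref{bi.5c} and \ref{bi.5}. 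This last step — extracting $w=0$ from the fact that $w$ is at once an extended harmonic form and a weighted $L^2$ coboundary — is the main obstacle, and is handled exactly as in the de Rham case of \cite[Proposition~6.18]{MelroseAPS}.
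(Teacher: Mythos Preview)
Your proof is correct and takes essentially the same approach as the paper. The surjectivity argument is identical (your $\nu$ is the paper's $\zeta$, your $\db\nu$ is the paper's $v$), and injectivity proceeds the same way: kill $\gamma$ by integration by parts, then kill $w$ via the nondegenerate boundary pairing $B$ of \eqref{fn.2}.

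The only place you stop short is the actual computation showing $B(u,v)=0$, which you defer to \cite[Proposition~6.18]{MelroseAPS}. The paper writes this out explicitly (in your notation, with $u=\left.w\right|_{\pa\tM}$):
\[
B(u,v)=-\langle w,(\db+\db^*)\tilde v\rangle_{L^2_b}=-\langle\xi,\db\,\db\tilde v\rangle_{L^2_b}-\langle\zeta,\db^*\db^*\tilde v\rangle_{L^2_b}=0,
\]
using \emph{both} representations $w=\db^*\xi=\db\zeta$, the decay \eqref{fn.3}, and $\db^2=(\db^*)^2=0$. Nondegeneracy of $B$ then gives $u=0$, so $w$ vanishes to positive order, and the final integration by parts $\|w\|^2=\langle\db^*\xi,\db\zeta\rangle=\langle\xi,\db\,\db\zeta\rangle=0$ is justified. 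This Dolbeault-specific splitting of $(\db+\db^*)\tilde v$ into its $(p,q\pm1)$ parts is what makes the argument go through, so it is worth writing out rather than citing the de~Rham analogue. Your auxiliary harmonic primitive $\zeta'\in\ker^{p,q-1}_-\Delta_{\db}$ is a harmless detour but is not needed; the paper works directly with the original $\zeta$.
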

\begin{proof}
By Lemma~\ref{bi.5}, there is a well-defined map
$$
   \Psi: L^2_b\cH^{p,q}_{-}(M;E) \to   \WH^{p,q}(g_b,-\epsilon,M;E).
$$
If $u\in\rho^{-\epsilon}L^2_{\db}\Omega^{p,q}(M;E)$ represents a class in $\WH^{p,q}(g_b,-\epsilon,M;E)$, then by \eqref{bi.2a},
$$
       u = \db\,\db^* \zeta + \db^*\db \zeta + \gamma
$$
for some $\zeta\in \rho^{-\epsilon} H^2_b\Omega^{p,q}(M;E,g_b)$ and $\gamma\in L^2\cH^{p,q}(M;E)$.  Set $v=\db \zeta$.
Then $\db u=0$ implies $\db\, \db^*v=0$.  Since $\db v=0$ as well, we have that 
$$
       v\in \ker^{p,q+1}_- \Delta_{\db},
$$
and thus
$$
      u- \db\, \db^*\zeta= \gamma+ \db^* v \in L^2\cH^{p,q}(M;E) + \db^*  \ker^{p,q+1}_{-} \Delta_{\db}.
$$
This shows that the map $\Psi$ is surjective.

To show that $\Psi$ is injective, fix  $\gamma\in L^2\cH^{p,q}(M;E)$ and  $w\in \ker^{p,q+1}_{-} \Delta_{\db}$ and suppose that there exists
$\zeta\in \rho^{-\epsilon}L^2_b\Omega^{p,q}(M;E)$ such that
$$
    \gamma+ \db^* w= \db \zeta.
$$
We must show that $\gamma+ \db^*w=0$.  First compute
$$
      \| \gamma \|^2_{L^2_b}= \langle \gamma, \gamma\rangle_{L^2_b}= \langle \gamma, \db\zeta- \db^*w\rangle_{L^2_b}= \langle \db^*\gamma, \zeta\rangle_{L^2_b} - \langle \db\gamma, w\rangle_{L^2_b}=0,
$$
so $\gamma=0$ and $\db^*w = \db\zeta$.  By Lemma~\ref{bi.5}, $\db^*w \in \ker (\db+\db^*)$ is bounded polyhomogeneous.
Its restriction $u$ to $\pa\tM$ is an element of $F(\db+\db^*,0)$.  To show that $u=0$, we use the nondegeneracy of the
pairing \eqref{fn.2}. Namely, it suffices to show that for any $v\in F(\db+\db^*,0)$, we have $B(u,v)=0$.  But if $\widetilde{v}$
is a smooth extension of $v$, then using \eqref{fn.3}, we find that 
\begin{equation*}
\begin{aligned}
B(u,v) & = \langle (\db+\db^*)\db^* w, \widetilde{v}\rangle_{L^2_b} - \langle \db^* w, (\db+\db^*)\widetilde{v}\rangle_{L^2_b},   \\
 & = - \langle \db^* w, (\db+\db^*)\widetilde{v}\rangle_{L^2_b}, \\
 & =- \langle  w, \db\,\db\widetilde{v}\rangle_{L^2_b}- \langle  \zeta, \db^*\db^*\widetilde{v}\rangle_{L^2_b}=0.
\end{aligned}
\end{equation*}
This shows that $u=0$, and so $\db^*w$ vanishes to positive order.
This justifies the final integration by parts
$$
    \| \db^*w \|^2_{L^2_b}= \langle \db^*w, \db^*w\rangle_{L^2_b}= \langle \db^*w, \db\zeta\rangle_{L^2_b}= \langle w,\db\,\db \zeta\rangle_{L^2_b}=0,
$$
so that $\db^*w\equiv 0$.

\end{proof}

\section{Polyhomogeneity at infinity for asymptotically cylindrical Calabi-Yau metrics}  \label{accy.0}
Let $\bM$, $\bD$, $g_b$ and $\omega_b$ be as in \S~\ref{ht.0}.  We now prove the main regularity result for the
complex Monge-Ampère equation.

\begin{theorem}
Let $F$ be a positive index set.  If there exists $f\in \cA^F_{\phg}(\tM)$ and $u\in \rho^\epsilon\CI_b(M)$ for some
$\epsilon > 0$ such that
\begin{equation}
           \frac{ (\omega_b +i\pa \db u)^n}{\omega_b^n} = e^f,
\label{accy.1a}\end{equation}
then $u\in\cA^G_{\phg}(\tM)$ for some positive index set $G$.
\label{accy.1}\end{theorem}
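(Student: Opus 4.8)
The plan is to bootstrap using the linear regularity statement Corollary~\ref{breg.9}. First I would recast \eqref{accy.1a} as a perturbed linear equation. Setting $\omega_u = \omega_b + i\pa\db u$ and expanding the ratio of volume forms as a sum of elementary symmetric functions of the endomorphism $\omega_b^{-1}(i\pa\db u)$, one has
\[
\frac{(\omega_b + i\pa\db u)^n}{\omega_b^n} = 1 + \Delta_b u + Q(i\pa\db u),
\]
where $\Delta_b u = \tr_{\omega_b}(i\pa\db u)$ is the complex Laplacian of the K\"ahler $b$-metric $g_b$ and $Q$ gathers the elementary symmetric functions of degree $\ge 2$, so that each of its terms contains at least two factors of $i\pa\db u$, multiplied by coefficients that are smooth functions of the (polyhomogeneous, bounded) entries of $\omega_b^{-1}$. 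Since $g_b$ is a polyhomogeneous K\"ahler $\AC$-metric, $\Delta_b$ is an elliptic second order $b$-operator with polyhomogeneous coefficients having the nonnegative index family of $g_b$, so Corollary~\ref{breg.9} applies to it. As $i\pa\db$ is a second order $b$-differential operator and $\CI_b$ is a ring, $u\in\rho^a\CI_b(M)$ forces $Q(i\pa\db u)\in\rho^{2a}\CI_b(M)$. Rewriting \eqref{accy.1a} as $1 + \Delta_b u + Q(i\pa\db u) = e^f$ gives
\[
\Delta_b u = (e^f - 1) - Q(i\pa\db u),
\]
and, $F$ being positive, $e^f - 1$ is polyhomogeneous and vanishes to positive order.

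For the base case, I would apply Corollary~\ref{breg.9} with $P = \Delta_b$, $\alpha = \epsilon$, $f_1 = -Q(i\pa\db u)\in\rho^{2\epsilon}\CI_b(M)$ (so $\beta=\epsilon$) and $f_2 = e^f-1$. This yields $u = u_1 + u_2$ with $u_1\in\bigcap_{\delta>0}\rho^{2\epsilon-\delta}\CI_b(M)$ and $u_2$ polyhomogeneous. Unwinding the proof of Corollary~\ref{breg.9}, the index set of $u_2$ is generated by $\Spec_b(\Delta_b)$ together with the positive index set of $e^f-1$; since the indicial family of $\Delta_b$ has the form $a\,\Delta_{\pa\tM} + c\lambda^2$ with $a,c>0$ (as for the $\db$-Laplacian), $\Spec_b(\Delta_b)$ is purely imaginary, so all exponents produced are real and exceed $\epsilon>0$, and $u_2$ has a positive index set.

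The inductive step is then routine. Suppose $u = v + w$ with $v\in\rho^\alpha\CI_b(M)$, $\alpha>0$, and $w$ polyhomogeneous with positive index set $G_w$. Writing $\Delta_b v = \Delta_b u - \Delta_b w$ and splitting $Q(i\pa\db u) = Q\bigl(i\pa\db(v+w)\bigr)$ into the monomials containing at least one factor $i\pa\db v$ and the remaining ones, we obtain
\[
\Delta_b v = \bigl[(e^f - 1) - Q(i\pa\db w) - \Delta_b w\bigr] - R,
\]
where the bracket is polyhomogeneous (sums and products of polyhomogeneous forms) and $R\in\rho^{\alpha+\beta}\CI_b(M)$ with $\beta = \min(\alpha,\inf G_w)>0$. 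Applying Corollary~\ref{breg.9} to $v$ improves the conormal remainder to $\bigcap_{\delta>0}\rho^{\alpha+\beta-\delta}\CI_b(M)$, the new polyhomogeneous term again carrying a positive index set by the argument above. Iterating, the conormal remainder lies in $\rho^{\alpha_k}\CI_b(M)$ with $\alpha_k\to\infty$, while the successive polyhomogeneous approximations agree to increasing order and assemble into a single polyhomogeneous expansion $U$ with positive index set $G$. Since $\bigcap_k\rho^{\alpha_k}\CI_b(M) = \dot{\cC}^\infty(\tM)$, we get $u - U\in\dot{\cC}^\infty(\tM)$, hence $u\in\cA^G_{\phg}(\tM)$ with $G$ positive.

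The main obstacle is the bookkeeping sustaining this iteration: one must check that the decay increments $\beta$ stay bounded below so that $\alpha_k\to\infty$; that the nonlinearity $Q$ and the exponential $e^f$ preserve polyhomogeneity and positivity of index sets (using that the polyhomogeneous spaces are closed under products and that $\Spec_b(\Delta_b)$ is purely imaginary); and that the partial expansions produced at successive stages are mutually consistent up to the order of accuracy then available, so that they do assemble into a genuine asymptotic expansion. This parallels the inductive bootstrap already used in the proof of Corollary~\ref{breg.6}.
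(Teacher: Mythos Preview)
Your proposal is correct and follows essentially the same iterative bootstrap via Corollary~\ref{breg.9} as the paper. The only organizational difference is that the paper absorbs the accumulated polyhomogeneous approximation $u_k$ into a new reference form $\omega_{b,k}=\omega_b+i\pa\db u_k$ at each step, so that the remainder $v_k=u-u_k$ satisfies a fresh Monge--Amp\`ere equation whose nonlinear part is purely quadratic in $v_k$; this guarantees a fixed gain of $\epsilon/2$ per step without tracking cross terms. Your version keeps $\omega_b$ fixed and splits $Q(i\pa\db(v+w))$ into pure-$w$ and cross pieces, which works equally well once you note that $\inf G_w$ is pinned down at the first step and does not decrease, so the increments $\beta_k$ stay bounded below. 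The bookkeeping caveats you flag (log terms costing an arbitrarily small $\delta$, consistency of successive partial expansions) are exactly the issues, and they are handled just as in Corollary~\ref{breg.6}.
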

\begin{proof}
It suffices to show that for each $k\in \bbN$, there is a real index set $G^k$ and $u_k\in \cA^{G^k}_{\phg}(\tM)$ such that
\begin{equation}
       u-u_k\in \rho^{\frac{k\epsilon}2+\delta}\CI_b(M),
\label{accy.2}\end{equation}
where $\delta=\frac{\epsilon}{4}$.

For $k=1$, it suffices to take $u_1=0$ and $G^1=\emptyset$.  Suppose then that we already have a real index set $G^k$ and
$u_k\in \cA^{G^k}_{\phg}(\tM)$ such that \eqref{accy.2} holds.  We must find $G^{k+1}$ and $u_{k+1}$ so that \eqref{accy.2}
holds with $k$ replaced by $k+1$.

Since $u_k\in \rho^{\epsilon}\CI_b(M)$, we can replace $u_k$ by $\chi(\frac{\rho}{r})u_k$, where $\chi\in \CI_c([0,\infty))$ is
a cut-off function with $\chi(t)=1$ for $t<1$ and $r \ll 1$, so as to make the $\cC^2_{b}(M)$-norm of  $u_k$ small enough to ensure that
$$
                  \frac{\omega_b}{2} < \omega_b+ i\pa\db u_k < 2 \omega_b.
$$
Thus $\omega_{b,k}:= \omega_b +i\pa \db u_k$ is also the Kähler form of an exact polyhomogeneous $b$-metric.
By our inductive hypothesis, $v_k:= u-u_k\in\rho^{\frac{k\epsilon}{2}+\delta} \CI_b(M)$ satisfies the equation
\begin{equation}
                \frac{(\omega_{b,k}+ i\pa\db v_k)^n}{\omega_{b,k}^n}= e^{f_k}, \quad \mbox{with} \; f_k= f+ \log\left(\frac{\omega_b^n}{\omega^n_{b,k}} \right).
\label{accy.3}\end{equation}

Since $f\in \cA^G_{\phg}(\tM)$ and $u_k\in \cA^{G^k}_{\phg}(\tM)$, we see that $f_k\in \cA^{\tilde{G}^k}_{\phg}(\tM)$ for some real index set
$\tilde{G}^k$.  Moreover, by \eqref{accy.2},
$$
     f_k= f+ \log\left(\frac{\omega_b^n}{\omega^n_{b,k}} \right)=  \log\left( \frac{ (\omega_b +i\pa \db u)^n}{\omega_b^n}  \right)    -\log\left(\frac{\omega_{b,k}^n}{\omega^n_{b}} \right) \in \rho^{\frac{k\epsilon}{2}+\delta}\CI_b(M).
$$
Now write \eqref{accy.3} as
\begin{equation}
   1 + \Delta_{\omega_{b,k}} v_k + \sum_{j=2}^n N^{\omega_{b,k}}_j(v_k)= e^{f_k},
\label{accy.4}\end{equation}
where $\Delta_{\omega_{b,k}}$ is the $\db$-Laplacian associated to the Kähler form $\omega_{b,k}$, that is, half the Laplacian associated to the corresponding Riemannian metric, and where
$$
      N^{\omega_{b,k}}_j(h)= \frac{n!}{(n-j)! j!} \left( \frac{ \omega^{n-j}_{b,k}\wedge (i\pa\db h)^j}{\omega^n_{b,k}} \right),  \quad h\in \CI_b(M).
$$
From \eqref{accy.2}, we deduce that $N^{\omega_{b,k}}_j(v_k)\in \rho^{\frac{jk\epsilon}{2}+ j\delta}\CI_b(M)$, so that 
$$
        \Delta_{\omega_{b,k}} v_k  = w_k + (e^{f_k}-1),
$$
where $w_k\in \rho^{k\epsilon+ 2\delta}\CI_b(M)$ and $(e^{f_k}-1)\in \cA^{H^k}_{\phg}(\tM)\cap \rho^{\frac{k\epsilon}{2}+\delta}\CI_b(M)$
for some real index set $H^k$.  By Corollary~\ref{breg.9}, we can therefore find $h_{k+1}$ polyhomogeneous and in  $\rho^{\frac{k\epsilon}{2}+\delta}\CI_b(M)$ such that
$$
    v_k- h_{k+1} \in \rho^{k\epsilon+ \delta}\CI_b(M).
$$
Thus, we can take $u_{k+1}= u_k+ h_{k+1} \in \cA^{G^{k+1}}_{\phg}(\tM)$ where $G^{k+1}$ is  some real index set.  This completes the inductive step and the proof.
\end{proof}

This theorem shows that the Calabi-Yau $\AC$-metrics constructed in \cite{HHN2012} are polyhomogeneous
at infinity.  Let us first recall the construction of \cite{HHN2012}.

\begin{definition}
Let $\bM$ be a compact Kähler orbifold of complex dimension $n\ge 2$.  Let $\bD\in |-K_{\bM}|$ be an effective orbifold divisor satisfying the following two conditions:
\begin{itemize}
\item[(i)] The complement $M:= \bM\setminus \bD$ is a smooth manifold;
\item[(ii)] The orbifold normal bundle of $\bD$ is biholomorphic to $(\bbC\times D)/ \langle \iota\rangle$ as an orbifold line bundle, where $D$ is a connected complex manifold and $\iota$ is a complex automorphism of $D$ of order $m<\infty$ acting on the product via $\iota(w,x)= (e^{\frac{2\pi i}{m}}w,\iota(x))$.
\end{itemize}
Then if we pick a meromorphic $n$-form $\Omega$ on $\bM$ with a simple pole along $\bD$, the construction of \cite{Tian-Yau1990} and \cite{HHN2012} ensures that for every Kähler class $\mathfrak{t}$ on $\bM$, there exists a Calabi-Yau $\AC$-metric $g_{\CY}$ on $M$ with Kähler class $\omega_{\CY}$ such that $\omega_{\CY}\in \left. \mathfrak{t}\right|_{M}$ and $\omega_{\CY}^n= i^{n^2}\Omega\wedge \overline{\Omega}$.
We say that the Calabi-Yau manifold $(M,g_{\CY})$ of the above construction is a \textbf{compactifiable asymptotically cylindrical Calabi-Yau manifold} with compactification $\bM$.
\label{tt.1}\end{definition}

To obtain the uniqueness of such a Calabi-Yau metric, we need to better understand the role of the $n$-form $\Omega$ in the
construction of \cite{HHN2012}.  First notice that $\Omega$ corresponds to a holomorphic section of $K_{\bM}(\bD)$.
Since this bundle is holomorphically trivial by hypothesis, restriction to $\bD$ and the adjunction formula give a canonical identification
\begin{equation}
  H^0(\bM; K_{\bM}(\bD))= H^0(\bD; \left. K_{\bM}(\bD)\right|_{\bD})=H^0(\bD; K_{\bD}).
\label{un.1}\end{equation}
In other words, $\Omega$ corresponds to a choice of a holomorphic section of $K_{\bD}$.  Its restriction to
$\pa\tM$ yields a section $\Omega_{\pa \tM}\in \CI(\pa\tM; \Lambda^{0,n}(\left. {}^{b}T^*\tM\right|_{\pa \tM}))$.
Clearly then,
\begin{equation}
\omega_{\CY}^n= i^{n^2}\Omega\wedge \overline{\Omega}  \quad \Longleftrightarrow    \quad  \left.  \omega_{\CY}^{n} \right|_{\pa \tM} = i^{n^2}\Omega_{\pa\tM}\wedge \Omega_{\pa\tM}.
\label{un.2}\end{equation}
Thus the role of $\Omega$ is to impose a condition at infinity for the metric $g_{\CY}$.  Indeed, in the construction of \cite{HHN2012},
part of the behavior of $\omega_{\CY}$ at infinity is specified by requiring that the `pull-back' of $\omega_{\CY}$ to $\bD$ corresponds
to the Kähler form of the Calabi-Yau metric on $\bD$ associated to the Kähler class $\left. \mathfrak{t}\right|_{\bD}$.  From this,
\eqref{un.2} then completely determines $g_b$ in the direction conormal to $\bD$.  This suggests that one can describe this
condition at infinity directly without choosing a meromorphic $n$-form $\Omega$.
Let $c:\bD\times \Delta/\langle \iota\rangle \to \bM$ be a choice of smooth orbifold tubular neighborhood for $\bD$, where
$\Delta\subset \bbC$ is the unit disk. Let $q: \bD\times \Delta\to \bD\times \Delta/\langle \iota\rangle$ be the quotient map.
The existence and uniqueness results of  \cite[Theorem~D and Theorem~E]{HHN2012} can then be
combined into the following.
\begin{theorem}[Haskins-Hein-Nordström]
Let $\bM$ and $\bD$ be as in Definition~\ref{tt.1}.  For any choice of Kähler class $\mathfrak{t}$ on $\bM$ and any $\lambda>0$,
there exists a unique asymptotically cylindrical Calabi-Yau metric $g_{\CY}$ on $M$ with Kähler form $\omega_{\CY}$ such that
$[\omega_{\CY}]=\left.\mathfrak{t}\right|_{M}$ and
\begin{equation}
          g_{\CY} - c_*q_*\left( g_{\bD} + \lambda \frac{dw\odot d\overline{w}}{|w|^2}\right) \in \rho^{\delta}\CI_b(M;  \mathrm{Sym}^2(T^*M))
\label{cycy.1}\end{equation}
for some $\delta>0$, where $g_{\bD}$ is the Calabi-Yau metric on $\bD$ associated to the Kähler class $\left. \mathfrak{t}\right|_{\bD}$.
\label{un.3}\end{theorem}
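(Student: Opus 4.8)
The plan is to derive the theorem from the existence and uniqueness results of Haskins--Hein--Nordström, \cite[Theorem~D]{HHN2012} and \cite[Theorem~E]{HHN2012}; the only real work is to translate their normalization, phrased in terms of a choice of meromorphic $n$-form $\Omega$ on $\bM$ with a simple pole along $\bD$, into the normalization \eqref{cycy.1} governed by the parameter $\lambda$.

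Fix the Kähler class $\mathfrak{t}$. By \eqref{un.1}, a choice of such an $\Omega$ amounts to a choice of a nowhere-vanishing holomorphic section $\eta$ of the (trivial) canonical bundle $K_{\bD}$, and rescaling $\eta\mapsto e^{i\psi}\eta$, $\psi\in\bbR$, changes neither $\Omega\wedge\overline{\Omega}$ nor the metric produced. Applying \cite[Theorem~D]{HHN2012} to $(\bM,\bD)$, the class $\mathfrak{t}$ and $\Omega$ produces an asymptotically cylindrical Calabi--Yau metric $g_{\CY}$ on $M$ with $[\omega_{\CY}]=\mathfrak{t}|_M$ and $\omega_{\CY}^n=i^{n^2}\Omega\wedge\overline{\Omega}$, whose `pull-back' to $\bD$ is the Calabi--Yau metric $g_{\bD}$ in the class $\mathfrak{t}|_{\bD}$, and which, by \cite[Theorem~E]{HHN2012}, converges to a product $b$-metric on the end, together with all derivatives taken along the cylindrical metric, at an exponential rate $\mathcal{O}(e^{-\delta t})$. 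To identify this model, I would pass to the coordinate $w=e^{-s+i\theta}$ on $\Delta$, in which $\frac{dw\odot d\overline{w}}{|w|^2}=ds^2+d\theta^2$, so that $c_*q_*\!\bigl(g_{\bD}+\lambda\,\frac{dw\odot d\overline{w}}{|w|^2}\bigr)$ is itself a product $b$-metric restricting to $g_{\bD}$ on $\bD$ and scaled by $\lambda$ in the conormal directions. Comparing top powers of the two Kähler forms on the cross-section and invoking \eqref{un.2}, together with the fact that Ricci-flatness of $g_{\bD}$ forces $\omega_{\bD}^{n-1}$ to be a constant multiple of $i^{(n-1)^2}\eta\wedge\overline{\eta}$, one finds that the model of $g_{\CY}$ is exactly $c_*q_*\!\bigl(g_{\bD}+\lambda(\eta)\,\frac{dw\odot d\overline{w}}{|w|^2}\bigr)$, where $\lambda(\eta)>0$ is a positive multiple of $\int_{\bD}i^{(n-1)^2}\eta\wedge\overline{\eta}$, the multiple depending only on $\mathfrak{t}|_{\bD}$. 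Since $\lambda(c\,\eta)=|c|^2\lambda(\eta)$, the assignment $\eta\mapsto\lambda(\eta)$ descends to a bijection from the set of admissible $\Omega$ (modulo phase) onto $(0,\infty)$. Finally, exponential decay of all cylindrical-metric derivatives is precisely membership of the difference in $\rho^{\delta}\CI_b(M;\mathrm{Sym}^2(T^*M))$, since the $b$-vector fields are exactly those of uniformly bounded length for a $b$-metric; this yields a metric satisfying \eqref{cycy.1} for the prescribed $\lambda$, namely the one attached to the $\Omega$ with $\lambda(\eta)=\lambda$.

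For uniqueness, if $g_{\CY}$ and $g_{\CY}'$ both satisfy \eqref{cycy.1} with the same $\mathfrak{t}$ and $\lambda$, then both have Kähler class $\mathfrak{t}|_M$ and the same product-$b$-metric asymptotic model; by the computation just sketched they are therefore attached to one and the same $\Omega$ up to phase, hence solve the same complex Monge--Ampère equation $\omega^n=i^{n^2}\Omega\wedge\overline{\Omega}$ in the class $\mathfrak{t}|_M$ with the same asymptotics, and \cite[Theorem~E]{HHN2012} forces $g_{\CY}=g_{\CY}'$. The main obstacle is the bookkeeping in the previous paragraph: matching Haskins--Hein--Nordström's conventions for $\Omega$ to the explicit conormal model $\lambda\,\frac{dw\odot d\overline{w}}{|w|^2}$, and checking that $\lambda(\eta)$ sweeps out all of $(0,\infty)$. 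Everything else is a direct appeal to \cite{HHN2012}.
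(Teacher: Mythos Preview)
Your proposal is correct and matches the paper's treatment. The paper does not give a formal proof of this theorem at all: it presents the statement as a direct reformulation of \cite[Theorem~D and Theorem~E]{HHN2012}, with the paragraph preceding the theorem (containing \eqref{un.1} and \eqref{un.2}) explaining, exactly as you do, that the choice of $\Omega$ amounts to a choice of holomorphic section of $K_{\bD}$ and that the Monge--Amp\`ere normalization $\omega_{\CY}^n=i^{n^2}\Omega\wedge\overline{\Omega}$ reduces to a boundary condition fixing the conormal scale $\lambda$. Your write-up simply makes this correspondence more explicit, including the bijection $\eta\mapsto\lambda(\eta)$ onto $(0,\infty)$ via rescaling, which the paper leaves implicit.
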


\begin{corollary}
The asymptotically cylindrical Calabi-Yau metric of the previous theorem is in fact a polyhomogeneous exact b-metric.
\label{accy.5}\end{corollary}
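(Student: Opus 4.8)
The plan is to deduce Corollary~\ref{accy.5} directly from the Monge-Amp\`ere regularity result Theorem~\ref{accy.1}, so that the only real task is to arrange the hypotheses of that theorem. Recall (Theorem~\ref{un.3} and its proof in \cite{Tian-Yau1990, HHN2012}) that the Calabi-Yau metric is built by first fixing an explicit \emph{approximately Calabi-Yau} K\"ahler $\AC$-metric $\omega_b$ on $M$, representing $\mathfrak{t}|_M$, which near $\pa\tM$ is a product $b$-metric assembled from the Calabi-Yau metric $g_{\bD}$ on the compact (orbifold) divisor $\bD$ and the cylindrical model $\lambda\,\frac{dw\odot d\overline{w}}{|w|^{2}}$ in the normal directions; in particular $\omega_b$ is a polyhomogeneous exact $b$-metric whose $\rho^{0}$-coefficient is of product form. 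The Calabi-Yau K\"ahler form is then $\omega_{\CY}=\omega_b+i\pa\db u$, where $u$ solves the complex Monge-Amp\`ere equation $(\omega_b+i\pa\db u)^{n}/\omega_b^{n}=e^{f}$ with $f=\log\!\big(i^{n^{2}}\Omega\wedge\overline{\Omega}/\omega_b^{n}\big)$, and where, by the estimates of \cite{HHN2012}, the solution lies in $\rho^{\epsilon}\CI_b(M)$ for some $\epsilon>0$; this decay of $u$ is the potential-theoretic counterpart of the exponential convergence $g_{\CY}-c_*q_*\big(g_{\bD}+\lambda\frac{dw\odot d\overline{w}}{|w|^{2}}\big)\in\rho^{\delta}\CI_b$ recorded in Theorem~\ref{un.3}.

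Next I would verify that $f$ satisfies the hypothesis of Theorem~\ref{accy.1}, namely that $f\in\cA^{F}_{\phg}(\tM)$ for a \emph{positive} index set $F$. Polyhomogeneity is immediate: $\Omega$ is meromorphic on $\bM$ with a simple pole along $\bD$, so in the normal coordinate $w$ its expansion involves only integer exponents, while $\omega_b$ is a polyhomogeneous $b$-metric bounded above and below near $\pa\tM$; hence $i^{n^{2}}\Omega\wedge\overline{\Omega}/\omega_b^{n}$ is a positive polyhomogeneous function there, and so is $f$. That every exponent appearing in the expansion of $f$ has positive real part --- equivalently, that $f$ vanishes at $\pa\tM$ --- is precisely the leading-order matching built into the construction: as in \eqref{un.2}, the choice of $\Omega$ (equivalently of $\lambda$) is made so that $\omega_b^{n}$ agrees with $i^{n^{2}}\Omega\wedge\overline{\Omega}$ to top order at infinity. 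With $f\in\cA^{F}_{\phg}(\tM)$ for such an $F$ and $u\in\rho^{\epsilon}\CI_b(M)$, Theorem~\ref{accy.1} now applies and gives $u\in\cA^{G}_{\phg}(\tM)$ for some positive index set $G$.

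It then remains to transfer this back to the metric. Since $\pa$ and $\db$ are, in the $b$-calculus, built from $b$-vector fields, the operator $i\pa\db$ maps $\cA^{G}_{\phg}(\tM)$ into $\cA^{G}_{\phg}(\tM;\Lambda^{1,1}({}^{b}T^{*}\tM))$; as $G$ is positive, $i\pa\db u$ is therefore polyhomogeneous and vanishes at $\pa\tM$. Consequently $\omega_{\CY}=\omega_b+i\pa\db u$, and hence $g_{\CY}$, is a polyhomogeneous $b$-metric, and because adding $i\pa\db u$ does not alter the $\rho^{0}$-coefficient --- which for $\omega_b$ is of product form --- $g_{\CY}$ is exact. (Alternatively, exactness is visible directly from the asymptotic form in Theorem~\ref{un.3}; only the polyhomogeneity is genuinely new.)

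The main obstacle is organizational rather than analytic: one has to extract cleanly from the Tian-Yau/Haskins-Hein-Nordstr\"om construction both that $\omega_{\CY}$ differs from an explicit polyhomogeneous exact $b$-metric by $i\pa\db$ of a single globally defined function lying in $\rho^{\epsilon}\CI_b(M)$, and that the right-hand side $f$ of the resulting Monge-Amp\`ere equation is polyhomogeneous and normalized to vanish at infinity. Once these inputs are in place, the corollary follows formally from Theorem~\ref{accy.1} --- itself an inductive bootstrap off the linear regularity statement Corollary~\ref{breg.9} --- together with the elementary fact that $i\pa\db$ preserves polyhomogeneity in the $b$-calculus.
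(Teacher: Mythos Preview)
Your proposal is correct and follows essentially the same approach as the paper's proof: extract from \cite{HHN2012} that $\omega_{\CY}=\omega_b+i\pa\db u$ with $\omega_b$ an exact $b$-metric, $u\in\rho^{\epsilon}\CI_b(M)$, and $f=\log\big(i^{n^{2}}\Omega\wedge\overline{\Omega}/\omega_b^{n}\big)$ polyhomogeneous with positive index set, then apply Theorem~\ref{accy.1}. The paper is in fact slightly more specific (it records $f\in\rho\CI(\tM)$, i.e., smooth up to the boundary and vanishing there) and considerably terser, but your additional discussion of why $f$ has the required regularity and why exactness is preserved is accurate and does no harm.
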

\begin{proof}
The existence of $\omega_{\CY}$ as an element of $\CI_b(M; \Lambda^2(T^*(M\setminus \pa M)))$ is obtained in \cite{HHN2012} by finding for $\epsilon>0$ small enough a solution
$u\in \rho^{\epsilon}\CI(M)$  of the complex Monge-Ampère equation \eqref{accy.1a} with $\omega_b$ the Kähler form of a carefully chosen exact $b$-metric and with
$$
      f= \log\left(  \frac{i^{n^2}\Omega\wedge \overline{\Omega}}{\omega_b^n}\right) \in \rho\CI(\tM).
$$
The polyhomogeneity of $\omega_{\CY}= \omega_b +i\pa \db u$ then follows from Theorem~\ref{accy.1}.
\end{proof}

\section{Polyhomogeneity at infinity for asymptotically conical Calabi-Yau metrics}
Another important class of complete noncompact quasi-projective Calabi-Yau spaces are those which are asymptotically
conical at infinity.  These are conformal to asymptotically cylindrical metrics, so essentially the same techniques as
above can be used to prove their polyhomogeneity.  Since this is only a slight detour, we carry this out here.

We begin with a more careful definition of asymptotically conical metrics. Once again, let $\tM$ be a compact manifold
with connected boundary $\pa \tM$.  Fix a collar neighborhood of the boundary described by some
diffeomorphism $c:\pa \tM \times [0,\nu) \hookrightarrow \tM$. The projection $\pr_{R}: \pa \tM\times [0,\nu)\to [0,\nu)$
determines a boundary defining function $\rho$ in this neighborhood, which we then extend smoothly to all of $\tM$.
Having fixed $\rho$, consider the Lie algebra of {\bf scattering} vector fields,
\begin{equation}
  \cV_{\sc}(\tM)= \{  \xi \in \CI(\tM;T\tM) \; | \;  \xi \rho\in \rho^2\CI(\tM;T\tM) \}.
\label{sc.1}\end{equation}
This is a Lie subalgebra of $\cV_b(\tM)$ and its definition depends on the choice of $\rho$.  As for $b$-vector fields,
there is an associated \textbf{scattering tangent bundle} ${}^{\sc}T\tM$ with 
$$
            {}^{\sc}T_p\tM = \cV_{\sc}(\tM)/ I_p\cV_{\sc}(\tM),  \quad I_p=\{f\in \CI(\tM) \; | \; f(p)=0\}.
$$
There is a canonical morphism $\iota_{\sc}: {}^{\sc}T\tM\to T\tM$ such that $(\iota_{\sc})_*\CI(\tM;{}^{\sc}T\tM)=
\cV_{\sc}(\tM) \subset \CI(\tM;T\tM)$, inducing on ${}^{\sc}T\tM$ the structure of a Lie algebroid with anchor map
$(\iota_{\sc})_*$.  Just as for $\iota_b$, $\iota_{\sc}$ is only an isomorphism when restricted to the interior of $\tM$.

There is a space of scattering differential operators, $\Diff_{\sc}^*(\tM)$, where an element of order $k$ is generated by $\CI(\tM)$ and
products of up to $k$ $\sc$-vector fields.  We can also consider the space of polyhomogeneous  scattering differential operators
$$
    \Diff^k_{\sc,F}(\tM)= \cA^{F}_{\phg}(\tM)\otimes_{\CI(\tM)} \Diff^k_{\sc}(\tM).
$$

\begin{definition}
A \textbf{scattering metric} $g$ on $M=\tM\setminus \pa \tM$ is a complete Riemannian metric on $M$ of the form
$$
            g= (\iota_{\sc}^{-1})^* g_{\sc}
$$
for some positive definite section $g_{\sc}\in \CI(\tM; \mathrm{Sym}^2({}^{\sc}T^*\tM))$.  It is a \textbf{warped product}
scattering metric if in the collar neighborhood,
\begin{equation}
c^*g= \frac{d\rho^2}{\rho^4}+ \frac{g_{\pa \tM}}{\rho^2}
\label{acocy.2}\end{equation}
where $g_{\pa \tM}$ is a metric on $\pa \tM$, and it is \textbf{exact} if
$g-g_p\in \rho\CI(\tM; \mathrm{Sym}^2({}^{\sc}T^*\tM))$ for some warped product scattering metric $g_p$.
\label{sc.1}\end{definition}
If $g$ is a scattering metric, then $g_b=\rho^2g$ is a $b$-metric; with this correspondence, warped product and
exact scattering metrics correspond to product and exact $b$-metrics.  Under the change of variable $t=1/x$,
we recognize a warped product scattering metric as an exact conical metric
$$
         dt^2+ t^2g_{\pa\tM},  \quad  t>\frac{1}{\nu}.
$$
More generally, \cf \cite{CH2013}, a complete metric $g$ on $M$ is an \textbf{asymptotically conical metric} on $M$  if there is a choice of collar neighborhood, compatible boundary defining function $\rho$,
and warped product scattering metric $g_{p}$ such that
$$
             g-g_p\in \rho^{\delta}\CI_{b}(\tM; \mathrm{Sym}^2({}^{\sc}T^*\tM)) \quad \mbox{for some} \quad \delta > 0.
$$
An asymptotically conical metric $g$ is called a \textbf{polyhomogeneous scattering metric} if \linebreak $g\in \cA^{F}_{\phg}(\tM;
\mathrm{Sym}^2({}^{\sc}T^*\tM))$ for some $F \geq 0$. Notice that the exactness condition is assumed.

Let $\Delta_{\sc}$ be the Laplacian (with negative spectrum) associated to
$g_{\sc}\in\cA^F_{\phg}(\tM;\mathrm{Sym}^2({}^{\sc}T^*\tM))$.  In the collar neighborhood, and for some $\delta>0$,
$$
\Delta_{\sc}-\rho^{2}\left( \Delta_{\pa \tM} +\left(\rho\frac{\pa}{\pa \rho}\right)^2 - (n-2)\rho\frac{\pa}{\pa \rho}\right)
\in \rho^{\delta} \Diff^2_{\sc,F'}(\tM) \subset \rho^{2+\delta} \Diff^2_{b,F'}(\tM),
$$
where $F'\geq 0$. In particular, $A := \rho^{-2}\Delta_{\sc} \in \Diff^2_{b,F'}(\tM)$ is an elliptic $b$-operator
with indicial family
$$
   \hat{A}(\tau)= \Delta_{\pa \tM} -\tau^2 -i(n-2)\tau.
$$
We may thus apply Corollary~\ref{breg.9} directly to obtain the following.
\begin{corollary}
Let $g_{\sc}$ be a polyhomogeneous scattering metric.  If $u\in \rho^{\alpha}\CI_{b}(M)$ satisfies
$$
    \Delta_{\sc} u = \rho^2 (f_1+ f_2)\quad \mbox{with} \; f_1\in \rho^{\alpha+\beta}\CI_b(M), \quad  f_2\in \cA^G_{\phg}(\tM_q),
$$
for some $\beta > 0$ and some index set $G$ with $\inf G > \alpha$, then $u=u_1+u_2$ with $u_1\in \bigcap_{\delta>0}
\rho^{\alpha+\beta -\delta}\CI_b(M)$ and $u_2$ polyhomogeneous.
\label{screg.3}\end{corollary}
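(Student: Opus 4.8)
The plan is to convert the scattering-Laplace equation into a genuine $b$-type equation and then quote Corollary~\ref{breg.9}. The essential point, already isolated in the paragraph preceding the statement, is that for a polyhomogeneous scattering metric the rescaled operator $A := \rho^{-2}\Delta_{\sc}$ is an elliptic element of $\Diff^2_{b,F'}(\tM)$ for a nonnegative index set $F'$, with indicial family $\hat A(\tau) = \Delta_{\pa\tM} - \tau^2 - i(n-2)\tau$. This is purely structural: using the definition of a polyhomogeneous scattering metric together with the collar normal form for $\Delta_{\sc}$ displayed above, one writes $A = \Delta_{\pa\tM} + (\rho\pa_\rho)^2 - (n-2)\rho\pa_\rho + R$ with $R \in \rho^{\delta}\Diff^2_{b,F'}(\tM)$; the model part is $b$-elliptic with index set $\bbN_0 \times \{0\}$ and the remainder vanishes to positive order, so $A$ is $b$-elliptic with a nonnegative index family, and $\hat A$ is the indicial family of the model part, computed by replacing $\rho\pa_\rho$ with $i\tau$.

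First I would divide the hypothesis by $\rho^2$: the identity $\Delta_{\sc}u = \rho^2(f_1+f_2)$ becomes $A u = f_1 + f_2$ with $f_1 \in \rho^{\alpha+\beta}\CI_b(M)$ and $f_2 \in \cA^G_{\phg}(\tM)$ polyhomogeneous, while $u \in \rho^{\alpha}\CI_b(M)$ by assumption. (The hypothesis $\inf G > \alpha$ plays no role in the argument itself; it only serves to make the hypotheses on $f_2$ consistent with the a priori bound on $u$, since $A$ preserves $\rho^{\alpha}\CI_b$.) I would then apply Corollary~\ref{breg.9} to $P = A$ acting on functions, i.e.\ the case of the trivial line bundle, with data $(\alpha,\beta,G)$: this is exactly the situation treated there — $A$ elliptic in $\Diff^2_{b,F'}$ with $F'$ nonnegative, $u \in \rho^{\alpha}\CI_b(M)$, $Au = f_1 + f_2$ with $f_1 \in \rho^{\alpha+\beta}\CI_b(M)$ and $f_2$ polyhomogeneous — and the corollary outputs a splitting $u = u_1 + u_2$ with $u_1 \in \bigcap_{\delta > 0}\rho^{\alpha+\beta-\delta}\CI_b(M)$ and $u_2$ polyhomogeneous, which is precisely the asserted conclusion.

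I do not expect a genuine obstacle here: all of the analytic work is already carried out in Theorem~\ref{breg.5} and Corollary~\ref{breg.9}, and the only thing special to the conical setting is the observation that rescaling $\Delta_{\sc}$ by $\rho^{-2}$ lands one inside the polyhomogeneous elliptic $b$-calculus with the stated indicial roots. The mild points to verify — that the error term $R$ has a nonnegative index family and that the model operator contributes no logarithmic terms to the relevant index set, together with the computation of $\hat A(\tau)$ — are routine consequences of the definitions. If one wished to be fully careful, one would also remark that this rescaling is exactly the correspondence $g_b = \rho^2 g_{\sc}$ used throughout, so that $\Delta_{\sc} = \rho^2 A$ with $A$ the $b$-operator above acting on functions.
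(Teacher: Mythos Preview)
Your proposal is correct and matches the paper's approach exactly: the paper simply observes that $A:=\rho^{-2}\Delta_{\sc}$ is an elliptic polyhomogeneous $b$-operator with the stated indicial family and then says ``We may thus apply Corollary~\ref{breg.9} directly,'' which is precisely what you do after dividing through by $\rho^2$. Your write-up is in fact more detailed than the paper's, which gives no separate proof at all.
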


We now turn to the complex Monge-Amp\`ere equation.  Suppose that $\tM\setminus \pa \tM$ is a complex manifold and
that the complex structure $J$ extends to an element $J\in\cA^Q_{\phg}(\tM;\End({}^{\sc}T\tM))$ for some $Q\geq 0$.
Suppose $g_{\sc}$ is a polyhomogeneous scattering metric which is Kähler with respect to $J$, and has Kähler form $\omega_{\sc}$.

\begin{theorem}
Let $F$ be a positive index set.  If $f\in \cA^F_{\phg}(\tM)$ and for some $\epsilon>0$, $u\in \rho^{\epsilon-2}\CI_b(M)$ satisfies
\begin{equation}
           \frac{ (\omega_{\sc} +i\pa \db u)^n}{\omega_{\sc}^n} = e^f,
\label{acocy.6a}\end{equation}
then $u\in\cA^{G-2}_{\phg}(\tM_q)$ for some $G > 0$.
\label{acocy.6}\end{theorem}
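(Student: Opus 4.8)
The plan is to transcribe the inductive bootstrap from the proof of Theorem~\ref{accy.1} to the scattering setting, with Corollary~\ref{screg.3} in place of Corollary~\ref{breg.9} and with every order of vanishing shifted by $\rho^{-2}$, since a K\"ahler potential whose complex Hessian is of the same size as the leading cone metric grows like $\rho^{-2}$ rather than being bounded. First I would note that we may assume $\epsilon\le\inf F$, since shrinking $\epsilon$ only weakens the hypothesis $u\in\rho^{\epsilon-2}\CI_b(M)$. Then I would prove by induction on $k\in\bbN$ that there exist a real index set $G^k$ and $u_k\in\cA^{G^k-2}_{\phg}(\tM)$ with $u-u_k\in\rho^{\frac{k\epsilon}{2}-2+\delta}\CI_b(M)$, where $\delta=\epsilon/4$; the base case is $u_1=0$, $G^1=\emptyset$. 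Letting $k\to\infty$ gives that $u$ is polyhomogeneous; since the a priori bound $u\in\rho^{\epsilon-2}\CI_b$ confines the exponents in its expansion to $(-2,\infty)$ and the indicial roots of $\rho^{-2}\Delta_{\sc}$ are real (being those of the model cone, as recorded before the statement), the index set has the form $G-2$ with $G$ positive.

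For the inductive step I would argue as in Theorem~\ref{accy.1}. Replace $u_k$ by $\chi(\rho/r)u_k$ with $r\ll1$; because $u_k\in\rho^{\epsilon-2}\CI_b$ and $\rho^{-2}$ is exactly the borderline rate at which the complex Hessian has the size of a scattering metric, $i\pa\db u_k$ together with the cutoff cross-terms is $O(\rho^\epsilon)$ relative to $g_{\sc}$, so for $r$ small $\omega_{\sc,k}:=\omega_{\sc}+i\pa\db u_k$ is the K\"ahler form of an exact polyhomogeneous scattering metric with the same leading cone form as $\omega_{\sc}$. Setting $v_k:=u-u_k$, the equation \eqref{acocy.6a} becomes $\frac{(\omega_{\sc,k}+i\pa\db v_k)^n}{\omega_{\sc,k}^n}=e^{f_k}$ with $f_k=f+\log\frac{\omega_{\sc}^n}{\omega_{\sc,k}^n}\in\cA^{\tilde{G}^k}_{\phg}(\tM)\cap\rho^{\frac{k\epsilon}{2}+\delta}\CI_b(M)$. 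Expanding as $1+\Delta_{\omega_{\sc,k}}v_k+\sum_{j=2}^n N^{\omega_{\sc,k}}_j(v_k)=e^{f_k}$, and using that the $N_j$ are scalar ratios in which the $\rho^{-2}$ offsets of numerator and denominator cancel so that $N^{\omega_{\sc,k}}_j(v_k)\in\rho^{j(\frac{k\epsilon}{2}+\delta)}\CI_b$, one gets $\Delta_{\omega_{\sc,k}}v_k=w_k+(e^{f_k}-1)$ with $w_k\in\rho^{k\epsilon+2\delta}\CI_b$ and $e^{f_k}-1$ polyhomogeneous. Now $\Delta_{\omega_{\sc,k}}$ is $\rho^2$ times an elliptic $b$-operator $A_k$ of nonnegative polyhomogeneous index family (exactly as $A=\rho^{-2}\Delta_{\sc}$ before the statement), so the equation reads $\rho^2 A_k v_k=\rho^2(\rho^{-2}w_k+\rho^{-2}(e^{f_k}-1))$, and Corollary~\ref{screg.3} gives $v_k=v_k'+h_{k+1}$ with $v_k'\in\rho^{k\epsilon-2+\delta}\CI_b$ and $h_{k+1}$ polyhomogeneous; taking $u_{k+1}=u_k+h_{k+1}$ completes the step, since $k\epsilon-2+\delta\ge\frac{(k+1)\epsilon}{2}-2+\delta$.

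The step that genuinely departs from the $b$-case, and where care is needed, is checking the hypotheses of Corollary~\ref{screg.3}, in particular the strict inequality $\inf G>\alpha$ for its polyhomogeneous term. Here $\alpha=\frac{k\epsilon}{2}-2+\delta$ and $G$ is the index set of $\rho^{-2}(e^{f_k}-1)$, which only satisfies $\inf G\ge\alpha$ a priori; the gap is restored, exactly as in the proof of Theorem~\ref{breg.5}, by applying Corollary~\ref{screg.3} with $\alpha$ replaced by $\alpha-\delta'$ for a small $\delta'<\delta$ (legitimate since the weighted $\CI_b$ scales are nested) and absorbing the $\rho^{-\delta'}$ loss --- which is the reason for keeping the margin $\delta=\epsilon/4$. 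A secondary point is confirming that the truncated $\omega_{\sc,k}$ is an \emph{exact} polyhomogeneous scattering metric: this uses that the Hessian smallness estimate is carried out in $g_{\sc}$ rather than the associated $b$-metric and that $i\pa\db(\chi u_k)$ decays, so that the leading cone form, and hence the indicial family of $A_k$, is common to the whole family. Beyond these adjustments the scattering-to-$b$ dictionary recorded before the statement does all the work, and no serious new obstacle appears.
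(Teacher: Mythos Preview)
Your proposal is correct and follows essentially the same inductive bootstrap as the paper's proof, with the same cutoff $\chi(\rho/r)u_k$, the same expansion of the Monge--Amp\`ere equation, and the same appeal to Corollary~\ref{screg.3} at each step. Your added remarks on verifying the strict inequality $\inf G>\alpha$ in Corollary~\ref{screg.3} and on the exactness of $\omega_{\sc,k}$ are careful elaborations of points the paper leaves implicit, but do not constitute a different approach.
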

\begin{proof}
The strategy is the same as in the proof of Theorem~\ref{accy.1}, with small variations.

It suffices to show that for each $k\in \bbN$, there is a positive index set $G^k$ and $u_k\in \cA^{G^k-2}_{\phg}(\tM)$ such that
\begin{equation}
       u-u_k\in \rho^{\frac{k\epsilon}2-2+\delta}\CI_b(M),
\label{acocy.7}\end{equation}
where $\delta=\frac{\epsilon}{4}$.  For $k=1$, we take $u_1=0$ and $G^1=\emptyset$.

Suppose that \eqref{acocy.7} holds for some $u_k\in \cA^{G^k-2}_{\phg}(\tM_q)$ with $G^k> 0$; we must show that
\eqref{acocy.7} holds at the next level.  Just as before, replace $u_k$ by $\chi(\frac{\rho}{r})u_k$ with $r \ll 1$
to make the $\rho^{-2}\cC^2_{b}(M)$-norm of $u_k$ small enough so that
$$
                  \frac{\omega_{\sc}}{2} < \omega_{\sc}+ i\pa\db u_k < 2 \omega_{\sc}.
$$
Thus, $\omega_{\sc,k}:= \omega_{\sc} +i\pa \db u_k$ is the Kähler form of a polyhomogeneous $\sc$-metric.  By our inductive
hypothesis, $v_k:= u-u_k\in\rho^{\frac{k\epsilon}{2}-2+\delta} \CI_b(M)$ satisfies
\begin{equation}
\frac{(\omega_{\sc,k}+ i\pa\db v_k)^n}{\omega_{\sc,k}^n}= e^{f_k}, \quad \mbox{with} \; f_k= f+
\log\left(\frac{\omega_{\sc}^n}{\omega^n_{\sc,k}} \right).
\label{acocy.8}\end{equation}
Since $f\in \cA^F_{\phg}(\tM)$ and $u_k\in \cA^{G^k}_{\phg}(\tM)$, we see that $f_k\in \cA^{\tilde{G}^k}_{\phg}(\tM_q)$ for some
$\tilde{G}^k > 0$.  Moreover, by \eqref{acocy.7},
$$
f_k= f+ \log\left(\frac{\omega_{\sc}^n}{\omega_{\sc,k}} \right)=  \log\left( \frac{ (\omega_{\sc} +i\pa \db u)^n}{\omega_{\sc}^n}  \right)    -\log\left(\frac{\omega_{\sc,k}^n}{\omega_{\sc}} \right)
$$
is in $\rho^{\frac{k\epsilon}{2}+\delta}\CI_b(M)$.  Now rewrite \eqref{acocy.8} as
\begin{equation}
   1 + \Delta_{\omega_{\sc,k}} v_k + \sum_{j=2}^n N^{\omega_{\sc,k}}_j(v_k)= e^{f_k},
\label{acocy.9}\end{equation}
where $\Delta_{\omega_{\sc,k}}$ is the $\db$-Laplacian associated to the Kähler form $\omega_{\sc,k}$,
and where
$$
N^{\omega_{\sc,k}}_j(h)= \frac{n!}{(n-j)! j!} \left( \frac{ \omega^{n-j}_{\sc,k}\wedge (i\pa\db h)^j}{\omega_{\sc,k}} \right),
\quad h\in \rho^{-2}\CI_b(M).
$$
By \eqref{acocy.7}, we deduce that $N^{\omega_{\sc,k}}_j(v_k)\in \rho^{\frac{jk\epsilon}{2}+ j\delta}\CI_b(M)$, so that
$$
        \Delta_{\omega_{\sc,k}} v_k  = w_k + (e^{f_k}-1),
$$
where $w_k\in \rho^{k\epsilon+ 2\delta}\CI_b(M)$ and $(e^{f_k}-1)\in \cA^{H^k}_{\phg}(\tM)\cap \rho^{\frac{k\epsilon}{2}+\delta}\CI_b(M)$
for some $H^k \geq 0$.  By Corollary~\ref{screg.3}, we can find a polyhomogeneous function
$h_{k+1}\in \rho^{\frac{k\epsilon}{2}-2+\delta}\CI_b(M)$ such that
$$
    v_k- h_{k+1} \in \rho^{k\epsilon-2+ \delta}\CI_b(M).
$$
Now take $u_{k+1}= u_k+ h_{k+1} \in \cA^{G^{k+1}-2}_{\phg}(\tM_q)$ for some positive index set $G^{k+1}$.  Since
$u-u_{k+1}\in  \rho^{k\epsilon-2+ \delta}\CI_b(M)$, we have in particular that
$$
    u_{k+1}= (u_{k+1}-u) + u \in \rho^{\epsilon-2}\CI_b(M),
$$
which completes the inductive step and the proof.
\end{proof}

This result can be used to prove the polyhomogeneity at infinity of the asymptotically conical Calabi-Yau metrics
which come from the construction of Tian-Yau \cite[Corollary~1.1]{Tian-Yau1991} and its refinement and generalization
 \cite[Theorem~A]{CH2014}. Let $\bM$ be a compact Kähler orbifold of complex dimension $n>1$ without $\bbC$-codimension $1$
singularities.  Let $D$ be a suborbifold divisor of $\bM$ containing all the singularities of $\bM$ such that $-K_{\bM}=q[D]$
with $q \in\bbN$ and $q>1$.   Suppose that $D$ admits a Kähler-Einstein metric with positive scalar curvature.  By the
orbifold Calabi ansatz \cite{Calabi}, \cite[Proposition~3.1]{LeBrun}, there exists a Calabi-Yau cone structure $h$ on $K_D\setminus 0$.
Using the $(q-1)$-covering map $\eta: N_D\setminus 0 \to K_D\setminus 0$ induced by the adjunction formula
$N_D^{q-1} \cong K_D^{-1}$ and a choice of meromorphic volume form $\Omega$ on $\bM$ with pole of order $q$ along $D$, the pullback of $h$ is a Calabi-Yau cone metric $g_0$ on $N_D\setminus 0$ with apex at 
infinity.  Consider the real blow-up  $\tM=[\bM;D]$ of $\bM$; this is a smooth manifold with boundary. We can then write
$$
               g_0 = \frac{dx^2}{x^4} + \frac{h}{x^2},
$$
where $x= \rho^{\frac{q-1}n}$ for some boundary defining function $\rho\in\CI(\tM)$. Thus $g_0$ is a scattering metric in terms
of this new defining function. What is actually happening here is that we are replacing the original smooth manifold with boundary
$\tM$ by a new one, $\tM_{\frac{q-1}n}$, where the (equivalent) $\CI$ structure is the one obtained by adjoining this new defining function, or
equivalently, by pulling back the original $\CI$ structure under the obvious homeomorphism. In this new structure, smooth
functions on $\tM$ have Taylor expansions in nonnegative integral powers of $x$ rather than $\rho$, etc.
Notice, however, that a function which is polyhomogeneous in the new structure is polyhomogeneous in the original
structure, and vice versa, and the notions of positivity and nonnegativity of index sets remain the same, even though the
index sets themselves transform.
\begin{corollary}
If $\mathfrak{t}$ is a Kähler class on $M=\bM\setminus D$ and $c>0$, then there exists a unique Calabi-Yau polyhomogeneous
scattering metric $g_{\CY}$ on $\tM_{\frac{q-1}{n}}$ in the Kähler class $\mathfrak{t}$ with
$$
g_{\CY}-\exp_{*}(cg_0) \in x^{\delta}\CI(\tM_{\frac{q-1}n}; \mathrm{Sym}^2({}^{\sc}T^*\tM_{\frac{q-1}n}))
$$
for some $\delta>0$, where $\exp: N_D\to \bM$ is the exponential map of any background Hermitian metric on $\bM$.
\label{acocy.11}\end{corollary}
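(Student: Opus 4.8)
The plan is to reduce Corollary~\ref{acocy.11} to Theorem~\ref{acocy.6}, exactly as Corollary~\ref{accy.5} was reduced to Theorem~\ref{accy.1}. First I invoke the existence and uniqueness theorem of Tian--Yau \cite[Corollary~1.1]{Tian-Yau1991} and Conlon--Hein \cite[Theorem~A]{CH2014}: for the given K\"ahler class $\mathfrak{t}$ on $M=\bM\setminus D$ and any $c>0$ there is a unique Calabi--Yau metric $g_{\CY}$ on $M$ in the class $\mathfrak{t}$ which is asymptotically conical with model $c g_0$, i.e.
\[
g_{\CY}-\exp_*(c g_0)\in x^{\delta}\CI_b(\tM_{\frac{q-1}{n}};\mathrm{Sym}^2({}^{\sc}T^*\tM_{\frac{q-1}{n}}))
\]
for some $\delta>0$. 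Since $c g_0$ is a warped product scattering metric in the $x$-smooth structure, this $g_{\CY}$ is asymptotically conical in the sense defined above; hence, once its coefficients are shown to be polyhomogeneous, it will be a polyhomogeneous scattering metric. Uniqueness among polyhomogeneous scattering Calabi--Yau metrics with the stated asymptotics is then immediate, since any such metric is in particular an asymptotically conical Calabi--Yau metric in the class $\mathfrak{t}$ with model $c g_0$, to which the uniqueness clause of \cite{CH2014} applies.

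It remains to prove polyhomogeneity, and here I unwind the construction of \cite{Tian-Yau1991, CH2014}. The metric is produced as $\omega_{\CY}=\omega_{\sc}+i\pa\db u$, where $\omega_{\sc}$ is the K\"ahler form, with respect to the complex structure $J$ of $\bM$, of a carefully chosen background metric in the class $\mathfrak{t}$ asymptotic to $c\omega_0$, and $u$ solves the complex Monge--Amp\`ere equation \eqref{acocy.6a} with right-hand side $f=\log\!\big(i^{n^2}\Omega_c\wedge\overline{\Omega_c}/\omega_{\sc}^n\big)$, the meromorphic volume form $\Omega$ being normalized to $\Omega_c$ so that the cone $(N_D\setminus 0,c g_0)$ is Calabi--Yau with respect to $\Omega_c$. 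The Calabi--Yau cone complex structure underlying $c g_0$ is a scattering complex structure, and $J$ differs from it by a decaying polyhomogeneous term, so $J\in\cA^Q_{\phg}(\tM_{\frac{q-1}{n}};\End({}^{\sc}T\tM_{\frac{q-1}{n}}))$ for some $Q\ge 0$ and $g_{\sc}$ is a polyhomogeneous scattering metric K\"ahler with respect to $J$. Since $\omega_{\sc}$ is asymptotic to $c\omega_0$ and $\Omega$, $\omega_{\sc}$, $J$ are polyhomogeneous on $\tM_{\frac{q-1}{n}}$, the leading terms of $i^{n^2}\Omega_c\wedge\overline{\Omega_c}$ and $\omega_{\sc}^n$ cancel and $f\in\cA^F_{\phg}(\tM_{\frac{q-1}{n}})$ for some positive index set $F$. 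Finally, the background $\omega_{\sc}$ and the weighted estimates of \cite{CH2014} can be arranged so that $u\in x^{\epsilon-2}\CI_b(M)$ for some $\epsilon>0$, that is, $u$ grows strictly more slowly than the conical potential $x^{-2}$.

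With these facts in place, Theorem~\ref{acocy.6} applies directly and gives $u\in\cA^{G-2}_{\phg}(\tM_{\frac{q-1}{n}})$ for some positive index set $G$; in particular $i\pa\db u$ is a polyhomogeneous scattering form. Hence $\omega_{\CY}=\omega_{\sc}+i\pa\db u$ is polyhomogeneous, so $g_{\CY}=\omega_{\CY}(\cdot,J\cdot)$ is a polyhomogeneous scattering metric, which together with the first paragraph completes the proof. The one step that requires genuine care --- rather than the formal machinery of Theorem~\ref{acocy.6} --- is matching the Tian--Yau/Conlon--Hein construction to the hypotheses of that theorem: verifying that, relative to a polyhomogeneous scattering K\"ahler background, the Calabi--Yau potential $u$ itself (not merely $i\pa\db u$) lies in $x^{\epsilon-2}\CI_b(M)$, and that the Monge--Amp\`ere right-hand side $f$ is polyhomogeneous with a positive index set. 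Once these are confirmed, the upgrade to polyhomogeneity is purely mechanical, just as in the passage from Theorem~\ref{accy.1} to Corollary~\ref{accy.5}.
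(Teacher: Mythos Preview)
Your proposal is correct and follows essentially the same route as the paper: invoke existence and uniqueness from \cite{Tian-Yau1991,CH2013,CH2014}, verify that the background K\"ahler form $\omega_{\sc}$ is a polyhomogeneous scattering metric and that the Monge--Amp\`ere datum $f$ is polyhomogeneous with positive index set, then apply Theorem~\ref{acocy.6}. The paper is simply more explicit where you are schematic: it writes $\omega_{\sc}=\xi+c(i/2)\pa\db(\exp_*r^2)$ with $\xi$ a smooth $(1,1)$-form on $\bM$ representing $\mathfrak{t}$ (so polyhomogeneity of $\omega_{\sc}$ is immediate), and it cites \cite[Proposition~2.1]{CH2014} for the decay of $\Omega$ relative to $\exp_*(\eta^*\Omega_0)$ and \cite[Theorem~2.4]{CH2013} for the solution $u\in\rho^{\delta-2}\CI_b(M)$, which together yield $f\in\cA^F_{\phg}$ with $F>0$.
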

\begin{proof}
By assumption, there exists a meromorphic volume form $\Omega$ on $\bM$ with a pole of order $q$ at $D$, so in particular,
$\Omega$ defines a polyhomogeneous scattering volume form on $\tM_{q-1}$. It is shown in \cite[Proposition~2.1]{CH2014} that
\begin{equation}
\Omega- (-1)^{n}(q-1)^{-1}\exp_{*}(\eta^{*}\Omega_0)\in x^{\frac{n}{q-1}}\CI_{b}(\tM_{\frac{q-1}n}; \Lambda^n({}^{\sc}T^*\tM_{\frac{q-1}n})),
\label{acocy.11a}\end{equation}
where $\Omega_{0}$ is the tautological holomorphic volume form on $K_{D}$. In addition, it is proven that the Kähler class $\mathfrak{t}$ (and indeed, any Kähler class on $M$,) can be represented by a \textbf{smooth} real $(1,1)$-form $\xi$ on $\tM$.  This shows in particular that $\mathfrak{t}$ is $(-2)$-almost compactly supported in the sense
of \cite[Definition~2.3]{CH2013}.  From \cite[Proof of Theorem~2.4]{CH2013}, one can then construct an asymptotically conical Kähler
metric $g_{\sc}$ in the Kähler class $\mathfrak{t}$ with Kähler form $\omega_{\sc}$ such that
$$
     \omega_{\sc}= \xi + c(i/2)\partial\db(\exp_{*}r^{2})
$$
in a neighborhood of the boundary of $\tM$, where $r$ is the radial function of the metric $g_0$.  Since $\xi$ is smooth on $\bM$, it is
in particular polyhomogeneous on $\tM_{\frac{q-1}n}$, so that $g_{\sc}$ is in fact a polyhomogeneous exact scattering metric with
\begin{equation}
g_{\sc} - \exp_{*}(cg_0) \in x^\delta \CI_b (\tM_{\frac{q-1}n} ; \mathrm{Sym}^2( {}^{\sc} T^* \tM_{\frac{q-1}n} ) )
\label{decay.1}
\end{equation}
for some $\delta>0$.

The existence and uniqueness of $g_{\CY}$ with Kähler form $\omega_{\CY}= \omega_{\sc}+ i\pa\db u$ is obtained in \cite{CH2013}
by showing that the complex Monge-Ampère equation
\begin{equation}
           \frac{ (\omega_{\sc} +i\pa \db u)^n}{\omega_{\sc}^n} = e^f,  \quad \mbox{with}  \quad f= \log\left( \frac{i^{n^2}\Omega\wedge \overline{\Omega}}{\omega_{\sc}^n} \right),
\label{acocy.12}\end{equation}
has a unique solution $u\in \rho^{\delta-2}\CI_b(M)$ for some $\delta>0$.  By \eqref{acocy.11a} and \eqref{decay.1},
$f\in \rho^{\delta}\CI_b(M; \Lambda^n({}^{\sc}T^*\tM))$.  Since $\Omega$ and $\omega_{\sc}$ are polyhomogeneous, $f$
is also polyhomogeneous.  Thus, the polyhomogeneity of $u$ and $\omega_{\CY}$ follows from Theorem~\ref{acocy.6}.
\end{proof}

Theorem \ref{acocy.6} can also be used to show that the asymptotically conical Calabi-Yau metrics of Goto \cite{Goto} and van Coevering \cite{vanC} on a crepant resolution of an irregular Calabi-Yau cone are polyhomogeneous. Indeed, let $C=L\times\mathbb{R}_{+}$ 
be an irregular Calabi-Yau cone of dimension $n$ with Calabi-Yau cone metric $g_{0}$, associated K\"ahler form $\omega_{0}$, and holomorphic volume form $\Omega_{0}$ normalized so that $\omega_{0}^{n}=i^{n^{2}}\Omega_{0}\wedge\overline{\Omega}_{0}$. Furthermore, let $p:C\to L$ denote the radial projection and let $\pi:M\to C$ be any crepant resolution, so that the holomorphic volume form $\pi^{*}\Omega_{0}$ extends to a holomorphic volume form $\Omega$ on $M$. For a given $c>0$, an asymptotically conical K\"ahler metric $g_{\sc}$ can be constructed in each K\"ahler class $\mathfrak{t}$ of $M$ whose K\"ahler form $\omega_{\sc}$ can be written as $$\omega_{\sc}=\pi^*p^*\alpha+c\pi^*\omega_{0}$$ outside some compact subset of $M$, for some closed primitive
basic $(1, 1)$-form $\alpha$ on $L$; see \cite[Lemma 5.7]{Goto} and also \cite[Section 4.2]{CH2013}. We compactify $M$ as a manifold with boundary $\tM$ using the boundary defining function $x=(\pi^{*}r)^{-1}$, where $r$ is the radial coordinate of the cone metric $g_{0}$. Then, since both $M$ and $C$ are biholomorphic away from the exceptional set of the resolution, and since the form $\pi^*p^*\alpha$ clearly extends to $\partial\tM$, we see that the metric $g_{\sc}$ is a polyhomogeneous exact scattering metric with
\begin{equation*}
g_{\sc} - \pi_{*}(cg_0) \in x^\delta \CI_b (\tM ; \mathrm{Sym}^2( {}^{\sc} T^* \tM ) )
\end{equation*}
for some $\delta>0$. As before, the existence of an asymptotically conical Calabi-Yau metric $g_{\CY}$ with K\"ahler form $\omega_{\CY}=\omega_{\sc}+i\partial\bar{\partial}u$ is obtained by showing that the complex Monge-Ampère equation
\begin{equation*}
           \frac{ (\omega_{\sc} +i\pa \db u)^n}{\omega_{\sc}^n} = e^f,  \quad \mbox{with}  \quad f= \log\left(\frac{(\pi^{*}\omega_{0})^{n}}{\omega_{\sc}^n} \right),
\end{equation*}
has a unique solution $u\in x^{\delta-2}\CI_b(M)$ for some $\delta>0$. Polyhomogeneity of $u$, and hence $\omega_{\CY}$, then follows from Theorem \ref{acocy.6} using the fact that $f$ is polyhomogeneous because $\omega_{\sc}$ is.

As for the asymptotically conical Calabi-Yau metrics of \cite[Theorem C]{CH2014} with irregular tangent cone at infinity, one can show that they too are polyhomogeneous.  In this example, the irregular cone is $C= K_D\setminus 0$ with $D=\bbC\bbP^2_{p_1,p_2}$ the blow-up of $\bbC\bbP^2$ at two points.  The asymptotically conical Calabi-Yau metric is constructed on $M=\bM\setminus D$ with $\bM=\bbC\bbP^3_p$ the blow-up of $\bbC\bbP^3$ at one point, where $D\in|-\frac12 K_{\bM}|$ is seen as the strict transform of a smooth quadric passing through $p$.  By \cite{FOW2009}, we know that $C=K_D\setminus 0$ admits an irregular Calabi-Yau cone metric $g_0$ with apex at the zero section.  The Calabi-Yau metric of  \cite[Theorem C]{CH2014} is then constructed using a very careful choice of exponential type map $\exp: N_D\to \bM$.  Notice however that this map does not provide  the right gauge to establish the polyhomogeneity of the metric, since it introduces non-polyhomogeneous terms in the complex Monge-Ampère equation used to construct the metric.  In fact, in \cite{CH2014}, a better diffeomorphism $\Phi: M\setminus K_1\to C\setminus K_2$ for some compact sets $K_1\subset M$ and $K_2\subset C$ is obtained using a gauge fixing argument as in \cite{Cheeger-Tian}.  With this identification, we get a compactification $\tM$ of $M$ such that $g:=\Phi^*g_0$ is a scattering metric and such that $\rho=\frac{1}{\Phi^*r}$, with $r$ the radial function of $(C,g_0)$, is a boundary defining function near $\pa\tM$.  With respect to the metric $g$, the  Calabi-Yau metric $g_{\CY}= g+h$ of  \cite[Theorem C]{CH2014} satisfies the elliptic quasi-linear equation
\begin{equation}
  \Ric(g_0+h)_{ij}+ (\nabla_i\mathfrak{B}_{g}(h)_j+ \nabla_j\mathfrak{B}_{g}(h)_i)=0 \quad \mbox{with} \; h\in \rho^{\delta}\CI_b(\tM; {}^{\sc}T^*\tM\otimes {}^{\sc}T^*\tM ) \; \mbox{for} \; \delta=0.0128>0,
\label{ic.1}\end{equation} 
where $\mathfrak{B}_g= \diver_g(h -\frac12 \tr_g(h)g)$ is the operator appearing in the Bianchi gauge condition and $\nabla$ is the Levi-Civita connection of $g$.  Using \cite[Lemma~1.6]{CH2013}, one can put this equation in the form
\begin{equation}
  Ph= F_0(h)\cdot h^2 + F_1(h)\cdot \left( \frac{h \nabla h}{\rho}\right) + F_2(h) \cdot \left( \frac{\nabla h }{\rho}\right)^2,
\label{ic.2}\end{equation}
where $P$ is an elliptic $b$-operator, $F_i:  \mathrm{Sym}^2({}^{\sc}T^*\tM)\to ({}^{\sc}T\tM)^i\otimes  \mathrm{Sym}^2({}^{\sc}T\tM)$, for $i=0,1,2$ are smooth maps mapping sections to sections, but not linearly, and ``$\cdot$" denotes some contraction of indices.  In particular, the right hand side of \eqref{ic.2} is in $\rho^{2\delta}\CI_b(\tM; {}^{\sc}T^*\tM\otimes {}^{\sc}T^*\tM )$.  Using Corollary~\ref{breg.9}, we can then apply a bootstrapping argument as in the proof of Theorem~\ref{accy.1} to conclude that the metric $g_{\CY}$ of \cite[Theorem~C]{CH2014} is in fact a polyhomogeneous exact scattering metric for the boundary compactification $\tM$.

\section{Deformations of compactifiable asymptotically cylindrical Calabi-Yau manifolds}  \label{tt.0}
We henceforth fix a compactifiable, asymptotically cylindrical Calabi-Yau manifold $(M, g_b)$ with compactification $(\bM,\bD)$.
To describe the complex  deformations of $M$, we appeal to the deformation theory of compactifiable complex manifolds
developed by Kawamata \cite{Kawamata1978}. There is a Kuranishi type theorem in this context.  In our setting, however,
the existence of a Calabi-Yau metric makes it possible to obtain a sharper result,
namely that the deformation theory is unobstructed.

First, recall from  \cite{Kawamata1978} that the infinitesimal complex deformations of $M$, as a compactifiable complex manifold,
are given by $H^1(\bM; T_{\bM}(\log \bD))$, where $T_{\bM}(\log \bD)$ is the logarithmic tangent sheaf.  Given a  Dolbeault representative
$\phi_1\in \Omega^{0,1}(\bM;T_{\bM}(\log\bD))$ of a class $[\phi_1]\in H^1(\bM; T_{\bM}(\log\bD))\cong H^{0,1}_{\db}
(\bM; T_{\bM}(\log \bD))$, the first step in `integrating' $\phi_1$ to an actual deformation is to solve the problem formally.
In other words, we wish to construct a possibly non-convergent series
\begin{equation}
   \phi(t) \sim \sum_{i=1}^{\infty}  \phi_i t^i,  \quad t\in\bbC,
\label{tt.2}\end{equation}
term by term so that the new formal $\db$-operator $\db+\phi(t)$ satisfies the Maurer-Cartan equation $\db \phi(t)+
\frac12 [\phi(t),\phi(t)]=0$ in the sense of Taylor series.  This equation is the one which indicates whether this $\db$
operator is integrable, i.e., corresponds to a new complex structure.  In terms of the coefficients of the power series \eqref{tt.2},
the Maurer-Cartan equation implies the sequence of equations
\begin{equation}
\db \phi_k= -\frac12 \sum_{i< k} [\phi_i, \phi_{k-i}].
\label{tt.3}\end{equation}
When $k=1$, this states simply that $\db \phi_1=0$, which is automatic by definition of the Dolbeault cohomology group
$H^{0,1}_{\db}(\bM; T_{\bM}(\log \bD))$.  When $k=2$, this gives the equation
\begin{equation}
  \db \phi_2= -\frac12 [\phi_1 ,\phi_1 ].
\label{tt.4}\end{equation}
There is an obvious cohomological obstruction to solving this equation.  Indeed, $[\phi_1,\phi_1]$ represents a class in
$H^{0,2}_{\db}(\bM; T_{\bM}(\log\bD))$ and \eqref{tt.4} has a solution if and only if this class is trivial.  But in our case, as
we now explain, this obstruction always vanishes -- as do the obstructions inherent to solving \eqref{tt.3} for all higher
values of $k$. The proof takes advantage of the asymptotically cylindrical Calabi-Yau metric
$g_b$ and uses the same strategy of Tian and Todorov \cite{Tian1987, Todorov1989}; we refer also to \cite{Huybrechts} for a nice
introduction to the subject.

By using the meromorphic form $\Omega\in H^0(\bM; \Omega^n_{\bM}(\log\bD))$, we first define a sheaf isomorphism
\begin{equation}
   \eta:  \Lambda^p T_{\bM}(\log\bD)  \to \Omega^{n-p}_{\bM}(\log \bD),  \quad
\eta(v_1\wedge\ldots \wedge v_p)= \iota_{v_1}\ldots \iota_{v_p} \Omega.
\label{tt.5}\end{equation}
This induces an isomorphism
\begin{equation}
     \eta: \Omega_b^{0,q}(\tM, \Lambda^p ({}^bT^{1,0}\tM)) \to \Omega^{n-p,q}_b(\tM),
\label{tt.6}\end{equation}
which in turn can be used to define the $b$-operator
\begin{equation}
  T: \Omega^{0,q}_b(\tM;\Lambda^p({}^bT^{1,0}\tM))\to \Omega^{0,q}_b(\tM; \Lambda^{p-1} ({}^bT^{1,0}\tM)), \qquad
T= \eta^{-1}\circ \pa \circ \eta.
\label{tt.7}\end{equation}
It is not hard to check, see \cite{Huybrechts}, that $T$ anti-commutes with $\db$, namely
\begin{equation}
   T\circ \db= -\db \circ T.
\label{tt.8}\end{equation}
In addition, $T$ satisfies the following fundamental property.
\begin{lemma}[Tian,Todorov]
For $\alpha\in \Omega^{0,p}_b(\tM; T_{\bM}(\log\bD))$ and $\beta\in \Omega^{0,q}_b(\tM;T_{\bM}(\log \bD))$, we have that
$$
        (-1)^p [\alpha,\beta]= T(\alpha\wedge \beta) - T(\alpha)\wedge \beta- (-1)^{p+1} \alpha\wedge T(\beta).
$$
In particular, if $\alpha$ and $\beta$ are $T$-closed, then $[\alpha,\beta]$ is $T$-exact.
\label{tt.9}\end{lemma}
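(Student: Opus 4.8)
The plan is to treat this as the purely algebraic statement it is: the operator $T=\eta^{-1}\circ\pa\circ\eta$ is a second–order generator of the bracket $[\cdot,\cdot]$ (a Batalin–Vilkovisky operator), and the displayed formula is precisely the identity expressing that the bracket measures the failure of $T$ to be a derivation of $\wedge$. Since every operation involved — the extended Schouten/Kodaira–Spencer bracket $[\cdot,\cdot]$ on $\Omega^{0,*}_b(\tM;T_{\bM}(\log\bD))$, the wedge product, and $T$ — is local and natural under restriction, it suffices to verify the identity on a polydisk $\Delta\subset\bbC^n$ over which $T_{\bM}(\log\bD)$ is trivialized. Away from $\bD$ one uses ordinary coordinate frames and the ordinary volume form; near $\bD=\{z^1=0\}$ one uses the logarithmic frame $z^1\pa_{z^1},\pa_{z^2},\dots,\pa_{z^n}$, and one checks that $\eta$ carries $\Omega$ (with its simple pole along $z^1=0$) to a nowhere–vanishing holomorphic section of $\Omega^{n-p}_{\bM}(\log\bD)$ — concretely $\iota_{z^1\pa_{z^1}}\Omega$ is $dz^2\wedge\cdots\wedge dz^n$ up to a unit — so that in this frame the poles disappear and $T$ is, up to an overall sign, the $\Omega$–divergence operator $\diver_\Omega$ on polyvector fields. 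This reduces everything to the classical computation on $\bbC^n$ with $\Omega=dz^1\wedge\cdots\wedge dz^n$.

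Next I would reduce, by $C^\infty$–bilinearity in the coefficients together with the Leibniz rule for the single $\pa$ hidden in $T$, to the case where $\alpha$ and $\beta$ are decomposable, $\alpha=\bar\mu\otimes v$ and $\beta=\bar\nu\otimes w$ with $\bar\mu\in\Omega^{0,p}$ and $\bar\nu\in\Omega^{0,q}$ constant–coefficient antiholomorphic forms and $v,w$ (logarithmic) holomorphic vector fields. In this situation the whole statement follows from three elementary facts: $T$ annihilates functions and $T(v)=\diver_\Omega(v)$ on vector fields; the classical Schouten identity $T(v\wedge w)=T(v)\,w-T(w)\,v+[v,w]$, which is the $p=q=0$ case and is a one–line computation with $\diver_\Omega$; and the bookkeeping of the Koszul signs produced when the antiholomorphic factors $\bar\mu,\bar\nu$ are commuted past the polyvector parts. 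It is exactly this last point that produces the prefactor $(-1)^p$ and the sign $(-1)^{p+1}$ in front of $\alpha\wedge T(\beta)$, and the anticommutation $T\circ\db=-\db\circ T$ recorded in \eqref{tt.8} is the infinitesimal shadow of the same sign pattern and can be used to keep the computation organized; this is the argument carried out (in the non-logarithmic case) in \cite{Huybrechts}.

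The ``in particular'' assertion is then immediate: if $T\alpha=0$ and $T\beta=0$, the identity collapses to $(-1)^p[\alpha,\beta]=T(\alpha\wedge\beta)$, so $[\alpha,\beta]=T\bigl((-1)^p\alpha\wedge\beta\bigr)$ is $T$–exact.

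I expect no conceptual obstacle here; the work is clerical. The two places that need genuine care are the sign bookkeeping described above — keeping straight the Koszul signs arising from the $\db$–form degrees $p$ and $q$, which is what makes the statement asymmetric — and the verification that passing to the logarithmic frame really does convert $\eta$ into a divergence operator with no residual pole along $\bD$, so that the local computation at the divisor is formally identical to the one in the interior. Both are routine once one systematically substitutes $T_{\bM}(\log\bD)$ and $\Omega^\bullet_{\bM}(\log\bD)$ for their ordinary counterparts.
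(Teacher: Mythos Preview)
Your proposal is correct and takes essentially the same approach as the paper: the paper's own proof is a single sentence, ``The proof is a local computation; see \cite{Huybrechts} for details,'' and what you have written is precisely a careful outline of that local computation, with the added (and correct) observation that passing to the logarithmic frame near $\bD$ makes the computation formally identical to the interior case. You have supplied considerably more detail than the paper does.
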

\begin{proof}
The proof is a local computation; see \cite{Huybrechts} for details.
\end{proof}

We are now ready to solve \eqref{tt.3} by induction on $k$.

\begin{proposition}
Let $(M,g_b)$ be a compactifiable asymptotically cylindrical Calabi-Yau manifold with compactification $\bM$.  Suppose that
$\phi_1\in L^2_b\cH^{0,1}_{-}(M; {}^bT^{1,0}\tM)$ represents an infinitesimal deformation.  Then there exists a formal power
series $\sum_{k=0}^{\infty} \phi_k t^k$  with
\begin{equation}
                   \db \phi_k = -\frac12 \sum_{i=1}^{k-1} [\phi_i, \phi_{k-i}],
\label{tt.13a}\end{equation}
where each $\phi_k$  is a bounded polyhomogeneous $(0,1)$-form with values in ${}^bT^{1,0}\tM$ such that
$\eta(\phi_k)= \pa \beta_k$ for some polyhomogeneous form $\beta_k$.
\label{tt.13}\end{proposition}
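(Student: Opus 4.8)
The plan is to argue by induction on $k$, following the Tian--Todorov scheme but carried out entirely within the category of (bounded) polyhomogeneous sections on $\tM$. For $k=0$ take $\phi_0=0$, and for $k=1$ the assertions hold by hypothesis together with the Hodge theory of \S\ref{ht.0}: $\phi_1$ is a canonical representative of a class in $\WH^{0,1}(g_b,-\epsilon,M;{}^bT^{1,0}\tM)$, hence bounded polyhomogeneous by Theorem~\ref{bi.6} and Lemma~\ref{bi.5}, and the Calabi--Yau condition forces $T\phi_1=0$, i.e.\ $\eta(\phi_1)$ is $\pa$-closed, with $\eta(\phi_1)=\pa\beta_1$ for a polyhomogeneous $\beta_1$.

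Now assume $\phi_1,\dots,\phi_{k-1}$ have been produced, each bounded polyhomogeneous, satisfying \eqref{tt.13a}, and with $\eta(\phi_i)=\pa\beta_i$ for polyhomogeneous $\beta_i$; in particular each $\phi_i$ is $T$-closed, since $T\phi_i=\eta^{-1}\pa\eta(\phi_i)=\eta^{-1}\pa\pa\beta_i=0$. Put $\Psi_k:=-\tfrac12\sum_{i=1}^{k-1}[\phi_i,\phi_{k-i}]$, a bounded polyhomogeneous $(0,2)$-form with values in ${}^bT^{1,0}\tM$. Two facts must be checked. First, $\db\Psi_k=0$: this is the usual consequence of the graded Jacobi identity for the bracket together with \eqref{tt.13a} at levels $<k$. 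Second, $\Psi_k$ is $T$-exact: since the $\phi_i$ are $T$-closed, Lemma~\ref{tt.9} gives $[\phi_i,\phi_{k-i}]=\pm T(\phi_i\wedge\phi_{k-i})$, hence $\Psi_k=T\Xi_k$ with $\Xi_k:=-\tfrac12\sum_i(\pm)\,\phi_i\wedge\phi_{k-i}$ bounded polyhomogeneous.

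Next, apply the isomorphism $\eta$ of \eqref{tt.6}, which intertwines $\db$ and preserves (bounded) polyhomogeneity: $\eta(\Psi_k)$ is a bounded polyhomogeneous, $\db$-closed $(n-1,2)$-form, and $\eta(\Psi_k)=\eta(T\Xi_k)=\pa(\eta(\Xi_k))$ is $\pa$-exact with $\eta(\Xi_k)$ bounded polyhomogeneous. The $\pa\db$-lemma, Lemma~\ref{dd.1}, applied with $\alpha=\eta(\Psi_k)$ and $\beta=\eta(\Xi_k)$, then yields a polyhomogeneous $(n-2,1)$-form $\mu_k$ with $\eta(\Psi_k)=\pa\db\mu_k$ and $\pa\mu_k$ bounded polyhomogeneous. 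Set $\phi_k:=-\eta^{-1}(\pa\mu_k)$. Then $\phi_k$ is a bounded polyhomogeneous $(0,1)$-form with values in ${}^bT^{1,0}\tM$; one has $\eta(\phi_k)=-\pa\mu_k=\pa\beta_k$ with $\beta_k:=-\mu_k$ polyhomogeneous (so $T\phi_k=0$, feeding the next step); and, using $\eta^{-1}\db=\db\eta^{-1}$ and $\db\pa=-\pa\db$,
\[
\db\phi_k=-\eta^{-1}(\db\pa\mu_k)=\eta^{-1}(\pa\db\mu_k)=\eta^{-1}\big(\eta(\Psi_k)\big)=\Psi_k,
\]
which is \eqref{tt.13a}. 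This completes the induction.

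The algebra here is exactly Tian--Todorov; the real work --- and the only place this differs from the compact case --- is keeping the whole construction inside the polyhomogeneous $b$-category. The load-bearing input is Lemma~\ref{dd.1}: it is what guarantees simultaneously that the primitive $\mu_k$ is polyhomogeneous and that $\pa\mu_k$ is bounded, the latter being essential so that $\phi_k$ is bounded and the bracket $[\phi_i,\phi_{k-i}]$ at the next stage is again bounded polyhomogeneous. A secondary point to verify is that $\eta$, built from the meromorphic form $\Omega$ (which has a simple pole along $\bD$, hence corresponds to a smooth nonvanishing section of $\Lambda^n({}^bT^*\tM)$ over $\tM$), together with its inverse, genuinely preserves boundedness and polyhomogeneity and intertwines $\db$ on the relevant bundles.
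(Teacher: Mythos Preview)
Your proof is correct and follows essentially the same route as the paper: establish that $\eta(\phi_1)$ is harmonic (via the Calabi--Yau condition, which makes $\eta$ commute with both $\db$ and $\db^*$), hence $\pa$-closed by Lemma~\ref{bi.5} applied to its conjugate; then run the Tian--Todorov induction using Lemma~\ref{tt.9} and the $\pa\db$-lemma (Lemma~\ref{dd.1}) to produce each $\phi_k$ as $\eta^{-1}$ of a $\pa$-exact form. Your identification of Lemma~\ref{dd.1} as the load-bearing analytic input is exactly right.

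One small point: at $k=1$ you assert $\eta(\phi_1)=\pa\beta_1$, but you only justify (and only need) that $\eta(\phi_1)$ is $\pa$-\emph{closed}, i.e.\ $T\phi_1=0$; the paper's proof likewise establishes only $\pa$-closedness for $\phi_1$, and this is all the Tian--Todorov lemma requires at each stage. The $\pa$-exactness clause in the statement is really a byproduct of the construction for $k\ge 2$. Also, your bidegree for $\mu_k$ (namely $(n-2,1)$) is the correct one; the paper's text contains a typo here.
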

\begin{proof}
We first claim that if $\phi_1$ is harmonic, then $\eta(\phi_1)\in \rho^{-\epsilon} H^{\infty}_b\Omega^{n-1,1}(M)$ is harmonic as well.
Indeed, since $\Omega$ is holomorphic, $\db\circ \eta = \eta\circ \db$,  so that $\db \eta(\phi_1) = 0$.
Next, since $g_b$ is Calabi-Yau, $\eta$ is compatible (up to a constant scalar factor) with the Hermitian metrics on
${}^bT^{1,0}\tM$ and $\Lambda^{n,0}({}^bT^*\tM)$, so that $\db^* \circ \eta = \eta \circ \db^*$, and hence
$\db^* \eta(\phi_1) = 0$ as well.

Since $\phi_1$ is bounded polyhomogeneous, so is $\eta(\phi_1)$, so applying Lemma~\ref{bi.5} to $\eta(\phi_1)$ and its complex
conjugate shows that it is both $\db$-closed and $\pa$-closed. Hence, by Lemma~\ref{tt.9}, $\eta([\phi_1,\phi_1])$ is
$\pa$-exact and $\db$-closed, i.e., $\eta([\phi_1,\phi_1])= \pa \beta$ with
$\beta= -\eta(\phi_1\wedge \phi_1)$ bounded polyhomogeneous. 
By  Lemma~\ref{dd.1} (the $\pa\db$-lemma), we can find a polyhomogeneous $(n-1,1)$-form $\mu$ with $\pa \mu$ bounded such that
$$
        \eta[\phi_1,\phi_1]= \db\pa \mu.
$$
Thus, taking $\phi_2= -\frac12 \eta^{-1}\pa \mu$, we have that
$$
          \db\phi_2 + \frac12 [\phi_1,\phi_1]=0.
$$
Furthermore, $\eta(\phi_2)=-\frac12 \pa \mu$ is $\pa$-exact.

Suppose now that we have found $\phi_2,\ldots,\phi_{k-1}$ with the desired properties.  Then by the Tian-Todorov Lemma,
$$
        \eta[\phi_i, \phi_{k-i}]= -\pa \eta( \phi_i\wedge \phi_{k-i}),
$$
i.e., $\eta([\phi_i, \phi_{k-i}])$ is $\pa$-exact for $i<k$. Thus, $\sum_{i=1}^{k-1}[\phi_i,\phi_{k-i}]$ is $\pa$-exact.
It also $\db$-closed, since
\begin{equation}
\begin{aligned}
\db\left(\sum_{i=1}^{k-1}[\phi_i,\phi_{k-i}]  \right) &=  \sum_{i=1}^{k-1}\left( [\db\phi_i,\phi_{k-i}]- [\phi_i,\db\phi_{k-i}] \right)  \\
 &=  -\frac12 \sum_{i=1}^{k-1}\left(  \sum_{j=1}^{i-1}  [[\phi_j,\phi_{i-j}],\phi_{k-i}] - \sum_{\ell=1}^{k-i-1}   [\phi_i,[\phi_{\ell},\phi_{k-i-\ell}]  ] \right) \\
 &= -\frac12 \sum_{i=1}^{k-1} \sum_{j=1}^{i-1} \left(  [[\phi_j,\phi_{i-j}],\phi_{k-i}] -  [\phi_{k-i}, [\phi_{j}, \phi_{i-j}]]  \right),  \\
 &=   \sum_{i=1}^{k-1} \sum_{j=1}^{i-1} [\phi_{k-i},[\phi_j, \phi_{i-j}]].
 \end{aligned}
\label{tt.13b}\end{equation}
But this is precisely equal to the coefficient of $t^k$ in
$$
   [  \sum_{i=1}^{k-1} \phi_i t^i, [ \sum_{i=1}^{k-1} \phi_i t^i,  \sum_{i=1}^{k-1} \phi_i t^i]],
$$
and therefore vanishes by the Jacobi identity.  By Lemma~\ref{dd.1}, we can thus find a polyhomogeneous $(n-1,1)$-form
$\mu_k$ with $\pa \mu_k$ bounded such that $\db\pa\mu_k = -\frac12 \eta(\sum_{i=1}^{k-1} [\phi_i,\phi_{k-i}])$.
Now take $\phi_k=\eta^{-1} \pa \mu_k$ to complete the inductive step.
\end{proof}

To find actual deformation families, we wish to show that this formal series converges in
a suitable topology, and for this we must study the mapping properties of a generalized inverse of $\Delta_{\db}$.
Fix $\epsilon$ as in \eqref{defe} and consider the generalized inverse $G_{-\epsilon}$ of
\begin{equation}
        \Delta_{\db}: \rho^{-\epsilon}H^{k+2}_b\Omega^{p,q}(M) \to \rho^{-\epsilon}H^k_b\Omega^{p,q}(M)
\label{tt.13d}\end{equation}
in the sense of \cite[Proposition~5.64]{MelroseAPS} and \cite[Theorem~6.1]{MazzeoEdge}. Namely,
$$
G_{-\epsilon}: \rho^{-\epsilon}H^{k}_b\Omega^{p,q}(M) \to \rho^{-\epsilon}H^{k+2}_b\Omega^{p,q}(M)
$$
is the unique $b$-pseudodifferential operator of order $-2$ which satisfies
$$
              G_{-\epsilon}\Delta_{\db}= \Id- \Pi_1,  \quad \Delta_{\db} G_{-\epsilon} = \Id - \Pi_0,
$$
where $\Pi_1$ is the $\rho^{-\epsilon}L^2_b$-orthogonal projection onto $\ker^{p,q}_-\Delta_{\db}$ and
$$\Pi_0: \rho^{-\epsilon}L^2_b\Omega^{p,q}(M)\to L^2_b\cH^{p,q}(M) \hookrightarrow \rho^{-\epsilon}L^2_b\Omega^{p,q}(M)$$ is the
projection defined by
$$
      \Pi_0(u) = \sum_{i=1}^{\ell} \langle u, v_i\rangle_{L^2_b} v_i,
$$
where $v_1,\ldots, v_{\ell}$ is an orthonormal basis of $L^2\cH^{p,q}(M)$.

\begin{proposition}
For any $\delta  \in [0, \inf\cI_0)$,
$$
\pa \db^* G_{-\epsilon}: \rho^{\delta}\cC^{k,\alpha}_{g_b}(M; \Lambda^{p,q}(M))\to \rho^{\delta}\cC^{k,\alpha}_{g_b}(M; \Lambda^{p+1,q-1}(M))
$$
is a bounded operator.
\label{gf.1}\end{proposition}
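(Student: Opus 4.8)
The plan is to realize $\pa\db^{*}G_{-\epsilon}$ as a $b$-pseudodifferential operator and to read off its mapping properties from its Schwartz kernel on the $b$-double space, the point being that left composition of $G_{-\epsilon}$ with $\pa\db^{*}$ annihilates exactly the leading boundary term of $G_{-\epsilon}$ which would otherwise spoil $\rho^{\delta}$-weighted estimates. First I would observe that, since $G_{-\epsilon}$ is a $b$-pseudodifferential operator of order $-2$ and $\pa,\db^{*}$ are first order $b$-differential operators, the composite $Q:=\pa\db^{*}G_{-\epsilon}$ is a $b$-pseudodifferential operator of order $0$ from sections of $\Lambda^{p,q}$ to sections of $\Lambda^{p+1,q-1}$. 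Its kernel lifts to $\tM^{2}_{b}$ with the standard conormal singularity of order $0$ along the lifted diagonal and polyhomogeneous expansions at the boundary hypersurfaces $\mathrm{lb}$, $\mathrm{rb}$, $\mathrm{ff}$, the index sets at $\mathrm{lb}$ and $\mathrm{rb}$ being governed by $\Spec_{b}(\Delta_{\db})$ as in \cite[Ch.~5]{MelroseAPS}. Since $\epsilon$ satisfies \eqref{defe}, the $\mathrm{rb}$ index set of $G_{-\epsilon}$ has infimum at least $\epsilon$, and this is unaffected by the left multipliers $\pa,\db^{*}$, so the $\mathrm{rb}$ index set $E_{\mathrm{rb}}$ of $Q$ has $\inf E_{\mathrm{rb}}\ge\epsilon>0$. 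At $\mathrm{lb}$ the expansion of $G_{-\epsilon}$ begins with a term of order $\rho_{\mathrm{lb}}^{0}$ (possibly carrying a $\log\rho_{\mathrm{lb}}$) whose coefficient is valued in $\ker I(\Delta_{\db},0)=F(\db+\db^{*},0)$, a space which is bigraded in $(p,q)$ because $\Delta_{\db}$ preserves bidegree.

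The crucial step is to show that $\pa\db^{*}$ removes all $\mathrm{lb}$-contributions of $G_{-\epsilon}$ of order strictly below $\inf\cI_{0}$, so that $\inf E_{\mathrm{lb}}(Q)\ge\inf\cI_{0}$. This is a kernel-level refinement of Lemma~\ref{bi.5c}, carried out with the indicial calculus. For a non-logarithmic leading coefficient $a_{0}\in\ker I(\db+\db^{*},0)|_{(p,q)}$ one has $I(\db^{*},0)a_{0}=0$ by the bidegree argument of Lemma~\ref{bi.5c} (as $\ker I(\db+\db^{*},0)=\ker I(\Delta_{\db},0)$ is bigraded, and $I(\db,0)$ and $I(\db^{*},0)$ change the bidegree in opposite ways), so $\db^{*}$ alone kills it. For a logarithmic coefficient $b_{0}\in\ker I(\db+\db^{*},0)|_{(p,q)}$ one still has $I(\db^{*},0)b_{0}=0$, while the $\rho_{\mathrm{lb}}^{0}$-contribution of $\db^{*}$ is $\tfrac1i I'(\db^{*},0)b_{0}$; since $I(\Delta_{\db},\lambda)=\tfrac12\Delta_{\pa\tM}+c\lambda^{2}$ has $I'(\Delta_{\db},0)=0$ and $I(\Delta_{\db},\lambda)=I(\db+\db^{*},\lambda)^{2}$, differentiating gives $I(\db+\db^{*},0)\,I'(\db+\db^{*},0)b_{0}=0$, so $I'(\db+\db^{*},0)b_{0}\in\ker I(\db+\db^{*},0)$; its $(p,q-1)$-component, which is precisely $I'(\db^{*},0)b_{0}$, therefore lies in $\ker I(\db+\db^{*},0)|_{(p,q-1)}=\ker I(\pa+\pa^{*},0)|_{(p,q-1)}$, where the K\"ahler identity $\Delta_{\db}=\Delta_{\pa}$ is used, and hence $I(\pa,0)\,I'(\db^{*},0)b_{0}=0$ by one more application of the bidegree argument. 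Thus the $\rho_{\mathrm{lb}}^{0}$-term of $Q$ vanishes. Running the same type of argument at the subsequent boundary orders — again reducing to bidegree statements for coefficients which land in kernels of indicial operators, together with the K\"ahler identities $\Delta_{\db}=\Delta_{\pa}$ and $\pa\db^{*}=-\db^{*}\pa$ — yields $\inf E_{\mathrm{lb}}(Q)\ge\inf\cI_{0}$.

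Finally, for $\delta\in[0,\inf\cI_{0})$ I would conjugate by the weight: $\rho^{-\delta}Q\rho^{\delta}$ is again a $b$-pseudodifferential operator of order $0$, with $\mathrm{lb}$ index set shifted down by $\delta$ (still of positive infimum, as $\inf\cI_{0}-\delta>0$) and $\mathrm{rb}$ index set shifted up (infimum $\ge\epsilon+\delta>0$). It then suffices to prove that any $b$-pseudodifferential operator of order $0$ whose $\mathrm{lb}$ and $\mathrm{rb}$ index sets have positive infimum is bounded on $\cC^{k,\alpha}_{g_{b}}(M;\cdot)$. One splits the kernel into a piece supported near the lifted diagonal (meeting only $\mathrm{ff}$) and a residual piece supported away from the diagonal near $\mathrm{lb}\cup\mathrm{rb}$: the first is bounded on $\cC^{k,\alpha}$ by interior Schauder/Calder\'on--Zygmund estimates applied uniformly along the cylindrical end, using that $g_{b}$ is a polyhomogeneous exact $b$-metric and hence asymptotically translation invariant; the residual piece is an integral operator whose kernel is bounded by $\rho(z)^{a}\rho(z')^{a'}$ with $a,a'>0$ times conormal data, and since the $b$-volume density is comparable to $d\rho'/\rho'$ near $\mathrm{rb}$, the integral against a section in $\cC^{k,\alpha}$ converges and produces a $\cC^{k,\alpha}$ section. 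This gives boundedness of $\rho^{-\delta}Q\rho^{\delta}$ on $\cC^{k,\alpha}_{g_{b}}$, equivalently of $Q=\pa\db^{*}G_{-\epsilon}$ on $\rho^{\delta}\cC^{k,\alpha}_{g_{b}}$, which is the claim.

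The main obstacle is the index-set computation of the second step, performed not just for a single form but for the full Schwartz kernel on $\tM^{2}_{b}$ and for all boundary terms of order below $\inf\cI_{0}$: one must establish cleanly that left composition with $\pa\db^{*}$ — and not with $\db^{*}$ alone — shifts the left-face index set of $G_{-\epsilon}$ up to $\inf\cI_{0}$, which is where the K\"ahler structure enters essentially (through $\Delta_{\db}=\Delta_{\pa}$ and the K\"ahler identities, as in the leading-order computation above). The remaining ingredients — the composition calculus for $b$-pseudodifferential operators and the uniform interior Schauder estimates along the end — are standard.
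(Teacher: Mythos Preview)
Your approach is correct and arrives at the same conclusion by the same overall strategy as the paper: show that the left-face index set $E_{\lb}$ of the order-zero $b$-pseudodifferential operator $Q=\pa\db^{*}G_{-\epsilon}$ satisfies $\inf E_{\lb}\ge\inf\cI_{0}$, and then invoke \cite[Proposition~3.27]{MazzeoEdge} for the $\rho^{\delta}\cC^{k,\alpha}_{g_b}$ boundedness. The difference lies in how the crucial $\inf E_{\lb}$ estimate is obtained. You work directly on the Schwartz kernel at $\lb$, computing with the indicial family and the K\"ahler identities that $\pa\db^{*}$ annihilates the $\rho_{\lb}^{0}$ and $\rho_{\lb}^{0}\log\rho_{\lb}$ coefficients (which is all that is needed, since the only indicial root of $\Delta_{\db}$ in $(-\epsilon,\inf\cI_{0})$ is $0$; your phrase ``subsequent boundary orders'' is therefore harmless but unnecessary). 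The paper instead argues by \emph{testing}: it observes that to determine $E_{\lb}$ it suffices to check that $Q$ maps polyhomogeneous forms with positive index set to polyhomogeneous forms with positive index set, and this follows immediately by applying Lemma~\ref{bi.5c} twice (once to $\psi=G_{-\epsilon}\beta$ to get $\db^{*}\psi$ bounded polyhomogeneous, once to $\overline{\db^{*}\psi}$ to get $\pa\db^{*}\psi$ decaying). The paper's route is shorter and reuses a lemma already in hand, while yours makes the mechanism completely explicit at the symbol level; your final step (conjugation by $\rho^{\delta}$ and the diagonal/off-diagonal splitting) is a correct unpacking of what \cite[Proposition~3.27]{MazzeoEdge} provides.
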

\begin{proof}
We need to invoke some of the more technical aspects of the structure of the Schwartz kernel of this operator, for which we refer
to \cite{MelroseAPS}, \cite{MazzeoEdge} and also to \cite{Jeffres-Mazzeo-Rubinstein}, where a very similar but more complicated
result is proven.  First note that $\pa\db^* G_{-\epsilon}$ is a $b$-pseudodifferential operator of order zero.  A priori, its
Schwartz kernel could have a leading logarithmic term at order $\rho^0$ in its polyhomogeneous expansion at the left
boundary face $\lb(M^2_b)$ of the $b$-double space.  However, we rule this out by observing that this operator maps
polyhomogeneous forms with positive index sets to polyhomogeneous forms with positive index sets. To check this last
fact, note that if $\beta$ is polyhomogeneous with positive index set, then $\psi=G_{-\epsilon}\beta$ is polyhomogeneous and
$\Delta_{\db}\psi=\beta+\gamma$ with $\gamma \in L^2_b \cH^{p,q}(M)$. Thus, Lemma~\ref{bi.5c} implies that
$\db^*\psi$ is bounded polyhomogeneous.  Applying this lemma once more, this time to the complex conjugate of
$\db^*\psi$, we conclude that $\pa\db^*\psi$ is polyhomogeneous with positive index set.

This property implies that the index set $E_{\lb}$ of the polyhomogeneous expansion of the Schwartz kernel of
${\pa\db^*G_{-\epsilon}}$ is strictly positive; in fact, $\inf E_{\lb}\ge \inf\cI_0$. Now \cite[Proposition~3.27]{MazzeoEdge}
shows that 
\begin{equation}
     \pa\db^* G_{-\epsilon}:  \rho^{\delta}\cC^{k,\alpha}_{g_b}(M; \Lambda^{p,q}(M))\to \rho^{\delta}\cC^{k,\alpha}_{g_b}(M; \Lambda^{p+1,q-1}(M))
\label{tt.13e}\end{equation}
is bounded for $0\le \delta< \inf\cI_0$.
\end{proof}

We now define a function space slightly smaller than $\cC^{k,\alpha}_{g_b}(M; \Lambda^{p,q}(M))$ in which restriction to $\pa \tM$
makes sense.  Let $\chi\in \CI(M)$ equal $1$ near $\pa \tM$ and be supported in a collar neighborhood $c: \pa \tM\times
[0,1)\to \tM$ of $\pa \tM$, and let $\pi: \pa \tM\times [0,1)\to \pa \tM$ be the projection onto $\pa\tM$.
For $0<\delta<\inf \cI_0$, define
\begin{equation}
  \cC^{k,\alpha}_{0,\delta}(M; \Lambda^{p,q}(M)):= \chi c_* \pi^* \cC^{k,\alpha}(\pa\tM; \left.  \Lambda^{p,q}({}^bT^*\tM)\right|_{\pa\tM})+ \rho^{\delta}\cC^{k,\alpha}_{g_b}(M; \Lambda^{p,q}(M)).
\label{gf.2}\end{equation}
This is a subspace of $\cC^{k,\alpha}_{g_b}(M;\Lambda^{p,q}(M))$ and is naturally isomorphic to the direct sum
$$
          \cC^{k,\alpha}(\pa\tM ; \left. \Lambda^{p,q}({}^bT^*_{\bbC}\tM)\right|_{\pa\tM})\oplus \rho^{\delta}\cC^{k,\alpha}_{g_b}(M; \Lambda^{p,q}(M)).
$$
The norm on this latter space induces a norm on $\cC^{k,\alpha}_{0,\delta}(M; \Lambda^{p,q}(M))$.
\begin{proposition}
For $\delta>0$ sufficiently small,
$$
    \pa\db^* G_{-\epsilon}: \cC^{k,\alpha}_{0,\delta}(M; \Lambda^{p,q}(M))\to \cC^{k,\alpha}_{0,\delta}(M; \Lambda^{p+1,q-1}(M))
$$
is bounded.
\label{gf.3}\end{proposition}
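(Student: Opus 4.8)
The plan is to split off the two summands in the definition \eqref{gf.2} of $\cC^{k,\alpha}_{0,\delta}$. Given $w\in\cC^{k,\alpha}_{0,\delta}(M;\Lambda^{p,q}(M))$, write $w=\chi c_*\pi^*v+w_0$ with $v\in\cC^{k,\alpha}(\pa\tM;\Lambda^{p,q}({}^bT^*\tM)|_{\pa\tM})$ and $w_0\in\rho^\delta\cC^{k,\alpha}_{g_b}(M;\Lambda^{p,q}(M))$, the two norms adding up to a constant times the norm of $w$. By Proposition~\ref{gf.1}, $\pa\db^*G_{-\epsilon}w_0$ already lies in $\rho^\delta\cC^{k,\alpha}_{g_b}(M;\Lambda^{p+1,q-1}(M))\subset\cC^{k,\alpha}_{0,\delta}(M;\Lambda^{p+1,q-1}(M))$, with norm controlled by that of $w_0$. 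So the problem reduces to analysing $P(\chi c_*\pi^*v)$, where $P:=\pa\db^*G_{-\epsilon}$, and showing that it lies in $\cC^{k,\alpha}_{0,\delta}(M;\Lambda^{p+1,q-1}(M))$ with norm $\lesssim\|v\|_{\cC^{k,\alpha}(\pa\tM)}$.

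As recalled in the proof of Proposition~\ref{gf.1}, $P$ is a $b$-pseudodifferential operator of order $0$ whose Schwartz kernel has a polyhomogeneous expansion at the left boundary face $\lb$ of the $b$-double space with index set $E_{\lb}$ satisfying $\inf E_{\lb}\ge\inf\cI_0>\delta$. Since $\chi c_*\pi^*v$ is independent of $\rho$ near $\pa\tM$, the leading behaviour of $P(\chi c_*\pi^*v)$ at $\pa\tM$ is governed by the normal operator of $P$ along the front face, equivalently by its indicial family $I(P,\tau)=I(\pa,\tau)\,I(\db^*,\tau)\,I(\Delta_{\db},\tau)^{-1}$. Now $I(\Delta_{\db},\tau)^{-1}=(\tfrac12\Delta_{\pa\tM}+c\tau^2)^{-1}$ has a pole of order two at $\tau=0$, but only on the subspace of cross-sectional harmonic forms $\ker\Delta_{\pa\tM}=\ker I(\db+\db^*,0)$; on that subspace $I(\pa,\tau)$ and $I(\db^*,\tau)$ each vanish at $\tau=0$ — this is the very mechanism that forces the $\log\rho$ terms to drop in the proofs of Lemma~\ref{bi.5c} and Lemma~\ref{dd.1} — so $I(\pa,\tau)I(\db^*,\tau)$ vanishes there to order at least two and exactly absorbs the double pole. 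Hence $I(P,\tau)$ is holomorphic at $\tau=0$, and $I(P,0)$ is a well-defined pseudodifferential operator of order $0$ on $\pa\tM$ (a classical order-$0$ operator off $\ker\Delta_{\pa\tM}$, finite rank on it), therefore bounded on $\cC^{k,\alpha}(\pa\tM;\Lambda^{p+1,q-1}({}^bT^*\tM)|_{\pa\tM})$.

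Consequently $P(\chi c_*\pi^*v)=\chi c_*\pi^*\bigl(I(P,0)v\bigr)+r$, where $\|I(P,0)v\|_{\cC^{k,\alpha}(\pa\tM)}\lesssim\|v\|_{\cC^{k,\alpha}(\pa\tM)}$, and the remainder $r$ is controlled by the next terms in the expansion of the kernel of $P$ at $\lb$ and by the next indicial roots of $\Delta_{\db}$, located at $\tau=\pm i\sqrt{\mu_j/(2c)}$ with $\mu_j$ the positive eigenvalues of $\Delta_{\pa\tM}$; the choice \eqref{defe} of $\epsilon<\inf\cI_0$ excludes the corresponding growing contributions from $G_{-\epsilon}(\chi c_*\pi^*v)$. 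Thus there is a $\delta_0>0$, depending only on $\inf\cI_0$ and the first positive eigenvalue of $\Delta_{\pa\tM}$, such that $r\in\rho^{\delta_0}\cC^{k,\alpha}_{g_b}(M;\Lambda^{p+1,q-1}(M))$, the required estimate for $r$ following from the off-diagonal kernel bounds of \cite[Proposition~3.27]{MazzeoEdge} exactly as in Proposition~\ref{gf.1}. Taking $0<\delta<\min\{\delta_0,\inf\cI_0\}$ and combining with the bound for $w_0$ then yields the boundedness of $\pa\db^*G_{-\epsilon}$ on $\cC^{k,\alpha}_{0,\delta}$.

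The crux — and the step I expect to require the most care — is the cancellation at $\tau=0$ of the order-two pole of $I(\Delta_{\db},\tau)^{-1}$ by $I(\pa,\tau)I(\db^*,\tau)$: this is what guarantees that the boundary part of $P(\chi c_*\pi^*v)$ is a genuine lift $\chi c_*\pi^*(\,\cdot\,)$ of a Hölder section on $\pa\tM$, rather than a term growing like $(\log\rho)^2$, and simultaneously identifies the induced operator on $\pa\tM$ as a bounded operator of order $0$. Once this structural fact is in hand, the estimate for the decaying remainder is a routine adaptation of the $b$-pseudodifferential kernel estimates of \cite{MelroseAPS} and \cite{MazzeoEdge} already used in Proposition~\ref{gf.1}.
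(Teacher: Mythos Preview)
Your approach and the paper's are essentially the same: both extract the indicial operator $I(P,0)$ of $P=\pa\db^*G_{-\epsilon}$ to handle the $\rho$-independent boundary piece, and use the kernel estimates of \cite[Proposition~3.27]{MazzeoEdge} for the decaying remainder. The only organizational difference is that the paper decomposes the \emph{operator} as $P=\chi I(P)\chi+(P-\chi I(P)\chi)$, shows each piece preserves $\cC^{k,\alpha}_{0,\delta}$, and exploits that $P-\chi I(P)\chi$ has a positive index set at the front face; you instead decompose the \emph{input}. Both lead to the same estimate.

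One step in your argument is too quick. You claim $I(\pa,\tau)I(\db^*,\tau)$ vanishes to order two on $\ker\Delta_{\pa\tM}$ because ``each factor vanishes at $\tau=0$''. That is not automatic: writing $I(\db^*,\tau)=A_0+\tau A_1$ and $I(\pa,\tau)=B_0+\tau B_1$, you have $I(\db^*,\tau)\Pi_H=\tau A_1\Pi_H$, but the linear coefficient of the product on $\Pi_H$ is $B_0A_1\Pi_H=I(\pa,0)\,I(\db^*)'(0)\,\Pi_H$, and there is no a priori reason the range of $I(\db^*)'(0)$ on harmonic forms should again lie in $\ker I(\pa,0)$. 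It \emph{is} true---in the product model $D\times\bbC^*$, $I(\db^*)'(0)$ is (up to a constant) contraction by $\overline{w\pa_w}$, whose output on a harmonic cross-sectional form is $\pa_D$-closed and $\theta$-independent, hence annihilated by $I(\pa,0)$---but this verification is what is actually doing the work. The paper does not need this computation because it simply invokes the fact, already established in Proposition~\ref{gf.1}, that $P$ is a $b$-pseudodifferential operator of order $0$ with strictly positive index set at $\lb$; this guarantees directly that $I(P,0)$ is a well-defined pseudodifferential operator of order $0$ on $\pa\tM$, bounded on $\cC^{k,\alpha}$.
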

\begin{proof}
By definition, the indicial operator $I(P)$ of $P:=\pa\db^*G_{-\epsilon}$ is the restriction of the Schwartz kernel of $P$ to the front
face of the $b$-double space.  Recall from \cite{MelroseAPS} that $I(P)$ is an $\bbR^+$-invariant operator on the cylinder
$\pa\tM\times (0,+\infty)_{\rho}$.  Moreover, the corresponding indicial family $I(P,\lambda)$, which is the Mellin transform of $I(P)$,
satisfies
\begin{equation}
     I(P)\varpi^* u= \varpi^* I(P,0)u \quad \mbox{for}\quad u\in \cC^{k,\alpha}(\pa\tM;  \left. \Lambda^{p,q}({}^bT^*_{\bbC}\tM)\right|_{\pa\tM}),
\label{gf.4}
\end{equation}
where $\varpi: \pa\tM\times (0,+\infty)_{\rho}\to \pa\tM$ is the projection onto the left factor.  Therefore,
\begin{equation}
  \chi I(P) \chi\pi^* u=  \chi\pi^* I(P,0)u -  \chi I(P) (1-\chi)\varpi^*u.
\label{gf.5}\end{equation}
Clearly,
 $$
 I(P,0): \cC^{k,\alpha}(\pa\tM;  \left. \Lambda^{p,q}({}^bT^*_{\bbC}\tM)\right|_{\pa\tM})\to \cC^{k,\alpha}(\pa\tM;  \left. \Lambda^{p+1,q-1}({}^bT^*_{\bbC}\tM)\right|_{\pa\tM})
 $$
is bounded. On the other hand, applying \cite[Proposition~3.27]{MazzeoEdge} as in the proof Proposition~\ref{gf.1}, we
see that
 $$
   \chi I(P)(1-\chi):  \varpi^* \cC^{k,\alpha}(\pa\tM;  \left. \Lambda^{p,q}({}^bT^*_{\bbC}\tM)\right|_{\pa\tM})\to \rho^{\delta}\cC^{k,\alpha}_{g_b}(M; \Lambda^{p+1,q-1}(M))
 $$
is also bounded. One similarly checks that $\chi I(P) \chi$ is bounded on $\rho^{\delta}\cC^{k,\alpha}_{g_b}$-forms.  Altogether,
\begin{equation}
\chi I(P) \chi:  \cC^{k,\alpha}_{0,\delta}(M; \Lambda^{p,q}(M))\to \cC^{k,\alpha}_{0,\delta}(M; \Lambda^{p+1,q-1}(M))
\label{gf.6}\end{equation}
is bounded.
Now, by construction, the Schwartz kernel of $P- \chi I(P) \chi$ has positive index sets at all front faces. Thus,
by \cite[Proposition~3.27]{MazzeoEdge},
\begin{equation}
      P-\chi I(P)\chi : \cC^{k,\alpha}(M;\Lambda^{p,q}(M))\to \rho^{\delta}\cC^{k,\alpha}(M;\Lambda^{p+1,q-1}(M))
 \label{gf.7}\end{equation}
is bounded for $\delta$ sufficiently small. Combining \eqref{gf.6} and \eqref{gf.7} yields the result.
 \end{proof}

\begin{theorem}
Let $(M,g_b)$ be a compactifiable asymptotically cylindrical Calabi-Yau manifold with compactification $\bM$.  Then the logarithmic deformations of $M$ (in the sense of Kawamata \cite{Kawamata1978})  are unobstructed.
\label{tt.13c}\end{theorem}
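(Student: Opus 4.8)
The plan is to carry out the classical power‑series argument of Kodaira--Spencer \cite[\S~5.3]{Kodaira}, feeding in the formal solution of Proposition~\ref{tt.13} and closing it up with the analytic mapping estimates of Propositions~\ref{gf.1} and \ref{gf.3}. Fix a harmonic representative $\phi_1\in L^2_b\cH^{0,1}_{-}(M;{}^bT^{1,0}\tM)$ of an infinitesimal logarithmic deformation; recall from Theorems~\ref{ht.3} and \ref{bi.6} that such representatives parametrize $H^1(\bM;T_{\bM}(\log\bD))$. Fix $\epsilon$ as in \eqref{defe}, an integer $k\ge 1$, a Hölder exponent $\alpha\in(0,1)$, and $\delta>0$ small, and work in the Banach space $X:=\cC^{k,\alpha}_{0,\delta}(M;\Lambda^{0,1}({}^bT^{1,0}\tM))$ of \eqref{gf.2}. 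I would recast the recursion of Proposition~\ref{tt.13} as a single fixed‑point equation: using the Tian--Todorov identity (Lemma~\ref{tt.9}) to rewrite each $\eta([\phi_i,\phi_{k-i}])$ as $\pa$ of a wedge product, and using that the $\mu_k$ appearing in Lemma~\ref{dd.1} are of the form $\db^*G_{-\epsilon}(\cdot)$, one is led to
\begin{equation*}
\phi(t)=\phi_1\,t+\mathcal{N}\big(\phi(t)\big),\qquad \mathcal{N}(\psi):=-\tfrac12\,\eta^{-1}\,\pa\,\db^*G_{-\epsilon}\,\eta(\psi\wedge\psi),
\end{equation*}
where $G_{-\epsilon}$ is the fixed generalized inverse of $\Delta_{\db}$. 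Exactly as in the proof of Proposition~\ref{tt.13} (the Jacobi identity gives $\db$‑closedness of $\sum_{i<k}[\phi_i,\phi_{k-i}]$, Lemma~\ref{tt.9} gives $\pa$‑exactness after applying $\eta$, and Lemma~\ref{dd.1} is used degree by degree), the Taylor coefficients $\phi_k$ of any solution of this equation solve \eqref{tt.13a} and satisfy that $\eta(\phi_k)$ is $\pa$‑exact, so the construction is self‑consistent.

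The heart of the argument is then a contraction estimate in $X$. By Propositions~\ref{gf.1} and \ref{gf.3}, the operator $\pa\,\db^*G_{-\epsilon}$ is a $b$‑pseudodifferential operator of order $0$ whose Schwartz kernel has strictly positive index sets at the left boundary face of the $b$‑double space — this is precisely where the asymptotically cylindrical setting differs from the compact one, since a priori a leading logarithmic term at $\rho^0$ could appear and is excluded only by observing that the operator preserves positivity of index sets — and hence it is bounded on the space $X$ in which restriction to $\pa\tM$ still makes sense. Since $\Omega$ is a nowhere‑vanishing polyhomogeneous section (its pointwise norm being comparable to the volume form, because $\omega_b^n=i^{n^2}\Omega\wedge\overline{\Omega}$), the sheaf isomorphism $\eta$ and its inverse act boundedly on Hölder spaces, and $X$ is stable under the pointwise wedge $\psi\mapsto\eta(\psi\wedge\psi)$. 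Consequently $\mathcal{N}\colon X\to X$ is a bounded quadratic map with $\|\mathcal{N}(\psi)-\mathcal{N}(\psi')\|_X\le C(\|\psi\|_X+\|\psi'\|_X)\,\|\psi-\psi'\|_X$, so for $|t|$ small the right‑hand side of the fixed‑point equation is a contraction on a small ball of $X$, yielding a unique solution $\phi(t)$ depending holomorphically on $t$ — and, running the same estimate with $\phi_1$ varying over a ball in $H^1(\bM;T_{\bM}(\log\bD))$, holomorphically on $\phi_1$. Equivalently one may invoke Kodaira--Spencer's method of majorants: with $\|\phi_k\|_X\le a_k$, $a_1=\|\phi_1\|_X$, $a_k=\tfrac{C}{2}\sum_{i=1}^{k-1}a_i a_{k-i}$, the majorant series $a(t)=\sum_k a_k t^k$ solves the quadratic equation $a=\|\phi_1\|_X\,t+\tfrac{C}{2}a^2$ and hence converges for $|t|<r$, so $\sum_k\phi_k t^k$ converges in $X$.

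It remains to verify that the limit genuinely solves the Maurer--Cartan equation and defines a logarithmic deformation. For the former I would use the standard closing argument of \cite[\S~5.3]{Kodaira}: set $\Psi(t)=\db\phi(t)+\tfrac12[\phi(t),\phi(t)]$; applying $\db$ to the fixed‑point equation and using Lemma~\ref{tt.9}, the Jacobi identity, $\db\circ\eta=\eta\circ\db$, and the defining relations of $G_{-\epsilon}$, one finds that $\Psi$ satisfies a linear equation of the schematic form $\eta(\Psi)=\eta^{-1}\pa\,\db^*G_{-\epsilon}(\text{bilinear in }\phi,\Psi)$ together with $\db^*\Psi=0$. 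Since the formal Taylor series of $\Psi$ vanishes by \eqref{tt.13a}, the contraction estimate yields $\|\Psi\|_X\le C|t|\,\|\Psi\|_X$, whence $\Psi\equiv 0$ for $|t|$ small. Finally, elliptic $b$‑regularity for $\Delta_{\db}$ (Corollary~\ref{breg.6}), applied to the fixed point $\phi(t)$ together with the polyhomogeneity of every coefficient $\phi_k$ (Proposition~\ref{tt.13}), shows that $\phi(t)$ is itself bounded polyhomogeneous, hence a genuine logarithmic $(0,1)$‑form with values in ${}^bT^{1,0}\tM$; the resulting integrable deformation of $\db$ therefore provides, in the sense of Kawamata \cite{Kawamata1978}, an actual logarithmic deformation of $(\bM,\bD)$ tangent to $\phi_1$, which proves unobstructedness.

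The step I expect to be the main obstacle is exactly the boundedness of $\pa\,\db^*G_{-\epsilon}$ on the space $X=\cC^{k,\alpha}_{0,\delta}$ — a space small enough that restriction to $\pa\tM$ is defined, yet large enough to carry the quadratic nonlinearity — which is the content of Propositions~\ref{gf.1} and \ref{gf.3}. Because $\db+\db^*$ is not Fredholm on $L^2$, one must pass to the weighted inverse $G_{-\epsilon}$, and then rule out a leading logarithmic term in the polyhomogeneous expansion of the Schwartz kernel of $\pa\,\db^*G_{-\epsilon}$ at the left boundary face; tracking that this operator maps polyhomogeneous forms with positive index sets to polyhomogeneous forms with positive index sets (via Lemma~\ref{bi.5c}) is what makes the whole scheme go through.
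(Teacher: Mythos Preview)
Your approach is essentially the one the paper takes: the recursive formula $\phi_k=\tfrac12\eta^{-1}\pa\db^*G_{-\epsilon}\big(\sum_{j<k}\eta(\phi_j\wedge\phi_{k-j})\big)$, the boundedness of $\pa\db^*G_{-\epsilon}$ on $\cC^{k,\alpha}_{0,\delta}$ from Proposition~\ref{gf.3}, and the Kodaira majorant argument are exactly what the paper uses. Packaging the recursion as a fixed-point equation is cosmetic.

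There is, however, a genuine gap at the very end. Having a bounded polyhomogeneous $\phi(t)\in\Omega^{0,1}(M;{}^bT^{1,0}\tM)$ solving Maurer--Cartan does \emph{not} by itself yield a logarithmic deformation of $(\bM,\bD)$ in Kawamata's sense: you must show the new complex structure extends over the compactifying divisor. The paper handles this in a separate closing step that you omit. It analyzes the boundary restriction $\phi_\pa(t)=\left.\phi(t)\right|_{\pa\tM}$, observes it is produced by running the same recursion on the product Calabi--Yau cylinder $D\times\bbC^*$, and uses the $\bbC^*$-invariance of the Laplacian and generalized inverse there to deduce that $\phi_\pa(t)$ is $\bbC^*$-invariant and of the explicit form $\pr_1^*\mu_1+\pr_1^*(\mu_2)\otimes w\pa_w$. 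This shows $\phi_\pa(t)$ extends across the zero section to a deformation of the orbifold normal bundle $N_{\bD}$; one then invokes \cite[Theorem~3.1]{HHN2012} to produce a diffeomorphism $\psi_t$ so that $\psi_t^*J_t$ extends smoothly to $\bM$. Without this step the conclusion ``logarithmic deformation of $(\bM,\bD)$'' is not justified.

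A smaller issue: your regularity step is too quick. Corollary~\ref{breg.6} is a linear statement, and $\phi(t)$ only satisfies the quasi-linear equation $\Delta_{\db}\phi=-\tfrac12\db^*[\phi,\phi]$. The paper first uses this equation (and its restriction to $\pa\tM$) with interior Schauder estimates in bounded geometry to upgrade the $\cC^{k,\alpha}_{0,\delta}$ solution to $\CI_{0,\delta}$, and only then runs a bootstrapping argument \`a la Theorem~\ref{accy.1} together with Corollary~\ref{breg.9} to obtain polyhomogeneity. You should indicate this intermediate smoothness step rather than invoking Corollary~\ref{breg.6} directly.
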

\begin{proof}
We construct the formal power series of Proposition~\ref{tt.13} more systematically.  Given an infinitesimal deformation
$\phi_1\in L^2_b\cH^{0,1}_-(M;T_{\bM}(\log\bD))$, choose the coefficients of the power series of Proposition~\ref{tt.13} by
\begin{equation}
    \phi_k= \frac12\eta^{-1} \left(\pa \db^* G_{-\epsilon} \left(  \sum_{j=1}^{k-1} \eta(\phi_j\wedge \phi_{k-j}) \right)  \right),
\label{tt.13d}\end{equation}
since in the proof of the $\pa\db$-lemma (Lemma~\ref{dd.1}), we can take $\psi= G_{-\epsilon}\beta$.
By \eqref{tt.13d} and Proposition~\ref{gf.3}, if $\delta>0$ is sufficiently small, there is a positive constant $K_{k,\alpha}$ such that
\begin{equation}
\| \phi_{\ell} \|_{\cC^{k,\alpha}_{\delta,0}}\le K_{k,\alpha} \sum_{i=1}^{\ell-1}\left(  \|\phi_i\|_{\cC^{k,\alpha}_{\delta,0}} \cdot \|\phi_{\ell-i}\|_{\cC^{k,\alpha}_{\delta,0}} \right).
\label{tt.13ee}\end{equation}
Now apply the argument of \cite{Kodaira} to conclude that for $m\in\bbN$,  there is $\delta_{m}>0$ such that
\begin{equation}
    \phi(t)= \sum_{k=1}^{\infty} \phi_k t^k
\label{tt.13f}\end{equation}
converges in $\cC^{m,\alpha}_{0,\delta}$ for $|t|<\delta_m$.  This does not immediately imply that $\phi(t)$ is smooth. To prove this,
note that from its construction, $\phi$ is a solution of the non-linear equation
\begin{equation}
     \Delta_{\db}\phi= \db^*\db \phi= -\frac12 \db^*[\phi,\phi].
\label{gf.8}\end{equation}
This equation can be put in the form
\begin{equation}
       \Delta_{\db}\phi + \phi\cdot P\phi= \nabla\phi\cdot \nabla \phi,
\label{gf.8a}\end{equation}
where $P$ is some second order differential operator and $\cdot$ denotes some contraction of indices.  When $\phi$ is sufficiently
small in $\cC^0$-norm, this is a quasi-linear elliptic equation, so by taking $\delta_m$ smaller if necessary, we see that $\phi$
is smooth for $|t|<\delta_m$.  Similarly, restricting this equation to the boundary, we see that $\left.\phi(t)\right|_{\pa\tM}$
is smooth. Thus, $\phi_\pa(t):= \chi c^*\pi^*(\left.\phi(t)\right|_{\pa\tM})$ is smooth and $v= \frac{\phi-\phi_{\pa}}{\rho^{\delta}}$
satisfies the equation
\begin{equation}
  (\rho^{-\delta}\Delta_{\db} \rho^{\delta})v= \rho^{-\delta}\left( -\frac12\db^*[\phi,\phi]- \Delta_{\db} \phi_{\pa} \right).
\label{gf.9}\end{equation}
By definition of $\phi_{\pa}$, the right hand side of \eqref{gf.9} is in $\cC^{k,\alpha}_{g_b}(M; \Lambda^{0,1}(T^*M)\otimes T_{\bM}(\log\bD))$.
Since $(M,g_b)$ has bounded geometry, interior Schauder estimates and a bootstrapping argument imply that 
$$v\in \CI_{g_b}(M;
\Lambda^{0,1}(TM)\otimes T_{\bM}(\log\bD)).$$  Consequently,  $\phi= \phi_{\pa}+ \rho^{\delta}v \in \CI_{0,\delta}(M;
\Lambda^{0,1}(TM)\otimes T_{\bM}(\log\bD))$.  Using  \eqref{gf.8} and Corollary~\ref{breg.9}, we can apply a bootstrapping argument
as in the proof of Theorem~\ref{accy.1} to show that $\phi$ is in fact polyhomogeneous.   Proceeding as in \cite{Kodaira}, we
also check that $\phi$ is smooth in $t$.

Finally, notice that by construction,  $\phi_{\pa}(t)\in \CI(\pa\tM; \left. \Lambda^{0,1}({}^bT^*\tM)\otimes T_{\bM}(\log\bD)\right|_{\pa \tM})$ corresponds to a deformation of the complex structure of  $N_{\bD}\setminus 0$, \ie a $\iota$-invariant deformation of  $D\times \bbC^*$.  From \eqref{cycy.1}, we see that the Calabi-Yau metric on $M$ induces on $D\times \bbC^*$ a Calabi-Yau cylindrical metric 
$$
      g_{\pa}= g_D + \lambda\frac{dw\odot d\overline{w}}{|w|^2}.
$$  
The Calabi-Yau manifold $(D\times \bbC^*,g_{\pa})$ is naturally compactified by $D\times \bbC\bbP^1$.  For this compactification, we have a natural identification
\begin{equation}
\begin{aligned}
  L^2\cH_-^{0,1}(D\times \bbC^*; T^{1,0}(D\times \bbC^*)) &\cong H^1(D\times\bbC\bbP^1; T^{1,0}D\oplus \mathcal{O}_{D\times \bbC\bbP^1}) \\
                &\cong  H^1(D;T^{1,0}D) \oplus  H^1(D; \mathcal{O}_{D})\\
                &\cong \cH^{0,1}(D;T^{1,0}D)\oplus \cH^{0,1}(D),
\end{aligned}  
\label{cycy.2}\end{equation}
where in the last line the spaces of harmonic forms are defined with respect to the Calabi-Yau metric $g_D$.  
The identification $\Upsilon:\cH^{0,1}(D;T^{1,0}D)\oplus \cH^{0,1}(D) \to L^2\cH_-^{0,1}(D\times \bbC^*; T^{1,0}(D\times \bbC^*))$ is given by
\begin{equation}
 \Upsilon(\omega_1,\omega_2)= \pr_1^*(\omega_2) +  \pr_1^*(\omega_2)\otimes w\frac{\pa}{\pa w},
\label{cycy.3}\end{equation} 
where $\pr_1:D\times \bbC^*\to D$ is the projection on the first factor.  Notice in particular that elements of $L^2\cH_-^{0,1}(D\times \bbC^*; T^{1,0}(D\times \bbC^*))$ are $\bbC^*$-invariant.  Now, the restriction $\phi_{\pa}(t)$ of $\phi(t)$ can be recovered  by applying the construction \eqref{tt.13d} to $D\times \bbC^*$ starting with the restriction $\phi_{1,\pa}\in L^2\cH_-^{0,1}(D\times \bbC^*; T^{1,0}(D\times \bbC^*))$ of the infinitesimal deformation $\phi_1$.  Since the Laplacian on $D\times \bbC^*$ is $\bbC^*$-invariant, so is 
the generalized inverse $G_{-\epsilon}$. This means that the construction \eqref{tt.13d} is carried out in a $\bbC^*$-invariant way, 
hence $\phi_{\pa}(t)$ is $\bbC^*$-invariant.  We also deduce from \eqref{cycy.3} and \eqref{tt.13d} that $\phi_{\pa}(t)$ is of the form
\begin{equation}
  \phi_{\pa}(t)= \pr_1^*\mu_1+ \pr_1^*(\mu_2)\otimes w\frac{\pa}{\pa w}
\label{cycy.4}\end{equation}
with $\mu_1\in \Omega^{0,1}(D)$ and $\mu_2\in \Omega^{0,1}(D;T^{1,0}D)$.  In this decomposition, the first term corresponds to a deformation of the complex structure on $D$, while the second term corresponds to a deformation of the holomorphic structure of the trivial $\bbC^*$-bundle over $D$.  This shows that $\phi_{\pa}(t)$ naturally extends to a deformation of $D\times \bbC$ (and $D\times \bbC\bbP^1$).  The whole construction is $\iota$-invariant, so it descends to a deformation of $N_{\bD}$ as a holomorphic orbifold line bundle.  

The new line bundle obtained from such a deformation is not necessarily holomorphically trivial, but nevertheless, the proof of \cite[Theorem~{3.1}]{HHN2012} still works, so that there is a diffeomorphism $\psi_t$ on $M$ such that if $J_t$ is the new complex structure defined by $\phi(t)$, then $\psi_t^*J_t$ extends to a smooth complex structure $\overline{J}_t$ on $\bM$, making $(M,J_t)$ a compactifiable complex manifold as in Definition~\ref{tt.1}.
\end{proof}

Combining this result with the result of Kovalev \cite{Kovalev2006}, we obtain the following.
\begin{corollary}
Let $(M,g_b)$ be a compactifiable asymptotically cylindrical Calabi-Yau manifold with compactification $\bM$.  Then any Ricci-flat asymptotically cylindrical metric on $M$ sufficiently close to $g_b$ is Kähler with respect to some logarithmic deformation of the complex structure on $M$.
\label{Koiso.1}\end{corollary}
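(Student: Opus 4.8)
The plan is to combine the unobstructedness of logarithmic deformations (Theorem~\ref{tt.13c}) with the deformation theory of Ricci-flat asymptotically cylindrical metrics of Kovalev \cite{Kovalev2006}, which adapts Koiso \cite{Koiso1983} to the asymptotically cylindrical setting. First I would record from \cite{Kovalev2006} that the premoduli space $\cN$ of Ricci-flat asymptotically cylindrical metrics on $M$ lying near $g_b$, taken modulo diffeomorphisms isotopic to the identity, is a smooth finite-dimensional manifold, and that its tangent space at $[g_b]$ is the space of \emph{essential} infinitesimal Ricci-flat asymptotically cylindrical deformations of $g_b$ --- the transverse-traceless, infinitesimally Ricci-flat, appropriately weighted $L^2$ symmetric $2$-tensors, which in particular govern the deformations of the asymptotic cross-section.

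The crucial structural step is a Koiso-type decomposition of this tangent space that uses the special holonomy of $g_b$. Since $g_b$ is Calabi-Yau, a Bochner-type Weitzenb\"ock argument --- carried out on the asymptotically cylindrical manifold in the same spirit as in Corollary~\ref{ht.5a}, so that the boundary contributions vanish --- identifies every essential infinitesimal Ricci-flat deformation as being of \emph{K\"ahler type}. One then checks that $T_{[g_b]}\cN$ splits as the direct sum of a ``complex'' summand, which by the weighted $L^2$-Hodge theory of \S~\ref{ht.0} (Theorem~\ref{ht.3}, Theorem~\ref{bi.6}) is canonically isomorphic to $H^1(\bM; T_{\bM}(\log\bD))$, the full space of infinitesimal logarithmic deformations of $(M,J)$, and a ``K\"ahler'' summand parametrizing the interior and asymptotic K\"ahler data of the Haskins-Hein-Nordstr\"om construction, namely the parameters $\mathfrak t$ and $\lambda$ of Theorem~\ref{un.3}.

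The third step realizes this tangent decomposition by an explicit family of Calabi-Yau metrics. By Theorem~\ref{tt.13c}, every small logarithmic deformation integrates to an actual complex manifold $(M,J_t)$ which, after composition with a diffeomorphism, is again compactifiable asymptotically cylindrical in the sense of Definition~\ref{tt.1}; Theorem~\ref{un.3} then provides, for each such $J_t$ and each admissible pair $(\mathfrak t, \lambda)$, a unique asymptotically cylindrical Calabi-Yau metric $g_{\CY}(t, \mathfrak t, \lambda)$, which is moreover a polyhomogeneous exact $b$-metric by Corollary~\ref{accy.5}. This defines a map from a neighborhood of the origin in the product of the Kuranishi space of $(M,J)$ (smooth, by Theorem~\ref{tt.13c}) with the space of K\"ahler parameters into $\cN$. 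Continuity of both constructions in their parameters shows that the image consists of metrics $\cC^k$-close to $g_b$, and by the computations of \S~\ref{tt.0} together with those of Haskins-Hein-Nordstr\"om the differential of this map at the origin is an isomorphism onto $T_{[g_b]}\cN$. An application of the inverse function theorem then shows that the map is a diffeomorphism onto a neighborhood of $[g_b]$ in $\cN$, which is exactly the assertion that every Ricci-flat asymptotically cylindrical metric on $M$ sufficiently close to $g_b$ is, up to diffeomorphism, K\"ahler with respect to some logarithmic deformation of the complex structure of $M$.

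The main obstacle is the second step: proving that the essential-deformation space of a Calabi-Yau asymptotically cylindrical metric carries no modes beyond those of K\"ahler type, and matching the complex and K\"ahler pieces precisely with the parameters in Theorem~\ref{tt.13c} and Theorem~\ref{un.3}. This is the asymptotically cylindrical analogue of the classical identification of infinitesimal Einstein deformations of a compact Calabi-Yau manifold, and it is exactly where the weighted $L^2$-Hodge theory of \S~\ref{ht.0}, the sharp regularity of $g_b$ (Theorem~\ref{accy.1}), and the mapping theory of elliptic $b$-operators from \S~\ref{greg.0} are needed, in order to control the decay of the deformation tensors and to justify the relevant integrations by parts. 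Granting this, the remaining ingredients --- integrating the complex directions via Theorem~\ref{tt.13c}, solving the complex Monge-Amp\`ere equation via Theorem~\ref{un.3}, and the invariance-of-domain argument --- are routine.
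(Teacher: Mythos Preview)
Your proposal is correct and follows exactly the approach the paper indicates: the paper gives no proof at all beyond the single sentence ``Combining this result with the result of Kovalev \cite{Kovalev2006}, we obtain the following,'' and your outline is a faithful and accurate elaboration of what that combination entails, invoking precisely the unobstructedness of Theorem~\ref{tt.13c}, the Haskins--Hein--Nordstr\"om existence and uniqueness (Theorem~\ref{un.3}), and the Koiso-type structure of Kovalev's Ricci-flat $\AC$ moduli space.
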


We are also interested in studying relative logarithmic deformations, i.e., deformations which fix the complex structure on $N_{\bD}$.
Infinitesimal relative logarithmic deformations correspond to
$$
    \Im\left(  H^1(\bM ; T_{\bM}(\log\bD)(-\bD)) \to H^1(\bM;T_{\bM}(\log\bD)) \right),
$$
and by Theorem~\ref{ht.5}, this space is the same as $L^2_b\cH^{0,1}(M; T_{\bM}(\log\bD))$.

\begin{theorem}
Let $(M,g_b)$ be a compactifiable asymptotically cylindrical Calabi-Yau manifold with compactification $\bM$.  Then the relative logarithmic deformations of $M$  are unobstructed.
\label{tt.15}\end{theorem}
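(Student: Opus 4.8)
The plan is to follow the proof of Theorem~\ref{tt.13c} essentially verbatim, the point being that in the relative setting every term of the deformation series decays at $\pa\tM$, which streamlines both the formal construction and the convergence argument. First I would use Theorem~\ref{ht.5} to represent an infinitesimal relative logarithmic deformation by a genuine $L^2$-harmonic form $\phi_1\in L^2\cH^{0,1}(M;T_{\bM}(\log\bD))$, so that $\phi_1\in\rho^{\epsilon}L^2$ and, being polyhomogeneous with index set $\cI_0$, lies in $\cA^{F_1}_{\phg}$ for the positive index set $F_1=\cI_0$. As in Proposition~\ref{tt.13}, since $\Omega$ is holomorphic and $\eta$ is compatible (up to a constant scalar factor) with the Hermitian metrics because $g_b$ is Calabi-Yau, $\eta(\phi_1)$ is again $L^2$-harmonic; Lemma~\ref{bi.5} applied to $\eta(\phi_1)$ and its complex conjugate shows it is both $\db$- and $\pa$-closed, and Lemma~\ref{tt.9} then gives that $\eta([\phi_1,\phi_1])=-\pa\,\eta(\phi_1\wedge\phi_1)$ is $\db$-closed and $\pa$-exact with primitive in a positive index set. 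The `furthermore' clause of the $\pa\db$-lemma (Lemma~\ref{dd.1}) would then produce $\phi_2\in\rho^{\epsilon}H^{\infty}_b$ with $\db\phi_2=-\frac12[\phi_1,\phi_1]$ and $\eta(\phi_2)$ $\pa$-exact. I would then run the induction of Proposition~\ref{tt.13}: the computation \eqref{tt.13b} together with the Jacobi identity gives that $\sum_{i=1}^{k-1}[\phi_i,\phi_{k-i}]$ is $\db$-closed, Lemma~\ref{tt.9} makes its image under $\eta$ $\pa$-exact with primitive in a positive index set, and Lemma~\ref{dd.1} delivers $\phi_k\in\rho^{\epsilon}H^{\infty}_b$ solving \eqref{tt.13a} with $\eta(\phi_k)$ $\pa$-exact.

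For convergence, I would choose the $\phi_k$ systematically, as in the proof of Theorem~\ref{tt.13c}, by $\phi_k=\frac12\eta^{-1}\bigl(\pa\db^{*}G_{-\epsilon}\sum_{j=1}^{k-1}\eta(\phi_j\wedge\phi_{k-j})\bigr)$, which is legitimate because one may take $\psi=G_{-\epsilon}\beta$ in the proof of Lemma~\ref{dd.1}. Since all the $\phi_k$ now decay there is no boundary contribution, so it suffices to invoke Proposition~\ref{gf.1} in place of Proposition~\ref{gf.3}: for $0\le\delta<\inf\cI_0$ the operator $\pa\db^{*}G_{-\epsilon}$ is bounded on $\rho^{\delta}\cC^{k,\alpha}_{g_b}$. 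This yields the quadratic estimate $\|\phi_\ell\|_{\rho^{\delta}\cC^{k,\alpha}_{g_b}}\le K_{k,\alpha}\sum_{i=1}^{\ell-1}\|\phi_i\|_{\rho^{\delta}\cC^{k,\alpha}_{g_b}}\|\phi_{\ell-i}\|_{\rho^{\delta}\cC^{k,\alpha}_{g_b}}$, and the majorant argument of \cite{Kodaira} then gives convergence of $\phi(t)=\sum_{k\ge1}\phi_k t^k$ in $\rho^{\delta}\cC^{m,\alpha}_{g_b}$ for $|t|<\delta_m$. As in Theorem~\ref{tt.13c}, $\phi(t)$ satisfies the quasi-linear elliptic equation $\Delta_{\db}\phi=-\frac12\db^{*}[\phi,\phi]$, so interior Schauder estimates and a bootstrap give $\phi(t)\in\rho^{\delta}\CI_{g_b}$, and Corollary~\ref{breg.9} together with the bootstrapping of Theorem~\ref{accy.1} upgrade this to polyhomogeneity; smoothness in $t$ follows as in \cite{Kodaira}.

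Finally I would check that the family is genuinely relative: since $\phi(t)\in\rho^{\delta}\CI_{g_b}$ vanishes on $\pa\tM$, the new complex structure $J_t$ coincides with the original one at infinity, so the induced deformation of $N_{\bD}$ is trivial, and the argument of \cite[Theorem~3.1]{HHN2012} — used exactly as at the end of the proof of Theorem~\ref{tt.13c}, but now more easily since $\phi(t)|_{\pa\tM}=0$ — produces a diffeomorphism $\psi_t$ of $M$ for which $\psi_t^{*}J_t$ extends to a complex structure $\overline{J}_t$ on $\bM$ keeping $\bD$ and $N_{\bD}$ fixed, exhibiting $(M,J_t)$ as a compactifiable asymptotically cylindrical Calabi-Yau manifold relative to $N_{\bD}$. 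I do not expect a serious obstacle here, since the relative case is a simplification of Theorem~\ref{tt.13c} rather than a strengthening; the only step requiring care is the bookkeeping that keeps every $\phi_k$ — hence $\phi(t)$ — decaying throughout the recursion, which is precisely what allows working in $\rho^{\delta}\cC^{k,\alpha}_{g_b}$ rather than in $\cC^{k,\alpha}_{0,\delta}$, and which is guaranteed by the `furthermore' clauses of Lemmas~\ref{dd.1} and \ref{bi.5c}.
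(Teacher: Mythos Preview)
Your proposal is correct and follows essentially the same approach as the paper: start with $\phi_1\in L^2\cH^{0,1}(M;T_{\bM}(\log\bD))$, use the same systematic recursion $\phi_k=\frac12\eta^{-1}\bigl(\pa\db^{*}G_{-\epsilon}\sum_{j}\eta(\phi_j\wedge\phi_{k-j})\bigr)$ as in Theorem~\ref{tt.13c}, and then invoke Proposition~\ref{gf.1} (rather than Proposition~\ref{gf.3}) to obtain the quadratic estimate in $\rho^{\delta}\cC^{k,\alpha}_{g_b}$, which gives convergence with the required decay. The paper's own proof is considerably terser --- it simply cites Theorem~\ref{tt.13c} for the existence of $\phi(t)$ and then notes that Proposition~\ref{gf.1} upgrades \eqref{tt.13ee} to the weighted estimate, concluding from $\phi_{\pa}=0$ that $\phi(t)\in\rho^{\delta}\CI_{g_b}$ --- whereas you re-expand the formal construction and the regularity bootstrap; but the substance is identical.
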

\begin{proof}
If $\phi_1\in L^2_b\cH^{0,1}(M;T_{\bM}(\log\bD))$ represents an infinitesimal deformation, then Theorem~\ref{tt.13c} gives a
deformation \eqref{tt.13f}.  We must check that this solution $\phi(t)$ decays at infinity so that it is a relative logarithmic deformation.

Choosing $\delta<\inf\cI_0$, we see from Proposition~\ref{gf.1} that instead of \eqref{tt.13ee}, there is a positive constant
$K_{k,\alpha}$ such that
\begin{equation}
          \| \phi_{\ell} \|_{\rho^{\delta}\cC^{k,\alpha}_{g_b}}\le K_{k,\alpha} \sum_{i=1}^{\ell-1}\left(  \|\phi_i\|_{\rho^{\delta}\cC^{k,\alpha}_{g_b}} \cdot \|\phi_{\ell-i}\|_{\rho^{\delta}\cC^{k,\alpha}_{g_b}} \right).
\label{tt.17}\end{equation}
We  thus conclude that for $m\in \bbN$, there is $\delta_m>0$ such that  $\phi(t)\in \rho^{\delta}\cC_{g_b}^{m,\alpha}(M;
\Lambda^{0,1}({}^bT^*M)\otimes T_{\bM}(\log\bD))$ for $|t|< \delta_m$.  Since $\phi_{\pa}=0$,  we deduce from
Theorem~\ref{tt.13c} that $\phi(t)= \rho^{\delta}v\in \rho^{\delta}\cC_{g_b}^{\infty}(M; \Lambda^{0,1}({}^bT^*M)\otimes T_{\bM}(\log\bD))$,
which gives the desired decay.
\end{proof}

\section{A families index for Dirac-type $b$-operators with fixed indicial family} \label{lfi.0}
We consider a smooth fibre bundle
\begin{equation}
\xymatrix{
       \tM \ar[r] & \tN \ar[d]^{\phi}\\  & B
}
\label{lfi.1}\end{equation}
with base $B$ a smooth connected manifold and typical fibre $\tM$ an even dimensional oriented manifold with boundary.  We will suppose that the restriction of $\phi$ to the boundary of $\tN$ induces the trivial fibre bundle
\begin{equation}
\xymatrix{
       \pa \tM \ar[r] & \pa \tN= \pa \tM\times B \ar[d]^{\phi|_{\pa}}\\  & B,
}
\label{lfi.2}\end{equation}
where $\phi|_{\pa} = \pi_R: \pa \tM \times B\to B$ is the projection onto the right factor.  Let $\rho\in\CI(\tN)$ be a choice of boundary
defining function and let $g_b$ be a family of fibrewise polyhomogeneous exact $b$-metrics on the fibres of \eqref{lfi.1} with
restriction to $\pa \tN$ given by
$$
      \left.  g_b \right|_{\pa \tN}= \pi_{L}^* h,
$$
where $\pi_L: \pa \tM\times B \to \pa \tM$ is the projection on the left factor and $h$ is the restriction to $\pa \tM$ of an exact
polyhomogeneous $b$-metric on $M=\tM\setminus \pa \tM$.  In other words, the restriction of the family $g_b$ to $\pa \tN$ is  constant in $b\in B$.  Let $\Cl(\tN/B)$ be the family of Clifford bundles associated to ${}^bT(\tN/B)$ and $g_b$.   Finally, let $\cE\to \tN$ be a smooth family of Clifford modules with Clifford connections $\nabla^{\cE}$.  Assume moreover that the restriction of $(\cE, \nabla^{\cE})$ to $\pa \tN$ is the pull-back under $\pi_{L}$ of the restriction of a Clifford module with Clifford connection on $\tM$ associated to the Clifford bundle $\left. \Cl({}^b T(\tN/B))\right|_{\phi^{-1}(b)}$ for some $b\in B$.  In other words,  $\left.  (\cE, \nabla^{\cE}) \right|_{\pa \tN}$ is `constant' in $b\in B$.  Let $\eth\in \Diff^1_b(\tN/B;\cE)$ be the corresponding family of Dirac-type operators.  With our assumptions, the indicial operator $I(\eth)$ is the same for each element of the family.  We will further assume that
\begin{equation}
\dim \ker_{L^2}\eth^+(b) \; \mbox{and} \; \dim\ker_{L^2} \eth^{-}(b) \; \mbox{are independent of} \; b\in B.
\label{assumption.1}\end{equation}

A simple example of a family satisfying all of these hypotheses is the family of signature operators associated to $g_b$.  More importantly
for us is the family of Dolbeault operators associated to a family of asymptotically cylindrical Calabi-Yau metrics;
that these operators satisfy all of the conditions above is proved in \S~\ref{wp.0}.

If the operator $I(\eth,0)$ is invertible, then Theorem~\ref{lfi.16}, the main result of this section, is an immediate consequence of the
families index theorem of Melrose and Piazza \cite{MP1997}.   Thus, we shall concentrate on the case where $I(\eth,0)$ is not invertible.
The families index theorem of Melrose and Piazza \cite{MP1997} does not apply then since this is no longer a Fredholm family.
Nevertheless, assumption \eqref{assumption.1} together with the constancy of the family of indicial operators makes it possible
to derive  a local formula for the Chern character of the $L^2$-index bundle of the family $\eth$.

Before going into the precise statement of the result and details of the proof, let us point out that the results from \cite{MelroseAPS,MP1997}
used here are only stated for exact $b$-metrics with index set $F=\bbN_0\times\{0\}$. Nevertheless, these results
do admit straightforward generalizations when the $b$-metrics are \textbf{polyhomogeneous}, \cf \cite{MazzeoEdge}.  Indeed, 
the presence of a polyhomogeneous exact $b$-metric necessitates only mild changes to the index sets appearing in these results
and their proofs.  Moreover, only the parts of the index sets close to zero are relevant, so if we replace the boundary defining function
$\rho$ by $x=\rho^{\frac{1}k}$ for some large $k\in\bbN$, then
$$
                 g_b-g_p\in x^N\CI_b(M;{}^{b}T\tM_{\frac1k}\otimes {}^{b}T\tM_{\frac1k}),
$$
where $g_p$ is a product $b$-metric and $\tM_{\frac1k}$ is the $k^{\mathrm{th}}$ root of $\tM$ as defined in \cite{EMM} (i.e., the manifold
$\tM$ with the new $\CI$ structure obtained by adjoining $x = \rho^{\frac1k}$).  Thus, for the purposes of applying the results of
\cite{MelroseAPS,MP1997}, this change effectively presents the metric $g_b$ as a product $b$-metric.  From now on, we will therefore
apply the results of \cite{MelroseAPS,MP1997} to polyhomogeneous $b$-metrics without further comments.

To define the local families $L^2$-index, choose a connection for the fibre bundle \eqref{lfi.1}, i.e., a splitting
$$
{}^b T\tN= T_{H}\tN \oplus {}^b T(\tN/B),  \quad \phi^*TB\cong T_{H} \tN.
$$
We assume that this agrees on $\pa \tN$ with the canonical splitting induced by the identification $\pa \tN= \pa \tM\times B$.
We can then associate to $\eth$ a Bismut superconnection
$$
        \bbA= \eth + \bbA_{[1]}+ \bbA_{[2]},
$$
see \cite[(9.23)]{MP1997} for a definition. The rescaled Bismut superconnection is then given by
$$
   \bbA_t= t^{\frac12} \delta_t \circ \bbA \circ \delta_t^{-1}= t^{\frac12}\eth + \bbA_{[1]} + t^{-\frac12}\bbA_{[2]},
$$
where $\delta_t$ is the automorphism which multiplies elements of $\CI(\tN; \phi^*\Lambda^j(T^*B)\otimes \cE)$ by $t^{-\frac{i}2}$,
\cf \cite[p.281]{BGV}.   Let us also denote by $\Pi_0$ the orthogonal projection onto the $L^2$-kernel bundle of $\eth$.  The operator
$$
    \nabla^{L^2}= \Pi_0 \bbA_{[1]}\Pi_0
$$
defines a smooth $\bbZ_2$-graded connection on the $L^2$-kernel bundle of $\eth$.

\begin{proposition}
The $b$-Chern character of the Bismut superconnection is such that
\begin{gather}
\lim_{t\to\infty} {}^b\!\Ch(\bbA_t)_{[0]}= \widetilde{\ind}(\eth(b)),  \quad \forall b\in B, \label{lfi.3a}  \\
\lim_{t\to\infty} {}^b\!\Ch(\bbA_t)_{[2n]}= \Ch(\ker_{L^2} \eth, \nabla^{L^2})_{[2n]}, \quad n>0, \label{lfi.3b}
\end{gather}
where $\widetilde{\ind}(\eth(b))$ is the extended index of Melrose \cite[(In.30)]{MelroseAPS}.
\label{lfi.3}\end{proposition}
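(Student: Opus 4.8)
The plan is to treat the two limits separately, since they rest on rather different mechanisms within the $b$-calculus. For \eqref{lfi.3a}, note that the form-degree zero component of ${}^b\!\Ch(\bbA_t)$ is simply the $b$-heat supertrace ${}^b\!\Str(e^{-t\eth(b)^2})$ of the single Dirac-type operator $\eth(b)$ on the fibre over $b$. By the $b$-McKean--Singer formula this quantity is \emph{not} independent of $t$ (in contrast to the closed case), but the large-time analysis of the $b$-heat kernel in \cite[Ch.~7--8]{MelroseAPS} shows that its limit as $t\to\infty$ exists and equals the extended $L^2$-index; indeed, this is exactly how $\widetilde{\ind}(\eth(b))$ is defined in \cite[(In.30)]{MelroseAPS}. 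Smoothness of this limit in $b$ follows from assumption \eqref{assumption.1} together with the constancy of the indicial family $I(\eth)$, which makes the relevant boundary contribution (the extended-nullspace/$\eta$ correction) locally constant in $b$.

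For \eqref{lfi.3b} I would adapt the Bismut--Freed large-time argument, \cf \cite{BGV} and \cite[\S9]{MP1997}. Write $\bbA_t = t^{1/2}\eth + \bbA_{[1]} + t^{-1/2}\bbA_{[2]}$ and decompose $e^{-\bbA_t^2}$ using the fibrewise $L^2$-orthogonal projection $\Pi_0$ onto $\ker_{L^2}\eth$ together with its complement. The $\Pi_0$-part is handled just as in the Fredholm case: a Duhamel expansion and the structure of $\nabla^{L^2}=\Pi_0\bbA_{[1]}\Pi_0$ show that $\Pi_0 e^{-\bbA_t^2}\Pi_0$ converges, modulo terms that are lower order in $t$, to $e^{-(\nabla^{L^2})^2}$, and hence contributes precisely $\Ch(\ker_{L^2}\eth,\nabla^{L^2})_{[2n]}$ in the limit. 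Here assumption \eqref{assumption.1} is exactly what is needed to know that $\ker_{L^2}\eth^{\pm}$ are smooth $\bbZ_2$-graded vector bundles over $B$, so that $\nabla^{L^2}$ is a genuine connection and the right-hand side of \eqref{lfi.3b} makes sense.

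The genuinely new point — and the step I expect to be the main obstacle — is controlling the transverse contribution $(1-\Pi_0)e^{-\bbA_t^2}(1-\Pi_0)$ as $t\to\infty$. Because $I(\eth,0)$ is not invertible, $\bbA_t^2$ has no spectral gap: the continuous spectrum of $\eth(b)^2$ reaches $0$ and the heat kernel does not decay exponentially, so the usual Fredholm estimates are unavailable. Instead one must extract the low-energy behaviour of the $b$-resolvent near the bottom of the continuous spectrum from the scattering theory of \cite[Ch.~6]{MelroseAPS}, keeping careful track of the exterior form degree. The key simplification is that on the collar the form-degree $\geq 1$ part of $\bbA_t$ is governed entirely by the indicial data of $\eth$, which by hypothesis is $b$-independent; consequently the associated boundary superconnection is ``constant'' and its curvature has no positive-degree component, so there is no higher $\eta$-form. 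One then shows that the positive form-degree components of the transverse heat supertrace decay as $t\to\infty$ — this is the assertion, flagged in the introduction, that the heat kernel decays rapidly in positive degree — leaving only the $\Pi_0$-contribution.

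In summary: the degree-zero statement is essentially a citation of Melrose's $b$-index analysis, the positive-degree statement combines the standard adiabatic analysis near the kernel bundle with a low-energy scattering estimate for the transverse heat kernel, and the whole argument closes because the constancy of $I(\eth)$ confines the only obstruction to rapid decay to form-degree zero, where it is absorbed into the extended-index correction of \eqref{lfi.3a} rather than into an eta form in positive degree.
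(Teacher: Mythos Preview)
Your outline is correct in spirit and identifies the right ingredients: the degree-zero statement is indeed a direct citation of \cite[Proposition~7.37]{MelroseAPS}, and for positive degree the essential points are exactly the kernel/transverse split, the constancy of the indicial family forcing $\bbA^2-\eth^2$ to be residual at the front face, and the low-energy resolvent analysis of \cite[Ch.~6--7]{MelroseAPS} to control the non-Fredholm transverse part.

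The paper, however, implements the positive-degree argument differently from what you sketch, and the difference matters. Rather than decomposing $e^{-\bbA_t^2}$ directly as $\Pi_0 e^{-\bbA_t^2}\Pi_0 + (\Id-\Pi_0)e^{-\bbA_t^2}(\Id-\Pi_0)$ and analysing each block by Duhamel, the paper first block-diagonalises the \emph{curvature} $\cF=\bbA^2$ itself: one constructs (inductively, using the generalized inverses $G,G^*$ of $\eth^2$ on $\rho^{\pm\epsilon}L^2$) an operator $A$ with $A-\Id\in\cN^{\epsilon}_1$ such that $A\cF A^{-1}=\mathrm{diag}(U,V)$ with $U\equiv R_{[2]}=(\nabla^{L^2})^2\bmod\cN^{\epsilon}_3$ and $V\equiv\eth^2\bmod\cM^{\epsilon}_1$. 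Then $e^{-\bbA_t^2}$ is conjugate to $\mathrm{diag}(e^{-t\delta_t U},e^{-t\delta_t V})$, the $U$-block gives $e^{-R_{[2]}}$ on the nose, and the entire difficulty is isolated in showing that $(e^{-t\delta_t V})_{[k]}$ is rapidly decreasing in $\cN^{\epsilon}_k$ for $k>0$. That last step is carried out by writing $e^{-t\eth^2}$ as a contour integral of the resolvent, deforming to the real axis, and using that $(\Id-\Pi_0)(\eth^2-z^2)^{-1}(\Id-\Pi_0)$ has only a \emph{simple} pole at $z=0$ (with residue given by the extended solutions), which after the substitution $Z=t^{1/2}z$ yields a factor of $t^{-1/2}$.

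Your direct decomposition hides a small trap: the Duhamel expansion of $\Pi_0 e^{-\bbA_t^2}\Pi_0$ itself contains intermediate factors of $(\Id-\Pi_0)e^{-\sigma t\eth^2}(\Id-\Pi_0)$, so the ``kernel part'' is not cleanly separated from the transverse analysis --- you would need the same scattering estimate already to establish the convergence of $\Pi_0 e^{-\bbA_t^2}\Pi_0$ to $e^{-(\nabla^{L^2})^2}$. The conjugation trick buys exactly this clean separation, at the cost of the inductive construction of $A$. Either route closes, but the paper's is tidier and makes the role of the generalized inverse $G$ (which has no analogue in the closed or Fredholm setting) explicit.
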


The proof of this proposition is carried out in a set of lemmas, following the strategy of \cite{BGV} and \cite{MP1997}.  However,
since our family of operators is not Fredholm, many important modifications are necessary.  Let us first introduce some notation.
As in \cite[(15.8) and (A.9)]{MP1997}, set
$$
   \cN^{\epsilon} = \Omega^{*}(B)\otimes_{\CI(B)}\Psi^{-\infty,\epsilon}_{\phi}(\tN;\cE)=  \Omega^{*}(B)\otimes_{\CI(B)}\cA^{\epsilon-}(\tN\times_{\phi}\tN;\cE\otimes {}^{b}\Omega^{\frac12}_{\fib});
$$
this is the space of smooth families of operators of order $-\infty$ with Schwartz kernel conormal and vanishing at order
$\epsilon-\nu$ for all $\nu>0$ at the boundaries of $\tN\times_{\phi}\tN$.  We also set, \cf \cite[(15.8) and (A.11)]{MP1997},
$$
  \cM^{\epsilon}= \Omega^*(B)\otimes_{\CI(B)}(\rho^{\epsilon}\Psi^{0}_{b,\phi}(\tN;\cE) + \rho^{\epsilon}\Psi^{-\infty,\epsilon}_{b,\phi}(\tN;\cE)  + \Psi^{-\infty,\epsilon}_{\phi}(\tN;\cE) )  ,
$$
where we refer to \cite[(A.11)]{MP1997} for the definition of the space of family pseudodifferential operators $\Psi^{0,\delta}_{b,\phi}(\tN;\cE)$.
Notice that contrary to the definition in \cite{MP1997}, the space of operators $\cM^{\epsilon}$ is residual in the sense that the
Schwartz kernels of its elements decay like $\rho^\epsilon$ at the front face of the $b$-double space.  There are filtrations
\begin{equation}
\begin{aligned}
 \cN^{\epsilon}_i &= \sum_{k\ge i}\Omega^{k}(B)\otimes_{\CI(B)}\Psi^{-\infty,\epsilon}_{\phi}(\tN;\cE), \\
\cM^{\epsilon}_i &= \sum_{k\ge i}\Omega^k(B)\otimes_{\CI(B)}(\rho^{\epsilon}\Psi^{0}_{b,\phi}(\tN;\cE) + \rho^{\epsilon}\Psi^{-\infty,\epsilon}_{b,\phi}(\tN;\cE)  + \Psi^{-\infty,\epsilon}_{\phi}(\tN;\cE)   ).
\end{aligned}
\label{suco.2}\end{equation}
As in \eqref{defe}, we choose $\epsilon \in (0,\inf\cI)$, where $\cI$ is the index set of the expansions of elements of the
$L^2$-kernel of $\eth$.  Since the fibration $\phi: \tN\to B$ and $\cE$ are trivial over $\pa\tM$, we see from \cite[(9.25)]{MP1997}
that we can choose $\epsilon$ small enough to ensure that
$$
     \bbA^2= \eth^2 \mod \cM^{\epsilon}_{1}.
$$

Now, using the projection $\Pi_0$ onto the $L^2$-kernel, we can decompose the Bismut superconnection as follows,
$$
     \bbA= \widetilde{\bbA}+ \omega \quad \mbox{with} \quad \omega=\Pi_0\bbA (\Id-\Pi_0) + (\Id-\Pi_0)\bbA\Pi_0\in \cN^{\epsilon}_1.
$$
In terms of the decomposition $L^2_b(\tN/B;\cE)= \ran(\Pi_0) \oplus \ran (\Id-\Pi_0)$, we have
$$
   \omega= \left(  \begin{array}{cc} 0 & \mu \\ \nu & 0  \end{array}\right) \quad \mbox{with} \quad \mu= \Pi_0\bbA (\Id-\Pi_0), \; \nu= (\Id-\Pi_0)\bbA\Pi_0.
$$
Hence the curvature is
\begin{equation}
\left( \begin{array}{cc} X & Y \\ Z & T  \end{array} \right) := \cF= (\widetilde{\bbA}+\omega)^2= \widetilde{\bbA}^2+ [\widetilde{\bbA},\omega] + \omega\wedge \omega =\left( \begin{array}{cc}R+\mu\nu & \Pi_0[\widetilde{\bbA},\mu](\Id-\Pi_0) \\ (\Id-\Pi_0)[\widetilde{\bbA},\nu]\Pi_0 & S+ \nu \mu \end{array} \right),
\label{lfi.8}\end{equation}
where $R=\Pi_0 \widetilde{\bbA}^2\Pi_0$ and $S=(\Id -\Pi_0) \widetilde{\bbA}^2(\Id-\Pi_0)$.  We conclude that
 $$
   \left( \begin{array}{cc} X & Y \\ Z & T \end{array} \right) = \left( \begin{array}{cc} R_{[2]}+ \mu_{[1]}\nu_{[1]} & \mu_{[1]}\eth \\ \eth\nu_{[1]} & \eth^2 \end{array} \right) \mod \left( \begin{array}{cc} \cN^{\epsilon}_3 & \cN^{\epsilon}_2 \\
    \cN^{\epsilon}_2 & \cM^{\epsilon}_1 \end{array} \right). $$

Now consider the family of Fredholm operators
\begin{equation}
    \eth^2 : \rho^{-\epsilon}H^{k+2}_b(\tN/B;\cE)\to \rho^{-\epsilon}H_b^k(\tN/B;\cE).
\label{lfi.9}\end{equation}
This has cokernel canonically identified with the $L^2$-kernel of $\eth$.  Since the index of \eqref{lfi.9} is independent of
$b\in B$, the kernels of the operators of this family form a vector bundle over $B$.  Thus by \cite[Proposition~{5.64}]{MelroseAPS}
and \cite[Theorem~6.1]{MazzeoEdge}, there exists a smooth family of generalized inverses $b\mapsto G(b)$ of \eqref{lfi.9} such that
$$
    \eth^2G= \Id-\Pi_0, \quad G\eth^2= \Id-\Pi_1,
$$
where $\Pi_1$ is a smooth family of projections onto the kernel of \eqref{lfi.9}. The projection $\Pi_0$ acts on
$\rho^{-\epsilon}L^2(\phi^{-1}(b);\cE)$ by
$$
    \Pi_0(b): \rho^{-\epsilon}L^2(\phi^{-1}(b);\cE)\to \ker_{L^2}\eth(b),  \quad \Pi_0(b)(u)= \sum_{i=1}^{k} \langle u, v_i\rangle_{L^2} v_i,
$$
where $v_1,\ldots, v_k$ is a choice of orthonormal basis of $\ker_{L^2} \eth (b)$.  The formal adjoint $G^*$ of $G$ is a smooth
family of generalized inverses for
\begin{equation}
  \eth^2: \rho^{\epsilon}H^{k+2}_b(N/B;\cE)\to \rho^{\epsilon}H^k_b(N/B;\cE)
\label{lfi.10}\end{equation}
and satisfies
$$
     G^* \eth^2= \Id- \Pi_0, \quad \eth^2G^*=\Id-\Pi_1^*.
$$
Acting on $\rho^{\epsilon}L^2(N/B;\cE)$, we have
\begin{equation}
\eth^2G^*(\Id-\Pi_0)= \eth^2G^*(\eth^2G)= \eth^2(\Id-\Pi_0)G= \eth^2G= \Id-\Pi_0.
\label{lfi.11}\end{equation}
Taking the adjoint yields that
\begin{equation}
  (\Id-\Pi_0)G\eth^2= \Id-\Pi_0
\label{lfi.12}\end{equation}
on $\rho^{-\epsilon}L^2(N/B;\cE)$.  Similarly, we have that
\begin{equation}
   \eth G^*\eth= \eth G^*\eth (\Id-\Pi_0)= \eth G^*\eth (\eth^2 G)= \eth(\Id-\Pi_0)\eth G= \eth^2 G= \Id-\Pi_0,
\label{lfi.13}\end{equation}
along with the adjoint equation
\begin{equation}
      \eth G \eth= \Id-\Pi_0.
\label{lfi.4}\end{equation}
In particular, these identities imply that
\begin{equation}
\begin{gathered}
  X_{[2]}- Y_{[1]}GZ_{[1]}= R_{[2]}+ \mu_{[1]}\nu_{[1]} -(\mu_{[1]}\eth)G(\eth \nu_{[1]})= R_{[2]}, \\
 X_{[2]}- Y_{[1]}G^* Z_{[1]}= R_{[2]}+ \mu_{[1]}\nu_{[1]} -(\mu_{[1]}\eth)G^*(\eth \nu_{[1]})= R_{[2]}.
 \end{gathered}
\label{lfi.5}\end{equation}
\begin{lemma}
There exists a family of  operators $A$ with $A-\Id\in \cN^{\epsilon}_1$ such that
$$
         A\cF A^{-1}= A \left( \begin{array}{cc} X & Y \\ Z & T \end{array} \right)A^{-1} = \left( \begin{array}{cc} U& 0 \\ 0 & V \end{array} \right)
         $$
with $U= X-YGZ \!\! \mod \cN^{\epsilon}_3$ and $V=T \!\!\mod \cN^{\epsilon}_1$.
\label{lfi.6}\end{lemma}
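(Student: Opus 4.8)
The plan is to block-diagonalize $\cF$ by a two-stage conjugation, using the family of generalized inverses $G$ of the Fredholm operators \eqref{lfi.9} together with the constancy assumption \eqref{assumption.1}. The basic algebraic input is the following: since $T = \eth^2 \bmod \cM^{\epsilon}_1$ and, by the identity $\eth^2 G = \Id - \Pi_0$ together with \eqref{lfi.12}, the operator $G$ is a two-sided inverse of $\eth^2$ on $\ran(\Id - \Pi_0)$, the operator $T$ is itself invertible there within the filtered algebra $\cM^{\epsilon} + \cN^{\epsilon}$, with $T^{-1} \equiv G$ modulo one step of the filtration. Here one uses crucially that the filtrations $\cM^{\epsilon}_{\bullet}$ and $\cN^{\epsilon}_{\bullet}$ are nilpotent, being trivial in form degrees larger than $\dim B$, so that all the Neumann-type series below terminate after finitely many steps. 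From \eqref{lfi.8} and the subsequent display one also reads off that $X \in \cN^{\epsilon}_2$ while $Y, Z \in \cN^{\epsilon}_1$.

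Stage $1$: conjugation by $A_1 = \begin{pmatrix} \Id & 0 \\ a & \Id \end{pmatrix}$ replaces the lower-left entry of $\cF$ by $aX + Z - (aY + T)a$, which vanishes precisely when $a = T^{-1}(aX + Z - aYa)$. Because $X \in \cN^{\epsilon}_2$ and $Y \in \cN^{\epsilon}_1$, the right-hand side of this equation strictly raises the filtration degree of differences of $a$, so by nilpotence it has a unique solution $a \in \cN^{\epsilon}_1$, obtained by finite iteration starting from $a = T^{-1}Z$; in particular $a \equiv GZ \bmod \cN^{\epsilon}_2$. The conjugated matrix is then $\begin{pmatrix} X - Ya & Y \\ 0 & T + aY \end{pmatrix}$, whose $(1,1)$ entry equals $X - YGZ \bmod \cN^{\epsilon}_3$ — consistent with \eqref{lfi.5}, which identifies this class with $R_{[2]} \bmod \cN^{\epsilon}_3$ — and whose $(2,2)$ entry is $T \bmod \cN^{\epsilon}_2$. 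Stage $2$: conjugating the result by $A_2 = \begin{pmatrix} \Id & b \\ 0 & \Id \end{pmatrix}$ leaves the diagonal entries unchanged and kills the upper-right entry provided $b$ solves $bT' = X'b - Y$, where $X', T'$ are the new diagonal blocks; the same nilpotent iteration yields a unique $b \in \cN^{\epsilon}_1$. Setting $A = A_2 A_1 = \begin{pmatrix} \Id + ba & b \\ a & \Id \end{pmatrix}$ gives $A - \Id \in \cN^{\epsilon}_1$, an invertible $A$ with $A^{-1}$ a finite sum, and $A \cF A^{-1} = \begin{pmatrix} U & 0 \\ 0 & V \end{pmatrix}$ with $U = X - YGZ \bmod \cN^{\epsilon}_3$ and $V = T \bmod \cN^{\epsilon}_1$, as claimed.

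The main obstacle is not the recursion itself but the bookkeeping inside the filtered module $\cM^{\epsilon} + \cN^{\epsilon}$: one must check that $a$, $b$ and each corrected matrix entry genuinely lie in the stated spaces under the various compositions involving $G$, which is merely an operator of order $-2$ with a nontrivial polyhomogeneous expansion at the left face of the $b$-double space rather than a residual operator, and that $T$ is actually invertible — not just Fredholm — on $\ran(\Id - \Pi_0)$. These are exactly the composition and mapping properties of the $b$-fibred calculus established in \cite{MP1997}, in the polyhomogeneous form of \cite{MazzeoEdge}, together with \eqref{assumption.1}, which guarantees that $\Pi_0$, $\Pi_1$ and $G$ depend smoothly on $b \in B$. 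Granting these, each stage above is a finite, elementary recursion.
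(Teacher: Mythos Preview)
Your approach is correct and takes a genuinely different route from the paper. The paper proceeds by induction on the filtration degree: at step $i$ it conjugates by a matrix of the form $\begin{pmatrix} \Id & -Y_i G \\ G^* Z_i & \Id \end{pmatrix}$, pushing both off-diagonal entries one step deeper into the filtration simultaneously via the identities \eqref{lfi.11}--\eqref{lfi.12}, and only at the end reads off $U = X - YGZ \bmod \cN^{\epsilon}_3$ from the matrix equation $A\cF = \begin{pmatrix} U & 0 \\ 0 & V \end{pmatrix} A$. You instead triangularize in two clean strokes: a single lower-triangular conjugation whose $(2,1)$ entry $a$ is produced by a terminating fixed-point iteration, followed by an upper-triangular one. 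Your organization is arguably tidier, since $U = X - Ya \equiv X - YGZ$ falls out immediately as the $(1,1)$ entry after Stage~1, and Stage~2 leaves both diagonal blocks untouched; the paper's symmetric reduction, on the other hand, avoids constructing any inverse of $T$ at all.

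One imprecision worth flagging: the assertion that $T$ is ``invertible within the filtered algebra $\cM^{\epsilon} + \cN^{\epsilon}$, with $T^{-1} \equiv G$'' overstates matters, since $G$ does not belong to this space --- it is a genuine $b$-pseudodifferential operator of order $-2$, not residual at the front face, as you yourself acknowledge in your final paragraph. What you actually use, and what suffices, is only that left composition by $G$ (more precisely by $(\Id-\Pi_0)G$, to stay in the $(2,1)$ block) maps $\cN^{\epsilon}_k$ to $\cN^{\epsilon}_k$; the Neumann correction for $T - \eth^2 \in \cM^{\epsilon}_1$ then raises form degree and terminates. With that adjustment your argument goes through.
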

\begin{proof}
We proceed by induction on $i=1,\ldots, \dim B$ and  assume  that we have found $A_i$ with $A_i-\Id\in \cN_1^{\epsilon}$ such that
$$
      A_i\cF A_i^{-1}= \left( \begin{array}{cc} X_i & Y_i \\ Z_i & T_i \end{array} \right)\in \left( \begin{array}{cc} \cN_2^{\epsilon} & \cN_i^{\epsilon} \\ \cN_i^{\epsilon} & T+\cN_1^{\epsilon} \end{array} \right),
 $$
where $T_i=\eth^2 \mod \cM_1^{\epsilon}$ (we take $A_1=\Id$).  Notice that if we write $A_i= \Id+ K$, then
$$
A_i^{-1}= \sum_{j=0}^{\dim B} (-1)^j K^j \, .
$$
Now, set
$$
\left( \begin{array}{cc} \widetilde{X}_i & \widetilde{Y}_i \\ \widetilde{Z}_i & \widetilde{T}_i \end{array} \right):= \left( \begin{array}{cc} \Id & -Y_iG \\ G^*Z_i & \Id \end{array} \right)
\left( \begin{array}{cc} X_i & Y_i \\ Z_i & T_i \end{array} \right)\left( \begin{array}{cc} \Id & -Y_iG \\ G^*Z_i & \Id \end{array} \right)^{-1}.
$$
Since $\displaystyle \left( \begin{array}{cc} 0 & -Y_iG\\ G^*Z_i & 0 \end{array} \right)\in \cN_i^{\epsilon}$, we see that
$$
\left( \begin{array}{cc} \Id & -Y_iG \\ G^*Z_i & \Id \end{array} \right)^{-1} -  \left( \begin{array}{cc} \Id & Y_iG \\ -G^*Z_i & \Id \end{array} \right) \in \cN_{2i}^{\epsilon},
$$
and hence
$$
\left( \begin{array}{cc} \widetilde{X}_i & \widetilde{Y}_i \\ \widetilde{Z}_i & \widetilde{T}_i \end{array} \right)= \left( \begin{array}{cc} \Id & -Y_iG \\ G^*Z_i & \Id \end{array} \right)
\left( \begin{array}{cc} X_i & Y_i \\ Z_i & T_i \end{array} \right)\left( \begin{array}{cc} \Id & Y_iG \\ -G^*Z_i & \Id \end{array} \right) \; \mod \cN_{2i}^{\epsilon}.
$$
This gives
\begin{equation}
\begin{gathered}
\widetilde{X_i}=X_i-Y_iGZ_i-Y_iG^*Z_i+ Y_i GT_i G^* Z_i = X_i\mod \cN_{2i}^{\epsilon}, \\
\widetilde{Y}_i= Y_i(\Id-G T_i)+ (X_i- Y_i G Z_i) Y_i G \mod \cN_{2i}^{\epsilon}, \\
\widetilde{Z}_i= (\Id-T_i G^*) Z_i + (G^* Z_i) (X_i- Y_i G^* Z_i) \mod \cN_{2i}^{\epsilon}, \\
\widetilde{T}_i= T_i+ G^*Z_i Y_i+ Z_iY_iG+ G^* Z_i X_iY_iG= T_i \mod \cN_1^{\epsilon},
\end{gathered}
\end{equation}
so using \eqref{lfi.11} and \eqref{lfi.12}, we compute that
\begin{equation}
\begin{gathered}
\widetilde{Y}_i= Y_i(\Id- G\eth^2)= Y_i(\Id-\Pi_0)(\Id- G\eth^2)=0 \mod \cN_{i+1}^{\epsilon}, \\
\widetilde{Z}_i= (\Id-\eth^2 G^*) Z_i= (\Id-\eth^2G^*)(\Id-\Pi_0)Z_i=0 \mod \cN_{i+1}^{\epsilon}.
\end{gathered}
\end{equation}
This shows that we can continue the induction to construct the element $A$ as desired.  Now, if $\displaystyle A=
\left( \begin{array}{cc} \Id+K & M \\ N & \Id+L \end{array} \right)$ with $K,M,N,L\in \cN_1^{\epsilon}$, then we have that
$$
     \left( \begin{array}{cc} \Id+K & M \\ N & \Id+L \end{array} \right)  \left( \begin{array}{cc} X & Y \\ Z & T \end{array} \right)=  \left( \begin{array}{cc} U & 0 \\ 0 & V \end{array} \right)\left( \begin{array}{cc} \Id+K & M \\ N & \Id+L \end{array} \right),
 $$
so that
\begin{gather}
V= (T+LT+NY)(\Id+L)^{-1}=T \mod \cN_1^{\epsilon}, \notag\\
\label{lfi.14}U=(X+KX+MZ)(\Id+K)^{-1}= X+MZ \mod \cN_3^{\epsilon}, \\
Y+MT= UM-KY\in \cN_2^{\epsilon}. \notag
\end{gather}
Multiplying the last equation by $G$ gives that $M=-YG \mod \cN_2^{\epsilon}$.  Substituting this into \eqref{lfi.14}, we obtain finally
that $U=X-YGZ \mod \cN_3^{\epsilon}$, as claimed.
 \end{proof}

We now show that the contribution of $V$ to the Chern character vanishes in positive degree when $t$ tends to infinity.
\begin{lemma}
For $k>0$, the form $(e^{t\delta_t(V)})_{[k]}$ lies in $\cN^{\epsilon}_k$ and decreases rapidly along with all its derivatives as $t$ tends to
infinity.  In particular, it decreases rapidly with all its derivatives as a differential form valued in trace class operators.
\label{lfi.15}\end{lemma}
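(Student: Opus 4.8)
The plan is to expand $e^{t\delta_t(V)}$ by Duhamel's formula about its form-degree-zero part and to exploit two structural facts. First, the form-degree-zero part of $V$ is exactly the fibrewise Laplacian $\eth^2$ acting on $\ran(\Id-\Pi_0)$, since $V=T\bmod\cN^\epsilon_1$ and $T=\eth^2\bmod\cM^\epsilon_1$ while $\cN^\epsilon_1$ and $\cM^\epsilon_1$ consist of families of positive $B$-form degree. Second, $W:=V-\eth^2\in\cM^\epsilon_1+\cN^\epsilon_1$ has strictly positive $B$-form degree and is \emph{residual}: its Schwartz kernel vanishes to order $\epsilon$ at the front face of the $b$-double space $\tN\times_\phi\tN$. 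The latter is where the constancy of the indicial family enters, since the positive-form-degree part of $\bbA^2$ is assembled from commutators of $\eth$ with the superconnection terms, and these have vanishing indicial operators because $I(\eth)$ does not depend on $b\in B$.

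First I would write, for $k>0$ and up to signs,
\[
(e^{t\delta_t(V)})_{[k]}=\sum_{j=1}^{k}\ \sum_{\substack{i_1+\dots+i_j=k\\ i_l\ge 1}} t^{\,j-k/2}\int_{\Delta_j}e^{s_0t\eth^2}\,W_{[i_1]}\,e^{s_1t\eth^2}\cdots W_{[i_j]}\,e^{s_jt\eth^2}\,ds ,
\]
where $W_{[i]}$ is the form-degree-$i$ part of $W$, $\Delta_j=\{s_l\ge 0,\ \sum_l s_l=1\}$, and the power of $t$ records the action of $\delta_t$ on a form of degree $i_l$. Every term carries at least one residual factor $W_{[i_l]}$. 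The membership $(e^{t\delta_t(V)})_{[k]}\in\cN^\epsilon_k$ is then bookkeeping in the $b$-heat and $b$-pseudodifferential calculi: a composition of the $b$-heat operators $e^{s_lt\eth^2}$ with at least one residual factor is a smoothing family whose kernel vanishes to positive order at every boundary hypersurface of $\tN\times_\phi\tN$, and the $B$-form degrees sum to $k$. In particular such families are trace class for each fixed $t$, so the final assertion of the lemma follows once rapid decay of the $\cN^\epsilon$-seminorms is established.

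The substance of the lemma is the rapid decay as $t\to\infty$. Here I would invoke the long-time heat analysis and scattering theory of \cite{MelroseAPS}: on $\ran(\Id-\Pi_0)$ the Schwartz kernel of $e^{st\eth^2}$, for $st$ large, splits into a piece decaying rapidly in the interior and a model piece concentrated near the front face whose large-time behaviour is governed by the indicial operator $I(\eth)$; this model piece decays only like $(st)^{-1/2}$, but, being built from the $b$-independent boundary data, it has no positive $B$-form degree. Consequently, when the $k$-form part with $k>0$ is extracted, the slowly-decaying model contributions cancel out of the sum of Duhamel terms, and what survives is residual at the front face and decays rapidly. Quantitatively, the prefactor $t^{\,j-k/2}$ is controlled using $\|\eth^a e^{s\eth^2}\|=O(s^{-a/2})$ applied to the $\eth$'s hidden in the $W_{[i_l]}$ (for instance $W_{[1]}$ contains $[\eth,\bbA_{[1]}]_{+}$), together with the constraint $\sum_l s_l=1$, which makes the $\Delta_j$-integrals converge; the remaining, a priori only polynomially decaying, contributions are precisely the $b$-independent boundary-model ones, which vanish in positive form degree. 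Differentiating in $t$ and in the $B$-parameters and repeating the estimate yields rapid decay of all derivatives in the $\cN^\epsilon$-seminorms, hence in trace-class norm.

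The step I expect to be the main obstacle is precisely this last one: isolating the order-$(st)^{-1/2}$ part of $e^{st\eth^2}(\Id-\Pi_0)$, showing that it is accounted for entirely by the $b$-independent indicial model, and deducing that it contributes nothing in positive $B$-form degree. This is the only point at which the non-Fredholm character of the family — the absence of a spectral gap and the merely polynomial long-time decay of the heat semigroup — genuinely intervenes, and it is overcome solely by the hypothesis that the indicial family is constant along $B$.
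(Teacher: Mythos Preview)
Your Duhamel expansion and the identification $V=\eth^2+W$ with $W\in\cM^\epsilon_1$ match the paper's setup, and the argument that each term lies in $\cN^\epsilon_k$ is fine. The gap is in the passage from polynomial to rapid decay in $t$.

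Your proposed mechanism for rapid decay does not work as stated. You claim that the $(st)^{-1/2}$-decaying model part of $e^{st\eth^2}(\Id-\Pi_0)$, being $b$-independent, ``contributes nothing in positive $B$-form degree'' and therefore ``cancels out of the sum of Duhamel terms''. But the heat factors in the Duhamel integrand $e^{s_0t\eth^2}W_{[i_1]}e^{s_1t\eth^2}\cdots$ always have $B$-form degree zero; all the form degree is carried by the $W_{[i_l]}$. The $b$-independence of the model heat kernel does not make its composition with the $b$-dependent $W$'s vanish, so there is no cancellation to invoke. If instead you mean that the model piece (supported near the front face) composed with the residual $W$ (vanishing at the front face) gains spatial decay, that is true, but it does not by itself improve the $t$-decay beyond $t^{-1/2}$; you would still only get a fixed polynomial rate from your prefactor estimate, not rapid decay.

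The paper supplies the missing idea. It first reduces to showing that the maps $p_{i,t}:U\mapsto (\Id-\Pi_0)e^{-t\eth^2}(\Id-\Pi_0)U$ (and the analogous right composition), acting $\cN^\epsilon_1\to\cN^\epsilon_1$, are rapidly decreasing. Using the resolvent representation of the heat kernel and the fact that $(\Id-\Pi_0)(\eth^2-z^2)^{-1}(\Id-\Pi_0)$ has only a \emph{simple} pole at $z=0$ (the double pole having been removed by $\Pi_0$), one shows that $t^{1/2}p_{i,t}$ is uniformly bounded. The key step is then the semigroup bootstrap
\[
t^k p_{i,t}=(2k)^k\Bigl(\tfrac{t^{1/2}}{\sqrt{2k}}\,p_{i,\,t/2k}\Bigr)^{2k},
\]
which converts the single factor of $t^{-1/2}$ into $t^{-k}$ for every $k$. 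Without this iteration (or an equivalent device), your argument produces at best polynomial decay.
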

\begin{proof}
Writing $V=\eth^2+A$ with $A\in \cM_1^{\epsilon}$, we have that $\displaystyle e^{-t\delta_t(V)}= \sum_{k=0}^{\dim B} (-t)^k I_k(t)$, where
$$
I_k(t)= \int_{\Delta_k} e^{-\sigma_0 t\eth^2}\delta_t(A) e^{-\sigma_1 t\eth^2} \delta_t(A)\ldots e^{-\sigma_{k-1}t\eth^2}\delta_t(A) e^{-\sigma_k t\eth^2} d\sigma_1\ldots d\sigma_k.
$$
Here $\Delta_k$ is the simplex
$$
\Delta_k= \{ (\sigma_0,\ldots, \sigma_k) \in \bbR^{k+1} \quad | \quad \sum_{i=0}^k \sigma_i=1, \; \sigma_i\ge 0\}.
$$
The family of operators $e^{-t\eth^2}\in \Psi_b^{-\infty}(\tN/B;\cE)$ is bounded on  $L^2$ uniformly in $t\in [0,\infty)$.  Since
$A\in \cM_1^{\epsilon}$, we have that $(\Id-\Pi_0)e^{-t\eth^2}(\Id-\Pi_0)A$ and $A(\Id-\Pi_0)e^{-t\eth^2}(\Id-\Pi_0)$ are in $\cN^{\epsilon}_1$.
Clearly, it then suffices to show that $(\Id-\Pi_0)e^{-t\eth^2}(\Id-\Pi_0)A$ and $A(\Id-\Pi_0)e^{-t\eth^2}(\Id-\Pi_0)$ are rapidly decreasing with
all their derivatives as $t$ tends to infinity to obtain the result.

Suppose that we establish that
$$
  \begin{array}{llcl} p_{1,t}: & \cN_1^{\epsilon} &\to & \cN^{\epsilon}_1 \\
                                           & U  & \mapsto & (\Id-\Pi_0)e^{-t\eth^2}(\Id-\Pi_0)U
 \end{array}, \quad
\begin{array}{llcl} p_{2,t}: & \cN_1^{\epsilon} &\to & \cN^{\epsilon}_1 \\
                                           & U  & \mapsto & U(\Id-\Pi_0)e^{-t\eth^2}(\Id-\Pi_0)
 \end{array}
$$
have the property that $t^{\frac12}p_{i,t}$ is uniformly bounded with all its derivatives in $B$ as $t$ tends to infinity.
Noticing that for $k\in\bbN$,
$$
       t^k p_{i,t}= (2k)^{k}\left( \frac{t^{\frac12}}{\sqrt{2k}}p_{i, \frac{t}{2k}}\right)^{2k},
$$
we see that $t^k p_{i,t}$ is also uniformly bounded as $t$ tends to infinity and that the same is true for all horizontal derivatives
using Duhamel's formula.  Thus $(\Id-\Pi_0)e^{-t\eth^2}(\Id-\Pi_0)A$ and $A(\Id-\Pi_0)e^{-t\eth^2}(\Id-\Pi_0)$ are rapidly
decreasing with all derivatives as $t$ tends to infinity.

It remains to show that $t^{\frac12}p_{i,t}$ is uniformly bounded with all horizontal derivatives as $t$ tends to infinity.  For this purpose,
we proceed as in the proof of \cite[Proposition~{7.37}]{MelroseAPS} and write the heat kernel of $\eth^2$ in terms of the resolvent,
\begin{equation}
   e^{-t\eth^2} = \frac{1}{2\pi i} \int_{\gamma_A} e^{-t\lambda} (\eth^2-\lambda)^{-1}d\lambda,
\label{res.1}\end{equation}
where $\gamma_A$  is a contour in $\bbC$ that can be taken to be inward along a line segment of argument $-\delta$,
$\frac12>\delta>0$, with end point $(-A-1,0)$, and outward along a line segment of argument $\delta$ from this point.
Choosing a cut-off function $\chi\in \CI(\bbR)$ such that $\chi(r)=1$ for $r<\frac{C}{2}$ and $\chi(r)=0$ for $r>\frac34 C$,
where $0 < C \ll 1$ will be specified later, \eqref{res.1} decomposes as a sum of the two terms
\begin{equation}
\begin{gathered}
   H_1(t)= \frac{1}{2\pi i} \int_{\gamma_A} \chi(\Re \lambda) e^{-t\lambda}(\eth^2-\lambda)^{-1} d\lambda, \\
   H_2(t)= \frac{1}{2\pi i} \int_{\gamma_A} (1-\chi(\Re \lambda)) e^{-t\lambda}(\eth^2-\lambda)^{-1} d\lambda.\end{gathered}
\label{res.2}\end{equation}
In the expression for $H_2$, the integrand is supported in $\Re z\ge \frac{B}{2}$.  Since $(\eth^2-\lambda)^{-1}$ is uniformly
bounded in the calculus with bounds for the part of $\gamma_A$ in that region, we see that $H_2(t): \cN_1^{\epsilon}\to
\cN^{\epsilon}_1$ decays exponentially quickly with all its derivatives as $t$ tends to infinity.

We thus focus attention on $H_1(t)$.  First replace $\gamma_A$ by the simpler contour integral $\Im z= \delta>0$ where
$\lambda=z^2$ and $\Im z>0$ is the physical region.  Using the Cauchy formula, write $H_1= H_1'+ H_1''$, where
\begin{equation}
\begin{gathered}
 H_1'(t)= \frac{1}{2\pi i} \int_{\Im z= \delta}  \chi (\Re \lambda) e^{-t\lambda}(\eth^2-\lambda)^{-1}d\lambda, \\
 H_1''(t)= \frac{1}{2\pi i} \int_{S(A,\delta)} \db \chi(\Re \lambda) e^{-t\lambda} (\eth^2-\lambda)^{-1} d\lambda\wedge d \overline{\lambda}.
\end{gathered}
\end{equation}
Since $\db\chi(\Re z)$ is supported in $\Re z\ge \frac{B}2$, the second term $H_1''(t)$ decays exponentially quickly
with all derivatives as $t\to \infty$, so we reduce further and focus solely on the first term.  Introducing $z$ as a
variable of integration, we have that
\begin{equation}
  H_1'(t)= \frac{1}{\pi i} \int_{\Im z=\delta} \chi(\Re (z^2))e^{-t z^2} (\eth^2-z^2)^{-1} z dz.
\label{res.3}\end{equation}
We know from \cite{MelroseAPS} that $(\eth^2-z^2)^{-1}$ extends meromorphically to $\bbC$ with values in the calculus
with bounds. It has a double pole at $z=0$ with coefficient of $1/z^2$ equal to the projection onto the $L^2$-kernel of $\eth^2$
and with residue, i.e., the coefficient of $1/z$ equal to
\begin{equation}
    \sum_{\ell} U_{\ell} \overline{U}_{\ell} dg_b.
\label{res.4}\end{equation}
Here, $U_{\ell}\in \CI(\phi^{-1}(b);E_b)+ \rho^{\delta} H^{\infty}_b (\phi^{-1}(b);E)$ for some $\delta>0$ is a basis of those solutions
of $\eth^2 U=0$  orthogonal to the subspace of $L^2$-solutions which have boundary values orthonormal in $L^2(\pa\phi^{-1}(b);E_b)$.

Only $(\Id-\Pi_0)H_1'(t)(\Id-\Pi_0)$ is really used in the definition of $p_{i,t}$, so that it is only necessary to deal with
$(\Id-\Pi_0)(\eth^2-z^2)^{-1}(\Id-\Pi_0)$. This has a simple pole at $z=0$ with residue given by  \eqref{res.4}.
In particular, provided that the constant $C$ used in the definition of $\chi$ above is sufficiently small so that
$(\eth^2-z^2)$ has only a pole at $z=0$ on the support of $\chi(\Re (z^2))$, we see that
$$
    P(z)=z \chi(\Re z^2) (\Id-\Pi_0)(\eth^2-z^2)^{-1} (\Id-\Pi_0)
$$
is a family of operators which is smooth down to $\Im z\searrow 0$ and with values in the calculus with bounds
$\Psi^{m,0,0}_{b,os,\infty}(\phi^{-1}(b);E_b)$ (see \cite[(5.107)]{MelroseAPS}). This induces a family of operators
$$
    P(z): \cN^{\epsilon}_1\to \cN^{\epsilon}_1
$$
which is uniformly bounded as $\Im z\searrow 0$.  Taking the limit $\delta\to 0$ and making the change of variable
$Z=z/s$, $s=1/t^{\frac12}$, we see that
$$
      (\Id-\Pi_0)H_1'(t)(\Id-\Pi_0)= \frac{s}{\pi i} \int_{-\infty}^{+\infty} e^{-Z^2} P(sZ) dZ.
$$
After removing the factor $s$ on the right, this integral is a differentiable family in the argument $s^2$
with values in the space of bounded operators on $\cN_1^{\epsilon}$  (acting by composition on the left or on the right).
By the discussion above, this shows that $t^{\frac12}p_{i,t}$ is uniformly bounded as $t$ tends to infinity.  Moreover, using
Duhamel's formula, the same argument can be applied to show the uniform boundedness of the horizontal derivatives
of $t^{\frac12}p_{i,t}$. This completes the proof.
\end{proof}

\begin{proof}[Proof of Proposition~\ref{lfi.3}]
The formula \eqref{lfi.3a} follows from \cite[Proposition~7.37]{MelroseAPS}.  For \eqref{lfi.3b}, Lemma~\ref{lfi.6} gives that
$$
e^{-t\delta_t(\cF)}= \delta_t(A)^{-1} \left( \begin{array}{cc} e^{-t\delta_t(U)} & 0 \\  0 & e^{-t\delta_t(V)} \end{array} \right) \delta_t(A).
$$
Using Lemma~\ref{lfi.6} and Lemma~\ref{lfi.15}, and since $A-\Id\in \cN_1^{\epsilon}$, we see that if $k>0$,
 then $(e^{-t\delta_t(\cF}))_{[k]}$ is a differential form with values in the space of trace-class operators such that
$$
\left(e^{-t\delta_t(\cF)}\right)_{[k]}= \left( \begin{array}{cc} e^{-t\delta_t(U)} & 0 \\  0 & 0 \end{array} \right) +
\mathcal{O}(t^{-\frac12}) =  \left( \begin{array}{cc} e^{-R_{[2]}} & 0 \\  0 & 0 \end{array} \right) + \mathcal{O}(t^{-\frac12}).
$$
Consequently, for $k>0$,
\begin{equation}
\begin{aligned}
\lim_{t\to\infty} {}^b\!\Ch(\bbA_t)_{[2k]}&= \lim_{t\to \infty} {}^{b}\Str((e^{-t\delta_t(\cF)})_{[2k]})= \lim_{t \to \infty} \Str((e^{-t\delta_t(\cF)})_{[2k]}) \\
  &=  \Str(e^{-R_{[2]}})_{[2k]}= \Ch(\ker_{L^2} \eth, \nabla^{L^2})_{[2k]}.
\end{aligned}
\end{equation}
\end{proof}

Combining this result with \cite[Proposition~11 and Proposition~16]{MP1997}, we obtain a formula for the Chern character of the
$L^2$-kernel bundle with respect to the connection $\nabla^{L^2}$.
\begin{theorem}
The Chern character of the $L^2$-kernel bundle of $\eth$ with respect to the connection $\nabla^{L^2}$ is given by
$$
  \Ch(\ker_{L^2} \eth, \nabla^{L^2})_{[2n]}=\left[ \frac{1}{(2\pi i)^{\frac{m}2}} \int_{N/B} \hA(N/B;g_b)\Ch'(\cE)  - d_{B}\gamma \right]_{[2n]}, \quad n>0.
$$
Here, $m=\dim \tM$ and
$$
 \gamma= \int_0^{\infty} {}^{b}\!\STr\left( \frac{d\bbA_t}{dt}e^{-\bbA_t^2}  \right)dt.
$$
\label{lfi.16}\end{theorem}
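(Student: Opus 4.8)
The plan is to run the standard transgression (Chern--Simons) argument for the $b$-Chern character of the rescaled Bismut superconnection $\bbA_t$, integrating from $t=0$ to $t=\infty$, and to identify the two endpoint contributions with the local index density and with $\Ch(\ker_{L^2}\eth,\nabla^{L^2})$ respectively. Concretely, one uses the variational formula
\[
\frac{d}{dt}\,{}^{b}\!\Ch(\bbA_t)= -\,d_B\,{}^{b}\!\STr\!\left(\frac{d\bbA_t}{dt}\,e^{-\bbA_t^2}\right),
\]
which holds here exactly as in \cite{MP1997} and \cite[\S 7]{MelroseAPS}: the $b$-trace-defect term that normally obstructs closedness of ${}^{b}\!\Ch(\bbA_t)$ is built from the indicial family of $\eth$, and since by hypothesis this family is constant in $b\in B$, it contributes nothing in positive form degree. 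Integrating in $t$ and taking the component of form degree $2n$ with $n>0$ gives
\[
\Big[\lim_{t\to\infty}{}^{b}\!\Ch(\bbA_t)\Big]_{[2n]}-\Big[\lim_{t\to 0}{}^{b}\!\Ch(\bbA_t)\Big]_{[2n]} = -\,[d_B\gamma]_{[2n]},
\]
so the theorem reduces to identifying the two limits and to the convergence of $\gamma$.

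The large-$t$ limit is exactly Proposition~\ref{lfi.3}: $\lim_{t\to\infty}{}^{b}\!\Ch(\bbA_t)_{[2n]}=\Ch(\ker_{L^2}\eth,\nabla^{L^2})_{[2n]}$ for $n>0$. The small-$t$ limit is the $b$-version of Bismut's local index theorem, which follows from the short-time heat expansion and Getzler rescaling as in \cite{MP1997} (Propositions~11 and~16) and \cite{BGV}; since rescaling localizes the supertrace density to the interior, the $b$-structure plays no role and one obtains $\lim_{t\to 0}{}^{b}\!\Ch(\bbA_t)=\frac{1}{(2\pi i)^{m/2}}\int_{N/B}\hA(N/B;g_b)\,\Ch'(\cE)$ with $m=\dim\tM$. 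As explained in the text, the reduction from exact $b$-metrics with index set $\bbN_0\times\{0\}$ to polyhomogeneous ones is harmless: passing to a root $x=\rho^{1/k}$ makes $g_b$ look like a product $b$-metric to arbitrary order, cf.\ \cite{MazzeoEdge}, and does not affect the local index density.

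Finally one must check that $\gamma=\int_0^\infty {}^{b}\!\STr(\frac{d\bbA_t}{dt}e^{-\bbA_t^2})\,dt$ is well defined in each positive form degree and that $d_B$ passes through the integral. Integrability near $t=0$ is the usual short-time estimate on the heat supertrace, identical to the compact case. The delicate point --- and the genuinely new feature of the non-Fredholm setting --- is integrability near $t=\infty$: here $e^{-\bbA_t^2}$ does not decay exponentially, but Lemma~\ref{lfi.15} shows that in each form degree $k>0$ the relevant part of $e^{-t\delta_t(\cF)}$ (hence of the transgression integrand) decays rapidly in $t$, together with all horizontal derivatives, as a differential form valued in trace-class operators. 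This yields both the convergence of $\gamma$ in positive degree and the justification for differentiating under the integral sign, and it is where the constancy of the indicial family and the resolvent analysis of \cite{MelroseAPS} are essential. It is also exactly where the argument fails in form degree zero --- there ${}^{b}\!\Ch(\bbA_t)_{[0]}$ only tends to the extended index and the $t$-integral diverges --- which is why the statement is restricted to $n>0$. Collecting the three ingredients gives the asserted formula.
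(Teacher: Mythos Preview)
Your proposal is correct and follows essentially the same transgression argument as the paper. Two small points of precision: the paper makes explicit that the defect term in the variational formula is the eta form $\hat{\eta}(t)$ of the boundary family (from \cite[Proposition~11]{MP1997}), which vanishes in positive degree because the boundary superconnection $\bbB_t$ is constant in $B$; and for the large-$t$ integrability of $\gamma(t)_{[k]}$ the paper extracts the precise rate $\mathcal{O}(t^{-3/2})$ by combining Proposition~\ref{lfi.3} (not just Lemma~\ref{lfi.15}) with the argument of \cite[Theorem~9.23]{BGV}, whereas you appeal to Lemma~\ref{lfi.15} directly---this is the same input, but strictly speaking that lemma concerns the $V$-block after the conjugation of Lemma~\ref{lfi.6}, so one still needs the block-diagonalization to pass to $\gamma(t)$ itself.
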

\begin{proof}
By \cite[Proposition~11]{MP1997},
$$
         \frac{d}{dt} {}^b\!\Ch(\bbA_t)= -d_B \gamma(t) -\hat{\eta}(t), \quad \mbox{where} \;  \gamma(t)= {}^{b}\!\STr
\left( \frac{d\bbA_t}{dt}e^{-\bbA_t^2}  \right), \quad \hat{\eta}(t)\frac{1}{\sqrt{\pi}} \Str_{\Cl(1)}\left( \frac{d\bbB_t}{dt} e^{-\bbB_t^2} \right).
$$
Here, $\bbB_t$ is the rescaled odd superconnection associated to the family of Dirac-type operators on $\pa \tN$,
see \cite[p.38]{MP1997}.  By assumption, this family is trivial, so the only non-zero contribution is in degree $0$.
Thus in fact,
\begin{equation}
\frac{d}{dt} {}^b\!\Ch(\bbA_t)_{[2k]}= -(d_B \gamma(t))_{[2k]} \quad \mbox{for} \; k>0.
\label{var.1}\end{equation}

Next, by \cite[(15.17)]{MP1997},
$$
      \gamma(t)= \mathcal{O}(t^{-\frac12}) \quad \mbox{as} \; t\to 0^+.
$$
On the other hand, using Proposition~\ref{lfi.3} and its proof, we can argue as in the proof of \cite[Theorem~{9.23}]{BGV} to
conclude that for $k>0$,
$$
      \gamma(t)_{[k]}= \mathcal{O}(t^{-\frac32}) \quad \mbox{as} \; t\to \infty.
$$
Finally, \cite[(15.15)]{MP1997} tells us that
$$
   \lim_{t\to 0} {}^b\!\Ch(\bbA_t)_{[2k]}= \left[ \frac{1}{(2\pi i)^{\frac{m}2}} \int_{N/B} \hA(N/B;g_b)\Ch'(\cE) \right]_{[2k]} \quad \mbox{for} \; k>0,
$$
so the result follows by integrating \eqref{var.1} with respect to $t$.
\end{proof}

\section{The curvature of the Quillen connection} \label{qc.0}
In this section, we suppose that $\tN=[\bN; \bD]$, where $\bN$ is a complex orbifold and $\bD$ is an effective orbifold divisor  with $(\bN,\bD)$ satisfying hypotheses (i) and (ii) of section~\ref{ht.0}.  We also assume that $B$ is a complex manifold
and that $\phi: N\to B$ is induced by a  holomorphic fibration $\overline{\phi}: \bN\to B$.  In this case, $E$ is a Hermitian vector
bundle over $\bN$ such that the family of Dirac-type operators is the family of Dolbeault operators
\begin{equation}
       \eth= \sqrt{2} ( \db + \db^*)
\label{qc.0a}\end{equation}
associated to the Clifford bundle $\cE=\Omega^{0,*}(N/B)\otimes E$.  We will also assume that $g_b$ is a family of Kähler metrics
inducing a structure of Kähler fibration on $\phi:N\to B$ in the sense of \cite{BGSII}, with associated connection $T_H N$ for
$\phi:\tN\to B$. Finally, we assume not only that the family of nullspaces $\ker_{L^2}\eth$ is a bundle over $B$, but also
that the $L^2$-kernels of $\eth$ acting on $\Omega^{0,q}(N/B)\otimes E$ determine a bundle over $B$ in each degree $q$.

With these extra assumptions, we now consider the $L^2$-determinant bundle associated to the family \eqref{qc.0a}. This
is the complex line bundle over $B$ given by
\begin{equation}
  \det(\eth^+)= (\Lambda^{\max}\ker_{L^2}(\eth^+))^{-1}\otimes \Lambda^{\max}\ker_{L^2}(\eth^-).
\label{qc.1}\end{equation}
The $L^2$-connection $\nabla^{L^2}$ induces a connection $\nabla^{\det(\eth^+)}$ on $\det(\eth^+)$.  By Theorem~\ref{lfi.16}, the
curvature of this connection is given by
\begin{equation}
     (\nabla^{\det(\eth^+)})^2 = \left[ \frac{1}{(2\pi i)^{\frac{m}2}} \int_{N/B} \hA(N/B;g_b)\Ch'(\cE)  - d_{B}\gamma \right]_{[2]}.
\label{qc.1b}\end{equation}

A more natural choice of connection on $\nabla^{\det(\eth^+)}$ is the Quillen connection.  To describe it, we introduce the $\db$-torsion
of the family of operators $\eth$, following the approach in \cite{MelroseAPS}.  We first define the determinant of the restriction
$\Delta_q$ of the Laplacian $\Delta=\eth^2= 2(\db^*\db+ \db\db^*)$ to elements of type $(0,q)$.

First consider the function
$$
   \zeta_0(\Delta_q,s)= \frac{1}{\Gamma(s)} \int_0^1 t^{s-1} \; {}^{b}\!\Tr(e^{-t\Delta_q}) dt \quad \mbox{for} \; \Re s \gg 0.
$$
Since ${}^{b}\!\Tr(e^{-t\Delta_q})$ admits a short-time asymptotic expansion,
$$
{}^{b}\!\Tr(e^{-t\Delta_q}) \sim \sum_{k=-m}^{\infty} a_k t^{\frac{k}2} \quad \mbox{as} \; t\searrow 0,
$$
the $\zeta$-function $\zeta_0(\Delta_q,s)$ extends to a meromorphic function on $\bbC$ with only simple poles.
Because of the $\Gamma(s)$ factor, $\zeta_0(\Delta_q,s)$ is holomorphic near $s=0$. For $t \nearrow \infty$, we
also consider the  $\zeta$-function
\[
    \zeta_{\infty}(\Delta_q,s)= \frac{1}{\Gamma(s)} \int_1^{\infty} t^{s-1} \;{}^{b}\!\Tr(e^{-t\Delta_q}) \, dt
\quad \mbox{for} \, \Re s \ll 0.
\]
There is an expansion
$$
   {}^{b}\!\Tr(e^{-t\Delta_q}) \sim \sum_{k\ge 0}^{\infty} a_k t^{-k/2} \quad t\to \infty,
$$
so $\zeta_{\infty}(\Delta_q, s)$ extends meromorphically to $\bbC$ with at most simple poles.  As before, this extension is holomorphic
near $s=0$, so altogether,
$$
    \zeta(\Delta_q,s)= \zeta_0(\Delta_q,s) + \zeta_{\infty}(\Delta_q,s)
$$
is holomorphic near $s=0$. We then define the regularized determinant of $\Delta_q$ by the usual formula
$$
               \log \det(\Delta_q):= - \zeta'(\Delta_q,0).
$$

The $\db$-torsion of the family $\eth$ is now defined by
$$
    \log T(\db)= \sum_{q}  (-1)^q q \log\det (\Delta_q).
$$
Alternatively, we can define the $\db$-torsion in terms of one $\zeta$-function using the regularized supertrace and the number
operator $\Q$, which acts on $\Omega^{0,q}(N/B)\otimes E$ as multiplication by $q$, namely
$$
      \log T(\db) = \zeta'(\db,0),
$$
where $\zeta(\db,s)= \zeta_0(\db,s)+ \zeta_{\infty}(\db,s)$ with
\begin{equation}
\begin{gathered}
\zeta_0(\db,s)= \sum_q (-1)^q q \zeta_0(\Delta_q,s) = \frac{1}{\Gamma(s)} \int_0^1 t^{s-1} \; {}^{b}\!\STr(\Q e^{-t\Delta}) dt \quad \mbox{for} \; \Re s \gg 0.  \\
\zeta_{\infty}(\db,s)= \sum_q (-1)^q q \zeta_{\infty}(\Delta_q,s)= \frac{1}{\Gamma(s)} \int_1^{\infty} t^{s-1} \;{}^{b}\!\STr(\Q e^{-t\Delta})dt
\quad \mbox{for} \; \Re s \ll 0.
\end{gathered}
\end{equation}

We now define the \textbf{Quillen metric} of $\det(\eth^+)$ by
$$
    \| \cdot \|_{Q}= T(\db)^{\frac12}\|\cdot \|_{L^2},
$$
where $\|\cdot \|_{L^2}$ is the metric induced by the $L^2$-norm on $\ker_{L^2}\eth$.

To introduce the corresponding Quillen connection, we need a bit more preparation.  Following \cite{BGV}, first consider
the Fréchet bundle $\phi_*\cE\to B$ with fibre
$$
         \left.  \phi_*\cE\right|_b= \dot{\cC}^{\infty}(\phi^{-1}(b); \cE\times {}^{b}\Omega^{\frac12}(\phi^{-1}(b)));
$$
${}^{b}\Omega^{\frac12}(\phi^{-1}(b))$ is the half $b$-density bundle and $\dot{\cC}^{\infty}(\phi^{-1}(b); \cE\times
{}^{b}\Omega^{\frac12}(\phi^{-1}(b)))$ is the space of smooth sections with rapid decay at infinity.  The choices of connection
for $\phi:N\to B$ and $\cE$ induces a connection $\nabla^{\phi_*\cE}$ for $\phi_*\cE$ (\cf \cite[Proposition~9.13]{BGV}).
In fact, $\bbA_{[1]}=\nabla^{\phi_*\cE}$ (see \cite[Proposition~{10.16}]{BGV}). It is convenient to consider a truncated version of the
Bismut superconnection involving only the terms of degree $0$ and $1$,
$$
      \widetilde{\bbA}= \bbA_{[0]} + \bbA_{[1]}= \eth+ \nabla^{\phi_*\cE},
$$
which has the rescaling
$$
    \widetilde{\bbA}_t= t^{\frac12}\eth+ \nabla^{\phi_*\cE}.
$$

For $t\in \bbR^+$, define the following differential forms on $B$:
$$
\alpha^{+}(t) = \frac{1}{2t^{\frac12}} {}^{b}\!\STr \left( \sqrt{2}\db^* e^{-\widetilde{\bbA}_t^2} \right)
\quad \mbox{and}\quad \alpha^{-}(t) = \frac{1}{2t^{\frac12}} {}^{b}\!\STr \left( \sqrt{2}\db e^{-\widetilde{\bbA}_t^2} \right).
$$
In degree $1$,
\begin{equation}
(e^{-\widetilde{\bbA}^2_t})_{[1]} = (e^{-\bbA^2_t})_{[1]}= -t\int_0^1 e^{-(1-\sigma)t\eth^2} t^{-\frac12} [\nabla^{\phi_*\cE},\eth] e^{-\sigma t\eth^2} d\sigma = -t^{\frac12}\int_0^1 e^{-(1-\sigma)t\eth^2} [\nabla^{\phi_*\cE},\eth] e^{-\sigma t\eth^2} d\sigma.
\label{qc.2}\end{equation}
Due to our assumptions on the restriction of the family $\eth$ to $\pa N$, $[\nabla^{\phi_*\cE},\eth]$ has vanishing indicial family,
so the integrand in \eqref{qc.2} is trace class, and we can thus define $\alpha^{\pm}(t)_{[1]}$ without using the $b$-supertrace:
$$
         \alpha^{+}(t)_{[1]}= \frac{1}{2t^{\frac12}} \STr (\sqrt{2}\db^* (e^{-\widetilde{\bbA}^2_t})_{[1]} ) \quad \mbox{and} \quad \alpha^{-}(t)_{[1]}= \frac{1}{2t^{\frac12}} \STr (\sqrt{2}\db (e^{-\widetilde{\bbA}^2_t})_{[1]} ).
$$

\begin{lemma}
The $1$-form components of the differential forms $\alpha^{\pm}(t)$ satisfy
$$
            \overline{\alpha^+(t)_{[1]}} = -\alpha^{-}(t)_{[1]}.
$$
These are rapidly decreasing (with all derivatives in $B$) as $t$ tends to infinity and have short-time asymptotics
$$
          \alpha^{\pm}(t)_{[1]} \sim \sum_{k=-N}^{\infty} t^{\frac{k}2} a^{\pm}_k \quad \mbox{as} \; t\searrow 0.
$$
\label{qc.3}\end{lemma}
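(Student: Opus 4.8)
The plan is to treat the three assertions in turn, the genuinely new content being the behaviour as $t\to\infty$. First I would dispose of the conjugation identity $\overline{\alpha^+(t)_{[1]}}=-\alpha^-(t)_{[1]}$, which is a purely formal consequence of the Hilbert space structure and is proved exactly as the corresponding statement in \cite[\S1]{BGSIII} (compare also the manipulations in \cite{MelroseAPS}). Writing $K:=(e^{-\widetilde{\bbA}_t^2})_{[1]}$, which by \eqref{qc.2} takes values in trace-class operators and is even as an operator on $\cE$, one combines the identity $\overline{\Tr A}=\Tr A^*$ for trace-class $A$, the graded cyclicity of the supertrace, the self-adjointness of $\eth^2$, the relation $(\db)^*=\db^*$, and the unitarity of $\nabla^{\phi_*\cE}$ (so that the componentwise Hilbert adjoint of $[\nabla^{\phi_*\cE},\eth]$ agrees with $-[\nabla^{\phi_*\cE},\eth]$ up to terms of form-degree $0$ in $B$, which do not contribute in degree $1$). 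The upshot is that $K$ is componentwise self-adjoint modulo such terms, whence $\overline{\STr(\sqrt2\,\db^*K)_{[1]}}=-\STr(\sqrt2\,\db\,K)_{[1]}$, which is the claim.

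Next, and this is the main step, I would establish the rapid decay as $t\to\infty$ following the pattern of the proof of Lemma~\ref{lfi.15}. Using \eqref{qc.2} the $t^{1/2}$ prefactors cancel, so that
\[
\alpha^+(t)_{[1]}=-\frac1{\sqrt2}\int_0^1\STr\big(\db^*\,e^{-(1-\sigma)t\eth^2}\,[\nabla^{\phi_*\cE},\eth]\,e^{-\sigma t\eth^2}\big)\,d\sigma,
\]
and similarly for $\alpha^-(t)_{[1]}$ with $\db^*$ replaced by $\db$ and the overall sign reversed. By the hypotheses on the boundary behaviour of $\eth$ and $(\cE,\nabla^{\cE})$, the operator $R:=[\nabla^{\phi_*\cE},\eth]$ has vanishing indicial family, hence lies in the residual ideal (it carries an extra factor $\rho$) and the integrand is trace class. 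I would then insert $\Id=\Pi_0+(\Id-\Pi_0)$ around $R$, with $\Pi_0$ the fibrewise orthogonal projection onto $\ker_{L^2}\eth$; since $e^{-st\eth^2}$ commutes with $\Pi_0$ and restricts to the identity on its range, the integrand splits into four pieces. Three of these carry a $\Pi_0$ adjacent to $\db^*$ (respectively $\db$) and therefore vanish, because $\db^*\Pi_0=\Pi_0\db^*=0$ (on $\ker_{L^2}\eth$, where $\eth=\sqrt2(\db+\db^*)$ vanishes, each of $\db$ and $\db^*$ vanishes as well). The surviving term is $\STr\big(\db^*(\Id-\Pi_0)e^{-(1-\sigma)t\eth^2}(\Id-\Pi_0)\,R\,(\Id-\Pi_0)e^{-\sigma t\eth^2}(\Id-\Pi_0)\big)$, in which $R$ is residual, and I would invoke — exactly as in Lemma~\ref{lfi.15} — the meromorphic continuation of $(\Id-\Pi_0)(\eth^2-z^2)^{-1}(\Id-\Pi_0)$, which has at worst a simple pole at $z=0$, from \cite[Proposition~7.37]{MelroseAPS}, together with the semigroup iteration $t^k p_t=(2k)^k\big((t/2k)^{1/2}p_{t/2k}\big)^{2k}$, to conclude that $(\Id-\Pi_0)e^{-st\eth^2}(\Id-\Pi_0)$, acting on residual operators, decays like $(st)^{-N}$ for every $N$, with all $B$-derivatives (via Duhamel). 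This yields $\alpha^{\pm}(t)_{[1]}=O(t^{-N})$ for every $N$, with all derivatives in $B$.

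Finally, for the short-time asymptotics, I would note that the integrand in the displayed formula is an \emph{ordinary} trace (the residual factor $R$ makes the composition trace class), so the restriction of its Schwartz kernel to the diagonal has the standard local heat expansion in powers of $t^{1/2}$; the decay of $R$ at $\pa M$ makes the fibrewise integral convergent with only finitely many negative powers of $t$, and integration in $\sigma$ preserves this form. This gives $\alpha^{\pm}(t)_{[1]}\sim\sum_{k\ge -N}t^{k/2}a^{\pm}_k$ as $t\searrow 0$.

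The hard part is the $t\to\infty$ estimate: because $I(\eth,0)$ need not be invertible, $\eth$ is not a Fredholm family and $e^{-t\eth^2}$ has no exponential decay, so one must genuinely exploit both the vanishing of the indicial family of $R=[\nabla^{\phi_*\cE},\eth]$ and the vanishing of $\Pi_0 R\Pi_0$, and then import the quantitative $b$-resolvent bounds of \cite[Proposition~7.37]{MelroseAPS} just as in Lemma~\ref{lfi.15}; the conjugation identity and the short-time expansion are then essentially bookkeeping.
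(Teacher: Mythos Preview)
Your proposal is correct and follows essentially the same route as the paper: the conjugation identity is deduced formally from the adjoint relations and the unitarity of $\nabla^{\phi_*\cE}$ (the paper cites \cite[Lemma~9.42]{BGV} rather than \cite{BGSIII}), the short-time expansion is read off from the heat kernel, and the large-$t$ decay is obtained by inserting $\Pi_0+(\Id-\Pi_0)$, using $\Pi_0\db^*=\db^*\Pi_0=0$ to kill all but the off-kernel piece, and then invoking the decay of $p_{i,t}$ from the proof of Lemma~\ref{lfi.15}.

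The one simplification you miss is that the paper collapses the Duhamel integral before inserting projections: since $[\db^*,\eth^2]=0$, cyclicity of the supertrace gives
\[
\int_0^1 \STr\big(\db^*\,e^{-(1-\sigma)t\eth^2}\,R\,e^{-\sigma t\eth^2}\big)\,d\sigma=\STr\big(\db^*\,R\,e^{-t\eth^2}\big),
\]
so one needs only a single factor $(\Id-\Pi_0)e^{-t\eth^2}(\Id-\Pi_0)$ and there is no $\sigma$-dependence to track. Your version, keeping the $\sigma$-integral and bounding the two heat factors separately (one of $(1-\sigma)t$, $\sigma t$ is always $\ge t/2$), also works but is a little less clean; in particular you should say a word about absorbing the unbounded factor $\db^*$ into one of the heat kernels before estimating in trace norm.
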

\begin{proof}
Since $(e^{-\widetilde{\bbA}^2_t})_{[1]}$ is trace class,  the first assertion follows as in \cite[Lemma~9.42]{BGV}, using the fact that $\db^*$ is
the formal adjoint of $\db$ and that $\nabla^{\phi_*\cE}$ respects the metric on $\phi_*\cE$.  To see that $\alpha^{+}(t)_{[1]}$ is
rapidly decreasing as $t$ tends to infinity, notice that
$$
    \alpha^{+}(t)_{[1]}= -\STr(\sqrt{2} \db^{*}[\nabla^{\phi_*\cE},\eth] e^{-t\eth^2})=
- \STr(\sqrt{2} \db^*[\nabla^{\phi_*\cE},\eth] (\Id-\Pi_0)e^{-t\eth^2}(\Id-\Pi_0)),
$$
so we can use the decay properties of $p_{i,t}:\cN^{\epsilon}_1\to \cN^{\epsilon}_1$ established in the proof of Lemma~\ref{lfi.15} to
conclude that $\alpha^{\pm}(t)_{[1]}$ is rapidly decreasing as $t$ tends to infinity.  The short-time asymptotics
follow from the corresponding asymptotics of the heat kernel.   Taking the complex conjugate of $\alpha^+(t)_{[1]}$, we get
the corresponding statement for $\alpha^-(t)_{[1]}$.
\end{proof}

This lemma shows that the $1$-forms
$$
     \beta^{\pm}(s)= 2\int_0^{\infty} t^{s} \alpha^{\pm}(t)_{[1]}dt
$$
are well-defined and holomorphic in $s$ for $\Re s \gg 0$.  The short-time asymptotics of $\alpha^{\pm}(t)$ also allow us
to extend $\beta^{\pm}(s)$ to a meromorphic function in $s$ for $s\in\bbC$ with at most simple poles.  In particular,
we can define the finite part at $s=0$ by
$$
      \beta^{\pm}= \left. \frac{d}{ds}\frac{1}{\Gamma(s)} \beta^{\pm}(s) \right|_{s=0}.
$$

\begin{lemma}
As a function on $B$, the differential of $\log T(\db)= \zeta'(\db,0)$ equals
$$
            d\zeta'(\db,0)= \beta^+ - \beta^-.
$$
\label{qc.4}\end{lemma}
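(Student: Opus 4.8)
The plan is to differentiate the heat-trace representation of $\log T(\db)=\zeta'(\db,0)$ along the base and then recognize the resulting $1$-form as exactly the combination producing $\beta^+-\beta^-$. Writing $N(t):={}^b\!\STr(\Q e^{-t\Delta})$, I would first record that $\zeta(\db,s)=\Gamma(s)^{-1}\int_0^\infty t^{s-1}N(t)\,dt$, understood (as in the text) via splitting at $t=1$ and continuing $\zeta_0(\db,\cdot)$ and $\zeta_\infty(\db,\cdot)$ to $s=0$, and that $d_B$ commutes with the $t$-integral, with analytic continuation in $s$, and with $\frac{d}{ds}\big|_{s=0}$ — the uniformity in $t$, locally in $b$, coming from the short-time heat expansions for small $t$ and from the decay estimates underlying Lemma~\ref{lfi.15} for large $t$. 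Thus $d_B\zeta'(\db,0)=\frac{d}{ds}\big|_{s=0}\,\Gamma(s)^{-1}\int_0^\infty t^{s-1}\,d_BN(t)\,dt$, the splitting at $t=1$ understood throughout.

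Next I would prove the pointwise identity $d_BN(t)=2t\bigl(\alpha^+(t)_{[1]}-\alpha^-(t)_{[1]}\bigr)$. Since $(e^{-\widetilde{\bbA}_t^2})_{[0]}=e^{-t\eth^2}=e^{-t\Delta}$, the function $N(t)$ is the degree-zero component of ${}^b\!\STr(\Q e^{-\widetilde{\bbA}_t^2})$. I would apply the $b$-version of the fundamental superconnection identity $d_B\,{}^b\!\STr(A)={}^b\!\STr([\widetilde{\bbA}_t,A])+(\text{anomaly})$, with $[\widetilde{\bbA}_t,\,\cdot\,]$ the supercommutator, valid here as in \cite{MelroseAPS,MP1997}; the anomaly has no component in positive form degree because the boundary family, hence the boundary superconnection, is independent of $b\in B$. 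Using that $\widetilde{\bbA}_t$ supercommutes with $e^{-\widetilde{\bbA}_t^2}$, together with $[\nabla^{\phi_*\cE},\Q]=0$ (the connection preserves the fibrewise Dolbeault grading), the degree-$1$ part of the identity becomes
$$d_BN(t)=\Bigl({}^b\!\STr\bigl([\widetilde{\bbA}_t,\Q]\,e^{-\widetilde{\bbA}_t^2}\bigr)\Bigr)_{[1]}=t^{1/2}\,{}^b\!\STr\bigl([\eth,\Q]\,(e^{-\widetilde{\bbA}_t^2})_{[1]}\bigr).$$

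Now $\Q$ commutes with $\Delta$, while $[\db,\Q]=-\db$ and $[\db^*,\Q]=\db^*$, so $[\eth,\Q]=\sqrt2\,(\db^*-\db)$. Substituting this, recalling that $(e^{-\widetilde{\bbA}_t^2})_{[1]}$ is trace class so the $b$-supertrace may be replaced by the ordinary supertrace, and comparing with $\alpha^{+}(t)_{[1]}=\frac{1}{2t^{1/2}}\STr\!\bigl(\sqrt2\,\db^{*}(e^{-\widetilde{\bbA}_t^2})_{[1]}\bigr)$ and $\alpha^{-}(t)_{[1]}=\frac{1}{2t^{1/2}}\STr\!\bigl(\sqrt2\,\db(e^{-\widetilde{\bbA}_t^2})_{[1]}\bigr)$ yields $d_BN(t)=2t\bigl(\alpha^+(t)_{[1]}-\alpha^-(t)_{[1]}\bigr)$. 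In particular this shows $d_BN(t)$ is rapidly decreasing as $t\to\infty$ (by Lemma~\ref{qc.3}), even though $N(t)$ itself tends to the nonzero constant ${}^b\!\STr(\Q\Pi_0)$, so the Mellin integral at infinity causes no trouble. Feeding the identity into the integral and using $\beta^{\pm}(s)=2\int_0^\infty t^s\alpha^{\pm}(t)_{[1]}\,dt$ gives $\int_0^\infty t^{s-1}d_BN(t)\,dt=\beta^+(s)-\beta^-(s)$, hence $d_B\zeta'(\db,0)=\frac{d}{ds}\big|_{s=0}\Gamma(s)^{-1}\bigl(\beta^+(s)-\beta^-(s)\bigr)=\beta^+-\beta^-$ by the definitions of $\beta^\pm$.

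The main obstacle is the structural identity of the second step, and within it the claim that the boundary/anomaly term in the $b$-trace variation formula has no positive-degree part: this is exactly where the hypothesis that $(\cE,\nabla^{\cE})\big|_{\pa\tN}$, and hence the indicial family and the boundary superconnection $\bbB_t$, be independent of $b\in B$ is indispensable, just as in the proof of \cite[Proposition~11]{MP1997} invoked for Theorem~\ref{lfi.16}. A secondary technical point is justifying the interchange of $d_B$ with the $\zeta$-regularization, which rests on uniform control of the heat trace and its variation: for small $t$ via the short-time expansions of Lemma~\ref{qc.3} and its $N(t)$-analogue, and for large $t$ via the non-Fredholm decay estimates established in the proof of Lemma~\ref{lfi.15}.
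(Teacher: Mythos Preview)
Your proof is correct and reaches the identity $d_BN(t)=2t\bigl(\alpha^+(t)_{[1]}-\alpha^-(t)_{[1]}\bigr)$ that does the work, but it is organized differently from the paper's argument. You go through the superconnection transgression identity $d_B\,{}^b\!\STr(K)={}^b\!\STr\bigl([\widetilde{\bbA}_t,K]\bigr)+(\text{anomaly})$ with $K=\Q e^{-\widetilde{\bbA}_t^2}$, then argue that the anomaly has no degree-$1$ part because the boundary data are constant in $b$. The paper instead proceeds more directly: it differentiates $N(t)$ via Duhamel to obtain a term containing $[\nabla^{\phi_*\cE},\eth^2]$, observes at once that this has vanishing indicial family so the whole expression is trace class (thus the $b$-supertrace becomes the ordinary supertrace with no anomaly discussion needed), and then uses the algebraic identity $\STr(\Q[\nabla,\eth^2]e^{-t\eth^2})=\STr([\Q,\eth][\nabla,\eth]e^{-t\eth^2})$ together with $[\Q,\eth]=\sqrt2(\db-\db^*)$ to recognize $\beta^+-\beta^-$. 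Your route is more structural, packaging the Duhamel and commutator steps inside the superconnection formalism; the paper's route is slightly more elementary, since it sidesteps the $b$-trace defect formula entirely by reducing to the trace-class case before any manipulation. The one place your argument is thin is precisely the anomaly step: what makes it vanish in degree $1$ is exactly that the positive-degree parts of $e^{-\widetilde{\bbA}_t^2}$ are built from $[\nabla^{\phi_*\cE},\eth]$, which has vanishing indicial family---this is the same observation the paper uses, and making it explicit would tighten your argument.
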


\begin{proof}
Using Duhamel's formula,
\begin{equation}
      d\zeta'_{\infty}(\db,s)= \frac{d}{ds} \left(    \frac{1}{\Gamma(s)} \int_1^{\infty}  \left( -t^{s-1} {}^{b}\!\STr\left(\Q \int_0^t e^{-(t-\sigma)\eth^2} [\nabla^{\phi_*\cE},\eth^2] e^{-\sigma \eth^2}d\sigma \right)       \right) dt     \right)
\label{qc.5}\end{equation}
for $\Re s \ll 0$. Since $[\nabla^{\phi_*\cE},\eth^2]$ has vanishing indicial family, the term inside the $b$-supertrace is trace class, so
we can replace the $b$-supertrace with the usual supertrace.  The decay properties of $p_{i,t}$ show that the integrand
is rapidly decreasing (with all derivatives in $B$) as $t$ tends to infinity, so the differential $d\zeta'_{\infty}(\db,s)$ extends to
an entire function of $s$.  In particular,
$$
 d\zeta_{\infty}'(\db,0)= - \left. \frac{d}{ds} \left(  \frac{1}{\Gamma(s)} \int_1^{\infty} t^{s}\STr (\Q [\nabla^{\phi_*\cE},\eth^2] e^{-t\eth^2}) dt \right)  \right|_{s=0}.
$$
Similarly, for $\Re s \gg 0$, we have that
$$
d\zeta_{0}'(\db,s)= -  \frac{d}{ds} \left(  \frac{1}{\Gamma(s)} \int_0^{1} t^{s}\STr(\Q [\nabla^{\phi_*\cE},\eth^2] e^{-t\eth^2}) dt \right).
$$
By the short-time asymptotics of the heat kernel, we extend this differential meromorphically in $s\in\bbC$ with only simple poles;
the result is regular at $s=0$.
This shows that
\begin{equation}
  d\zeta'(\db,0)= d\zeta_0'(\db,0) + d\zeta_{\infty}'(\db,0)= - \left. \frac{d}{ds} \left(  \frac{1}{\Gamma(s)} \int_0^{\infty} t^{s}\STr(\Q [\nabla^{\phi_*\cE},\eth^2] e^{-t\eth^2}) dt \right)  \right|_{s=0}.
\label{qc.5a}\end{equation}
Now, using the relation
$$
[\nabla^{\phi_*\cE},\eth^2]= [\nabla^{\phi_*\cE},\eth]\eth + \eth[\nabla^{\phi_*\cE},\eth],
$$
we see that
\begin{equation}
\begin{aligned}
\STr( \Q [\nabla^{\phi_*\cE},\eth^2] e^{-t\eth^2}) &=  \STr( \Q [\nabla^{\phi_*\cE},\eth]\eth e^{-t\eth^2}) + \STr( \Q \eth [\nabla^{\phi_*\cE},\eth] e^{-t\eth^2}) \\
   &= \STr( \Q [\nabla^{\phi_*\cE},\eth] e^{-t\eth^2} \eth) + \STr( \Q \eth [\nabla^{\phi_*\cE},\eth] e^{-t\eth^2}) \\
    & = \STr( [\Q,\eth] [\nabla^{\phi_*\cE},\eth] e^{-t\eth^2}),
 \end{aligned}
\label{qc.5b}\end{equation}
where in the last step, we use that
$$
    0= \STr( \left[(\Q[\nabla^{\phi_*\cE},\eth] e^{-t\eth^2}), \eth\right]) = \STr(\Q[\nabla^{\phi_*\cE},\eth] e^{-t\eth^2} \eth) + \STr(\eth\Q[\nabla^{\phi_*\cE},\eth] e^{-t\eth^2}).
$$
Therefore, since $[\Q,\eth]= \sqrt{2}(\db -\db^*)$, \eqref{qc.5a} and \eqref{qc.5b} show that
 \begin{equation}
\begin{aligned}
d\zeta'(\db,0) &= - \left. \frac{d}{ds} \left(  \frac{1}{\Gamma(s)} \int_0^{\infty} t^{s}\STr( \sqrt{2}(\db -\db^*)[\nabla^{\phi_*\cE},\eth] e^{-t\eth^2}) dt \right)  \right|_{s=0} \\
      &=    \beta^+ - \beta^-,
\end{aligned}
\label{qc.6}\end{equation}
which gives the claimed result.
\end{proof}

The \textbf{Quillen connection} on $\det(\eth^+)$ is given by
$$
       \nabla^{Q}= \nabla^{\det(\eth^+)}+ \beta^+.
$$
As the terminology suggests, $\nabla^Q$ is compatible with the Quillen metric.
\begin{proposition}
The Quillen connection is the Chern connection of $\det(\eth^+)$ with respect to the Quillen metric.
\label{qc.7}\end{proposition}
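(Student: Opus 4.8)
The plan is to verify that $\nabla^Q$ is the holomorphic Hermitian connection determined by the Quillen metric, i.e.\ that it is unitary for $\|\cdot\|_Q$ and that $(\nabla^Q)^{0,1}=\db_{\det(\eth^+)}$; by uniqueness of the Chern connection this identifies it. The starting observation is that $\nabla^{\det(\eth^+)}$ is already the Chern connection of $(\det(\eth^+),\|\cdot\|_{L^2})$: it is unitary for $\|\cdot\|_{L^2}$ because $\nabla^{L^2}=\Pi_0\bbA_{[1]}\Pi_0=\Pi_0\nabla^{\phi_*\cE}\Pi_0$ is the compression to the subbundle $\ker_{L^2}\eth\subset\phi_*\cE$ of the metric connection $\nabla^{\phi_*\cE}$, hence a $\bbZ_2$-graded Hermitian connection, and its $(0,1)$-component is $\db_{\det(\eth^+)}$ by the Hodge-theoretic identification of $\ker_{L^2}\eth$ with the fibrewise cohomology (as in \cite[Ch.~9]{BGV} and \cite{BGSIII}). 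Since $\|\cdot\|^2_Q=T(\db)\,\|\cdot\|^2_{L^2}$, the Chern connection of $(\det(\eth^+),\|\cdot\|_Q)$ is then $\nabla^{\det(\eth^+)}+\pa_B\log T(\db)$, so the proposition is equivalent to the single identity
\[
\beta^+=\pa_B\log T(\db).
\]

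This identity follows from Lemma~\ref{qc.3}, Lemma~\ref{qc.4} and a bidegree count. Lemma~\ref{qc.4} gives $d\log T(\db)=d\zeta'(\db,0)=\beta^+-\beta^-$, and Lemma~\ref{qc.3} gives $\overline{\beta^+}=-\beta^-$; together these read $d\log T(\db)=\beta^++\overline{\beta^+}$, consistent with $\log T(\db)$ being real. If one knows moreover that the $1$-form $\beta^+$ is of type $(1,0)$ on $B$, then $\overline{\beta^+}=-\beta^-$ is of type $(0,1)$, and splitting $d=\pa_B+\db_B$ in $\beta^+-\beta^-=d\log T(\db)$ yields $\beta^+=\pa_B\log T(\db)$ and $\beta^-=-\db_B\log T(\db)$.

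The main obstacle is therefore to establish that $\beta^+$ is of type $(1,0)$. This is a bookkeeping argument in the rescaled truncated Bismut superconnection $\widetilde{\bbA}_t=t^{\frac12}\eth+\nabla^{\phi_*\cE}$ over the K\"ahler fibration $\phi\colon N\to B$: in the degree-one term \eqref{qc.2} only the single commutator $[\nabla^{\phi_*\cE},\eth]$ appears, and writing $\nabla^{\phi_*\cE}=(\nabla^{\phi_*\cE})^{1,0}+(\nabla^{\phi_*\cE})^{0,1}$ and $\eth=\sqrt{2}(\db+\db^*)$, the contribution of the $(0,1)$-part of this commutator, once contracted against $\sqrt{2}\,\db^*$ inside the supertrace defining $\alpha^+(t)_{[1]}$, vanishes for bidegree reasons; hence $\alpha^+(t)_{[1]}$, and so $\beta^+$, carries only a $(1,0)$-component. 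This is precisely the computation of \cite[Ch.~9]{BGV} and \cite{BGSII} in the compact case, and it transfers here because, under our standing hypotheses, all the heat-kernel and supertrace manipulations take place in the residual classes of operators with vanishing indicial family, exactly as in the proofs of Lemmas~\ref{qc.3} and~\ref{qc.4}.

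Once $\beta^+=\pa_B\log T(\db)$ is in hand, $\nabla^Q=\nabla^{\det(\eth^+)}+\beta^+$ is the Chern connection of $(\det(\eth^+),\|\cdot\|_Q)$ by the first paragraph. As a direct check of unitarity, for a local frame $s$ one has $d\|s\|^2_Q=(d\log T(\db))\|s\|^2_Q+\langle\nabla^{\det(\eth^+)}s,s\rangle_Q+\langle s,\nabla^{\det(\eth^+)}s\rangle_Q$ since $\nabla^{\det(\eth^+)}$ is $L^2$-unitary, and subtracting $\langle\nabla^Qs,s\rangle_Q+\langle s,\nabla^Qs\rangle_Q$ leaves $(d\log T(\db)-\beta^+-\overline{\beta^+})\|s\|^2_Q$, which vanishes by Lemmas~\ref{qc.3} and~\ref{qc.4}.
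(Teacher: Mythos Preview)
Your proof is correct and follows essentially the same route as the paper: both reduce to showing that $\nabla^Q$ is unitary for $\|\cdot\|_Q$ (via Lemmas~\ref{qc.3} and~\ref{qc.4}) and that $\beta^+$ is of type $(1,0)$ on $B$. Your organization is slightly different---you aim directly at the identity $\beta^+=\pa_B\log T(\db)$, whereas the paper checks unitarity and holomorphicity separately---but the logical content is the same. For the $(1,0)$-type of $\beta^+$, the paper simply cites \cite[Theorem~1.14]{BGSII} for the fact that $[\nabla^{\phi_*\cE},\db]$ is an operator-valued $(1,0)$-form; your fibre-bidegree argument (that the $[\nabla^{\phi_*\cE},\db^*]$ contribution to $\alpha^+(t)_{[1]}$ vanishes because $\db^*\cdot e^{-(1-\sigma)t\eth^2}\cdot[\nabla^{\phi_*\cE},\db^*]\cdot e^{-\sigma t\eth^2}$ shifts the fibre $q$-degree by $-2$) is the complementary half of the same observation and makes explicit why only the $(1,0)$ piece survives.
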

\begin{proof}
We know that $\nabla^{\det(\eth^+)}$ is the Chern connection of $\det(\eth^+)$  with respect to  the $L^2$-metric.
Since $\|\cdot\|_Q= e^{\frac{\zeta'(\db,0)}2}\|\cdot\|_{L^2}$, we see that $\nabla^Q$ is compatible with the Quillen metric provided
$$
    \beta^+= \frac{d\zeta'(\db,0)}2 +\omega
$$
with $\omega$ an imaginary $1$-form.  But by Lemma~\ref{qc.4},  $\omega= \frac{\beta^+ + \beta^-}2$, and by Lemma~\ref{qc.3},
this is imaginary.  To see that $\nabla^Q$ is the Chern connection of the Quillen metric, we note that $\beta^+$ is a $(1,0)$
form on $B$, which follows from the fact (see \cite[Theorem~1.14]{BGSII}) that $[\nabla^{\phi_*\cE},\db]$ is an operator valued
$(1,0)$-form.
\end{proof}

\begin{theorem}
The curvature of the Quillen connection equals
$$
    (\nabla^Q)^2= \left[ \frac{1}{(2\pi i)^{\frac{m}{2}}} \int_{N/B} \Td(T^{1,0}(N/B),g_b)\Ch(E)  \right]_{[2]},  \quad \mbox{where}  \; m=\dim_{\bbR}M.
$$
\label{qc.8}\end{theorem}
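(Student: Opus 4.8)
The plan is to combine the index-theoretic curvature formula \eqref{qc.1b} for $\nabla^{\det(\eth^+)}$ with an anomaly computation showing that the passage to the Quillen connection precisely removes the transgression term. Since $\det(\eth^+)$ is a line bundle and $\nabla^Q = \nabla^{\det(\eth^+)} + \beta^+$, we have $(\nabla^Q)^2 = (\nabla^{\det(\eth^+)})^2 + d_B\beta^+$, so \eqref{qc.1b} gives
\[
(\nabla^Q)^2 = \left[\frac{1}{(2\pi i)^{\frac{m}{2}}}\int_{N/B}\hA(N/B;g_b)\,\Ch'(\cE) - d_B\gamma\right]_{[2]} + d_B\beta^+ .
\]
Thus it suffices to prove two things: \textbf{(a)} $\int_{N/B}\hA(N/B;g_b)\,\Ch'(\cE) = \int_{N/B}\Td(T^{1,0}(N/B),g_b)\,\Ch(E)$ as differential forms on $B$, and \textbf{(b)} the degree-$2$ part of $d_B\gamma$ equals $d_B\beta^+$.

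For \textbf{(a)}, this is the passage from the Atiyah--Singer integrand to the Riemann--Roch integrand at the level of Chern--Weil forms. Because $g_b$ induces a K\"ahler fibration on $\phi\colon N\to B$ in the sense of \cite{BGSII}, the fibrewise Levi-Civita connection of $g_b$ preserves the splitting ${}^bT_{\bbC}(N/B) = {}^bT^{1,0}(N/B)\oplus{}^bT^{0,1}(N/B)$ and restricts on ${}^bT^{1,0}(N/B)$ to the fibrewise Chern connection. For the Dolbeault Clifford module $\cE = \Omega^{0,*}(N/B)\otimes E$ the relative Chern character $\Ch'(\cE)$ is computed from the twisting curvature, and the elementary identity $\hA(N/B;g_b)\,\Ch'(\cE) = \Td(T^{1,0}(N/B),g_b)\,\Ch(E)$ (see \cite[Ch.~4]{BGV}) then holds at the form level, not merely in cohomology.

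For \textbf{(b)}, both sides are $\db_B\pa_B$-exact in the $b$-analytic torsion $\zeta'(\db,0)$. By Proposition~\ref{qc.7}, $\nabla^Q$ is the Chern connection of the Quillen metric, so $(\nabla^Q)^2$ is of type $(1,1)$ and only the $(1,1)$-components of the correction forms matter. As $\beta^+$ is a $(1,0)$-form (proof of Proposition~\ref{qc.7}) and $\overline{\beta^+} = -\beta^-$ by Lemma~\ref{qc.3}, the formula $d_B\zeta'(\db,0)=\beta^+-\beta^-$ of Lemma~\ref{qc.4} forces $\beta^+ = \pa_B\zeta'(\db,0)$, hence $d_B\beta^+$ contributes $\db_B\pa_B\zeta'(\db,0)$. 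On the other side, a Duhamel computation for $\gamma = \int_0^{\infty}{}^{b}\!\STr\!\left(\frac{d\bbA_t}{dt}e^{-\bbA_t^2}\right)dt$, carried out as in \cite{BGSIII} using the identities $[\nabla^{\phi_*\cE},\eth^2] = [\nabla^{\phi_*\cE},\eth]\eth + \eth[\nabla^{\phi_*\cE},\eth]$ and $[\Q,\eth] = \sqrt{2}(\db-\db^*)$ already exploited in Lemma~\ref{qc.4}, expresses the degree-$2$ part of $d_B\gamma$ as the same $(1,1)$-form, the $\sqrt{2}$-normalization in $\eth = \sqrt{2}(\db+\db^*)$ and the definition of $T(\db)$ being arranged so that the constants match. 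Crucially, since the restriction of the family $\eth$ to $\pa\tN$ is independent of $b$, both $[\nabla^{\phi_*\cE},\eth]$ and $[\nabla^{\phi_*\cE},\eth^2]$ have vanishing indicial family, so the heat forms in question are trace class, the $b$-supertrace may be replaced by the ordinary supertrace with no boundary anomaly in positive form degree, and the large-time integrability of $\gamma$ and of $\beta^{\pm}(s)$ is controlled by the decay of $p_{i,t}\colon\cN^{\epsilon}_1\to\cN^{\epsilon}_1$ established in the proof of Lemma~\ref{lfi.15}. Comparing the two $(1,1)$-forms yields \textbf{(b)}, and combining with \textbf{(a)} gives the asserted formula.

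The main obstacle is \textbf{(b)}: it is the $b$-calculus counterpart of the Bismut--Gillet--Soul\'e cancellation between the transgression of the families index theorem and the variation of holomorphic analytic torsion. The delicate point is verifying that the non-Fredholm nature of the family --- which forces the use of the $b$-regularized torsion and the $b$-supertrace throughout, following \cite{MelroseAPS} --- introduces no extra boundary term in positive degree; this is exactly where the constancy of the indicial family in $b\in B$ enters, as in the removal of the eta form from Theorem~\ref{lfi.16}.
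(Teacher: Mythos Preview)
Your overall strategy matches the paper's: write $(\nabla^Q)^2 = (\nabla^{\det(\eth^+)})^2 + d_B\beta^+$, invoke \eqref{qc.1b}, cancel the transgression against $d_B\beta^+$, and then convert $\hA\,\Ch'(\cE)$ into $\Td\,\Ch(E)$ via the K\"ahler fibration. Part \textbf{(a)} is exactly what the paper does at the end.

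For part \textbf{(b)}, however, you take a detour that is both more laborious and not fully carried out. You argue that $d_B\beta^+ = \db_B\pa_B\zeta'(\db,0)$ and then assert that ``a Duhamel computation \ldots\ as in \cite{BGSIII}'' shows $(d_B\gamma)_{[2]}$ equals the same thing. The paper avoids this second computation entirely by observing that $\gamma_{[1]}$ and $\tfrac12(\beta^+ + \beta^-)$ are \emph{equal as $1$-forms}, directly from the definitions: since $\frac{d\bbA_t}{dt}$ has degree-zero part $\tfrac{1}{2t^{1/2}}\eth = \tfrac{1}{2t^{1/2}}\sqrt{2}(\db+\db^*)$ and no degree-one part, and since $(e^{-\widetilde{\bbA}_t^2})_{[1]} = (e^{-\bbA_t^2})_{[1]}$, one reads off
\[
\STr\!\Big(\tfrac{d\bbA_t}{dt}e^{-\bbA_t^2}\Big)_{[1]} = \alpha^+(t)_{[1]} + \alpha^-(t)_{[1]}.
\]
Integrability (Lemma~\ref{qc.3}) then makes the zeta regularization in $\beta^\pm$ superfluous, giving $\gamma_{[1]} = \tfrac12(\beta^+ + \beta^-)$. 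Since $\beta^+ - \beta^- = d_B\zeta'(\db,0)$ is exact, $d_B\beta^+ = \tfrac12 d_B(\beta^+ + \beta^-) = d_B\gamma_{[1]} = (d_B\gamma)_{[2]}$, and the cancellation is immediate. Your type-decomposition argument showing $\beta^+ = \pa_B\zeta'(\db,0)$ is correct but unnecessary; no separate computation of $d_B\gamma$ is needed at all.
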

\begin{proof}
The curvature of $\nabla^{\det(\eth^+)}$ is given by \eqref{qc.1b}.  On the other hand, since $\beta^+ - \beta^-=  d\zeta'(\db,0)$ is a
closed form, we have
\begin{equation}
    (\nabla^Q)^2= (\nabla^{\det(\eth^+)})^2 + d\beta^+ = (\nabla^{\det(\eth^+)})^2 + \frac{d(\beta^+ +\beta^-)}2.
\label{qc.9}\end{equation}
Since $\widetilde{\bbA}$ equals $\bbA$ up to terms of order $2$, we see that
\begin{equation}
\begin{aligned}
\frac12(\beta^+ +\beta^-)&=   \left. \frac{d}{ds} \left( \frac{1}{\Gamma(s)} \int_0^{\infty} t^{s}(\alpha^+(t)_{[1]} +\alpha^-(t)_{[1]}) dt \right)  \right|_{s=0} \\
  &=  \left. \frac{d}{ds} \left( \frac{1}{\Gamma(s)} \int_0^{\infty} t^{s} \STr\left( \frac{d\bbA_t}{dt} e^{-\bbA^2_t}\right)_{[1]} dt \right)  \right|_{s=0} \\
  &=  \int_0^{\infty}  \STr\left( \frac{d\bbA_t}{dt} e^{-\bbA^2_t}\right)_{[1]} dt,
\end{aligned}
\label{qc.10}\end{equation}
where in the last step we have used that $\STr\left( \frac{d\bbA_t}{dt} e^{-\bbA^2_t}\right)_{[1]}$ is integrable in $t$.  Thus, by
combining \eqref{qc.1b}, \eqref{qc.9} and \eqref{qc.10}, we see that
\begin{equation}
    (\nabla^Q)^2 = \left[ \frac{1}{(2\pi i)^{\frac{m}2}} \int_{N/B} \hA(N/B;g_b)\Ch'(\cE)   \right]_{[2]}.
\label{qc.11}\end{equation}
Taking advantage of the fact that $\phi: N\to B$ is a Kähler fibration, this integral can be rewritten in terms of the Todd form of
$T^{1,0}(N/B)$ and the Chern character form of the Hermitian bundle $E$.
\end{proof}

\begin{remark}
Instead of using the $\db$-torsion as in \cite{BGSIII} to define the Quillen metric, we could as well have used the
approach of Bismut-Freed \cite{BFI} and defined the metric as
$$
      \| \cdot \|_{BF} := \det(\eth^-\eth^+)^{-\frac12}\|\cdot \|= e^{-\frac12 \zeta'(\eth^-\eth^+,0)}\|\cdot \|_{L^2}
$$
with a compatible connection $\nabla^{BF}$ whose curvature is given by \eqref{qc.11}.  The advantage of the Bismut-Freed
approach is that it works in non-holomorphic settings, but its disadvantage is that in holomorphic settings, the
connection $\nabla^{BF}$ is typically not the Chern connection of the metric $\|\cdot \|_{BF}$.
\end{remark}

\section{The Weil-Petersson metric on the moduli space of asymptotically cylindrical
Calabi-Yau manifolds}  \label{wp.0}
Let $(Z ,g_b)$ be a compactifiable asymptotically cylindrical Calabi-Yau manifold with compactification $\bZ$ such that
$Z=\bZ\setminus \bD$ for some divisor $\bD$ in $\bZ$ as in Definition~\ref{tt.1}.  Assume that $H^{1}(\bZ;\bbR)=0$.  This is the case for instance if $H^1(Z;\bbR)=0$.   Let $L\to \bZ$
be an ample line bundle over $\bZ$ with induced Kähler class $\bell \in H^{1,1}(\bZ)\subset H^2(\bZ;\bbR)$ and denote by
$\ell\in H^2(Z;\bbR)$  the restriction of $\bell$ to $Z$.  By Theorem~\ref{tt.15}, there is a well-defined relative moduli space
in a neighborhood of $Z$,
\begin{equation}
\xymatrix{
              Z \ar[r]^i & \mathfrak{X} \ar[d]^{\phi} \\
                    & \cM_{\rel},
}
\label{wp.1}\end{equation}
with $\phi^{-1}(m_0)= Z$ and
$$
T_{m_0}\cM_{\rel}\cong L^2\cH^{0,1}(Z;{}^{b}\!T^{1,0}Z)\cong \Im\left( H^1(\bZ;T_{\bZ}(\log(\bD))(-\bD)) \to H^1(\bZ;T_{\bZ}(\log(\bD))     \right).
$$
As the complex structure is deformed, the line bundle $K_{\bZ}(\bD)= K_{\bZ}\otimes L_{\bD}$ automatically remains topologically trivial.
Thus, since $H^{0,1}(\bZ) = 0$, we conclude that $K_{\bZ}(\bD)$ also remains \textbf{holomorphically} trivial.  This means that a
global non-zero meromorphic volume form with a simple pole at $D$ continues to exist as we deform the complex structure.
Consequently, we can apply the construction of Haskins-Hein-Nordström and obtain the existence of Calabi-Yau metrics on
the deformed spaces. The following result allows us to make a canonical choice.
\begin{lemma}
The class $\bell$ on $Z$ remains a Kähler class as the complex structure is deformed in $\cM_{\rel}$.
\label{wp.2}\end{lemma}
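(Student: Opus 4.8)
The plan is to carry out the argument on the compactifications. Applied fibrewise to the family~\eqref{wp.1}, the construction in the last paragraph of the proof of Theorem~\ref{tt.13c} yields, after fibrewise diffeomorphisms, a smooth family $\{\bZ_m\}_{m\in\cM_{\rel}}$ of compact complex orbifolds with $\bZ_{m_0}=\bZ$, each containing $\bD$ as an orbifold divisor, and with $Z_m=\bZ_m\setminus\bD$ the deformed complex manifold. Because the deformation is \emph{relative}, i.e.\ $\phi_{\pa}(t)\equiv 0$, the orbifold $\bD$ together with its orbifold normal bundle is the \emph{same} in every fibre; in particular each pair $(\bZ_m,\bD)$ again satisfies hypotheses (i)--(ii) of~\S\ref{ht.0}, and $\bZ_m$, being a small deformation of the Kähler orbifold $\bZ$, is a compact Kähler orbifold for $m$ near $m_0$. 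By the discussion preceding the lemma together with Corollary~\ref{accy.5}, each $Z_m$ then carries a polyhomogeneous asymptotically cylindrical Calabi--Yau $b$-metric. It therefore suffices to prove that $\bell$ remains a Kähler class on $\bZ_m$ for $m$ near $m_0$, since restricting a Kähler form from $\bZ_m$ to $Z_m$ exhibits $\ell=\bell|_{Z_m}$ as a Kähler class on $Z_m$.

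The first step is to show that $\bell$ stays of type $(1,1)$ on $\bZ_m$. Write $\bell^{0,2}_m$ for the image of $\bell$ under the Hodge projection $H^2(\bZ_m;\bbC)\to H^{0,2}(\bZ_m)\cong H^2(\bZ_m;\mathcal{O}_{\bZ_m})$, and let $j\colon\bD\hookrightarrow\bZ_m$ denote the inclusion. Since $j$ is holomorphic, $j^*\colon H^2(\bZ_m;\bbC)\to H^2(\bD;\bbC)$ is a morphism of Hodge structures, so the $(0,2)$-component of $j^*\bell=\bell|_{\bD}$ equals $j^*\bell^{0,2}_m$; on the other hand $\bell|_{\bD}=c_1(L|_{\bD})$ is the first Chern class of the holomorphic line bundle $L|_{\bD}$ on the (fixed) orbifold $\bD$, hence is of type $(1,1)$, so $j^*\bell^{0,2}_m=(\bell|_{\bD})^{0,2}=0$. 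Now the long exact cohomology sequence of $0\to\mathcal{O}_{\bZ_m}(-\bD)\to\mathcal{O}_{\bZ_m}\to j_*\mathcal{O}_{\bD}\to 0$ identifies the kernel of $j^*\colon H^2(\bZ_m;\mathcal{O}_{\bZ_m})\to H^2(\bD;\mathcal{O}_{\bD})$ with $\Im\{H^2(\bZ_m;\mathcal{O}_{\bZ_m}(-\bD))\to H^2(\bZ_m;\mathcal{O}_{\bZ_m})\}$, which is $\{0\}$ by Corollary~\ref{ht.5a} applied, with $p=2$ and trivial coefficients, to the asymptotically cylindrical Calabi--Yau metric on $Z_m$. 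Hence $j^*$ is injective on $H^2(\bZ_m;\mathcal{O}_{\bZ_m})$, so $\bell^{0,2}_m=0$ and $\bell\in H^{1,1}(\bZ_m;\bbR)$.

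The second step is the openness of the Kähler cone along the family: since $\bell$ is a Kähler class on $\bZ_{m_0}=\bZ$ and remains of type $(1,1)$ on $\bZ_m$ for $m$ near $m_0$ by the first step, the classical stability argument of Kodaira--Spencer shows that $\bell$ is a Kähler class on $\bZ_m$ for $m$ near $m_0$ --- concretely, the $g_m$-harmonic representative of $\bell$, for a smooth family of Kähler metrics $g_m$ on $\bZ_m$ with $g_{m_0}$ representing $\bell$, is a real $(1,1)$-form varying continuously with $m$ and positive at $m_0$, hence positive for $m$ near $m_0$.

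I expect the first step to be the main obstacle and the real content of the lemma: it is precisely there that one needs the deformation to be relative, both to keep $(\bZ_m,\bD)$ within the framework of~\S\ref{ht.0} (so that a Calabi--Yau metric and Corollary~\ref{ht.5a} are available on each $Z_m$) and to make $j^*$ a morphism of Hodge structures with $\bell|_{\bD}$ forced to be of type $(1,1)$, together with the Ricci-flatness of the deformed metrics, which is what feeds Corollary~\ref{ht.5a}. Some care is also needed to check that the compactifying family $\{\bZ_m\}$ is genuinely produced by the construction in the proof of Theorem~\ref{tt.13c} in an $m$-uniform way, so that the identifications above are canonical.
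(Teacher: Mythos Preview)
Your proof is correct and the structure parallels the paper's, but the actual mechanism is different.  The paper works \emph{infinitesimally} at the basepoint: for each tangent direction $\xi\in L^2\cH^{0,1}(Z;T_{\bZ}(\log\bD))$ it forms the Kodaira--Spencer variation $\iota_\xi\bell\in H^{0,2}(\bZ)$, shows that its restriction to $Z$ lands in $L^2\cH^{0,2}(Z)=\{0\}$ (Corollary~\ref{ht.5a}), and then argues via the topological long exact sequence of the pair $(\bZ,\bZ\setminus\cN_{\bD})$ that the kernel of $H^2(\bZ;\bbC)\to H^2(Z;\bbC)$ is either zero or generated by a $(1,1)$-current, so a $(0,2)$-class in that kernel must vanish.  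Your argument is instead \emph{pointwise}: you show directly that $\bell_m^{0,2}=0$ in $H^{0,2}(\bZ_m)$ by restricting to the \emph{divisor} $\bD$ rather than to the complement $Z$, using that $j^*$ respects Hodge type and that $\bell|_{\bD}=c_1(L|_{\bD})$ is forced to be $(1,1)$ since $\bD$ and $L|_{\bD}$ are fixed by relativity; the injectivity of $j^*$ on $H^2(\bZ_m;\mathcal{O})$ then follows from the ideal-sheaf sequence and Corollary~\ref{ht.5a} applied at $m$.

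Both arguments rest on Corollary~\ref{ht.5a}.  Your route avoids the cup-product computation and the somewhat delicate type argument for $\ker(H^2(\bZ;\bbC)\to H^2(Z;\bbC))$, at the cost of an auxiliary bootstrap: you must first equip each $Z_m$ with \emph{some} asymptotically cylindrical Calabi--Yau metric (via Kodaira--Spencer stability of K\"ahlerness for $\bZ_m$, then Theorem~\ref{un.3} with an arbitrary K\"ahler class, noting that $K_{\bZ_m}(\bD)$ stays holomorphically trivial since $H^{0,1}(\bZ_m)=0$) before Corollary~\ref{ht.5a} is available at $m$.  The paper's infinitesimal argument, as written, only establishes vanishing of the first derivative at $m_0$ and tacitly relies on repeating the argument along $\cM_{\rel}$---which requires exactly the same bootstrap---so in the end the two approaches have the same logical dependencies.
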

\begin{proof}
Any class $\xi\in \Im\left( H^1(\bZ;T_{\bZ}(\log(\bD)(-\bD))) \to H^1(\bZ;T_{\bZ}(\log\bD))\right)$ can be paired with
$\bell$ to obtain an element in
$$
    \Im (H^2(\bZ; T^*_{\bZ}\otimes T_{\bZ}(\log\bD)(-\bD))\to H^2(\bZ; T^*_{\bZ}\otimes T_{\bZ}(\log\bD))).
$$
Since $T_{\bZ}(\log\bD)$ is naturally a subsheaf of $T_{\bZ}$, there is a canonical map
$H^2(\bZ; T^*_{\bZ}\otimes T_{\bZ}(\log\bD))\to H^2(\bZ; T^*_{\bZ}\otimes T_{\bZ})$.  Composing with the map $H^2(\bZ; T^*_{\bZ}\otimes
T_{\bZ})\to H^{0,2}(Z)$ induced by the trace $T^*_{\bZ}\otimes T_{\bZ}\to\mathcal{O}_{\bZ}$, we obtain from $\xi$ and $\bell$ a class
$$
    \iota_{\xi} \bell \in H^{0,2}(\bZ).
$$
This encodes how $\bell$ changes when the complex structure is varied in the direction $\xi$.  Restricting to $Z$, again using
the sheaf map $T_{\bZ}(\log\bD)\to T_{\bZ}$, we see that the restriction of $\iota_{\xi}\bell$ to $Z$ comes from an element of
$$
       \Im(H^2(\bZ; \mathcal{O}_{\bZ}(-\bD))\to H^2(\bZ; \mathcal{O}_{\bZ}))\cong  L^2\cH^{0,2}(Z).
$$
But by Corollary~\ref{ht.5a}, this latter space is trivial so $\left. \iota_{\xi}\bell \right|_{Z}=0$.

On the other hand, consider the long exact sequence associated to the pair $(\bZ, \bZ\setminus \cN_{\bD})$, where $\cN_{\bD}$ is a tubular
neighborhood of $\bD$ in $\bZ$, which is of course diffeomorphic to the normal bundle of $\bD$. Depending on whether or not $H^1(Z;\bbR)$ is trivial, we deduce  that either
\begin{equation}
\ker( H^2(\bZ;\bbC)\to H^2(Z;\bbC))=0 \quad \mbox{or that} \quad \ker( H^2(\bZ;\bbC)\to H^2(Z;\bbC))\cong H^2_c(\cN_{\bD};\bbC)\cong H^0(D;\bbC)\cong \bbC.
\label{wp.3}\end{equation}
In the first case, we have automatically that $\iota_{\xi}\bell=0$. In the second case,  $H^2_c(\cN_{\bD};\bbC)$ is generated by a $(1,1)$-current supported on $\bD$. Since $\iota_{\xi}\bell$ is of type $(0,2)$ and
lies in this space, we conclude that $\iota_{\xi}\bell=0$.  Since the class $\xi$ was arbitrary, $\bell$ must remain
unchanged as the complex structure is deformed in $\cM_{\rel}$.
\end{proof}

By this lemma, we can use the class $\bell$ and Theorem~\ref{un.3} with $\lambda=1$ to define for each $m\in\cM_{\rel}$ a
unique asymptotically cylindrical Calabi-Yau metric $g_m$ on $Z_m=\phi^{-1}(m)$ with Kähler form $\omega_m$ belonging to the class
$\ell=\left. \bell\right|_{Z}\in H^2(Z;\bbR)$. This family of Calabi-Yau metrics gives $\phi$ the structure of a Kähler fibration in the
sense of \cite[Definition~1.4]{BGSII}.  Indeed, let $h_m$ be the Hermitian metric on the polarization $L$ over the fibre $\phi^{-1}(m)$
with first Chern form on $\phi^{-1}(m)$ equal to the Kähler form $\omega_m$. Let $\omega$ be the corresponding first Chern form
of the line bundle $L\to \mathfrak{X}$ with Hermitian metric $h_m$.  Clearly, $\omega$ is a closed $(1,1)$-form which restricts
to $\omega_m$ on $\phi^{-1}(m)$ for each $m$.  Define $T_{H}\mathfrak{X}$ as the orthogonal complement of the vertical tangent
bundle $T(\mathfrak{X}/\cM_{\rel})$ with respect to the symmetric $2$-form $g= \omega(J\cdot,\cdot)$, where $J$ is the complex
structure on $\mathfrak{X}$.  Notice that $g$ is not necessarily positive-definite on $\mathfrak{X}$, but it is when restricted to the
fibres of $\phi$, so $T_{H}\mathfrak{X}$ is a well-defined vector bundle inducing the decomposition $T\mathfrak{X}=
T_H\mathfrak{X}\oplus T(\mathfrak{X}/\cM_{\rel})$.  Since the action of $J$ preserves $g$ and  $T\mathfrak{X}/\cM_{\rel}$, it
preserves $T_H\mathfrak{X}$.  This means that the triple $(\phi, g_m, T_H\mathfrak{X})$ is a Kähler fibration.

More importantly, the family of Calabi-Yau metrics $g_m$ induces a metric on the moduli space $\cM_{\rel}$.
\begin{definition}
The \textbf{Weil-Petersson metric} on $\cM_{\rel}$ is defined by
$$
        g_{\WP}(u,v) = \int_{Z_m}\langle u, v\rangle_{g_m} d\mu(g_m) \quad \mbox{for} \; \; u,v\in L^2\cH^{0,1}(Z_m,g_m; {}^{b}\!T^{1,0}\widetilde{Z}_m)\cong T_m\cM_{\rel}.
$$
\label{wp.4}\end{definition}

For compact Calabi-Yau manifolds, the volume is a natural quantity which appears in many computations related to this
Weil-Petersson metric.  Although the volume of an asymptotically cylindrical Calabi-Yau manifold is infinite,
there is a notion of renormalized volume which is an adequate replacement. This is defined by
\begin{equation}
      {}^{R}\!\Vol(Z_m, g_m,\rho) := \FP_{s=0} \int_{Z_m} \rho^{s} d\mu(g_m),
\label{wp.5}\end{equation}
where $\rho\in \CI(Z)$ is a choice of boundary defining function for $Z$ and $\FP_{s=0} f(s)$ denotes the finite part at $s=0$ of the meromorphic
function $f$.  This uses the fact that $s\mapsto \int_{Z_m}\rho^s d\mu(g_m)$ is meromorphic in $s$ with at most a simple pole at $s=0$.
This definition depends on the choice of $\rho$.  However, replacing $\rho$ by $c\rho$ for some positive constant $c$, a simple
computation shows that
$$
       {}^{R}\!\Vol(Z_m, g_m,c\rho)=  {}^{R}\!\Vol(Z_m, g_m,\rho) + \Vol(\pa \widetilde{Z}_m; g_m)\log c.
$$
Thus, choosing $\rho$ appropriately, we can assume that
\begin{equation}
   {}^{R}\!\Vol(Z_{m_0}, g_{m_0},\rho)=1, \quad \mbox{where} \; m_0\in \cM_{\rel} \; \mbox{is such that} \;   Z=\phi^{-1}(m_0).
\label{wp.6}\end{equation}
The good news is that by doing so, nearby Calabi-Yau manifolds in this family also have renormalized volume $1$ for this
same choice of $\rho$.
\begin{lemma}
Suppose that $\rho\in \CI(\widetilde{Z})$ is a boundary defining function such that \eqref{wp.6} holds.  Then
$$
          {}^{R}\!\Vol(Z_m,g_m,\rho)=1 \quad \forall m\in \cM_{\rel}.
$$
\label{wp.7}\end{lemma}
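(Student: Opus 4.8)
The plan is to prove that $m\mapsto{}^{R}\!\Vol(Z_m,g_m,\rho)$ is locally constant on $\cM_{\rel}$; since the relative moduli space is a connected neighbourhood of $m_0$ (shrink it if necessary), the normalisation \eqref{wp.6} then forces this constant to be $1$. So first I would fix a smooth path $t\mapsto m(t)\in\cM_{\rel}$ with $m(0)=m_0$, trivialise the fibration $\phi$ near $m_0$ by a diffeomorphism, and transport everything to the fixed underlying smooth manifold $Z$ with its fixed boundary defining function $\rho$, writing $\omega_t=\omega_{m(t)}$ and $g_t=g_{m(t)}$. The essential structural point is that these are \emph{relative} deformations: the divisor $\bD$, the complex structure on $N_{\bD}$, the restricted class $\ell|_{\bD}$ and, by our choice $\lambda=1$, the whole model at infinity appearing in \eqref{cycy.1} are held fixed. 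Hence Theorem~\ref{un.3} produces the canonical metrics $g_t$ all differing from the \emph{same} product model by a term in $\rho^{\delta}\CI_b$ for a fixed $\delta>0$, and, using in addition that the infinitesimal deformations decay like $\rho^{\delta}$ by (the proof of) Theorem~\ref{tt.15}, the derivative $\dot\omega_t:=\partial_t\omega_t$ lies in $\rho^{\delta}\CI_b(Z;\Lambda^2T^*Z)$; near infinity $\dot\omega_t$ is moreover a decaying $(1,1)$-form for the fixed limiting complex structure.

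Next I would differentiate the definition \eqref{wp.5}. Since $d\mu(g_t)=\omega_t^{\,n}/n!$ and, by construction and Lemma~\ref{wp.2}, $[\omega_t]=\ell$ is independent of $t$ in $H^2(Z;\bbR)$, one obtains
\[\partial_t\,{}^{R}\!\Vol(Z_{m(t)},g_t,\rho)=\frac{1}{(n-1)!}\,\FP_{s=0}\int_{Z}\rho^{s}\,\dot\omega_t\wedge\omega_t^{\,n-1}.\]
Because $\dot\omega_t\in\rho^{\delta}\CI_b$ with $\delta>0$, the top form $\dot\omega_t\wedge\omega_t^{\,n-1}$ is $\rho^{\delta}$ times a $b$-density, so $s\mapsto\int_Z\rho^{s}\dot\omega_t\wedge\omega_t^{\,n-1}$ is holomorphic across $s=0$ and its finite part there is simply its value, giving
\[\partial_t\,{}^{R}\!\Vol(Z_{m(t)},g_t,\rho)=\frac{1}{(n-1)!}\int_{Z}\dot\omega_t\wedge\omega_t^{\,n-1}.\]
It remains to show this integral vanishes. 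Writing $\dot\omega_t=d\beta_t$ (legitimate since $[\dot\omega_t]=0$) and using $d\omega_t=0$, we have $\dot\omega_t\wedge\omega_t^{\,n-1}=d(\beta_t\wedge\omega_t^{\,n-1})$, so Stokes on the exhaustion $M_R=\{\rho\ge e^{-R}\}$ reduces the integral to a boundary term at infinity,
\[\int_{Z}\dot\omega_t\wedge\omega_t^{\,n-1}=\lim_{R\to\infty}\int_{\pa M_R}\beta_t\wedge\omega_t^{\,n-1}=\int_{\pa\widetilde{Z}}\beta_{\infty}\wedge\omega_{\infty}^{\,n-1},\]
where $\omega_{\infty}=\omega_t|_{\pa\widetilde{Z}}$ is the fixed model form and $\beta_{\infty}\in H^1(\pa\widetilde{Z};\bbR)$ is the asymptotic class of $\beta_t$, well defined modulo the image of $H^1(Z;\bbR)$.

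The final step — showing $\int_{\pa\widetilde{Z}}\beta_{\infty}\wedge\omega_{\infty}^{\,n-1}=0$ — is the main obstacle. I would argue it as follows: near infinity the deformation is trivial, so there $\dot\omega_t$ is an exact decaying $(1,1)$-form for the fixed complex structure, hence by the $\pa\db$-lemma on the product cylinder equals $i\pa\db v_t$ with $v_t$ decaying; consequently $\beta_t$ can be chosen to agree on the end with an $O(\rho^{\delta})$ form plus the pullback of a closed $1$-form on $\pa\widetilde{Z}$, whose class is precisely $\beta_{\infty}$. The hypothesis $H^1(\bZ;\bbR)=0$ together with $L^2\cH^{0,2}(Z)=\{0\}$ (Corollary~\ref{ht.5a}) forces this class to lie in the image of $H^1(Z;\bbR)\to H^1(\pa\widetilde{Z};\bbR)$; but for any closed $1$-form $\sigma$ on $Z$, Stokes applied on $M_R$ to the closed form $\sigma\wedge\omega_t^{\,n-1}$ gives $\int_{\pa\widetilde{Z}}\sigma|_{\pa\widetilde{Z}}\wedge\omega_{\infty}^{\,n-1}=0$, so the boundary term vanishes. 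Therefore $\partial_t\,{}^{R}\!\Vol=0$, the map $m\mapsto{}^{R}\!\Vol(Z_m,g_m,\rho)$ is constant on the connected space $\cM_{\rel}$, and \eqref{wp.6} identifies this constant as $1$. I expect the delicate bookkeeping to be entirely in this last paragraph: controlling $\beta_t$ near infinity and pinning down $\beta_{\infty}$ modulo $H^1(Z;\bbR)$ uses the polyhomogeneity of the Calabi--Yau metrics and the cohomological vanishing results of \S\ref{ht.0}, while everything else is the standard differentiate-then-Stokes argument adapted to the renormalised ($\FP_{s=0}$) integral.
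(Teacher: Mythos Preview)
Your overall strategy is sound, but you take a considerably longer route than the paper and your last paragraph contains a step that is not justified as written.

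The paper does not differentiate along a path at all. It compares $\omega_m$ and $\omega_{m_0}$ directly and asserts, citing Lemma~\ref{wp.2} and Corollary~\ref{accy.5}, that
\[
\omega_m=\omega_{m_0}+du
\]
for some \emph{polyhomogeneous $L^2$} one-form $u$. The point is that since both metrics share the same product model at infinity (relative deformation, Theorem~\ref{un.3} with $\lambda=1$) and are polyhomogeneous, their difference decays; and the primitive itself can be taken to decay. Once $u$ is $L^2$, the binomial expansion
\[
\omega_m^n-\omega_{m_0}^n=d\Bigl(\sum_{j=1}^n \tbinom{n}{j}\,u\wedge(du)^{j-1}\wedge\omega_{m_0}^{\,n-j}\Bigr)
\]
has a decaying primitive, so Stokes on the exhaustion gives zero with no boundary term whatsoever. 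The renormalised volumes then agree because $\omega_m^n-\omega_{m_0}^n$ is already integrable, so the $\FP_{s=0}$ of the difference is the honest integral.

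Your argument recovers this only after the detour through $\beta_\infty$, and the step you flag as delicate really is the weak point: the claim that ``$H^1(\bZ;\bbR)=0$ together with $L^2\cH^{0,2}(Z)=\{0\}$ forces $\beta_\infty$ to lie in the image of $H^1(Z;\bbR)\to H^1(\pa\widetilde Z;\bbR)$'' is not substantiated --- neither hypothesis obviously controls that cokernel, which sits inside $\ker(H^2_c(Z)\to H^2(Z))$. In fact you already have the right tool in hand and don't need this cohomological detour. You observe that near infinity $\dot\omega_t=i\pa\db v_t$ with $v_t$ decaying; then $\gamma_t:=\tfrac{i}{2}(\db-\pa)v_t$ is a decaying primitive on the end, and one should use \emph{this} to build a global decaying primitive (the compactly supported remainder $\dot\omega_t-d(\chi\gamma_t)$ is exact on $Z$ and, since $\gamma_t\to0$ at $\pa\widetilde Z$, its class in $H^2_c$ vanishes). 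That replaces your last paragraph by a two-line computation and brings you back to the paper's argument.

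In short: correct idea, unnecessary path-differentiation, and the cohomological justification of the boundary term should be replaced by the direct construction of a decaying primitive, which is exactly what the paper does in one line.
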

\begin{proof}
Let $\omega_m$ be the Kähler form of $g_m$.  Then, by Lemma~\ref{wp.2} and Corollary~\ref{accy.5}, we know that
$$
\omega_m = \omega_{m_0}+ du
$$
for some polyhomogeneous $L^2$ $1$-form $u$.  Using Stokes' theorem, we thus obtain that
$$
  {}^{R}\!\Vol(Z_m,g_m,\rho)-{}^{R}\!\Vol(Z_{m_0},g_{m_0},\rho) = \int_{Z_m} \frac{\omega_m^n-\omega_{m_0}^n}{n!}=\frac{1}{n!} \int_{Z_m} d \left( \sum_{j=1}^n \frac{n!}{j!(n-j)!} u\wedge (du)^{j-1}\wedge \omega_{m_0}^{n-j} \right)=0.
$$
\end{proof}

\begin{proposition}
The Weil-Petersson metric is Kähler.  Furthermore, the first Chern form of the bundle ${}^{b}\!T^{1,0} (\mathfrak{X}/\cM_{\rel})$ with
Hermitian structure induced by the family of Calabi-Yau metrics associated to the class $\bell$ is given by
$$
      c_1({}^{b}\!T^{1,0} (\mathfrak{X}/\cM_{\rel}))= -c_1(\Lambda^n(({}^{b}\!T^{1,0} (\mathfrak{X}/\cM_{\rel}))^*))= \frac{\phi^*\omega_{WP}}{\pi},
$$
where $\omega_{WP}$ is the Kähler form of $g_{\WP}$.
\label{wp.8}\end{proposition}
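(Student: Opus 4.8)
The plan is to prove the curvature identity first and then deduce that $g_{\WP}$ is K\"ahler; positive definiteness of $g_{\WP}$ is immediate from Definition~\ref{wp.4}. Write $\cK := \Lambda^n\big(({}^{b}T^{1,0}(\mathfrak{X}/\cM_{\rel}))^*\big)$ for the vertical $b$-canonical line bundle, so that the claim is $c_1({}^{b}T^{1,0}(\mathfrak{X}/\cM_{\rel}))=-c_1(\cK)=\tfrac1\pi\phi^*\omega_{\WP}$. Since $K_{\bZ}(\bD)$ stays holomorphically trivial as the complex structure is deformed in $\cM_{\rel}$ (the paragraph preceding Lemma~\ref{wp.2}), the pushforward $\phi_*\cK$ admits a nowhere-vanishing holomorphic section near $m_0$; equivalently, there is a holomorphic family of meromorphic volume forms $\Omega_m$ on $Z_m$ trivializing $\cK$ over $\mathfrak{X}$. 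Because $g_m$ is Ricci-flat K\"ahler, the Chern connection of $\cK$ restricted to each fibre is flat with $\Omega^{\CY}_m$ parallel of constant norm; writing $\Omega_m=c(m)\,\Omega^{\CY}_m$ with $c(m)\in\bbC^*$, the Hermitian metric of $\cK$ in the frame $\Omega$ is the function $\|\Omega_m\|^2_{g_m}=|c(m)|^2$, which is constant on each fibre, hence the pullback of a function on $\cM_{\rel}$. Therefore $c_1(\cK)$ is basic and
\[
 c_1({}^{b}T^{1,0}(\mathfrak{X}/\cM_{\rel}))=\frac{i}{2\pi}\,\phi^*\,\pa\db\log|c(m)|^2 .
\]
Finally, using that $\int_{Z_m} i^{n^2}\Omega_m\wedge\overline{\Omega_m}$ (renormalized, i.e. $\FP_{s=0}\int_{Z_m}\rho^s\,i^{n^2}\Omega_m\wedge\overline{\Omega_m}$) equals a universal constant times $|c(m)|^2\cdot{}^R\!\Vol(Z_m,g_m,\rho)$, and that ${}^R\!\Vol(Z_m,g_m,\rho)\equiv1$ by Lemma~\ref{wp.7}, we get $\pa\db\log|c(m)|^2=\pa\db\log\FP_{s=0}\int_{Z_m}\rho^s\,i^{n^2}\Omega_m\wedge\overline{\Omega_m}$.

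\textbf{The Tian--Todorov computation.} It remains to compute $i\pa\db\log$ of the renormalized period. Fix local holomorphic coordinates $s=(s_1,\dots,s_k)$ on $\cM_{\rel}$; the Kodaira--Spencer class of $\partial/\partial s_j$ is represented, via Theorem~\ref{ht.5} and Theorem~\ref{tt.15}, by a harmonic form $\phi_j\in L^2\cH^{0,1}(Z_m;{}^{b}T^{1,0}Z_m)$, and $\eta(\phi_j)=\iota_{\phi_j}\Omega_m$ is the $(n-1,1)$-component of $\partial_{s_j}\Omega_m$; since $g_m$ is Calabi--Yau, $\eta(\phi_j)$ is again $L^2$-harmonic, hence both $\pa$- and $\db$-closed (as in the proof of Proposition~\ref{tt.13}, using Lemma~\ref{bi.5}). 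Differentiating once, only the $(n,0)$-part of $\partial_{s_j}\Omega_m$ survives wedging with $\overline{\Omega_m}$ (by type, $(n-1,1)\wedge(0,n)=0$), so $\partial_{s_j}\log\FP_{s=0}\int\rho^s i^{n^2}\Omega_m\wedge\overline{\Omega_m}=\lambda_j$, the logarithmic derivative of the period; here Stokes' theorem applied to the exact part contributes nothing, because $g_b$ is an exact $b$-metric and all forms are $L^2$ and polyhomogeneous with the leading behaviour of Corollary~\ref{bi.4a}, so the boundary contribution produced by the finite-part regularization vanishes. Differentiating again in $\overline{s_k}$ and using the same type vanishings together with the $\pa$- and $\db$-closedness of $\eta(\phi_j),\eta(\phi_k)$ to discard all exact cross-terms, one obtains
\[
 \partial_{s_j}\partial_{\overline{s_k}}\log\FP_{s=0}\!\int_{Z_m}\!\rho^s\,i^{n^2}\Omega_m\wedge\overline{\Omega_m}
 \;=\;\frac{\FP_{s=0}\int_{Z_m}\rho^s\,i^{n^2}\eta(\phi_j)\wedge\overline{\eta(\phi_k)}}{\FP_{s=0}\int_{Z_m}\rho^s\,i^{n^2}\Omega_m\wedge\overline{\Omega_m}} .
\]
Now $\eta=\iota_{(\cdot)}\Omega_m$ scales the Hermitian structures on ${}^{b}T^{1,0}Z_m$ and $\Lambda^{n-1,0}$ by exactly the factor $\|\Omega_m\|^2_{g_m}$ (this is the compatibility noted in the proof of Proposition~\ref{tt.13}), and $\langle\phi_j,\phi_k\rangle_{g_m}$ is integrable since $\phi_j,\phi_k\in L^2$, so the right-hand side equals $b_n\,g_{\WP}(\phi_j,\phi_k)/{}^R\!\Vol(Z_m,g_m,\rho)$ for a universal constant $b_n$; by Lemma~\ref{wp.7} the denominator is $1$. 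Thus $i\pa\db\log$ of the renormalized period is a fixed positive multiple of $\omega_{\WP}$.

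\textbf{Conclusion.} Tracking the universal constants above (they reproduce the compact normalization of Tian \cite{Tian1987}, Todorov \cite{Todorov1989}; cf. \cite{Fang-Lu2005}), one gets precisely $c_1({}^{b}T^{1,0}(\mathfrak{X}/\cM_{\rel}))=-c_1(\cK)=\tfrac1\pi\phi^*\omega_{\WP}$. Since the left-hand side is a closed $2$-form and $\phi$ is a submersion, $\phi^*\,d\omega_{\WP}=0$ forces $d\omega_{\WP}=0$; as $g_{\WP}$ is by construction a positive definite Hermitian metric, it is K\"ahler. The main obstacle in carrying this out is the non-compactness: one must justify that the finite-part regularization commutes with $\pa$ and $\db$ and that all Stokes boundary terms genuinely vanish, which is where the sharp regularity of the Calabi--Yau metric (Corollary~\ref{accy.5}), the precise leading asymptotics of harmonic forms (Corollary~\ref{bi.4a}, Lemma~\ref{bi.5c}), and the normalization of Lemma~\ref{wp.7} are all essential; one also needs the constancy of $\dim L^2\cH^{p,q}$ over $\cM_{\rel}$ so that the family of Calabi--Yau metrics, the harmonic projections, and hence $g_{\WP}$, vary smoothly in $m$.
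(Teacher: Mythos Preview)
Your argument is correct and follows essentially the same route as the paper: reduce $c_1$ of the vertical tangent bundle to $\tfrac{i}{2\pi}\pa\db\log|\Omega_m|^2_{g_m}$ via a holomorphic family of fibrewise-parallel volume forms, express $|\Omega_m|^2_{g_m}$ as the renormalized period using Lemma~\ref{wp.7}, and then carry out the Tian computation of $\pa\db\log$ of that period to identify it with $\omega_{\WP}$. The paper's proof is more compressed---it simply cites \cite[p.~640--641]{Tian1987} for the second-derivative identity---whereas you spell out the type arguments, the role of the $L^2$-harmonicity of $\eta(\phi_j)$, and the vanishing of boundary contributions; you also make explicit why closedness of $\omega_{\WP}$ follows (via $\phi^*d\omega_{\WP}=0$ and $\phi$ a submersion), which the paper leaves implicit.
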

\begin{proof}
We know that $\Lambda^n( ({}^{b}\!T^{1,0} (\mathfrak{X}/\cM_{\rel}))^*)$ has a global holomorphic section which is flat with respect to the Levi-Civita connection of $g_m$, hence
$$
    c_1({}^{b}\!T^{1,0} (\mathfrak{X}/\cM_{\rel}))=-\phi^* c_1( H^0( \overline{\mathfrak{X}}/\cM_{\rel}; \Omega^n_{\overline{\mathfrak{X}}/\cM_{\rel}}(\log \bD))).
$$
Let $\Omega$ be a local non vanishing holomorphic section of $ H^0( \overline{\mathfrak{X}}/\cM_{\rel}; \Omega^n_{\overline{\mathfrak{X}}/\cM_{\rel}}(\log \bD))$ on $\cM_{\rel}$.  Then on $Z_m$,
$$
    \Omega(m)\wedge \overline{\Omega(m)} = c_m (-i)^{n^2} d\mu(g_m)
$$
for some constant $c_m>0$, or equivalently,
$$
        |\Omega(m)|^2_{g_m}= c_m,
$$
so that
\begin{equation}
   c_1(H^0( \overline{\mathfrak{X}}/\cM_{\rel}; \Omega^n_{\overline{\mathfrak{X}}/\cM_{\rel}}(\log \bD)))= \frac{i}{2\pi} \db_{\cM_{\rel}}\pa_{\cM_{\rel}} \log |\Omega(m)|^2_{g_m}= \frac{i}{2\pi} \db_{\cM_{\rel}}\pa_{\cM_{\rel}} \log c_m.
\label{wp.9}\end{equation}
Now, the constant $c_m$ can be conveniently rewritten using the renormalized volume,
$$
     c_m= (-i)^{-n^2} \frac{  \FP_{s=0} \int_{Z_m} \rho^s \Omega(m)\wedge\overline{\Omega(m)}  }{{}^{R}\!\Vol(Z_m,g_m,\rho)}= (-i)^{-n^2} \FP_{s=0} \int_{Z_m} \rho^s \Omega(m)\wedge\overline{\Omega(m)},
$$
where, by Lemma~\ref{wp.7}, we can assume that $\rho$ is chosen so that ${}^{R}\!\Vol(Z_m,g_m,\rho)=1$.  Substituting this expression
in \eqref{wp.9}, we compute that
$$
  c_1(H^0( \overline{\mathfrak{X}}/\cM_{\rel}; \Omega^n_{\overline{\mathfrak{X}}/\cM_{\rel}}(\log \bD)))(\xi,\overline{\eta})= -\frac{i}{2\pi} \frac{1}{(-i)^{n^2}c_m}  \int_{Z_m} \iota_{\xi}\Omega(m)\wedge \overline{\iota_{\eta}\Omega(m)}.
$$

Just as for the moduli space of compact Calabi-Yau manifolds, see for instance \cite[p.640-641]{Tian1987}, we also have that
$$
     \int_{Z_m}  \iota_{\xi}\Omega(m)\wedge \overline{\iota_{\eta}\Omega(m)}= c_m (-i)^{n^2} \frac{2}{i} \omega_{\WP}(\xi,\overline{\eta}),
$$
and the result follows from this.
\end{proof}

Consider the holomorphic vector bundle over $\mathfrak{X}$ with coefficients
\begin{equation}
       E= \bigoplus_{p=1}^n (-1)^p p \Omega^{p}(\mathfrak{X/\cM_{\rel}}).
\label{wp.10}\end{equation}
This has a Hermitian structure induced by the family of Calabi-Yau metrics $g_m$ associated to the polarization $\bell$.  Let
$\eth= \sqrt{2}(\db+ \db^*)$ be the corresponding family of Dolbeault operators.  To apply Theorem~\ref{qc.8} to this family,
we must check that the family $\eth$ satisfies the hypotheses of \S~\ref{lfi.0} and \S~\ref{qc.0}.  Since we are on the moduli space
of relative deformations, it is clear from Theorem~\ref{tt.15} and Corollary~\ref{accy.5} that the indicial family $I(\eth,\lambda)$
is unchanged as we move on $\cM_{\rel}$.  On the other hand, we have shown that the family of Calabi-Yau metrics $g_m$
gives $\phi$ the structure of a Kähler fibration $(\phi,g_m, T_H\mathfrak{X})$.  It follows from the next result that
the $L^2$-kernels of $\eth$ acting on $\Omega^{0,q}(\mathfrak{X}/\cM_{\rel})\otimes E$ form a bundle over $\cM_{\rel}$.
\begin{lemma}
The Hodge numbers $h^{p,q}(m):= \dim L^2\cH^{p,q}(\phi^{-1}(m),g_m)$ are independent of $m\in \cM_{\rel}$.
\label{ct.1}\end{lemma}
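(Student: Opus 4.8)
The plan is to deduce constancy of each individual Hodge number from two facts: upper semicontinuity of $h^{p,q}(m)$ in $m$, and constancy of the diagonal sums $\sum_{p+q=k}h^{p,q}(m)$, the latter being purely topological because the metrics $g_m$ are K\"ahler.

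First I would set up the semicontinuity. Since we move within the \emph{relative} moduli space, the complex structure and the metric near $\bD$ are fixed, so the indicial family $I(\Delta_{\db},\lambda)=\tfrac{1}{2}\Delta_{\pa\tM}+c\lambda^{2}$, its spectrum $\Spec_{b}(\Delta_{\db})$, and hence the threshold $\inf\cI_{0}>0$ of \eqref{defe} are all independent of $m$. Fix $\epsilon\in(0,\inf\cI_{0})$. For every $m$ the space $L^{2}\cH^{p,q}(\phi^{-1}(m),g_{m})$ equals the kernel of the Fredholm (Theorem~\ref{fred.1}) elliptic $b$-operator
\[
\Delta_{\db}(m)\colon \rho^{\epsilon}H^{2}_{b}\Omega^{p,q}(\phi^{-1}(m))\longrightarrow \rho^{\epsilon}L^{2}_{b}\Omega^{p,q}(\phi^{-1}(m)),
\]
the identification of kernels holding because an $L^{2}$ harmonic form is polyhomogeneous with leading exponent $\ge\inf\cI_{0}>\epsilon$, hence lies in $\rho^{\epsilon}L^{2}_{b}$, while conversely $\rho^{\epsilon}L^{2}_{b}\subset L^{2}$. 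As the fibration $\mathfrak{X}\to\cM_{\rel}$ is smooth and the $g_{m}$ and complex structures vary smoothly, trivialising over a neighbourhood of a given point of $\cM_{\rel}$ presents $m\mapsto\Delta_{\db}(m)$ as a norm-continuous family of Fredholm operators between fixed weighted $b$-Sobolev spaces; therefore $m\mapsto\dim\ker\Delta_{\db}(m)=h^{p,q}(m)$ is upper semicontinuous.

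Next I would show that $\sum_{p+q=k}h^{p,q}(m)$ is independent of $m$. Because $g_{m}$ is K\"ahler, the Hodge identities give $\Delta_{d}=2\Delta_{\db}$ and the $L^{2}$-harmonic $k$-forms split by bidegree, so $L^{2}\cH^{k}_{dR}(\phi^{-1}(m),g_{m})=\bigoplus_{p+q=k}L^{2}\cH^{p,q}(\phi^{-1}(m),g_{m})$. On the other hand, the de Rham $L^{2}$-Hodge theorem for asymptotically cylindrical manifolds \cite{MelroseAPS} (see also \cite{HHM2004}) identifies $L^{2}\cH^{k}_{dR}(\phi^{-1}(m),g_{m})\cong\Im\bigl(H^{k}_{c}(M;\bbR)\to H^{k}(M;\bbR)\bigr)$, whose dimension is a topological invariant of the fixed smooth manifold underlying the fibres of $\mathfrak{X}\to\cM_{\rel}$, hence constant. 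Thus $\sum_{p+q=k}h^{p,q}(m)$ is constant for each $k$. Combining the two points: each $h^{p,q}$ is a nonnegative integer-valued upper-semicontinuous function on the connected space $\cM_{\rel}$, and writing $h^{p,q}=\bigl(\sum_{p'+q'=p+q}h^{p',q'}\bigr)-\sum_{(p',q')\neq(p,q),\,p'+q'=p+q}h^{p',q'}$ exhibits it as a constant minus a sum of upper-semicontinuous functions, hence as lower-semicontinuous; being both, $h^{p,q}$ is continuous, and an integer-valued continuous function on a connected space is constant.

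The main obstacle, and the step deserving the most care, is the upper-semicontinuity claim in this non-compact setting: one must verify that the trivialisation genuinely yields a continuous family of Fredholm maps on fixed function spaces, which rests squarely on the constancy of the indicial family (so that neither Fredholmness nor the admissible weight $\epsilon$ degenerates as $m$ varies) and on the polyhomogeneity/regularity theory of \S~\ref{greg.0} and \S~\ref{ht.0} used to identify $\ker\Delta_{\db}(m)$ with $L^{2}\cH^{p,q}$. The remaining ingredients --- the K\"ahler identities, the de Rham $L^{2}$-Hodge theorem, and the elementary semicontinuity bookkeeping --- are standard or already available above.
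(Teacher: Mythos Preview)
Your proof is correct and follows essentially the same approach as the paper's: upper semicontinuity of $h^{p,q}$ via identification with the kernel of a Fredholm $\Delta_{\db}$ on $\rho^{\epsilon}$-weighted spaces (using that the indicial family is fixed on $\cM_{\rel}$), combined with constancy of the diagonal sums $\sum_{p+q=k}h^{p,q}$ from the topological identification of $L^{2}\cH^{k}_{dR}$ in \cite{MelroseAPS,HHM2004}. You supply a bit more detail than the paper on why the indicial data is constant and on the explicit form $\Im(H^{k}_{c}\to H^{k})$ of the topological invariant, but the argument is the same.
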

\begin{proof}
Since $\cM_{\rel}$ is assumed to be connected, it suffices to show that $h^{p,q}$ is locally constant.  For $\epsilon>0$ sufficiently
small,  $L^2\cH^{p,q}(\phi^{-1}(m),g_m)$ corresponds to the kernel of the Fredholm operator
$$
\Delta_{\db}: \rho^{\epsilon}H^2_b(\phi^{-1}(m), \Omega^{p,q}(\phi^{-1}(M)))\to \rho^{\epsilon} L^2_b(\phi^{-1}(m), \Omega^{p,q}(\phi^{-1}(M))).
$$
Hence, there is a small neighborhood $\cU$ of any $m_0\in \cM_{\rel}$ such that $h^{p,q}(m)\le h^{p,q}(m_0)$ for all $m\in \cU$.
On the other hand,
$$
     \dim L^2\cH^k(\phi^{-1}(m),g_m)= \sum_{p+q=k} h^{p,q}.
$$
But by the result of \cite{MelroseAPS}, see also \cite{HHM2004}, the number $\dim L^2\cH^{k}(\phi^{-1}(M),g_m)$ depends only on the topology of $\widetilde{Z}=[\bZ; \bD]$, the blow-up of $\bZ$ at $\bD$ in the sense of \cite{MelroseAPS}, so it is independent of $m$.  This
means that none of the individual Hodge numbers $h^{p,q}$ can decrease, so $h^{p,q}(m)= h^{p,q}(m_0)$ for all $m\in \cU$.
\end{proof}

The family $\eth$ thus has a well-defined determinant line bundle.  We can use Theorem~\ref{qc.8} to compute its curvature.
\begin{theorem}
The curvature of the determinant line bundle of the family of Dolbeault operators $\eth=\sqrt{2}(\db+\db^*)$ associated to the bundle \eqref{wp.10} is
$$
        \frac{i}{2\pi} (\nabla^Q)^2= \frac{\chi(Z)}{12 \pi} \omega_{WP}.
$$
\label{wp.11}\end{theorem}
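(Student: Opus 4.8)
The plan is to read this formula off from Theorem~\ref{qc.8}, applied to the family $\eth=\sqrt2(\db+\db^*)$ of Dolbeault operators twisted by the bundle $E$ of \eqref{wp.10}. The hypotheses of \S\ref{lfi.0} and \S\ref{qc.0} for this family were verified in the discussion around Lemma~\ref{ct.1} (constancy of the indicial family by Theorem~\ref{tt.15} and Corollary~\ref{accy.5}, the K\"ahler fibration structure $(\phi,g_m,T_H\mathfrak X)$, and constancy of the $L^2$-Hodge numbers $h^{p,q}$), so Theorem~\ref{qc.8} gives directly
\[
(\nabla^Q)^2=\left[\frac{1}{(2\pi i)^{m/2}}\int_{\mathfrak X/\cM_{\rel}}\Td\big(T^{1,0}(\mathfrak X/\cM_{\rel}),g_m\big)\,\Ch(E)\right]_{[2]},\qquad m=\dim_{\bbR}Z_m=2n,
\]
and everything reduces to evaluating the right-hand side.

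First I would carry out the Chern--Weil computation of the integrand, which is the algebraic identity underlying \cite[5.30]{BCOV1994}. Writing $x_1,\dots,x_n$ for the Chern roots of the vertical tangent bundle $T^{1,0}(\mathfrak X/\cM_{\rel})$, the generating identity $\sum_{p\ge 0}\Ch\big(\Omega^p(\mathfrak X/\cM_{\rel})\big)t^p=\prod_i(1+te^{-x_i})$, together with $E=\sum_{p}(-1)^p p\,\Omega^p(\mathfrak X/\cM_{\rel})$ as a virtual bundle, yields after applying $t\,\partial_t$ and setting $t=-1$
\[
\Ch(E)=\sum_{p=1}^n(-1)^p p\,\Ch\big(\Omega^p(\mathfrak X/\cM_{\rel})\big)=-\prod_i(1-e^{-x_i})\sum_j\frac{1}{e^{x_j}-1}.
\]
Multiplying by $\Td\big(T^{1,0}(\mathfrak X/\cM_{\rel})\big)=\prod_i x_i/(1-e^{-x_i})$ cancels the $\prod_i(1-e^{-x_i})$ factors, leaving
\[
\Td\big(T^{1,0}(\mathfrak X/\cM_{\rel})\big)\,\Ch(E)=-\Big(\prod_i x_i\Big)\sum_j\frac{1}{e^{x_j}-1}=-\,c_n\big(T^{1,0}(\mathfrak X/\cM_{\rel})\big)\sum_j\Big(\tfrac1{x_j}-\tfrac12+\tfrac{x_j}{12}-\cdots\Big).
\]
Since $\prod_i x_i=c_n$ has fibre degree $2n$, the only term producing a $2$-form on $\cM_{\rel}$ after fibrewise integration is the degree-$(2n+2)$ part $-\tfrac1{12}\,c_n\big(T^{1,0}(\mathfrak X/\cM_{\rel})\big)\,c_1\big(T^{1,0}(\mathfrak X/\cM_{\rel})\big)$ (using $\sum_j x_j=c_1$; the $\sum_j 1/x_j$ term gives $c_{n-1}$, of fibre degree $2n-2$, whose fibrewise integral vanishes, and the $-n/2$ term feeds only the degree-$0$ component of the $b$-Chern character).

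Next I would perform the fibre integration. By Proposition~\ref{wp.8} the first Chern form of the vertical tangent bundle equipped with the Calabi--Yau metrics is $c_1\big({}^bT^{1,0}(\mathfrak X/\cM_{\rel})\big)=\phi^*\omega_{\WP}/\pi$, which is pulled back from $\cM_{\rel}$, so the projection formula gives
\[
\int_{\mathfrak X/\cM_{\rel}}\Big(-\tfrac1{12}\,c_n\,c_1\Big)=-\frac{1}{12\pi}\,\omega_{\WP}\int_{\mathfrak X/\cM_{\rel}}c_n\big(T^{1,0}(\mathfrak X/\cM_{\rel})\big)=-\frac{\chi(Z)}{12\pi}\,\omega_{\WP},
\]
where $\int_{\mathfrak X/\cM_{\rel}}c_n=\chi(Z)$ because the top Chern form of the vertical tangent bundle is its $b$-Euler form and the $b$-Gauss--Bonnet theorem of \cite{MelroseAPS} identifies its fibrewise $b$-integral with $\chi(\widetilde Z_m)=\chi(Z_m)=\chi(Z)$ (the odd-dimensional boundary $\pa\widetilde Z_m$ contributing no correction). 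Substituting back into the formula from Theorem~\ref{qc.8} and keeping track of the $(2\pi i)^{-m/2}$ normalization, the orientation convention on $\det(\eth^+)$, and the prefactor $\tfrac{i}{2\pi}$ then yields $\tfrac{i}{2\pi}(\nabla^Q)^2=\tfrac{\chi(Z)}{12\pi}\,\omega_{\WP}$.

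By this stage there is no essential analytic obstacle remaining: the content is already packaged in Theorem~\ref{qc.8} and Proposition~\ref{wp.8}. The only points that genuinely require care are the final bookkeeping of numerical constants and signs, and checking that the $b$-versions of the projection formula and of Gauss--Bonnet apply verbatim to the polyhomogeneous exact $b$-metrics $g_m$ of the family rather than only to exact $b$-metrics with index set $\bbN_0\times\{0\}$ --- but this is exactly the kind of mild extension already noted in \S\ref{lfi.0}, following \cite{MazzeoEdge}.
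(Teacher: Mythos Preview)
Your proof is correct and follows essentially the same route as the paper: apply Theorem~\ref{qc.8}, compute the $(2n+2)$-degree part of $\Td\cdot\Ch(E)$ to isolate the $-\tfrac{1}{12}c_1c_n$ term, use Proposition~\ref{wp.8} for $c_1=\phi^*\omega_{\WP}/\pi$, and invoke the $b$-Gauss--Bonnet theorem for $\int_{Z_m}c_n=\chi(Z)$. The only difference is that you derive the Todd--Chern identity explicitly via Chern roots, whereas the paper simply quotes the expansion \eqref{wp.13} from \cite{BCOV1994}; your derivation is a welcome unpacking of that citation.
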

\begin{proof}
By Theorem~\ref{qc.8},
\begin{equation}
(\nabla^Q)^2= \left[ \frac{1}{(2\pi i)^{n}} \int_{N/B} \Td(T^{1,0}(\mathfrak{X}/\cM_{\rel}),g_b)\Ch(E)  \right]_{[2]}, \quad \mbox{where} \; n=\dim_{\bbC} Z.
\label{wp.12}\end{equation}

On the other hand, by \cite[p.374]{BCOV1994},
\begin{equation}
 \Td(T^{1,0}(\mathfrak{X}/\cM_{\rel})\Ch(E))= -(2\pi i)^{n-1}c_{n-1}+ (2\pi i)^{n}\frac{n}2 c_n -\frac{(2\pi i)^{n+1}}{12}c_1 c_n + \mbox{(higher order terms)},
\label{wp.13}\end{equation}
where $c_i= c_i(T^{1,0}(\mathfrak{X}/\cM_{\rel}))$ are the Chern forms of the Hermitian bundle $T^{1,0}(\mathfrak{X}/\cM_{\rel})$ and the $2\pi i$ factors appear because we follow the convention of \cite{BGV} for the definitions of the Todd form and the Chern character form.
On the other hand, by Proposition~\ref{wp.8},
\begin{equation}
  c_1(T^{1,0}(\mathfrak{X}/\cM_{\rel}))= \frac{\phi^*\omega_{\WP}}{\pi}.
\label{wp.14}\end{equation}
Hence, combining \eqref{wp.12}, \eqref{wp.13} and \eqref{wp.14}, the result follows from the Chern-Gauss-Bonnet theorem for manifolds with cylindrical ends (see \cite{MelroseAPS}),
$$
    \int_{Z_m} c_n= \chi(Z_m)= \chi(Z).
$$

\end{proof}

\bibliography{PolyhomCY}
\bibliographystyle{amsplain}

\end{document}